\theoremstyle{plain}
\newtheorem{theorem}{Theorem}[section]
\newtheorem{proposition}[theorem]{Proposition}
\newtheorem{corollary}[theorem]{Corollary}
\newtheorem{assumption}[theorem]{Assumption}
\newtheorem{lemma}[theorem]{Lemma}
\theoremstyle{definition}
\newtheorem{definition}[theorem]{Definition}
\newtheorem{remark}[theorem]{Remark}
\newcommand{\lra}{\longrightarrow}
\newcommand{\Spf}{\mbox{Spf}}
\newcommand{\Spm}{\mbox{Spm}}
\newcommand{\End}{\mbox{End}}
\newcommand{\Ker}{\mbox{Ker}}
\newcommand{\Fil}{\mbox{\rm Fil}}
\newcommand{\GL}{{\rm \mathbf{GL}}}
\newcommand{\Z}{{\mathbb Z}}
\newcommand{\Q}{{\mathbb Q}}
\newcommand{\C}{{\mathbb C}}
\newcommand{\F}{{\mathbb F}}
\newcommand{\N}{{\mathbb N}}
\newcommand{\V}{{\mathbb V}}
\newcommand{\A}{{\underline{A}}}
\newcommand{\Hsharp}{{\rm H}_E^\sharp}
\newcommand{\barHsharp}{\overline{\rm H}_E^\sharp}
\newcommand{\cI}{{\cal I}}
\newcommand{\cJ}{{\cal J}}
\newcommand{\cU}{{\cal U}}
\newcommand{\cP}{{\cal P}}
\newcommand{\cF}{{\cal F}}
\newcommand{\cE}{{\cal E}}
\newcommand{\bG}{{\mathbb G}}
\newcommand{\bV}{{\mathbb V}}
\newcommand{\bW}{{\mathbb W}}
\newcommand{\fcw}{{\mathfrak w}}
\newcommand{\fw}{{\mathfrak w}}
\newcommand{\bA}{{\mathbb A}}
\newcommand{\bB}{{\mathbb B}}
\newcommand{\bof}{\bf{f}}
\newcommand{\bg}{\bf{g}}
\newcommand{\bh}{\bf{h}}
\newcommand{\cO}{{\cal O}}
\newcommand{\Xf}{{\mathfrak X}}
\newcommand{\Uf}{{\mathfrak U}}
\newcommand{\Vf}{{\mathfrak V}}
\newcommand{\CN}{{\mathcal N}}
\newcommand{\CE}{{\mathcal E}}
\newcommand{\CS}{{\mathcal S}}
\newcommand{\CM}{{\mathcal M}}
\def\limi#1{\displaystyle\lim_{\longrightarrow\atop #1}}
\newcommand{\udelta}{\underline{\delta}}
\newcommand{\ubeta}{\underline{\beta}}
\newcommand{\oW}{{\mathbb W}^{\rm ord}}
\newcommand{\cW}{{\cal W}}
\newcommand{\fX}{{\mathfrak{X}}}
\newcommand{\cX}{{\mathcal{X}}}
\newcommand{\fIG}{{\mathfrak{IG}}}
\newcommand{\fY}{{\mathfrak{Y}}}
\newcommand{\fT}{{\mathfrak{T}}}
\newcommand{\fF}{{\mathfrak{F}}}
\newcommand{\fK}{{\mathfrak{K}}}
\newcommand{\mat}[4]{\left( \begin{array}{cc} {\sharp1} & {\sharp2} \\ {\sharp3} & {\sharp4}
\end{array} \right)}
\begin{document}

\bigskip

\bigskip

\title{Triple product $p$-adic $L$-functions associated to finite slope $p$-adic families of modular forms,}
\author{by Fabrizio Andreatta,  Adrian Iovita}\maketitle

\centerline{\large with an appendix by Eric Urban.}

\begin{abstract}
Let $p$ be a positive prime integer. We construct $p$-adic families of de Rham cohomology classes
and therefore $p$-adic families of nearly overconvergent elliptic modular forms. As an application we define triple product $p$-adic $L$-functions attached to three finite slope families of modular forms satisfying certain assumptions.
\end{abstract}

\tableofcontents

\section{Introduction}

The main theme of this article is that of $p$-adic variation of arithmetic objects. More precisely we will point out a very general geometric construction, called
vector bundles with marked sections, which we claim, when applied to (certain) families of $p$-divisible groups produces $p$-adic variations of certain modular
sheaves naturally existing there. In fact this construction produces all the known $p$-adic families and some which are new. So far this method has been tested on
modular curves and the results are recorded in this article but we think that the method, suitably adapted, works universally.

The motivation for this study is twofold: on the one hand it comes from the desire and need to find a general construction of $p$-adic $L$-functions attached to a
triple of $p$-adic finite slope families of modular forms. It has been known for a while, by work of H.~Hida (\cite{Hida2}), M.~Harris and J.~Tilouine
(\cite{harris_tilouine}), how to attach such a $p$-adic $L$-function to a triple of Hida families (or ordinary $p$-adic families) and its special values have been
investigated in work of M.~Harris and S.~Kudla (\cite{harris_kudla}) and more recently of A.~Ichino (\cite{ichino}) and T.C.~Watson (\cite{watson}). There have been
essays in the literature to extend this construction to finite slope families but so far they were not successful. For example in \cite{UNO} a construction of a
Rankin-Selberg $p$-adic $L$-function (which is a particular case of the Garret-Rankin triple product $p$-adic $L$-function constructed in this article) in the
finite slope case is claimed, but the article had a fatal gap. The gap is explained and fixed using the constructions and results of this article in section \S 7 by
E.~Urban. We refer to \cite{Marco} and the refinements in \cite{HS} for a construction of triple product $p$-adic $L$-functions which interpolate special values in
the balanced region, as opposite to the unbalanced regions considered in this paper and in the references mentioned so far. See also \cite{Loeffler} for an approach
using the Euler system of Beilinson-Flach elements, that provides a construction of two  dimensional ``slices" of the sought for three variable $p$-adic
$L$-function. The second motivation for the study of $p$-adic variation of modular sheaves is connected to our long term effort to provide crystalline
Eichler-Shimura isomorphisms associated to overconvergent eigenforms of finite slopes. This line of inquiry is not followed-up in this article but we hope to report
on such results soon.

\medskip
\noindent

Let us now be more precise and start by briefly reviewing the
triple product $p$-adic $L$-functions in the ordinary case
following the exposition of H.~Darmon and V.~Rotger in
\cite{darmon_rotger}. We will content ourselves to explain a
particular case in the introduction in order to simplify notations
but see the articles quoted or Section \S \ref{sec:tripleordinary}
and Remark \ref{rmk:DRanalogue} of this article for the general
case.

Let $N\ge 5$ be a square free integer and $f$, $g$, $h$ classical, normalized, primitive cuspidal eigenforms for $\Gamma_1(N)$ of weights $k$, $\ell$, $m$
respectively (and trivial characters) which are supposed to be unbalanced, i.e., there is an integer $t\ge 0$ such that $k=\ell+m+2t$. Let $p\ge 5$ be a prime
integer such that $(p,N)=1$ and we assume that $f,g,h$ are all ordinary at $p$. Let $\bof$, $\bg$, $\bh$ be Hida families of modular forms for $\Gamma_1(N)$
interpolating in weights $k$, $\ell$, $m$ the forms $f$, $g$, $h$ respectively. Here $\bof$, $\bg$, $\bh$ are seen as $q$-expansions with coefficients in the finite
flat extensions of $\Lambda:=\Z_p[\![\Z_p^\ast]\!]$ denoted $\Lambda_f$, $\Lambda_g$, $\Lambda_h$ respectively.

Before we start defining the $p$-adic $L$ function attached to $\bof$, $\bg$, $\bh$ let us make a short revisit of $q$-expansions and their properties. If $R$ is a
finite flat extension of $\Lambda$ we denote by $U$, $V\colon R[\![q]\!]\lra R[\![q]\!]$ the following $R$-linear operators: let $\alpha(q)=\sum_{n=0}^\infty
a_nq^n\in R[\![q]\!]$, then $U(\alpha)(q)=\sum_{n=0}^\infty a_{np}q^n$ and $V(\alpha)(q)=\sum_{n=0}^\infty a_nq^{pn}$. We immediately remark that $U\circ V={\rm
Id}_{R[\![q]\!]}$ and define, for $\alpha\in R[\![q]\!]$ as above $$\alpha^{[p]}(q):=\bigl({\rm Id}-V\circ U)\bigr)(\alpha)(q)=\sum_{n\ge 1, (p,n)=1}^\infty
a_nq^n.$$ One sees that $\alpha^{[p]}(q)\in R[\![q]\!]^{U=0}$ and moreover that if $\beta(q)\in R[\![q]\!]^{U=0}$ then $\beta^{[p]}(q)=\beta(q)$, i.e.
$R[\![q]\!]^{U=0}=\bigl(R[\![q]\!] \bigr)^{[p]}$. The operators $U,V$ defined above on $q$-expansions 
preserve the subspaces of $p$-adic modular forms for various weights.

We define the differential operator $d\colon R[\![q]\!]\lra R[\![q]\!]$ to be the $R$-derivation $\displaystyle d:=q\frac{d}{dq}.$ Let us remark that if $s\colon
\Z_p^\ast \lra R^\ast$ is a continuous homomorphism (it is called ``an $R$-valued weight'') it makes sense to define the operator $d^s\colon R[\![q]\!]^{U=0}\lra
R[\![q]\!]^{U=0}$ by
$$
d^s(\sum_{n=1, (n,p)=1}^\infty a_nq^n)=\sum_{n=1, (n,p)=1}^\infty a_ns(n)q^n.
$$
In particular for the universal weight $\Z_p^\ast\lra  \Lambda^\ast=\bigl(\Z_p[[\Z_p^\ast]]\bigr)^\ast \lra R^\ast$ sending $t\in \Z_p^\ast $ to the image in
$R^\ast$ of the grouplike element $[t]\in \Z_p^\ast\subset \Lambda^\ast$, we denote (following \cite{darmon_rotger}), by $d^\bullet$ the corresponding differential
operator on $q$-expansions, i.e., the operator defined by
$$
d^\bullet(\sum_{n=1,(n,p)=1}^\infty a_nq^n):=\sum_{n=1,(p,n)=1}^\infty a_n[n]q^n.
$$

\medskip\noindent
Let us now go back to our three Hida families $\bof$$\in \Lambda_f[\![q]\!], \bg$$\in \Lambda_g[\![q]\!], \bh$$\in \Lambda_h[\![q]\!]$. Following
\cite[Def.~4.4]{darmon_rotger} we define
$$
\mathcal{L}^f_p({\bof,\bg,\bh}):=\frac{\langle \bof, e^{\rm ord}\bigl(d^\bullet(\bg^{[p]})\times \bh  \bigr) \rangle}{\langle \bof, \bof \rangle} \in
{{\Lambda'_f\otimes \Lambda_g\otimes\Lambda_h}},
$$
where $\Lambda'_f$ denotes the total ring of fractions of $\Lambda_f$,
$e^{\rm ord}:={\rm lim}_{r\rightarrow \infty}U^{r!}$ is Hida's
``ordinary projector''  from $p$-adic families of nearly
overconvergent forms as in \cite{darmon_rotger}, to ordinary
modular forms and the inner product $\langle \ ,\ \rangle$ in the
above formula is the Peterson inner product for ordinary families
of weight the weight of $\bof$. We refer to \cite[\S
2.6]{darmon_rotger} for details. Then the specialization of this
three variable $p$-adic $L$-function at a triple of unbalanced
classical weights $(x,y,z)$ (where $d^\bullet$ is specialized at
$d^t$, with $x=y+z+2t$, $t\in \Z_{\ge 0}$) can be expressed as a
square root of the algebraic part of the classical central value
of the triple product of $\bof_x,\bg_y,\bh_z$.

\medskip
\noindent Suppose now that $f$, $g$, $h$ are classical normalized, primitive cuspidal eigenforms as above which have finite slope instead of being ordinary at $p$.
Then let us remark that the formula above defining $\mathcal{L}^f_p(\bof,\bg,\bh)$ makes no sense as there is no finite slope idempotent analogous to $e^{\rm ord}$
apart from the ordinary one. The reason is that the operator $U$  is not compact on $q$-expansions or on
$p$-adic modular forms.  One has to work with finite slope families of modular forms seen as overconvergent sections of the modular sheaves $\fw^{k_f}$,
$\fw^{k_g}$, $\fw^{k_h}$, where $k_f$, $k_g$, $k_h$ are the weights of the families interpolating $f$, $g$, $h$ respectively. Most importantly, instead of the
operator $d$ on $q$-expansions we have to work with a connection $ \nabla_{k_g}$ on a certain de Rham sheaf of weight $k_g$. This makes the whole
construction geometric and before proceeding to the construction of the $p$-adic $L$-function one has to define the new de Rham sheaves and study their properties.

More precisely, let $\cX$ denote the adic analytic space associated to the modular curve $X_1(N)_{\Q_p}$ and for every integer $r\ge 0$ and interval $I=[0,b]$,
$b\in \Z$ let $\cX_{r,I}$ denote the strict neighbourhood of the ordinary locus in $\cX\times W_I$ where the generalized elliptic curve has  a canonical subgroup
of order $1\leq n\leq r+b+1$.  Here $W$ is the weight space, i.e.,  the adic analytic space of analytic points attached to the formal scheme $\Spf(\Lambda)$ and
$W_I$ is a certain open subspace of weights containing $k$, $\ell$, $m$ (for details see Section \S \ref{sec:omega}).

Let now $\fX_{r,I}\lra \fX$ and $\mathfrak{W}_I$  be precisely defined formal models of $\cX_{r,I}\lra \cX$, and respectively of $W_I$, for example
 $\fX$ is the formal completion along its special fiber of the modular curve $X_1(N)_{\Z_p}$.
Let $\pi\colon  E\lra \fX_{r,I}$ be the inverse image of the universal generalized elliptic curve on $\fX$ and define
$\omega_E:=\pi_\ast\Bigl(\Omega^1_{E/\fX_{r,I}}\bigl({\rm log}(\pi^{-1}({\rm cusps})\bigr)\Bigr)$ and ${\rm
H}_E:=\mathbb{R}^1\pi_\ast\Bigl(\Omega^\bullet_{E/\fX_{r,I}}\bigl({\rm log}(\pi^{-1}({\rm cusps})\bigr)\Bigr)$. Then $\omega_E$ is a locally free modular sheaf of
rank one and ${\rm H}_E$ is a locally free modular sheaf of rank two, related by the Hodge filtration exact sequence on $\fX_{r,I}$:
$$
0\lra \omega_E\lra {\rm H}_E\lra \omega_E^\ast\lra 0.
$$
Moreover the Gauss-Manin connection defines a logarithmic connection
$$\nabla\colon {\rm H}_E\lra {\rm H}_E\otimes_{\cO_{\fX_{r,I}}}\Omega^1_{\fX_{r,I}/\mathfrak{W}_I}\bigl({\rm log}({\rm cusps})\bigr).
$$

On $\fX_{r,I}$ we have a family of line bundles $\bigl(\omega_E^{\otimes m}\bigr)_{m\in \N}$ and a family of locally free $\cO_{\fX_{r,I}}$-modules with connections
and Hodge filtrations $\bigl({\rm Sym}^m({\rm H}_E), {\rm Fil}_{m,\bullet}, \nabla_m  \bigr)_{m\in \N}$ and the main tasks before us is to $p$-adically interpolate
these two families by using weights in $W_I$.

Let us recall that the first family has already  been interpolated in various degrees of generality in \cite{halo_spectral}, \cite{andreatta_iovita_stevens2} and
\cite{SiegelAIP}.

More precisely if $\alpha\in W_I$ is any weight there is a sheaf $\fw^\alpha$ on $\fX_{r,I}$ such that if $\alpha\in \Z$ then $\fw^\alpha$ and $\omega_E^\alpha$
coincide on the analytic space $\cX_{r,I}$ and such that the elements of ${\rm H}^0(\fX_{r,I}, \fw^\alpha)$ are (integral models of) the overconvergent modular
forms or families of weight $\alpha$ and tame level $N$.

In particular, returning for a moment to our construction of the $p$-adic $L$-function, given $f$, $g$, $h$  we have modular sheaves $\fw^{k_f}$, $\fw^{k_g}$,
$\fw^{k_h}$ and (integral) families $\omega_f\in {\rm H}^0(\fX_{r,I}, \fw^{k_f})$, $\omega_g\in {\rm H}^0(\fX_{r,I}, \fw^{k_g})$, $\omega_h\in {\rm H}^0(\fX_{r,I},
\fw^{k_h})$ interpolating $f$, $g$, $h$ respectively in weights $k$, $\ell$, $m$.

The integral $p$-adic interpolation of the families $\bigl({\rm Sym}^m({\rm H}_E), {\rm Fil}_{m,\bullet}, \nabla_m  \bigr)_{m\in \N}$ in this article is new and it
follows from using the formal vector bundle with marked sections attached to a sheaf like ${\rm H}_E$ and a section of it coming from a generator of the Cartier
dual of the canonical subgroup of $E$ via the map ${\rm dlog}$. Our first result is the following, where we summarize Theorem \ref{thm:descentbWk}, Theorem
\ref{theorem:griffith}, Section \ref{sec:upoperator} and Theorem \ref{theorem:mostgeneral}:

\begin{theorem}  For every weight $\alpha\in W_I$ there exists a formal sheaf $\bW_\alpha$ on $\fX_{r,I}$ with meromorphic connection $\nabla_\alpha$ and filtration
${\rm Fil}_\bullet(\bW_\alpha)$ which define on the adic analytic fiber $\cX_{r,I}$ a sheaf of Banach modules $\bW_\alpha^{\rm an}$ with   a connection
$\nabla_\alpha$ and filtration ${\rm Fil}_\bullet(\bW_\alpha^{\rm an})$ satisfying Griffith's transversality.

Moreover if $\alpha\in\Z_{\ge 0}$ then $\bigl({\rm Sym}^\alpha({\rm H}_E), {\rm Fil}_{\alpha,\bullet}, \nabla_\alpha\bigr)$ is canonically a submodule (with
connection and filtration) of the sheaf defined by $\bigl(\bW_\alpha^{\rm an}, {\rm Fil}_\bullet(\bW_\alpha^{\rm an}), \nabla_\alpha   \bigr)$ on $\cX_{r,I}$ and
their global sections of slopes $\le \beta$, for $\beta<\alpha-1$ are equal.

Finally we show that there is $b\ge r$ such that for every $w\in {\rm H}^0(\cX_{r,I}, \bW^{\rm an}_k)^{U=0}$ and  for every weight $\gamma\in W_I$ satisfying the
conditions of Assumption (\ref{ass10}), there is a  section $\nabla_\alpha^\gamma(w)\in {\rm H}^0(\cX_{b,I}, \bW^{\rm an}_{\alpha+2\gamma})$ whose $q$-expansion is
$d^\gamma(w(q))$.
\end{theorem}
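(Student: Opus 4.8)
\medskip
\noindent\textbf{Proof plan for the last assertion.} The plan is to realize $\nabla_\alpha^\gamma$ as a $p$-adic limit of integral iterates of a weight-raising operator of weight $2$, the essential point being that on sections killed by $U$ this limit survives in a single strict neighbourhood $\cX_{b,I}$. First I would set up the weight-$2$ operator: composing the connection $\nabla_\alpha\colon \bW_\alpha^{\rm an}\lra \bW_\alpha^{\rm an}\otimes_{\cO_{\cX_{r,I}}}\Omega^1_{\cX_{r,I}/W_I}\bigl({\rm log}({\rm cusps})\bigr)$ with the Kodaira--Spencer isomorphism $\Omega^1_{\cX_{r,I}/W_I}\bigl({\rm log}({\rm cusps})\bigr)\simeq\fw^2$ and with the natural multiplication map $\bW_\alpha^{\rm an}\otimes\fw^2\lra\bW_{\alpha+2}^{\rm an}$ (which, for $\alpha\in\Z_{\ge0}$, restricts on ${\rm Sym}^\alpha{\rm H}_E$ to the composite of $\omega_E^{\otimes2}\hookrightarrow{\rm Sym}^2{\rm H}_E$ with the multiplication ${\rm Sym}^\alpha{\rm H}_E\otimes{\rm Sym}^2{\rm H}_E\to{\rm Sym}^{\alpha+2}{\rm H}_E$), one obtains an operator $D_\alpha$ that raises the weight by $2$. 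Using the explicit description of $\bW_\alpha$ and of $\nabla_\alpha$ near the cusps — where ${\rm H}_E$ has its canonical basis attached to the Tate curve and the parameter $q$ — I would check that $D_\alpha$ acts on $q$-expansions as $d=q\,d/dq$. In particular $U\circ D_\alpha=p\,D_\alpha\circ U$, so $D_\alpha$ preserves the $U$-depleted sections, and the iterate $D^n:=D_{\alpha+2(n-1)}\circ\cdots\circ D_{\alpha+2}\circ D_\alpha$ sends $w$ to a section of $\bW_{\alpha+2n}^{\rm an}$ whose $q$-expansion is $d^n\bigl(w(q)\bigr)$.

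Next I would pass from the integer $n$ to the $p$-adic weight $\gamma$. Fix a sequence $(m_N)_N$ of non-negative integers converging to $\gamma$ in weight space. By the construction of the family $\beta\mapsto\bW_\beta^{\rm an}$ recalled in Theorem \ref{thm:descentbWk}, the sheaves $\bW_{\alpha+2m_N}^{\rm an}$ converge to $\bW_{\alpha+2\gamma}^{\rm an}$, and the task is to show that $\bigl(D^{m_N}(w)\bigr)_N$ is a Cauchy sequence with limit in ${\rm H}^0(\cX_{b,I},\bW_{\alpha+2\gamma}^{\rm an})$. At the level of $q$-expansions this is immediate: on a $U$-depleted series $d$ multiplies the coefficient of $q^n$ by the $p$-adic \emph{unit} $n$ (for $(n,p)=1$), so each $d^{m_N}$ is an isometry and $d^{m_N}\bigl(w(q)\bigr)-d^{m_{N'}}\bigl(w(q)\bigr)=\sum_{(n,p)=1}\bigl(n^{m_N}-n^{m_{N'}}\bigr)a_nq^n$ has norm bounded by $\sup_{(n,p)=1}\bigl|n^{m_N}-n^{m_{N'}}\bigr|\cdot\|w(q)\|$, which tends to $0$ since $t\mapsto t^{m_N}$ converges uniformly on $\Z_p^\ast$ to the character $\gamma$. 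The real work is to upgrade this to a statement about the \emph{sections} $D^{m_N}(w)$ themselves: I would choose $b\ge r$ large enough — depending on $r$, $I$ and the analyticity-radius bound imposed by Assumption (\ref{ass10}), hence uniform in $w$ and in $\gamma$ — so that on $\cX_{b,I}$ the $q$-expansion map controls the Banach norm up to a bounded distortion and the sheaves $\bW_{\alpha+2m_N}^{\rm an}$ and $\bW_{\alpha+2\gamma}^{\rm an}$ can be compared; the condition $Uw=0$ is exactly what makes the $q$-expansions of the $D^{m_N}(w)$ Cauchy with no loss, as above, and the remaining conditions in Assumption (\ref{ass10}) ensure that the induced error terms on $\cX_{b,I}$ decay. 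One then sets $\nabla_\alpha^\gamma(w):=\lim_N D^{m_N}(w)$ and checks, by interleaving, that it does not depend on the approximating sequence.

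Granting this, the $q$-expansion of $\nabla_\alpha^\gamma(w)$ is the coefficientwise limit of $d^{m_N}\bigl(w(q)\bigr)=\sum_{(n,p)=1}n^{m_N}a_nq^n$, which equals $\sum_{(n,p)=1}\gamma(n)a_nq^n=d^\gamma\bigl(w(q)\bigr)$ because $n^{m_N}\to\gamma(n)$ for every $n$ prime to $p$; this is the claimed identity.

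The step I expect to be the main obstacle is the passage from $q$-expansions to sections in the second paragraph — that is, producing a single depth $b$ and proving the estimates that turn the (trivial) Cauchy property of the $q$-expansions into Cauchyness of the sections $D^{m_N}(w)$ in the Banach norm of ${\rm H}^0(\cX_{b,I},\bW_{\alpha+2\gamma}^{\rm an})$. Concretely one must quantify how the Banach norm on $\bW_\beta^{\rm an}$ and the $q$-expansion map interact as $\beta$ moves in a $p$-adic family and as one applies $D_\beta$, and then use $Uw=0$ together with Assumption (\ref{ass10}) to keep this interaction uniform along the sequence $m_N$. This is an effective, sheaf-theoretic incarnation of the classical principle that a $p$-depleted overconvergent modular form remains overconvergent, on a definite neighbourhood, after applying $d^\gamma$.
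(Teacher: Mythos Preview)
You have correctly located the crux --- upgrading convergence of $q$-expansions to convergence of sections on a fixed $\cX_{b,I}$ --- but your proposed mechanism (``choose $b$ so that the $q$-expansion map controls the Banach norm up to a bounded distortion'') does not work as stated, and the paper takes a different route. The problem is twofold. First, the connection on $\fX_{r,I}$ has poles along ${\rm Hdg}$ (see \S\ref{sec:GMwk}), so each iterate $D^m(w)$ lives a priori only in ${\rm Hdg}^{-c_n m}\,{\rm H}^0(\fX_{r,I},\bW)$, with denominators growing unboundedly in $m$; no single $b$ absorbs these for free. Second, the $q$-expansion map is injective but very far from a norm comparison: a power series can have small Gauss norm yet extend to no overconvergent section at all, or to one of enormous norm. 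So smallness of $D^{m_N}(w)(q)-D^{m_{N'}}(w)(q)$ gives no direct control over the sections themselves.

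The paper argues instead as follows. After reducing (via the finite-character twists $\theta^\chi$ of Proposition~\ref{prop:thetachi} and shifts by integer weights) to weights satisfying the ``strict'' Assumption~\ref{ass}, one writes $\nabla_k^s(g)=\exp\bigl(\tfrac{u_s}{p-1}\log(\nabla^{p-1})\bigr)(g)$ and proves convergence \emph{integrally} on the formal scheme in three steps: (i) an explicit computation at the Tate curve (Proposition~\ref{prop:iteratenabla}, Corollary~\ref{cor:iteratenabla}) gives $(\nabla^{p-1}-{\rm Id})^{Np}(g)\in p^N\cdot{\rm H}^0(\fX_{r,I}^{\rm ord},\bW)$ on the ordinary locus; (ii) a ``surconvergence'' lemma (Proposition~\ref{prop:surconv}), exploiting the description of $\fX_{r,I}$ as an admissible blow-up along ${\rm Hdg}$, shows that a section in ${\rm Hdg}^{-s}\,{\rm H}^0(\fX_{r,I},\bW)$ which is $p^j$-divisible on the ordinary locus becomes $p^{[j/2]}$-divisible on $\fX_{b,I}$ for some $b$ depending only on $r,n,p$; (iii) these combine (Corollary~\ref{cor:mainiterations}, Proposition~\ref{prop:nablas}, together with the elementary estimate of Lemma~\ref{lemma:estimate}) to give uniform $p$-adic decay of the series defining $\exp\circ\log$ on a single $\fX_{b,I}$. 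The bridge from $q$-expansions to sections is thus not a norm comparison at the cusp but a two-step argument: the $q$-expansion computation controls the \emph{ordinary locus}, and the geometry of the blow-up transfers this to a deeper strict neighbourhood. This is the content you would need to supply to turn your outline into a proof.
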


The Assumption (\ref{ass10}) on $\alpha$ and $\gamma$ for the existence of $\nabla_\alpha^\gamma(w)$ amounts to demand that $\alpha$ and $\gamma$ are $p$-adically
close to classical weights. In view of Remark \ref{rmk:Nablasonqexp} it seems difficult to weaken these assumptions, namely one does not have a formula for
$\nabla_\alpha^\gamma(w)$ valid for $\alpha$ and $\gamma$ varying over the whole weight space. As these are the technical tools needed to construct the $p$-adic
$L$-function in the finite slope case in Definition \ref{def:finiteslopel} we get an interpolation property over this type of regions of weight space. As in
loc.~cit., though, one needs to take overconvergent projections of forms of the type $\nabla_\alpha^\gamma(w)$ times an overconvergent form, it might still be
possible that an interpolation for the triple product $L$-function exists more generally as hinted in \cite{Loeffler} and in \S \ref{sec:finalremark}.

We'd like to point out that Zheng Liu has defined a sheaf similar to the adic analytic sheaf $\bW_\alpha^{\rm an}$ and a connection  $\nabla_\alpha$ on it in
\cite{zheng} but this is not sufficient to define the triple product $p$-adic $L$-functions in the finite slope case.  The $q$-expansions of sections of the sheaves
$\bW_\alpha$ of the Theorem are called {\it nearly overconvergent modular forms}; see Definition \ref{def:overconv} and the Remark \ref{rmk:overconv} for
connections with previous work of Harron-Xiao \cite{HX}, Darmon-Rotger \cite{darmon_rotger} and Urban \cite{UNO}.

\bigskip
\noindent We now describe the structure of the article. In Chapter \S \ref{sec:FVBMS} we introduce one of the main players of this article, the formal vector
bundles with marked sections, and study their main properties. In other words we present a geometric construction associating to every formal scheme $S$ (which has
an invertible ideal of definition $\cI$) and data $(\cE, s_1,\ldots,s_d)$ consisting of a locally free $\cO_S$-module of rank $n\ge 1$ and ``marked global sections"
$s_1,\ldots,s_d$ of $\cE/\cI\cE$ (satisfying certain properties) a formal scheme $\pi\colon\bV_0(\cE, s_1,\ldots,s_d)\lra S$ whose sheaf of functions is
``interpolable".

We show that if $(\cE, s_1,\ldots,s_d)$ has extra structure e.g. a connection, a filtration, a group action then the sheaf $\pi_\ast\bigl(\cO_{\bV_0(\cE,
s_1,\ldots,s_d)} \bigr)$ has an induced extra structure of a similar nature.

In Chapter \S \ref{sec:appmodular} we apply the above construction to modular curves and locally free sheaves which are modifications of $\omega_E$ and respectively
${\rm H}_E$. The marked section will be the image of a generator of the Cartier dual of the canonical subgroup via the map ${\rm dlog}$, and therefore we have to
place ourselves on a partial formal blow-up of a formal modular curve where such a canonical subgroup exists. The sheaves $\omega_E$ and respectively ${\rm H}_E$
have to be modified in order for the section coming from the dual canonical subgroup to satisfy the required property of a ``marked section".  This way we associate
to every weight $\alpha\in W_I$ a sheaf $\fw^\alpha$ and a triple $\Bigl(\bW_\alpha, \nabla_\alpha, {\rm Fil}_\bullet(\bW_\alpha)\Bigr)$, consisting of a sheaf $\bW_\alpha$, a meromorphic connection
$\nabla_\alpha$ on $\bW_\alpha$ and an increasing  filtration of $\bW_\alpha$  such that ${\rm Fil}_0(\bW_\alpha)=\fw^\alpha$. Furthermore we prove in Theorem \ref{thm:descentbWk} that,
forgetting the connection, the sheaves $\Bigl(\bW_\alpha, {\rm Fil}_\bullet(\bW_\alpha)\Bigr)$ can be extended to the whole interval $I=[0,\infty]$ and we provide in Theorem
\ref{thm:bWkinfty} an explicit description of these sheaves at the points at infinity.

On the global sections of $\fw^\alpha$ and of $\bW_\alpha$ as well as on the de Rham cohomology groups with coefficients in $(\bW_\alpha, \nabla_\alpha)$ we have natural, linear
actions of Hecke operators such that $U$ is compact.

In Chapter \S \ref{sec:IterateManin}, which is the main technical chapter of the article, we show that if $\alpha$, $\gamma$ are weights satisfying certain conditions (see
(\ref{ass}) and $w$ is a global section of $\bW^0_\alpha$ such that $U(w)=0$, then there is a canonical section denoted $\nabla_\alpha^\gamma(w)$ of the sheaf $\bW^0_{\alpha+2\gamma}$
over a ``smaller strict neighbourhood of the ordinary locus" whose $q$-expansion is $d^\gamma(w(q))$.

Having thus defined all the technical tools needed, in Chapter \S \ref{sec:triple} we review the construction of the triple product $p$-adic $L$-function in the
ordinary case in all its generality and construct the triple product $p$-adic $L$-functions attached to finite slope $p$-adic families of modular forms.

In Appendix I we show how given a general $p$-divisible group $G$, over a formal scheme, ``which is not too supersingular", one can attach to   its
sheaves $\omega_G$ and ${\rm H}_G$ (in fact to modifications of them) and a basis of the points of the Cartier dual of its canonical subgroup, canonical formal
vector bundles with marked sections. We think that if this construction is applied to certain Shimura varieties of PEL type (for example to Hilbert modular varieties) it would be possible to define triple product
$p$-adic $L$-functions in that setting. It should be clear though that we do not perform that construction here.

Finally Appendix II, written by E. Urban contains a corrigendum to the article \cite{UNO}: the author (of \cite{UNO} and this appendix) explains and fixes the gap
in the cited article using the results of the previous sections of this paper.

\bigskip
{\bf Acknowledgements} We are grateful to H. Darmon for many
stimulating discussions on triple product $p$-adic $L$-functions
without which this article would not have been possible. We thank:
Zheng Liu for suggesting an improvement of a proof in chapter \S
4, Eric Urban for interesting discussions pertaining to twists of
modular forms and for suggesting Definition
\ref{prop:thetachi} and the members of the ``working seminar
on $p$-adic $L$-functions" in Montreal for their interest,
questions and suggestions related to this work. Finally we  thank Yangyu Fan for   the careful reading of a previous version of the paper. We are grateful to the referee whose remarks helped us improve the article.

The beginning  of this research  was done while the second author was a visitor to the Max Planck Institute and it was finialized two years later while he was a Simons Fellow in Mathematics. He thanks the Max Planck Institute for its hospitality and the Simons Foundation for the additional
sabbatical provided which made this research possible.

\section{Formal vector bundles with marked sections.}
\label{sec:FVBMS}

In this chapter we present a general construction which associates
to every formal scheme $S$ with ideal of definition $\cI$ which is
supposed to be invertible and data $(\cE, s_1,s_2,\ldots,s_m)$
consisting of a locally free sheaf $\cE$ of $\cO_S$-modules of
rank $n\geq m$ on $S$ and global sections $s_1,s_2,\ldots, s_m$ of
$\overline{\cE}:=\cE/\cI\cE$ which generate a locally free direct
summand of rank $m$ of $\overline{\cE}$, a formal scheme
$\pi:\bV_0(\cE, s_1,s_2,\ldots,s_m)\lra S$ called {\bf vector
bundle with marked sections}, with the property that ${\rm
H}^0\bigl(\bV_0(\cE,s_1,s_2,\ldots,s_m),
\cO_{\bV_0(\cE,s_1,s_2,\ldots,s_m)}\bigr)$ can be seen as  the
ring of $R:={\rm H}^0(S, \cO_S)$-valued analytic functions on the
set
$$
E_0:=\{v:{\rm H}^0(S, \cE)\lra R \ |\ v\mbox{ is }R\mbox{ linear
and } v(\mbox{mod }\cI)(s_i)=1, i=1,2,\ldots,m\}.
$$
The construction is functorial in $(\cE, s_1,s_2,\ldots,s_m)$ and
if $\cE$ has additional structure compatible with
$(s_1,s_2,\ldots,s_m)$, such as a filtration, a connection, a
group action, then the sheaf
$\pi_\ast\bigl(\cO_{\bV_0(\cE,s_1,s_2,\ldots,s_m)}  \bigr)$ has an
induced extra structure of a similar nature.

\subsection{Formal vector bundles.}

Consider as above a formal scheme $S$ with invertible ideal of definition $\cI\subset \cO_S$. Denote by $\overline{S}$ the scheme with structure sheaf defined by
$\cO_S/\cI$.

In this section all formal schemes considered will be formal
schemes $f\colon T \to S$ over $S$, with ideal of definition
$f^{\ast} (\cI)\subset \cO_T$ which is an invertible ideal, i.e.,
locally on $T$ it is generated by an element that is not a zero
divisor.

\begin{definition}\label{def:fvb}
A formal vector bundle of rank $n$ over $S$ is a formal vector group scheme 
$f\colon X\lra S$ over $S$, locally on $S$ isomorphic to the $n$-fold product of the additive group ${\mathbb G}_{a,S}^n$.
Equivalently it is a formal scheme $f\colon X\lra S$   such that there exist
an affine open covering $\{U_i\}_{i\in I}$ of $S$ and for every
$i\in I$ an isomorphism $\psi_i\colon
X\vert_{U_i}:=f^{-1}(U_i)\cong \bA^n_{U_i}$, where $\bA^n_{U_i}$
is the formal $n$-dimensional affine space over $U_i$, such that
for every $i$, $j\in I$ and every affine open formal subscheme
$U\subset U_i\cap U_j$, the automorphism induced by $\psi_j\circ
\psi_i^{-1}$ on $\bA^n_U$ is a linear automorphism.

If $f\colon X\to S$ and $f'\colon Y\to S$ are two vector bundles
over $S$ of rank $n$ and $n'$ respectively, a morphism
(resp.~isomorphism) $g\colon X\lra Y$ of formal vector bundles
 over $S$ is a morphism (resp.~an isomorphism) as formal vector group schemes.

If we have charts
$\bigl(\{U_i\}_{i\in I}, \{\psi_i\}_{i\in I}\bigr)$ and
$\bigl(\{U'_j\}_{j\in J}, \{\psi'_j\}_{j\in J}\bigr)$ of $X$ and $Y$ respectively, a morphism
(resp.~isomorphism) $g\colon X\lra Y$ of formal vector bundles
 over $S$ is a morphism (resp.~an isomorphism) of formal
schemes over $S$ such that for every $i\in I$, every $j\in J$
and every affine open formal subscheme $U\subset U_i\cap U_j'$ the
induced map
$$\bA^n_{U} \stackrel{\psi_i^{-1}}{\lra} X\vert_{U} \stackrel{g\vert_U}{\lra}  Y \vert_{U} \stackrel{\psi_j'}{\lra}
\bA^{n'}_{U}$$is a linear map.
\end{definition}

\begin{lemma}\label{lem:V(E)}
Let $\cE$ be a locally free $\cO_S$-module of rank $n$ over $S$. Then there exists a unique formal vector bundle $\bV(\cE)$ of  rank $n$ over $S$ representing the
functor that associates to any formal scheme $t\colon T \to S$ the ${\rm H}^0(T,\cO_T)$-module $\mathrm{Hom}_{\cO_T}\bigl(t^\ast(\cE), \cO_T \bigr)$ of homomorphisms
$t^\ast(\cE)\to \cO_T $ as $\cO_T$-modules.

This contravariant functor $\bV$ defines an equivalence of
categories between the category of locally free $\cO_S$-modules of
constant rank and the category of formal vector bundle of finite
rank over $S$ and this equivalence preserves the notion of rank.
\end{lemma}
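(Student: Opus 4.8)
The plan is to take
$$\bV(\cE) \;:=\; \Spf_S\bigl(\widehat{\mathrm{Sym}}_{\cO_S}(\cE)\bigr), \qquad \widehat{\mathrm{Sym}}_{\cO_S}(\cE):=\varprojlim_k\, \mathrm{Sym}_{\cO_S}(\cE)\big/\bigl(\cI^k\cdot\mathrm{Sym}_{\cO_S}(\cE)\bigr),$$
the relative formal spectrum over $S$ of the $\cI$-adically completed symmetric $\cO_S$-algebra on $\cE$, so that $\pi_\ast\cO_{\bV(\cE)}=\widehat{\mathrm{Sym}}_{\cO_S}(\cE)$. Locally on an affine open $U=\Spf(A)$ over which $\cE$ is free with basis $e_1,\dots,e_n$ this is the $\Spf$ of the $\cI$-adic completion of $A[x_1,\dots,x_n]$, which is precisely the formal affine space $\bA^n_U$ of Definition~\ref{def:fvb}; a change of basis of $\cE$, i.e.\ a matrix in $\GL_n$, acts on these polynomial rings by a linear substitution of the variables, so the local charts glue to a formal vector bundle of rank $n$, the vector group structure coming from the diagonal $\cE\to\cE\oplus\cE$. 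The first thing I would check is representability: for any formal $S$-scheme $t\colon T\to S$ with invertible ideal of definition $t^\ast(\cI)$ there are natural bijections
\begin{align*}
\Hom_S(T,\bV(\cE)) &= \Hom^{\cont}_{\cO_S\text{-}\mathrm{alg}}\bigl(\widehat{\mathrm{Sym}}_{\cO_S}(\cE),\,t_\ast\cO_T\bigr) = \Hom_{\cO_S\text{-}\mathrm{alg}}\bigl(\mathrm{Sym}_{\cO_S}(\cE),\,t_\ast\cO_T\bigr)\\
&= \Hom_{\cO_S}(\cE,\,t_\ast\cO_T) = \Hom_{\cO_T}(t^\ast\cE,\,\cO_T),
\end{align*}
where the second identification holds because any $\cO_S$-algebra homomorphism $\mathrm{Sym}_{\cO_S}(\cE)\to t_\ast\cO_T$ sends $\cI^k\cdot\mathrm{Sym}_{\cO_S}(\cE)$ into $t^\ast(\cI)^k\cdot t_\ast\cO_T$, hence is continuous and extends uniquely to the completion, and the last is the adjunction $t^\ast\dashv t_\ast$. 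Uniqueness of $\bV(\cE)$ is then Yoneda.

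Next I would set up the equivalence of categories. A homomorphism $\phi\colon\cE'\to\cE$ induces $\mathrm{Sym}(\phi)$ and, after completion, a linear morphism $\bV(\phi)\colon\bV(\cE)\to\bV(\cE')$, making $\bV$ a contravariant functor that obviously takes rank $n$ to rank $n$. For full faithfulness, recall that by Definition~\ref{def:fvb} a morphism of formal vector bundles $\bV(\cE)\to\bV(\cE')$ is given in charts by linear maps $\bA^n_U\to\bA^m_U$, i.e.\ by matrices over $\cO_S(U)$, i.e.\ by elements of $\Hom_{\cO_S}(\cE'|_U,\cE|_U)$; since both $\Hom_{\cO_S}(\cE',\cE)$ and the set of morphisms of formal vector bundles $\bV(\cE)\to\bV(\cE')$ are sheaves on $S$ and $\phi\mapsto\bV(\phi)$ is compatible with restriction and realizes this local bijection, it is a global bijection. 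For essential surjectivity, given a formal vector bundle $f\colon X\to S$ of rank $n$ with charts $(\{U_i\},\{\psi_i\})$, I would let $\cE$ be the $\cO_S$-module whose sections over an open $V$ are the morphisms of formal vector bundles $f^{-1}(V)\to\bA^1_V$ (the ``linear functions on $X$''); via the $\psi_i$ the coordinate functions give local bases, so $\cE$ is locally free of rank $n$, with transition maps linear precisely because those of $X$ are. The tautological pairing $X\times_S\cE\to\bA^1_S$, $(p,\ell)\mapsto\ell(p)$, induces a morphism of formal vector bundles $X\to\bV(\cE)$ which in each chart $U_i$ is the identity of $\bA^n_{U_i}$, hence an isomorphism; this also shows $\bV$ preserves rank.

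The only points that need genuine care are formal-geometric rather than conceptual: the automatic-continuity step that lets an $\cO_S$-algebra homomorphism into the complete ring $t_\ast\cO_T$ extend over the $\cI$-adic completion (which uses that $\cI$ is an ideal of definition, so its powers form a basis of neighbourhoods of $0$ and behave well under invertibility), and the bookkeeping that gluing the charts $\bA^n_{U_i}$ along the $\GL_n$-valued cocycle of $\cE$ yields a legitimate formal scheme over $S$ whose ideal of definition is $f^\ast(\cI)$. I do not expect either to be a serious obstacle: once $\bV(\cE)=\Spf_S(\widehat{\mathrm{Sym}}_{\cO_S}(\cE))$ is in place the argument is the familiar ``total space via $\Spec(\mathrm{Sym})$'' construction, with Yoneda supplying representability, uniqueness and functoriality.
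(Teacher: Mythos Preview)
Your proposal is correct and follows essentially the same approach as the paper: both take $\bV(\cE)=\Spf_S\bigl(\widehat{\mathrm{Sym}}_{\cO_S}(\cE)\bigr)$, verify the vector-bundle structure via local trivializations, and obtain representability from the universal property of the symmetric algebra together with $t^\ast\dashv t_\ast$. The only cosmetic difference is in the inverse functor: the paper defines $\cE(U)$ as the set of \emph{sections} of $X$ over $U$ and then passes to the dual $\cE^\vee$, whereas you take $\cE(U)$ to be the \emph{linear functions} $f^{-1}(U)\to\bA^1_U$ directly; these are dual descriptions of the same construction.
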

\begin{proof}
Let $\cE$ be a locally free $\cO_S$-module of rank $n$ over $S$.
Define $f\colon \bV(\cE)\to S$ to be the formal scheme over $S$
defined by the $\cI$-adic completion $\widehat{{\rm Sym}}_S(\cE)$
of the $\cO_S$-symmetric algebra  ${\rm Sym}_S(\cE)=\oplus_{i\in
\N}{\rm Sym}^i_{\cO_S}(\cE)$ associated to $\cE$. Consider any
affine covering $\{U_i\}_{i\in I}$ of $S$ such that
$\cE\vert_{U_i}$ is a free $\cO_{U_i}$-module of rank $n$. If
$e_{1,i}, e_{2,i},\ldots,e_{n,i}$ is a basis of $\cE\vert_{U_i}$
as $\cO_{U_i}$-module, then we have natural isomorphisms of
$\cO_{U_i}$-algebras $\psi_i\colon {\rm
Sym}_S(\cE)\vert_{U_i}\cong \cO_{U_i}\langle
X_1,X_2,\ldots,X_n\rangle$ sending $e_{j,i}\mapsto X_j$. One
readily checks that $\bigl(\bV(\cE), f, \{U_i\}_{i\in I}, \{
\psi_i\}_{i\in I}  \bigr)$  is  a vector bundle of rank $n$ over
$S$. For any formal scheme $t\colon T \to S$ the $T$-valued points
of $\bV(\cE)$ (over $t$), correspond bijectively and functorially
in $T$ and in $t$ with the $\cO_T$-linear homomorphisms
$t^\ast\bigl({\rm Sym}^1_{\cO_S}(\cE)\bigr)=t^\ast(\cE)\to \cO_T$.
This provides the claimed representability.

We exhibit an inverse to this functor. Let $f\colon X\to S$ be a
formal vector bundle of rank $n$ over $S$. Define $\cE$ to be the
presheaf of sets that associates to any open formal subscheme
$U\subset S$ the set of sections of $X$ over $U$. We leave it to the
reader to show that $\cE(U)$ has a natural structure of
${\rm H}^0(U,\cO_U)$-module that makes $\cE$ a locally free
$\cO_S$-module of rank $n$. We then associate to $X$ the
$\cO_S$-module $\cE^\vee$. This is the sought for inverse.

\end{proof}

\subsection{Formal vector bundles with marked sections.}\label{sec:vbfs}

For  a locally free $\cO_S$-module  $\cE$ of rank $n$ we denote by
$\overline{\cE}$ the associated $\cO_{\overline{S}}$-module. Let
$s_1,\ldots, s_m$, with $m\leq n$, be  sections in
${\rm H}^0(\overline{S},\overline{\cE})$ such that the induced map
$\oplus_{i=1}^m \cO_{\overline{S}} \mapsto \overline{\cE}$,
sending $\sum_i a_i\to \sum_i a_i s_i$, identifies
$\cO_{\overline{S}}^m$ with a locally direct summand of
$\overline{\cE}$.

\begin{definition}\label{def:V0}
Define $\bV_0(\cE, s_1,s_2,\ldots,s_m)$ as the sub-functor of
$\bV(\cE)$ that associates to any formal scheme $t\colon T \to S$
the subset of sections $\rho\in \bV(\cE)(T)={\rm
H}^0\bigl(T,t^\ast(\cE)^\vee\bigr)$ whose reduction
$\overline{\rho}:=\rho$ modulo $\cI$ satisfies
$\overline{\rho}\bigl(t^\ast(s_i)\bigr))= 1$ for every
$i=1,\ldots,m$.

\medskip

Notice that this construction is functorial with respect to the tuples $(\cE,s_1,\ldots,s_m)$. Namely given a homomorphism $g\colon \cE' \to \cE$ of locally free $\cO_S$-modules
of finite rank and sections $s'_1,\ldots,s_m'\in \overline{\cE}'$ and $s_1,\ldots,s_m\in \overline{\cE}$, satisfying the requirements above and such that
$\overline{g}(s'_i)=s_i$ for every $i=1,\ldots,m$, we obtain a commutative diagram

$$\begin{matrix}\bV_0(\cE, s_1,\ldots,s_m) & \lra & \bV(\cE) \cr
\downarrow & & \downarrow \cr \bV_0(\cE', s'_1,\ldots,s'_m) & \lra
& \bV(\cE').\cr \end{matrix}$$

\end{definition}

\begin{lemma}\label{lem:V0}
The morphism $\bV_0(\cE, s_1,\ldots,s_m) \to \bV(\cE)$ is
represented by an open formal subscheme of a formal
$\cI$-admissible blow up of $\bV(\cE)$.
\end{lemma}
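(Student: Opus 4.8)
The plan is to work Zariski-locally on $S$ and reduce to the affine situation where $\cE$ is free. Choose an affine open covering $\{U_i\}_{i\in I}$ of $S$ such that $\cE\vert_{U_i}$ is free of rank $n$ with a basis $e_{1,i},\ldots,e_{n,i}$ chosen so that $e_{1,i},\ldots,e_{m,i}$ reduce modulo $\cI$ to a local basis of the direct summand $\cO_{\overline S}^m\hookrightarrow\overline\cE$ and, moreover, the images of $s_1,\ldots,s_m$ in that summand are precisely $e_{1,i},\ldots,e_{m,i}$ modulo $\cI$; this is possible after refining the covering since $s_1,\ldots,s_m$ generate a locally direct summand. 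Writing $s_j = \sum_{k} c_{jk,i}\, \overline{e_{k,i}}$ with $c_{jk,i}\in \cO_{\overline S}(U_i)$ and $c_{jk,i}=\delta_{jk}$ for $k\le m$, lift the $c_{jk,i}$ arbitrarily to $\cO_S(U_i)$. With the identification $\psi_i\colon \bV(\cE)\vert_{U_i}\cong \bA^n_{U_i}=\Spf\,\cO_{U_i}\langle X_1,\ldots,X_n\rangle$ from Lemma~\ref{lem:V(E)} (dual basis), a $T$-point $\rho$ lands in $\bV_0(\cE,s_1,\ldots,s_m)$ precisely when $\rho(t^\ast s_j)\equiv 1\ (\mathrm{mod}\ \cI)$ for all $j$, i.e. when the coordinates $x_k=\rho(t^\ast e_{k,i}^\vee)$ satisfy $\sum_k c_{jk,i}x_k \equiv 1\ (\mathrm{mod}\ f^\ast\cI)$ for $j=1,\ldots,m$.

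Next I would exhibit, over each $U_i$, the locus cut out by these congruences as an open subscheme of an $\cI$-admissible blow up. Fix a local generator $\varpi$ of $\cI$ on $U_i$ (allowed since $\cI$ is invertible). The condition ``$\sum_k c_{jk,i}X_k - 1 \in \varpi\cdot\cO_T$'' is exactly the condition that the element $\sum_k c_{jk,i}X_k-1$ of $\cO_{\bA^n_{U_i}}$ lie in the ideal generated by $\varpi$ after pullback; equivalently, on the blow up of $\bA^n_{U_i}$ along the ideal $\bigl(\varpi,\ \sum_k c_{1k,i}X_k-1,\ \ldots,\ \sum_k c_{mk,i}X_k-1\bigr)$, the open chart where $\varpi$ generates the exceptional ideal is exactly the functor of $T$-points $\rho$ with $\sum_k c_{jk,i}X_k-1\in\varpi\cO_T$ for all $j$. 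This is the standard description of blow ups via their functor of points together with the chart where one chosen generator of the blown-up ideal divides the others; the ideal contains $\varpi$, so this blow up is $\cI$-admissible. Thus over each $U_i$ the morphism $\bV_0(\cE,s_1,\ldots,s_m)\to\bV(\cE)$ is represented by an open subscheme of an $\cI$-admissible blow up of $\bA^n_{U_i}$.

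Finally I would globalize. The ideals $\bigl(\varpi,\ \{\sum_k c_{jk,i}X_k-1\}_j\bigr)$ on $\bA^n_{U_i}$ a priori depend on the chosen basis and the chosen lifts of the $c_{jk,i}$; but two such ideals have the same integral closure up to $\cI$-power, or more simply, the associated blow ups restricted to the open charts agree because the open chart is characterized intrinsically by the congruence conditions on $\rho$, which are coordinate-free (they only refer to $t^\ast(s_j)$ and the reduction mod $\cI$). Hence the local open-subscheme-of-a-blow-up descriptions glue over $U_i\cap U_j$ to a global $\cI$-admissible blow up $\widetilde{\bV(\cE)}\to \bV(\cE)$ together with an open formal subscheme of it representing $\bV_0(\cE,s_1,\ldots,s_m)$; standard fpqc/Zariski descent for blow ups along finitely presented ideals (or rather, gluing of formal schemes along an open cover) finishes the argument.

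The main obstacle I anticipate is the gluing step: one must check that the local blow-up-ideals are compatible on overlaps \emph{as ideals up to $\cI$-admissible equivalence} (admissible blow ups only depend on the ideal up to such equivalence), not merely that the open charts coincide set-theoretically. The cleanest way around this is probably to define the blow up globally and in a coordinate-free way: blow up $\bV(\cE)$ along the ideal sheaf generated by $\cI$ together with the global functions ``$\langle-,s_j\rangle - 1$'' (which make sense after choosing, locally, splittings, but whose $\cI$-admissible-equivalence class is canonical because modifying the lift of $s_j$ changes the function by an element of $\cI\cdot\cO_{\bV(\cE)}$), then take the open chart where $\cI$ divides each such function. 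One should also double-check, as a minor point, that the $\cI$-adic completion is harmless, i.e. that the formal blow up is the completion of the algebraic blow up along the special fiber, so that ``open formal subscheme of a formal $\cI$-admissible blow up'' is literally what one gets.
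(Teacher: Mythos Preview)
Your proposal is correct and reaches the same local description as the paper, but the paper's route is cleaner in exactly the place you flag as the main obstacle. Rather than building the blow up locally and then gluing, the paper defines the blow-up ideal globally from the outset: since $s_1,\ldots,s_m$ are global sections of $\overline{\cE}$, they are already global degree-$1$ elements of ${\rm Sym}^\bullet(\overline{\cE})=\cO_{\overline{\bV}(\cE)}$, so $\overline{\cJ}:=(s_1-1,\ldots,s_m-1)$ is a globally defined ideal sheaf on $\overline{\bV}(\cE)$; the paper then takes $\cJ\subset\widehat{{\rm Sym}}_S(\cE)$ to be its inverse image (which automatically contains $\cI$), blows up $\bV(\cE)$ along $\cJ$, and passes to the open where the pullback of $\cJ$ equals the pullback of $\cI$. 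This makes the gluing step disappear entirely. Your final paragraph essentially rediscovers this, but your parenthetical remark that the functions ``$\langle-,s_j\rangle-1$'' require local splittings is slightly off: no splitting is needed, because $s_j-1$ is already a global function on $\overline{\bV}(\cE)$; only a \emph{lift} of $s_j$ to $\cE$ involves a choice, and as you observe any two lifts differ by an element of $\cI\cdot{\rm Sym}(\cE)$, so the resulting ideal is canonical --- indeed it is precisely the inverse image $\cJ$ the paper uses. So your argument is salvageable as written, but the paper's formulation buys you a coordinate-free global ideal and avoids the appeal to ``$\cI$-admissible equivalence'' of local ideals.
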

\begin{proof} The sections $s_1,\ldots,s_m$ define a subsheaf of $\cO_{\overline{S}}$-modules
$\cO_{\overline{S}}^m \subset \overline{\cE}$ with quotient $Q$ which is a
locally free sheaf of $\cO_{\overline{S}}$-modules of rank $n-m$. There is a
a quotient map ${\rm Sym}^\bullet(\overline{\cE}):=\oplus_{i\in \N}{\rm
Sym}^i_{\cO_{\overline{S}}}(\overline{\cE}) \to \oplus_{i\in
\N}{\rm Sym}^i_{\cO_{\overline{S}}}(Q)$ whose kernel is the ideal
$(s_1-1,\ldots,s_m-1)$. Taking the induced map of spectra, relative to $\overline{S}$, such quotient map defines a closed subscheme $C$ in
$\overline{\bV}(\cE):={\rm Spec}\bigl({\rm Sym}^\bullet(\overline{\cE})  \bigr)$. Let $\overline{\cJ}:=(s_1-1,\ldots,s_m-1)$ be the corresponding ideal sheaf
and let $\cJ\subset \widehat{{\rm Sym}}_S(\cE)$ be its inverse image.  Consider the $\cI$-adic completion $\bB$ of the open formal
subscheme of the blow up of $\bV(\cE)$ with respect to the ideal
$\cJ$, open defined by the requirement that the ideal generated by
the inverse image of $\cJ$ coincides with the ideal generated by
the inverse image of $\cI$.

In local coordinates if $U=\Spf(R)\subset S$ is an open formal
subscheme such that $\cI$ is generated by $\alpha\in R$,
$\cE\vert_{U_i}$ is free of rank $m$ with basis $e_1,\ldots,e_n$
such that $e_i\equiv s_i$ modulo $\alpha$ for $i=1,\ldots,m$ and
$e_{m+1},\ldots,e_n$ modulo $\alpha$ define a basis of $Q$, then
$\bV(\cE)\vert_{U}$ is the formal scheme associated to $R\langle
X_1,\ldots,X_n\rangle$ and $\cJ\vert_{U}$ is the ideal
$\bigl(\alpha, X_1-1,\ldots,X_m-1\bigr)$. In particular
$\bB\vert_{U}=R\langle Z_1,\ldots,Z_m,X_{m+1},\ldots,X_n\rangle$
with morphism $\bB\vert_{U}\to \bV(\cE)\vert_{U}$ defined by
sending $X_i\to X_i$ for $i=m+1,\ldots,n$ and $X_i\to 1+ \alpha
Z_i$ for $i=1,\ldots,m$.

For every formal scheme $T$ over $U$ a section $\rho\in
\bV(\cE)(T)$ is defined by the images $a_1,\ldots,a_n$ of
$X_1,\ldots,X_n$ that we can identify via the identification
$\bV(\cE)(T)=\mathrm{Hom}_{\cO_T}\bigl(t^\ast(\cE), \cO_T \bigr)$
with the images of $t^\ast(e_1),\ldots,t^\ast(e_n)$ via $\rho$.
Then $\rho$ lies in $\bV_0(\cE, s_1,\ldots,s_m)(T)$ if and only if
$\rho\bigl(t^\ast(e_i)\bigr)=a_i\equiv 1$ modulo $\alpha$ for
$i=1,\ldots,m$. Hence $\rho$ uniquely lifts to a $T$-valued point
of $\bB\vert_{U}$ given by sending $X_i\to a_i$ for
$i=m+1,\ldots,n$ and $Z_i\mapsto \frac{a_i-1}{\alpha}$ for
$i=1,\ldots,m$ (which is well defined as $\alpha$ is not a zero
divisor in $\cO_T$). Viceversa any $T$-valued point of
$\bB\vert_{U}$ defines a  section $\rho\in \bV(\cE)(T)$, by the
formula above, that in fact lies in $\bV_0(\cE,
s_1,\ldots,s_m)(T)$ by construction.

One verifies that the isomorphisms $\bB\vert_{U_i}\cong \bV_0(\cE,
s_1,\ldots,s_m)\vert_{U_i}$ one obtains in this way varying $U_i$
glue and provide the sought for isomorphism $\bB\cong  \bV_0(\cE,
s_1,\ldots,s_m)$ as formal schemes over $ \bV(\cE)$.

The functoriality is immediately checked.

\end{proof}

\subsection{Filtrations on the sheaf of functions of a formal vector bundle with marked sections}\label{sec:vbfil}

Let $\cE$ be a locally free $\cO_S$-module  of rank $n$ and assume
that there exists an $\cO_S$-submodule $\cF\subset \cE$, locally
free as $\cO_S$-module of rank $m$, which is a locally direct
summand in $ \cE$. Equivalently $\cE/\cF$ is also locally free as
$\cO_S$-module of rank $n-m$. Assume also that the global sections
$s_1,\ldots,s_m$ of $\overline{\cE}$ as in \S\ref{sec:vbfs} define
an $\cO_{\overline{S}}$-basis of $\overline{\cF}$. By
the functoriality property in Definition \ref{def:V0} we obtain a commutative
diagram

$$\begin{matrix} \bV_0(\cE, s_1,\ldots,s_m) & \lra & \bV(\cE) \cr
\downarrow & & \downarrow \cr \bV_0(\cF, s_1,\ldots,s_m) & \lra &
\bV(\cF).
\end{matrix}$$

Denote by $f\colon \bV(\cE) \to S$  and $f_0\colon \bV_0(\cE,
s_1,\ldots,s_m) \to S$ the structural morphisms. Notice that the morphism $\bV(\cE)\to \bV(\cF)$ is a principal homogeneous space under
the action of the formal vector group scheme $\bV(\cE/\cF)$ (the action is provided by the inclusion of formal vector group schemes $\bV(\cE/\cF)\subset \bV(\cE)$ and the group scheme structure on $\bV(\cE)$; the fact that it is a principal homogeneous space follows as locally on $S$ one can choose a splitting of the projection $\cE\to \cE/\cF$ which identifies $\bV(\cE)$ with the product  $\bV(\cF)\times_S \bV(\cE/\cF)$). 

\begin{lemma} The diagram above is cartesian. In particular, the vertical morphisms are principal homogenous spaces under the formal vector group scheme
$\bV(\cE/\cF)$.

\end{lemma}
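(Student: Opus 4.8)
The plan is to verify the cartesian property functorially on $T$-valued points and then read off the statement about principal homogeneous spaces by base change. Recall from Lemma \ref{lem:V(E)} that for a formal scheme $t\colon T\to S$ one has $\bV(\cE)(T)=\Hom_{\cO_T}(t^\ast\cE,\cO_T)$, and similarly for $\cF$; since $\cF$ is a locally direct summand of $\cE$, the pullback $t^\ast\cF\hookrightarrow t^\ast\cE$ is a split injection and the morphism $\bV(\cE)\to\bV(\cF)$ is, on $T$-points, the restriction map $\rho\mapsto\rho|_{t^\ast\cF}$.

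First I would unwind the two subfunctors using Definition \ref{def:V0}: $\bV_0(\cE,s_1,\ldots,s_m)(T)$ consists of the $\rho\in\bV(\cE)(T)$ with $\overline\rho(t^\ast s_i)=1$ for all $i$, where $\overline\rho$ denotes reduction modulo $t^\ast\cI$, and $\bV_0(\cF,s_1,\ldots,s_m)(T)$ consists of the $\sigma\in\bV(\cF)(T)$ with $\overline\sigma(t^\ast s_i)=1$ for all $i$. The one point to observe is that each $s_i$ lies in ${\rm H}^0(\overline S,\overline\cF)\subset {\rm H}^0(\overline S,\overline\cE)$, so that for any $\rho\in\bV(\cE)(T)$ the value $\overline\rho(t^\ast s_i)$ equals $\overline{(\rho|_{t^\ast\cF})}(t^\ast s_i)$, i.e. it depends only on $\rho|_{t^\ast\cF}$. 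Consequently $\rho\in\bV_0(\cE,s_1,\ldots,s_m)(T)$ if and only if $\rho|_{t^\ast\cF}\in\bV_0(\cF,s_1,\ldots,s_m)(T)$, which is exactly the assertion that
$$\bV_0(\cE,s_1,\ldots,s_m)(T)=\bigl(\bV(\cE)\times_{\bV(\cF)}\bV_0(\cF,s_1,\ldots,s_m)\bigr)(T)$$
functorially in $T$. Since all four functors are representable by Lemmas \ref{lem:V(E)} and \ref{lem:V0}, the square is cartesian. As a check, in the local charts of Lemma \ref{lem:V0} with $\cI=(\alpha)$, $e_i\equiv s_i$ modulo $\alpha$ for $i\le m$ and $\cF|_U$ spanned by $e_1,\ldots,e_m$, the left vertical map is $R\langle Z_1,\ldots,Z_m\rangle\hookrightarrow R\langle Z_1,\ldots,Z_m,X_{m+1},\ldots,X_n\rangle$, manifestly the base change of $R\langle X_1,\ldots,X_m\rangle\hookrightarrow R\langle X_1,\ldots,X_n\rangle$ along $X_i\mapsto 1+\alpha Z_i$.

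For the final sentence I would invoke the remark made just before the lemma: the right-hand column $\bV(\cE)\to\bV(\cF)$ is a principal homogeneous space under the formal vector group scheme $\bV(\cE/\cF)$. Being a principal homogeneous space under a group scheme is stable under base change, so pulling this torsor back along $\bV_0(\cF,s_1,\ldots,s_m)\to\bV(\cF)$ and using the cartesian square just established to identify the total space with $\bV_0(\cE,s_1,\ldots,s_m)$ shows that $\bV_0(\cE,s_1,\ldots,s_m)\to\bV_0(\cF,s_1,\ldots,s_m)$ is a principal homogeneous space under $\bV(\cE/\cF)$ as well.

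There is no real obstacle here. The only step needing a moment's care is the compatibility of reductions modulo $\cI$ — that $\overline\rho(t^\ast s_i)$ factors through $\rho|_{t^\ast\cF}$ — which is immediate from $s_i$ being a section of $\overline\cF$ and from $\cF\hookrightarrow\cE$ being compatible with pullback and with reduction modulo $\cI$. The rest is formal: manipulation of representable functors and the standard stability of torsors under base change.
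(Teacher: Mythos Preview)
Your proof is correct. The paper argues purely via the local coordinate computation you include as a check (writing out the four coordinate rings and observing that the left column is obtained from the right by the substitution $X_i\mapsto 1+\alpha Z_i$), whereas you precede this with the equivalent functorial observation that the condition $\overline\rho(t^\ast s_i)=1$ depends only on $\rho|_{t^\ast\cF}$; both routes are the same idea, and your deduction of the torsor statement by base change is exactly what the paper intends.
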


\begin{proof}
Let $U=\Spf(R)$ be an  affine formal subscheme of $S$ such that
$\cI\vert_U$ is generated by $\alpha\in R$ and $\cF$, $\cE$ over
$U$ are free with basis $e_1,\ldots,e_m$, resp.~$e_1,\ldots,e_m,
f_1,\ldots,f_{n-m}$ and $e_i\equiv s_i$ modulo $\alpha$ for
$i=1,\ldots,m$ and $f_1,\ldots,f_{n-m}$ define  the complementary
direct summand of $\overline{\cF}$ in $\overline{E}$. Then
$\bV(\cF)\vert_U=\Spf\bigl(R\langle X_1,\ldots,X_m\rangle\bigr)$, $\bV(\cE)\vert_U=\Spf\bigl(R\langle X_1,\ldots,X_m,Y_1,\ldots,
Y_{n-m}\rangle\bigr)$, $\bV_0(\cF,
s_1,\ldots,s_m)\vert_U=\Spf\bigl(R\langle
Z_1,\ldots,Z_m\rangle\bigr)$, $\bV_0(\cE,
s_1,\ldots,s_m)\vert_U=\Spf\bigl(R\langle
Z_1,\ldots,Z_m,Y_1,\ldots, Y_{n-m}\rangle\bigr)$ where
$X_i=1+\alpha Z_i$ for $i=1,\ldots,m$. The statement
follows.

\end{proof}

\begin{corollary}\label{cor:fil}
With the notations above,  $f_{0,\ast}  \cO_{\bV_0(\cE,
s_1,\ldots,s_m)}$ is endowed with an increasing filtration
$\Fil_\bullet f_{0,\ast} \cO_{\bV_0(\cE, s_1,\ldots,s_m)}$ with
graded pieces

$$\mathrm{Gr}_h f_{0,\ast} \cO_{\bV_0(\cE, s_1,\ldots,s_m)}\cong f_{0,\ast}  \cO_{\bV_0(\cF, s_1,\ldots,s_m)}\otimes_{\cO_S} \mathrm{Sym}^h(\cE/\cF).$$

The filtration is characterized by the following local
description. If $U=\Spf(R)\subset S$ is an open formal affine
subscheme such that $\cF$, $\cE$ over $U$ are free with basis
$e_1,\ldots,e_m$, respectively $e_1,\ldots,e_m, f_1,\ldots,f_{n-m}$ so
that
$$\bV_0(\cF, s_1,\ldots,s_m)\vert_U=\Spf\bigl(R\langle
Z_1,\ldots,Z_m\rangle\bigr), \bV_0(\cE,
s_1,\ldots,s_m)\vert_U=\Spf\bigl(R\langle
Z_1,\ldots,Z_m,Y_1,\ldots, Y_{n-m}\rangle\bigr),
$$
then  $\Fil_h
f_{0,\ast} \cO_{\bV_0(\cE, s_1,\ldots,s_m)}(U)$ consists of the
polynomials of degree $\leq h$ in the variables $Y_1,\ldots,
Y_{n-m}$ with coefficients in $R\langle Z_1,\ldots,Z_m\rangle$.

\end{corollary}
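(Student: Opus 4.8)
The plan is to reduce everything to the torsor structure established in the Lemma immediately preceding this corollary. I would factor the structural morphism $f_0$ through $g\colon\bV_0(\cE,s_1,\ldots,s_m)\to\bV_0(\cF,s_1,\ldots,s_m)$, the left vertical arrow of the cartesian square, followed by the structural morphism $\bV_0(\cF,s_1,\ldots,s_m)\to S$; by that Lemma $g$ is a principal homogeneous space under $\bV(\cE/\cF)$, hence affine (since $\bV(\cE/\cF)\to S$ is affine) and Zariski-locally on $S$ a trivial torsor (since $\cE/\cF$ is locally free). I would then fix a small affine $U=\Spf(R)\subset S$ exactly as in the proof of Lemma~\ref{lem:V0}: $\cI\vert_U=(\alpha)$ with $\alpha$ not a zero divisor, $\cF$ and $\cE$ free on bases $e_1,\ldots,e_m$ and $e_1,\ldots,e_m,f_1,\ldots,f_{n-m}$ with $e_i\equiv s_i$ modulo $\alpha$, and a splitting of $\cE\to\cE/\cF$ carrying the $f_j$ to a basis of $(\cE/\cF)\vert_U$. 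This yields $\bV_0(\cF,s_1,\ldots,s_m)\vert_U=\Spf R\langle Z_1,\ldots,Z_m\rangle$ and $\bV_0(\cE,s_1,\ldots,s_m)\vert_U=\Spf R\langle Z_1,\ldots,Z_m,Y_1,\ldots,Y_{n-m}\rangle$ with $g\vert_U$ the evident projection, the $Y_j$ being the coordinates attached to the $f_j$; by Lemma~\ref{lem:V(E)}, $\oplus_j RY_j$ is canonically $(\cE/\cF)\vert_U$, and $R\langle Z_1,\ldots,Z_m\rangle$ is $f_{0,\ast}\cO_{\bV_0(\cF,s_1,\ldots,s_m)}$ over $U$.

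Over such a $U$ I would declare $\Fil_h f_{0,\ast}\cO_{\bV_0(\cE,s_1,\ldots,s_m)}$ to be the $R\langle Z_1,\ldots,Z_m\rangle$-submodule of polynomials of degree $\leq h$ in the $Y_j$, as in the statement, and the real content is to check that this is independent of the auxiliary choices, so that the local descriptions glue. Two presentations over $U$ differ by an automorphism of $\bV_0(\cE,s_1,\ldots,s_m)\vert_U$ over $\bV_0(\cF,s_1,\ldots,s_m)\vert_U$, and I would observe: changing the basis of $\cE/\cF$ replaces the $Y_j$ by $R$-linear combinations of the $Y_j$; changing the basis of $\cF$ (which stays inside $\cF$, hence cannot involve the $f_j$) touches only the $Z_i$; and changing the lifts $f_j$ (equivalently the torsor trivialisation) replaces $Y_j$ by $Y_j+g_j$ with $g_j\in R\langle Z_1,\ldots,Z_m\rangle$. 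The last point is where the hypotheses really enter: on $\bV_0(\cE,s_1,\ldots,s_m)$ the ``unit-reduction'' conditions force $X_i=1+\alpha Z_i$ for $i\leq m$, so any change of lift contributes only a $Z$-dependent, hence $Y$-degree-zero, translation. Each such substitution is invertible and does not raise the degree in the $Y_j$, hence preserves the filtration; so $\Fil_\bullet$ is well defined globally, increasing, with $\Fil_0=f_{0,\ast}\cO_{\bV_0(\cF,s_1,\ldots,s_m)}$. I expect this well-definedness step to be the only genuine obstacle; the rest is bookkeeping.

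For the graded pieces I would, over a trivialising $U$, divide polynomials of $Y$-degree $\leq h$ by those of degree $\leq h-1$ to obtain $R\langle Z_1,\ldots,Z_m\rangle\otimes_R\mathrm{Sym}^h_R(\oplus_j RY_j)$, which by the identifications above is $f_{0,\ast}\cO_{\bV_0(\cF,s_1,\ldots,s_m)}\otimes_{\cO_S}\mathrm{Sym}^h(\cE/\cF)$ over $U$. To conclude I would check that the induced transition maps on $\mathrm{Gr}_h$ agree with those of the right-hand side: the translation $Y_j\mapsto Y_j+g_j$ lands in $\Fil_{h-1}$, so acts trivially on $\mathrm{Gr}_h$, leaving only the $h$-th symmetric power of the linear change on $\cE/\cF$ tensored with the base-change automorphism of $f_{0,\ast}\cO_{\bV_0(\cF,s_1,\ldots,s_m)}$ --- precisely the transition maps of $f_{0,\ast}\cO_{\bV_0(\cF,s_1,\ldots,s_m)}\otimes_{\cO_S}\mathrm{Sym}^h(\cE/\cF)$. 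Hence the local isomorphisms glue to the asserted global one. As an alternative to the gluing argument one could define $\Fil_\bullet g_\ast\cO_{\bV_0(\cE,s_1,\ldots,s_m)}$ intrinsically, as the pieces on which the coaction of the vector group $\bV(\cE/\cF)=\Spf\widehat{\mathrm{Sym}}_S(\cE/\cF)$ lands in $g_\ast\cO\otimes\bigl(\oplus_{i\leq h}\mathrm{Sym}^i(\cE/\cF)\bigr)$, which makes well-definedness automatic; but the local route is the one that matches the stated characterisation verbatim.
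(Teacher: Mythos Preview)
Your proposal is correct and follows essentially the same approach as the paper: both use the torsor structure of $\bV_0(\cE,s_1,\ldots,s_m)\to\bV_0(\cF,s_1,\ldots,s_m)$ under $\bV(\cE/\cF)$ from the preceding Lemma, define the filtration via the local $Y$-degree, and verify well-definedness by checking that a change of local data induces an affine transformation in the $Y_j$ over $R\langle Z_1,\ldots,Z_m\rangle$. Your version is more explicit---you decompose the coordinate change into its three constituents (linear change on $\cE/\cF$, change of $\cF$-basis, change of lift) and spell out why the transition maps on $\mathrm{Gr}_h$ match those of the tensor product---whereas the paper simply observes that any change of basis of $\cE$ is $R$-linear in the $X_i,Y_j$ and hence affine in the $Y_j$ once one passes to the $Z_i$; your intrinsic coaction alternative is not in the paper but is a clean way to sidestep the gluing check.
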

\begin{proof} We use the fact that $\bV_0(\cE, s_1,\ldots,s_m)\to \bV_0(\cF, s_1,\ldots,s_m)$ is a principal  homogenous spaces under
$\bV(\cE/\cF)$ to prove that the local definition of  $\Fil_h
f_{0,\ast}  \cO_{\bV_0(\cE, s_1,\ldots,s_m)}$ is well defined and
glues for varying $U$'s. 

If $U=\Spf(R)\subset S$ is an open formal affine, any other  choice of bases defines new coordinates
$X_1',\ldots,X_m',Y_1',\ldots, Y_{n-m}'$ that are related to
$X_1,\ldots,X_m,Y_1,\ldots, Y_{n-m}$ by an $R$-linear
transformation. In particular the induced map $R\langle
X_1,\ldots,X_m,Y_1,\ldots, Y_{n-m}\rangle\cong R\langle
X_1',\ldots,X_m',Y_1',\ldots, Y_{n-m}'\rangle$ sends each
$Y_i$ to an $R$-linear combination of the
$X_1',\ldots,X_m',Y_1',\ldots, Y_{n-m}'$ and  is then an affine
transformation relative to $R\langle X_1',\ldots,X_m'\rangle=
R\langle X_1,\ldots,X_m\rangle$. The second claim follows as well.

\end{proof}

The construction of the filtration is clearly functorial. Namely given a homomorphism $g\colon \cE' \to \cE$ of locally free $\cO_S$-modules of finite rank,
$\cF'\subset \cE'$ and $\cF\subset \cE$,  locally free as $\cO_S$-modules of rank $d$ and  locally direct summands such that $g(\cF')\subset \cF$, and sections $s'_1,\ldots,s_m'\in
\overline{\cE}'$ and $s_1,\ldots,s_m\in \overline{\cE}$, satisfying the requirements above and such that $\overline{g}(s'_i)=s_i$ for every $i=1,\ldots,m$, we
obtain

\begin{corollary}\label{cor:functfil} Let $f_0\colon \bV_0(\cE, s_1,\ldots,s_m)\to S$ and $f_0'\colon  \bV_0(\cE', s'_1,\ldots,s'_m)\to S$ be the structural morphism. The morphism $g\colon
f_{0,\ast}'\cO_{\bV_0(\cE', s'_1,\ldots,s'_m)} \to
f_{0,\ast}\cO_{\bV_0(\cE, s_1,\ldots,s_m)}$ defined by $g$ (see
Definition \ref{def:V0}) sends $ \Fil_h f_{0,\ast}'
\cO_{\bV_0(\cE', s'_1,\ldots,s'_m)}$ to $\Fil_h f_{0,\ast}
\cO_{\bV_0(\cE, s_1,\ldots,s_m)}$.
\end{corollary}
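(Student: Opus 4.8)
The plan is to reduce the functoriality statement to the local description of the filtration provided by Corollary \ref{cor:fil}, and then check compatibility on overlaps using the principal homogeneous space structure. First I would work locally: choose an affine open formal subscheme $U=\Spf(R)\subset S$ on which $\cI$ is generated by $\alpha\in R$, and on which $\cF'\subset\cE'$ and $\cF\subset\cE$ are free and admit compatible bases. Concretely, pick a basis $e_1',\ldots,e_m'$ of $\cF'$ with $e_i'\equiv s_i'$ modulo $\alpha$, extend it to a basis $e_1',\ldots,e_m',f_1',\ldots,f_{n'-m}'$ of $\cE'$ whose last vectors reduce to a basis of $\overline{\cE}'/\overline{\cF}'$, and similarly a basis $e_1,\ldots,e_m,f_1,\ldots,f_{n-m}$ of $\cE$ adapted to $\cF$. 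In the corresponding coordinates $\bV_0(\cE',s_1',\ldots,s_m')\vert_U=\Spf\bigl(R\langle Z_1',\ldots,Z_m',Y_1',\ldots,Y_{n'-m}'\rangle\bigr)$ and $\bV_0(\cE,s_1,\ldots,s_m)\vert_U=\Spf\bigl(R\langle Z_1,\ldots,Z_m,Y_1,\ldots,Y_{n-m}\rangle\bigr)$, the map $g$ on functions is the pullback along the morphism of formal schemes induced by the linear map $g^\vee$ dual to $g$.

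The heart of the argument is the following observation about the shape of the dual map. Since $g(\cF')\subset\cF$, the matrix of $g$ in the chosen bases has a block structure: the images $g(e_i')$ lie in $\cF$, hence are $R$-linear combinations of $e_1,\ldots,e_m$ alone, while the images $g(f_j')$ are arbitrary combinations of $e_1,\ldots,e_m,f_1,\ldots,f_{n-m}$. Dualizing, the pullback of the coordinate $Y_k$ (a linear functional vanishing on $\cF$ up to the chosen splitting, i.e.\ a coordinate on $\bV(\cE/\cF)$) is an $R$-linear combination of the $Y_1',\ldots,Y_{n'-m}'$ only, \emph{not} involving the $Z_i'$; whereas the pullback of $Z_i$ (equivalently of $X_i=1+\alpha Z_i$) may involve all variables. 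Consequently the induced ring map sends a polynomial of degree $\leq h$ in $Y_1,\ldots,Y_{n-m}$ with coefficients in $R\langle Z_1,\ldots,Z_m\rangle$ to a polynomial of degree $\leq h$ in $Y_1',\ldots,Y_{n'-m}'$ with coefficients in $R\langle Z_1',\ldots,Z_m'\rangle$. By the local characterization in Corollary \ref{cor:fil}, this is exactly the statement that $g$ carries $\Fil_h f_{0,\ast}'\cO_{\bV_0(\cE',s_1',\ldots,s_m')}$ into $\Fil_h f_{0,\ast}\cO_{\bV_0(\cE,s_1,\ldots,s_m)}$ over $U$.

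Finally I would observe that since the filtration is intrinsically defined (it glues over varying $U$ by Corollary \ref{cor:fil}, being characterized via the principal homogeneous space structure under $\bV(\cE/\cF)$, i.e.\ via the $\bV(\cE/\cF)$-coaction), the inclusion $g\bigl(\Fil_h\bigr)\subseteq\Fil_h$ checked on an affine cover is automatically a statement of sheaves; no separate gluing verification for $g$ is needed beyond remarking that $g$ itself is a morphism of sheaves already constructed in Definition \ref{def:V0}. The only mildly delicate point — the step I expect to require the most care — is bookkeeping the block-triangularity of $g$ with respect to the two filtrations and confirming that it survives dualization and $\cI$-adic completion in the precise form "$Y$-coordinates pull back to $Y'$-coordinates modulo $R\langle Z'\rangle$"; once that is pinned down, the degree bound in the $Y$-variables is preserved essentially by inspection, and the graded-piece description $\mathrm{Gr}_h\cong f_{0,\ast}\cO_{\bV_0(\cF,\ldots)}\otimes\mathrm{Sym}^h(\cE/\cF)$ makes the compatibility transparent, the map on $\mathrm{Gr}_h$ being $g\vert_\cF$ tensored with $\mathrm{Sym}^h$ of the induced map $\cE'/\cF'\to\cE/\cF$.
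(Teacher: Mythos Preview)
Your overall strategy---reduce to the local coordinate description of Corollary~\ref{cor:fil}, exploit the block structure of $g$ coming from $g(\cF')\subset\cF$, and conclude that degree in the $Y$-variables is preserved---is exactly right, and it is what the paper means by ``the construction of the filtration is clearly functorial'' (no separate proof is given there). However, the middle paragraph has the direction of the computation reversed, and the conclusion you draw from it is literally the opposite of what is needed.

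The morphism of formal schemes goes $\bV_0(\cE,\ldots)\to\bV_0(\cE',\ldots)$ (Definition~\ref{def:V0}), so the ring map pulls back the \emph{primed} coordinates to \emph{unprimed} expressions, not the other way. Concretely, the degree-$1$ functions on $\bV(\cE')$ are the elements of $\cE'$ (not of $(\cE')^\vee$): the coordinate $Y_k'$ is $f_k'\in\cE'\subset\mathrm{Sym}(\cE')$, and the ring map is $\mathrm{Sym}(g)$, sending $e_i'\mapsto g(e_i')$ and $f_k'\mapsto g(f_k')$. Thus $X_i'\mapsto\sum_j a_{ij}X_j$ involves only $X$'s (since $g(e_i')\in\cF$), hence $Z_i'$ lands in $R\langle Z_1,\ldots,Z_m\rangle$, while $Y_k'\mapsto\sum_j b_{kj}X_j+\sum_l c_{kl}Y_l$ has degree $\le 1$ in the $Y$'s. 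From this one reads off that polynomials of degree $\le h$ in $Y'$ with coefficients in $R\langle Z'\rangle$ map to polynomials of degree $\le h$ in $Y$ with coefficients in $R\langle Z\rangle$, which is the statement to be proved.

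Your claim that ``the pullback of $Y_k$ is a combination of $Y'$'s only, while $Z_i$ may involve all variables'' is the transposed statement: it describes $g^\vee\colon\cE^\vee\to(\cE')^\vee$, not $\mathrm{Sym}(g)$. The slip originates in the parenthetical ``$Y_k$ (a linear functional vanishing on $\cF$\ldots)'': the coordinate $Y_k$ is the element $f_k\in\cE$, not the dual element $f_k^\vee\in\cE^\vee$. Once the primes are put on the correct side, your argument goes through; your final remark on the graded pieces (the map on $\mathrm{Gr}_h$ being the $\Fil_0$-map tensored with $\mathrm{Sym}^h$ of $\cE'/\cF'\to\cE/\cF$) is correct and matches the description used later in Proposition~\ref{prop:functfilbV0}.
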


\subsection{Connections on the sheaf of functions of a formal vector bundle with marked sections}\label{sec:fvvconenction}

Suppose that  we have fixed a $\Z_p$-algebra $A_0$ and an element
$\tau\in A_0$ such that $A_0$ is $\tau$-adically complete and
separated. Let $S$ be a formal scheme locally of finite
type over $\Spf(A_0)$ such that the topology of $S$ is the
$\tau$-adic topology, i.e. $\cI=\tau\cO_S$. We let $\Omega^1_{S/A_0}$ be the
$\cO_S$-module of continuous Kh\"aler differentials.

Consider a locally free $\cO_S$-module $\cE$ endowed with an
integrable connection $\nabla\colon \cE \to \cE \otimes_{\cO_S}
\Omega^1_{S/A_0}$. Assume that we have fixed sections
$s_1,\ldots,s_m \in \overline{\cE}$ as in \S\ref{sec:vbfs} which
are horizontal for the reduction of $\nabla$ modulo $\cI$. Let
$f_0\colon \bV_0(\cE, s_1,\ldots,s_m) \to S$ be the structural
morphism. We explain how $\nabla$ defines an integrable connection
$$\nabla_0\colon f_{0,\ast} \cO_{\bV_0(\cE, s_1,\ldots,s_m)} \to f_{0,\ast}
\cO_{\bV_0(\cE, s_1,\ldots,s_m)} \widehat{\otimes}
\Omega^1_{S/A_0}.$$

\smallskip

{\it Grothendieck's description of integrable connections:}  First
of all recall Grothendieck's approach to connections (see for
example \cite{berthelot_ogus} section \S 2). Let
$\cP_{S/A_0}:=S\times_{A_0} S$ and let $\Delta\colon S \to \cP_S$
be the diagonal embedding. It is a locally closed immersion and we
let $\cP^{(1)}_{S/A_0}$ be the first infinitesimal neighborhood of
$\Delta$: if locally on $S \times_{A_0} S$ the morphism $\Delta$
is the closed immersion defined by an ideal $\cJ$, then
$\cP^{(1)}_{S/A_0}\subset S\times_{A_0} S$ is defined by $\cJ^2$.
We have the two projections $j_1$, $j_2\colon \cP^{(1)}_{S/A_0}
\to S$. Then, to give an integrable connection $\nabla\colon M
\lra M\otimes_{\cO_S}\Omega^1_{S/A_0}$ on a locally free
$\cO_S$-module $M$ of finite rank,  is equivalent to giving an
isomorphism of $\cO_{\cP^{(1)}_{S/A_0}}$-modules $\epsilon\colon
j_2^\ast(M):=\cO_{\cP^{(1)}_{S/A_0}}\otimes_{\cO_S} M\cong
j_1^\ast(M):=M\otimes_{\cO_S} \cO_{\cP^{(1)}_{S/A_0}}$ such that
$\Delta^\ast(\epsilon)={\rm Id}$ on $M$ and $\epsilon$ satisfies a
suitable cocycle condition with respect to the three possible
pull-backs of $\epsilon$ to $S\times_{A_0} S\times_{A_0} S$. In
fact the relationship between $\epsilon$ and $\nabla$ is given by
the following formula, for every $x\in M$
$$
\epsilon(1\otimes x)=x\otimes 1+\nabla(x), \mbox{ where }
\nabla(x)\in M\otimes_{\cO_S} \bigl(\cJ/\cJ^2\bigr)\cong
M\otimes_{\cO_S}\Omega^1_{S/A_0}.
$$

\
\begin{remark} Let us remark that with notations as above, even
if $M$ is an arbitrary quasi-coherent $\cO_S$-module (i.e. not necessarily locally free
of finite rank) and $\epsilon\colon j_2^\ast(M)\cong j_1^\ast(M)$ is an
$\cO_{\cP^{(1)}_{S/A_0}}$-linear
isomorphism such that
$\Delta^\ast(\epsilon)={\rm Id}_M$, then $\epsilon$ defines a
connection $\nabla\colon M\lra M\hat{\otimes}_{\cO_S}\Omega^1_{S/A_0}$ by
the formula: $\nabla(x)=\epsilon(1\otimes x)-x\otimes 1$.

\end{remark}

\medskip

Consider now the given  locally free $\cO_S$-module $\cE$ with
integrable connection $\nabla\colon \cE \to \cE \otimes_{\cO_S}
\Omega^1_{S/A_0}$ and with sections $s_1,\ldots,s_m\in
\overline{\cE}$ horizontal for the reduction of $\nabla$ modulo
$\cI$.  This means that the associated isomorphism $\epsilon\colon
j_2^\ast(\cE)\lra j_1^\ast(\cE)$ has the property that its
reduction $\overline{\epsilon}$ modulo $\cI$ satisfies
$\overline{\epsilon}\bigl(j_2^\ast(s_i)\bigr)=j_1^\ast(s_i)$ for
every $i=1,\ldots,m$. We deduce from the functoriality of
Definition \ref{def:V0} that $\epsilon$ defines compatible
isomorphisms of formal schemes over $S$:
$$\begin{matrix}
\cP^{(1)}_{S/A_0} \times_S \bV_0(\cE, s_1,\ldots,s_m) &
\stackrel{\epsilon_0}{\lra} &   \bV_0(\cE, s_1,\ldots,s_m)
\times_S \cP^{(1)}_{S/A_0}\cr \downarrow & & \downarrow \cr
\cP^{(1)}_{S/A_0} \times_S \bV(\cE) & \stackrel{\epsilon'}{\lra} &
\bV(\cE) \times_S  \cP^{(1)}_{S/A_0}\cr\end{matrix}$$such that
$\Delta^\ast(\epsilon_0)={\rm Id}$ and $\Delta^\ast(\epsilon)={\rm
Id}$. Passing to functions  we obtain compatible isomorphisms

$$\begin{matrix}
j_2^\ast\bigl(f_\ast \cO_{\bV(\cE)} \bigr) &
\stackrel{\epsilon^{',\ast}}{\lra} & j_1^\ast\bigl(f_\ast
\cO_{\bV(\cE)} \bigr ) \cr \downarrow & & \downarrow \cr
j_2^\ast\bigl(f_{0,\ast} \cO_{\bV_0(\cE, s_1,\ldots,s_m)} \bigr ) &
\stackrel{\epsilon^\ast_0}{\lra} &  j_1^\ast\bigl(f_{0,\ast}
\cO_{\bV_0(\cE, s_1,\ldots,s_m)} \bigr )\cr
\end{matrix}$$
such that $\Delta^\ast(\epsilon^\ast_0)={\rm Id}$ and
$\Delta^\ast(\epsilon^{',\ast})={\rm Id}$. By construction
$\epsilon^{',\ast}$ coincides with the isomorphism $\epsilon$,
once restricted to the $\cO_S$-submodule $\cE\subset f_\ast
\cO_{\bV(\cE)}$, and is uniquely characterized by this property as
$f_\ast \cO_{\bV(\cE)}$ is the $\cI$-adic completion of the
symmetric algebra defined by $\cE$. Since the vertical maps are
obtained via a blowup by Lemma \ref{lem:V0} the commutativity of
the diagram above uniquely characterizes $\epsilon^\ast_0$. In
particular it satisfies the cocyle condition as
$\epsilon^{',\ast}$ does since $\epsilon$ does. Via Grothendieck's
correspondence this defines the sought for, compatible, integrable
connections:

$$\begin{matrix}
\cE & \stackrel{\nabla}{\lra} &
\cE\otimes_{\cO_S}\Omega^1_{S/A_0}\cr \downarrow & & \downarrow
\cr f_\ast \cO_{\bV(\cE)} & \stackrel{\nabla'}{\lra} & f_\ast
\cO_{\bV(\cE)} \widehat{\otimes}_{\cO_S} \Omega^1_{S/A_0}\cr
\downarrow & & \downarrow \cr f_{0,\ast} \cO_{\bV_0(\cE,
s_1,\ldots,s_m)} & \stackrel{\nabla_0}{\lra} & f_{0,\ast}
\cO_{\bV_0(\cE, s_1,\ldots,s_m)} \widehat{\otimes}_{\cO_S}
\Omega^1_{S/A_0}\cr .\end{matrix} $$ As remarked above both
$\nabla'$ and $\nabla_0$ are the unique connections that make the
diagram above commutative, i.e., compatible with $\nabla$.

\

Assume that we are in the hypothesis of \S\ref{sec:vbfil} with
locally free $\cO_S$-module and direct summand $\cF\subset \cE$.
Consider the filtrations $\Fil_\bullet f_{\ast} \cO_{\bV(\cE)}$
and $\Fil_\bullet f_{0,\ast} \cO_{\bV_0(\cE, s_1,\ldots,s_m)}$ of
Corollary \ref{cor:fil}.

\begin{lemma}\label{lem:Griffiths}

The connection $\nabla_0$ satisfies Griffith's transversality
property with respect to the filtration  $\Fil_\bullet f_{0,\ast}
\cO_{\bV_0(\cE, s_1,\ldots,s_m)}$, namely for every integer $h$ we
have
$$ \nabla\bigl( \Fil_h f_{0, \ast} \cO_{\bV_0(\cE, s_1,\ldots,s_m)}\bigr)
\subset \Fil_{h+1} f_{0,\ast} \cO_{\bV_0(\cE, s_1,\ldots,s_m)}
\widehat{\otimes}_{\cO_S} \Omega^1_{S/A_0}.$$Furthermore the
induced map
$$\mathrm{gr}_h(\nabla_0)\colon \mathrm{Gr}_h f_{0, \ast} \cO_{\bV_0(\cE, s_1,\ldots,s_m)} \lra \mathrm{Gr}_{h+1} f_{0,\ast}
\cO_{\bV_0(\cE, s_1,\ldots,s_m)} \widehat{\otimes}_{\cO_S}
\Omega^1_{S/A_0}$$is an $\cO_S$-linear map and, via the
identification $\mathrm{Gr}_\bullet f_{\ast} \cO_{\bV_0(\cE,
s_1,\ldots,s_m)}\cong f_{0,\ast} \cO_{\bV_0(\cF,
s_1,\ldots,s_m)}\otimes_{\cO_S} \mathrm{Sym}^\bullet(\cE/\cF)$  of
Corollary \ref{cor:fil}, the morphism
$\mathrm{gr}_\bullet(\nabla_0)$ is
$\mathrm{Sym}^\bullet(\cE/\cF)$-linear.

\end{lemma}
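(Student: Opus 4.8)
The plan is to work locally on an affine open $U=\Spf(R)\subset S$ where both $\cE$ and $\cF$ are free, with bases $e_1,\ldots,e_m$ of $\cF$ and $e_1,\ldots,e_m,f_1,\ldots,f_{n-m}$ of $\cE$, chosen so that $e_i\equiv s_i$ modulo $\cI$, and then check that the two assertions glue. By Corollary \ref{cor:fil} the filtration on $f_{0,\ast}\cO_{\bV_0(\cE,s_1,\ldots,s_m)}(U)=R\langle Z_1,\ldots,Z_m,Y_1,\ldots,Y_{n-m}\rangle$ is by total degree in the $Y$-variables, with coefficients in $R\langle Z_1,\ldots,Z_m\rangle$. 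Since $\nabla_0$ is the unique connection compatible with $\nabla$ on $\cE\subset f_\ast\cO_{\bV(\cE)}$, it suffices by the Leibniz rule to compute $\nabla_0$ on the generators $Z_i$ and $Y_j$ of the Tate algebra. First I would record that the images of $Y_j$ under $\nabla_0$ are exactly the images of $f_j$ under $\nabla$, hence lie in $\cE\otimes\Omega^1_{S/A_0}$, and that the image of $1+\tau Z_i$ under $\nabla_0$ equals $\nabla(e_i)$, so that $\tau\nabla_0(Z_i)=\nabla(e_i)$ and thus $\nabla_0(Z_i)\in\frac1\tau\cdot(\cE\otimes\Omega^1)$.

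The key point for Griffith's transversality is that $\nabla(e_i)\in\cF\otimes\Omega^1_{S/A_0}$ modulo $\cI$: this is precisely the horizontality of the $s_i$ for $\overline\nabla$. Hence $\nabla(e_i)=\sum_k a_{ik}e_k+\tau\sum_j b_{ij}f_j$ for some $a_{ik},b_{ij}\in R$, which gives $\nabla_0(Z_i)=\sum_k a_{ik}(1+\tau Z_k)/\tau\;\otimes(\cdot)\;\ldots$ — more carefully, writing $e_k=1+\tau Z_k$ inside $f_{0,\ast}\cO$, one gets $\nabla_0(Z_i)=\frac{1}{\tau}\bigl(\sum_k a_{ik}(1+\tau Z_k)\bigr)+\sum_j b_{ij}Y_j$ tensored with the corresponding $1$-forms; the apparent pole $\frac1\tau\sum_k a_{ik}$ is a constant, i.e. in $\Fil_0$, while $\sum_k a_{ik}Z_k\in\Fil_0$ and $\sum_j b_{ij}Y_j\in\Fil_1$. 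Therefore $\nabla_0(Z_i)\in\Fil_1(f_{0,\ast}\cO)\widehat\otimes\Omega^1$, and $\nabla_0(Y_j)=\nabla(f_j)\in\cE\otimes\Omega^1\subset\Fil_1\widehat\otimes\Omega^1$ as well. Since $\Fil_h$ is generated over $R\langle Z_1,\ldots,Z_m\rangle$ by degree-$\le h$ monomials in the $Y$'s and $\nabla_0$ is a derivation, the Leibniz rule propagates $\Fil_h\to\Fil_{h+1}\widehat\otimes\Omega^1$. For the graded statement, on $\mathrm{Gr}_h$ the terms of $\nabla_0(Y_j)$ and $\nabla_0(Z_i)$ lying in $\Fil_0$ (i.e. the coefficients $a_{ik}$, $b_{ij}$, and the $\nabla$-components of the $f_j$) survive, but the Leibniz contributions from differentiating a degree-$h$ monomial and multiplying by a $\Fil_0$ element only change the coefficient ring $R\langle Z_1,\ldots,Z_m\rangle=f_{0,\ast}\cO_{\bV_0(\cF,s_1,\ldots,s_m)}(U)$ and not the $Y$-part; one checks directly that $\mathrm{gr}_h(\nabla_0)$ is $\cO_S$-linear in the $Y$-variables and hence $\mathrm{Sym}^\bullet(\cE/\cF)$-linear under the identification of Corollary \ref{cor:fil}.

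Finally I would verify gluing: both the filtration (by Corollary \ref{cor:fil}) and the connection $\nabla_0$ (by its characterization as the unique connection compatible with $\nabla$ via the blow-up description of Lemma \ref{lem:V0}) are independent of the chosen bases, and the transition functions between two such charts are affine relative to $R\langle Z_1,\ldots,Z_m\rangle$ by the proof of Corollary \ref{cor:fil}, hence send $\Fil_h$ to $\Fil_h$ and commute with $\nabla_0$; so the local verifications assemble to the global statement. I expect the main obstacle to be bookkeeping the $\frac1\tau$-pole appearing in $\nabla_0(Z_i)$: one has to see clearly that the horizontality hypothesis on the $s_i$ is exactly what forces the potentially problematic term $\frac1\tau\nabla(e_i)$ to have its ``polar part'' be a constant section (in $\Fil_0$), so that $\nabla_0$ is genuinely meromorphic with at worst the expected behaviour and still respects the filtration up to the shift by one. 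Everything else is a routine Leibniz-rule computation in a Tate algebra.
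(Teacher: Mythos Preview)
Your approach is the same as the paper's: work locally, compute $\nabla_0$ on the generators $Z_i$ and $Y_j$ via compatibility with $\nabla$, and apply the Leibniz rule. However, there is one point where you understate the hypothesis and this creates the spurious ``pole'' you then worry about. You write that horizontality of the $s_i$ means $\nabla(e_i)\in\cF\otimes\Omega^1_{S/A_0}$ modulo $\cI$, and accordingly factor $\tau$ only from the $f_j$-components: $\nabla(e_i)=\sum_k a_{ik}e_k+\tau\sum_j b_{ij}f_j$. But horizontality says $\overline\nabla(s_i)=0$, i.e.\ $\nabla(e_i)\equiv 0$ modulo $\cI$ \emph{entirely}, so the $a_{ik}$ are themselves divisible by $\tau$. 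Writing $a_{ik}=\tau\,\omega_{ik}$ (this is exactly what the paper does, recording $\nabla(e_s)=\sum_i\alpha\, e_i\otimes\omega_{s,i}+\sum_j\alpha\, f_j\otimes\beta_{s,j}$) gives
\[
\nabla_0(Z_i)=\sum_k\omega_{ik}(1+\tau Z_k)+\sum_j b_{ij}Y_j,
\]
manifestly in $\Fil_1\widehat\otimes\Omega^1$ with no pole at all. Your concern about a ``$\frac{1}{\tau}$-pole'' and the phrase ``genuinely meromorphic'' are therefore misplaced: the connection $\nabla_0$ constructed in \S\ref{sec:fvvconenction} is integral precisely because full horizontality was built into its construction. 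With this correction your computation is complete and identical to the paper's; the remaining Leibniz-rule argument and the verification of $\mathrm{Sym}^\bullet(\cE/\cF)$-linearity on graded pieces are as you outline.
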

\begin{proof} The statement can be checked locally. Assume that $U=\Spf(R)\subset S$
is an open formal affine subscheme such that $\cI\vert_U$ is
generated by $\alpha\in R$, the sheaves $\cF$, $\cE$ over $U$ are free with
basis $e_1,\ldots,e_m$, resp.~$e_1,\ldots,e_m,
f_1,\ldots,f_{n-m}$, so that $$\bV(\cF)\vert_U=\Spf\bigl(R\langle
X_1,\ldots,X_m\rangle\bigr), \quad  \bV(\cE)\vert_U=\Spf\bigl(R\langle X_1,\ldots,X_m,Y_1,\ldots,
Y_{n-m}\rangle\bigr)$$and
$$\bV_0(\cF,
s_1,\ldots,s_m)\vert_U=\Spf\bigl(R\langle
Z_1,\ldots,Z_m\rangle\bigr) , \, \bV_0(\cE,
s_1,\ldots,s_m)\vert_U=\Spf\bigl(R\langle
Z_1,\ldots,Z_m,Y_1,\ldots, Y_{n-m}\rangle\bigr).$$By construction
$\nabla'(X_s) =\sum_{i=1}^m \alpha X_i \otimes \omega_{s,i} +
\sum_{j=1}^{n-m} \alpha Y_j \otimes \beta_{s,j}$ where the
elements $\omega_{s,i}$ and  $\beta_{s,j}$ are uniquely
characterized by the fact that $\nabla(e_s)=\sum_{i=1}^m \alpha e_i
\otimes \omega_{s,i} + \sum_{j=1}^{n-m} \alpha f_j \otimes
\beta_{s,j}$ (recall that $\nabla(e_s)\equiv 0$ modulo $\alpha$
for $s=1,\ldots,m$). Similarly $\nabla'(Y_t) =\sum_{i=1}^m X_i
\otimes \gamma_{t,i} + \sum_{j=1}^{n-m} Y_j \otimes \delta_{s,j}$
where $\nabla(f_t)=\sum_{i=1}^m e_i \otimes \gamma_{s,i} +
\sum_{j=1}^{n-m} f_j \otimes \delta_{s,j}$.

Since $X_i=1+\alpha Z_i$ then $\nabla_0(\alpha Z_i)=\nabla'(X_i)$
and we deduce that $\nabla_0(Z_s) =\sum_{i=1}^m X_i \otimes
\omega_{s,i} + \sum_{j=1}^{n-m}  Y_j \otimes \beta_{s,j} - Z_i
\otimes d\alpha$. Recall from Corollary \ref{cor:fil} that $\Fil_h
f_{0,\ast} \cO_{\bV_0(\cE)}(U)$ consists of the polynomials of
degree $\leq h$ in the variables $Y_1,\ldots, Y_{n-m}$ with
coefficients in $R\langle Z_1,\ldots,Z_m\rangle$. The fact that
Griffith's transversality holds for $\Fil_h f_{0,\ast}
\cO_{\bV_0(\cE, s_1,\ldots,s_m)}(U)$ follows from the explicit
expression of $\nabla$  and Leibniz' rule.

\end{proof}

\section{Applications to modular curves.}\label{sec:appmodular}

In this chapter we present applications of the main constructions in Section \ref{sec:FVBMS}, that is to say given a weight $k$ we present a new construction of the
modular sheaves $\fw^k$ already defined and studied in \cite{halo_spectral} and the construction of a modular sheaf $\bW_k$ interpolating the integral symmetric
powers of the sheaf of relative de Rham cohomology of the universal elliptic curve over the appropriate modification of a modular curve.

The sheaf $\bW_k$ has a natural filtration whose graded quotients are well understood, an integrable connection $\nabla_k$ which satisfies the Griffith
transversality property with respect to the filtration and a natural action of the Hecke algebra on its global sections such that the operator $U$ is compact.
Moreover, the global sections of $\bW_k$ have  natural $q$-expansions which allows one, as in the case of $p$-adic modualr forms, to define {\bf nearly overconvergent $p$-adic modular
forms} as formal $q$-expansions arising from sections of $\bW_k$.

\subsection{ The sheaves $\fw_I$.}\label{sec:omega}

{\bf Convention}. In what follows we will denote by $X,Y,Z,\ldots$ (algebraic) schemes, by $\frak{X},\frak{Y}, \frak{Z},\ldots$ formal schemes and by $\mathcal{X},
\mathcal{Y}, \mathcal{Z},\ldots$ analytic, adic spaces.

\bigskip
\noindent In this section we follow the constructions of \cite{halo_spectral}. Let $N\ge 4$ be an integer and $p$ a prime which does not divide $N$. Let $Y:=X_1(N)$
be the smooth, proper modular curve over $\Z_p$ which classifies generalized elliptic curves with $\Gamma_1(N)$-level structure and let $\mathfrak{Y}$ denote the
formal completion of $Y$ along its special fiber. We write $E\to Y$ for the universal semiabelian scheme and $\omega_E$ for its invariant differentials; away from
the cusps $E$ is the universal elliptic curve. We denote by $\mathrm{Hdg}_{\mathfrak{Y}}$ the ideal of $\cO_{\mathfrak{Y}}$ defined locally by: if $U={\rm Spf}(R)$
is an open affine of $\fY$ such that $\omega_E\vert_{U}$ is a free $R$-module of rank $1$, then $\mathrm{Hdg}\vert_U$ is generated by $p$ and by the value
$\widetilde{\mathrm{Ha}}(E/R, \omega)$ of a lift $\widetilde{\mathrm{Ha}}$ of the Hasse invariant modulo $p$, where $\omega$ is any $R$-generator of
$\omega_E\vert_U$. Note that for $p\geq 5$ one can take $\widetilde{\mathrm{Ha}}=E_{p-1}$, the Eisenstein series of weight $p-1$.

\

{\it The weight space:} \enspace Set $\Lambda$ to be the Iwasawa algebra $\Z_p[\![\Z_p^\ast]\!]\cong \Z_p\bigl[(\Z/q\Z)^\ast\bigr][\![T]\!]$, where $q=p$ if $p\geq 3$ and $q=4$ if $p=2$ and the
isomorphism is defined by sending $\displaystyle \exp(q):=\sum_{n=0}^\infty \frac{q^n}{n!}\in 1+q\Z_p$ to $1+T$. Consider the complete local ring $\Lambda^0=\Z_p[\![T]\!]\subset \Lambda$.

Let $\mathfrak{W}:={\rm Spf}(\Lambda)$, respectively
$\mathfrak{W}^0:={\rm Spf}(\Lambda^0)$, where the ideal of
definition of these formal schemes is $\mathfrak{m}:=(p,T)$ and let
us denote by $\cW:=\Bigl({\rm Spa}(\Lambda, \Lambda)\Bigr)^{\rm
an}$ (and similarly for $\cW^0$) the associated analytic adic weight space. Here the superscript ``$an$" stands for {\it analytic}, i.e., $\cW$ is the adic subspace consisting of the analytic points of the adic space associated
to the formal scheme $\mathfrak{W}$. For every closed interval
$I:=[p^a, p^b]\subset [0, \infty]$, with $a\in \N\cup
\{-\infty\}$ and $b\in \N \cup \{\infty\}$, we denote by $$\cW_I=\{x\in \cW\quad \vert \quad \vert p\vert _x\le \vert T^{p^a}\vert _x\neq 0\mbox{ and } \vert T^{p^b}\vert _x\le \vert p\vert_x\neq 0\}.$$These are rational open subsets  and we have two notable cases:\smallskip

(1) $\cW_I=\{x\in \cW\quad \vert \quad \vert T^{p^b}\vert _x\le \vert p\vert _x\neq 0\}$ with $I=[0,p^b]$ and $b\neq \infty$;
\smallskip

(2) $ \cW_I=\{x\in \cW\quad \vert \quad \vert p\vert _x\le \vert T^{p^a}\vert _x\neq 0\mbox{ and } \vert T^{p^b}\vert _x\le \vert p\vert_x\neq 0\}$ if $
a\neq -\infty$ and $a\leq b$.
\smallskip

In the first case $$\displaystyle \cW_I={\rm Spa}\Bigl(\Lambda\langle \frac{T^{p^b}}{p} \rangle\Bigl[\frac{1}{p}\Bigr], \Lambda\langle \frac{T^{p^b}}{p} \rangle  \Bigr)$$ and in the
second $$\displaystyle \cW_I={\rm Spa}\Bigl( \Lambda\langle \frac{p}{T^{p^a}}, \frac{T^{p^b}}{p}\rangle \Bigl[\frac{1}{T}\Bigr], \Lambda\langle \frac{p}{T^{p^a}},
\frac{T^{p^b}}{p}\rangle  \Bigr).$$Let us remark that for every $I\subset  [0,\infty)$ as above $\cW_I$ is an open adic subspace of  $\cW_{[0,\infty)}=\cW^{\rm rig}$

For each $I=[p^a, p^b]$ as above we let $k_I\colon\Z_p^\ast \lra \bigl(\cO_{\cW_I}^+\bigr)^\ast$ denote the universal character associated to $\cW_I$.
Let now $\mathfrak{X}:=\mathfrak{Y}\times_{\rm
Spf(\Z_p)}\mathfrak{W}^0$. We define $\mathfrak{X}_I=
\mathfrak{Y}\times_{\rm Spf(\Z_p)}{\rm Spf}(\cO_{\cW_I^0}^+)$.

We consider pairs $(\Lambda_I, \alpha)$ where $\displaystyle
\Lambda_I:=\Lambda\langle \frac{T}{p}\rangle$ and
$\alpha:=p\in \Lambda_I$ if $I$ is in case (1) and $\displaystyle
\Lambda_I:=\Lambda\langle \frac{p}{T^{p^a}},
\frac{T^{p^b}}{p}\rangle$ and $\alpha:=T\in \Lambda_I$ if $I$ is
in case (2).

\

{\it Formal admissible partial blow-ups of  modular curves:}\enspace  We continue using the notations above  and for every integer $r\ge 1$ we define $\mathfrak{X}_{r,I}$ to be
the formal scheme over $\mathfrak{X}_I$ which represents the
functor  associating to every $\Lambda_I^0$-algebra
$\alpha$-adically complete $R$ the set of equivalence classes of
pairs $(f, \eta)$, where $f\colon {\rm Spf}(R)\lra \mathfrak{X}_I$
and $\eta\in {\rm H}^0\bigl({\rm Spf}(R),
f^\ast(\omega^{(1-p)p^{r+1}})\bigr)$ such that
$$
\eta\cdot \widetilde{\mathrm{Ha}}^{p^{r+1}}=\alpha(\mbox{ mod
}p^2).
$$
Here $\widetilde{\mathrm{Ha}}$ denotes any lift of the Hasse
invariant. One sees that the definition is well posed, i.e., it
does not depend on the choice of the lift. Moreover the ideal of $R$
denoted $\mathrm{Hdg}_R$ at the beginning of this section becomes
invertible. See section \S 3.1 of \cite{halo_spectral} for the
proof of this fact and for the definition of the equivalence relation.
By abuse of notation we often write $\mathrm{Hdg}_R$ for a (local)
generator of this ideal as well.

Let us remark that if $I$ is in the case (1), i.e. $I=[0, p^b]$ then $\displaystyle \frac{p}{\mathrm{Hdg}^{p^{r+1}}}\in \cO_{\mathfrak{X}_{r,I}}$ and if $I$ is in
case (2), i.e., $I=[p^a, p^b]$ with $0\le a< b\leq \infty$ then $\displaystyle \frac{p}{\mathrm{Hdg}^{p^{a+r+1}}}=\frac{p}{T^{p^a}}\cdot
\frac{T^{p^a}}{\mathrm{Hdg}^{p^{a+r+1}}}\in \cO_{\mathfrak{X}_{r,I}}$. Therefore if we denote by $n$ any integer with $1\le n\le r$ if $I$ is in the case (1) and
$1\le n\le a+r$ if $I$ is in the case (2), then the semiabelian scheme $E\lra \mathfrak{X}_{r,I}$ has a canonical subgroup $H_n$ of order $p^n$. This is  a subgroup scheme of order $p^n$ lifting the kernel of the $n$-th power of the Frobenius isogeny modulo $\frac{p}{\mathrm{Hdg}^{p^{\frac{p^n-1}{p-1}}}}$  (see \cite[Cor. A.1 \& A.2]{halo_spectral} for the construction). Over the ordinary locus $H_n$ is the connected part of the $p^n$-th torsion of $E$.

\

{\it  Partial Igusa tower:} \enspace For every $r$, $I$, $n$ as above we denote by $\mathcal{X}_{r,I}$ the adic generic fiber of the formal scheme $\mathfrak{X}_{r,I}$ and let $\mathcal{IG}_{n,r,I}\lra
\mathcal{X}_{r,I}$ denote the adic space of trivializations of the group scheme $H_n^\vee\lra \mathcal{X}_{r,I}$, the Cartier dual of $H_n$. Then
$\mathcal{IG}_{n,r,I}\lra \mathcal{X}_{r,I}$ is a finite \'etale and Galois morphism of adic spaces with Galois group $(\Z/p^n\Z)^\ast$. We define by
$\mathfrak{IG}_{n,r,I}\lra \mathfrak{X}_{r,I}$ the normalization of $\mathfrak{X}_{r,I}$ in $\mathcal{IG}_{n,r,I}$, which is well defined. Moreover the morphism
$\mathfrak{IG}_{n,r,I}\lra \mathfrak{X}_{r,I}$ is finite and is endowed with an action of $(\Z/p^n\Z)^\ast$.

\

{\it The construction of the torsor $\mathfrak{F}_{n,r,I}$:}\enspace In \cite[\S 5.2]{halo_spectral} we define the formal scheme $f_n\colon \mathfrak{F}_{n,r,I} \to \mathfrak{IG}_{n,r,I}$. It represents the functor from the category of
affine formal schemes $\Spf(R)\to \mathfrak{IG}_{n,r,I}$, with $R$ an $\alpha$-adically complete and separated $\Z_p$-algebra without $ \alpha$-torsion, to the category of sets
$$
\mathfrak{F}_{n,r,I}(R)=\{(\omega,P)\in \omega_E(R)\times \bigl(H_n^\vee(R)-H_n^\vee[p^{n-1}](R)\bigr)\quad \vert \quad \omega={\rm dlog}(P)\mbox{ in
}\omega_E/p^n\mathrm{Hdg}^{-\frac{p^n-1}{p-1}} \}.
$$

We also denote by $h_n\colon \mathfrak{F}_{n,r,I}\lra \fX_{r,I}$ the composition $g_n\circ f_n$. We have a natural action of
$\Z_p^\ast\Bigl(1+p^n\mathrm{Hdg}^{-\frac{p^n-1}{p-1}}\mathbb{G}_a\Bigr)$ on $\mathfrak{F}_{n,r,I}$ given by: if $\lambda\in \Z_p^\ast$ and $x\in
\Bigl(1+p^n\mathrm{Hdg}^{-\frac{p^n-1}{p-1}}\mathbb{G}_a\Bigr)$, then  $(\lambda x)(\omega, P)=\bigl((\lambda x)\omega, \lambda P)$. This action is well defined and
the action of $\Z_p^\ast$ lifts the Galois action of $(\Z/p^n\Z)^\ast$ on $\mathfrak{IG}_{n,r,I}$. In fact $\mathfrak{F}_{n,r,I}$ admits an action of
$\Z_p^\ast\Bigl(1+p^n\mathrm{Hdg}^{-\frac{p^n-1}{p-1}}\mathbb{G}_a\Bigr)$ (in the \'etale topology) over $\fX_{r,I}$ with quotient $\fX_{r,I}$. Furthermore if $n\geq b+2$ for $p\neq 2$ or $n\geq b+4$ if $p=2$ then $k_I$ extends to a character $\Z_p^\ast\Bigl(1+p^n\mathrm{Hdg}^{-\frac{p^n-1}{p-1}}\mathbb{G}_a\Bigr)\to \mathbb{G}_m$.

In conclusion, given $r$, $I$, $n$ as above we have (see \cite{halo_spectral}) a sequence of formal schemes and morphisms
$$
\mathfrak{F}_{n,r,I}\stackrel{f_n}{\lra}\mathfrak{IG}_{n,r,I}\stackrel{g_n}{\lra}\mathfrak{X}_{r,I},
$$
which leads to the following definition. We summarize the various assumptions on $I$, $n$ and $r$ in the following two cases:

\smallskip

(1) $I=[0,1]$, $r\geq 2$ if $p\neq 2$ or $r\geq 4$ if $p=2$ and $n$ is an integer $n$ satisfying $1\leq n \leq r$. 

\smallskip

(2)  $I=[p^a,p^b]$ with $a,b\in \N$,  $r\geq 1$ and $r+a\geq b+2$ if $p\neq 2$ or $r\geq 2$ and $r+a\geq b+4$ if $p=2$ and $n$ is an integer  such that $1\le n \leq a+r$.

\begin{definition}\label{defi:wnrI}
Let $k_{I,f}$ be the character given by the restriction of the character $k$ to $(\Z/q\Z)^\ast\subset \Z_p^\ast$. Define $\fw^{k_{I,f}}$ to be the coherent
$\cO_{\fX_{r,I}}$-module $\bigl(g_{i,\ast} \bigl(\cO_{\mathfrak{IG}_{i,r,I}}\bigr)\otimes_{\Lambda^0} \Lambda\bigr) \bigl[k_{I,f}^{-1}\bigr] $ (see \cite[\S
6.8]{halo_spectral}). Here $i=1$ for $p$ odd and $i=2$ for $p=2$.

Let  $k_I^0:=k_I k_{I,f}^{-1}\colon \Z_p^\ast\to (\Lambda^0_I)^\ast$. Set $\fw_{n,r,I}^{k_I,0}:=\bigl((g_n\circ
f_n)_\ast\bigl(\cO_{\mathfrak{F}_{n,r,I}}\bigr)\bigr)[(k_I^0)^{-1}]$, i.e., the sheaf on $\mathfrak{X}_{r,I}$ of sections of $\cO_{\mathfrak{F}_{n,r,I}}$ which
transform under the action of $\Z_p^\ast\bigl(1+p^n\mathrm{Hdg}^{-\frac{p^n-1}{p-1}}\mathbb{G}_a\bigr)$ via the inverse of the universal character $k_I^0$. Define
$\fw_{n,r,I}^{k_I}:=\fw_{n,r,I}^{k_I^0}\otimes_{\cO_{\fX_{r,I}}} \fw^{k_f}$.

\end{definition}

{\it Overconvergent modular forms:} \enspace It is proved in section \cite[\S 5.3.2]{halo_spectral} that, under the assumptions in Definition \ref{defi:wnrI}, $\fw_{n,r,I}^{k_I,0}$ is an invertible sheaf on $\mathfrak{X}_{r,I}$ with the following property: for
every interval $I$ as above, there exist $r_I$, $n_I$ such that for all $r\ge r_I$, $n\ge n_I$ (satisfying the relations at the beginning of this section)
$\varphi_{r,r_I}^\ast\bigl(\fw_{n_I,r_I,I}^{k_{I},0} \bigr)\cong \fw_{n,r,I}^{k_I,0}$ as $\cO_{\mathfrak{X}_{r,I}}$-modules, where $\varphi_{r,r_I}\colon
\mathfrak{X}_{r,I}\lra \mathfrak{X}_{r_I,I}$ is the natural projection.

Therefore we denote $\fw_{n_I,r_I,I}^{k_{I},0}$ and $\fw_{n_I,r_I,I}^{k_{r_I}}$  by $\fw_I^{k_I,0}$ and  $\fw_I^{k_I}$ respectively and call them {\sl modular
sheaves}. Note that $\fw_I^{k_I}$ defines  an invertible sheaf, denoted $\omega^k_{I}$ in \cite{halo_spectral}, on the adic space $\mathcal{X}_{r_I,I}$, whose
global sections are {\sl the overconvergent $p$-adic families} of modular forms over $\mathcal{W}_I$.

\subsubsection{Some properties of $ \fIG_{n,r,I}$.}\label{sec:localcoo}

Consider the natural morphisms of formal schemes $$\eta \colon \fIG_{n,r,I}\stackrel{\gamma_n}{\lra}\fIG_{1,r,I}\lra \fX_{r,I}\lra \fX_I \lra \fX.$$Denote by $j\colon \fX^{\rm ord}_I \hookrightarrow \fX_{r,I} $ the $\alpha$-adic open formal sub-scheme of $\fX_{r,I}$ defined by the ordinary locus. Let $\iota
\colon \fIG_{n,r,I}^{\rm ord}\subset \fIG_{n,r,I}$ be the inverse image of $\fX^{\rm ord}_I$.
 We recall the following:

\begin{remark}\label{rmk:delta}
Fix a local lift $\widetilde{\mathrm{Ha}}$ of the Hasse invariant over an open formal subscheme $U=\Spf(R)$ of $\fIG_{1,r,I}$. For $p\geq 5$ one can take a global lift, namely $E_{p-1}$. There exists
a unique section of $\omega_E$ over $R$ denoted $\Delta$ such that its $q$-expansion has constant coefficient $1$ modulo $p$ and
$\Delta^{p-1}=\widetilde{\mathrm{Ha}}$; see \cite[Prop.~A.3]{halo_spectral}. We define the ideal $\udelta$  of $\cO_{\mathfrak{IG}_{1,r,I}}$ to be the ideal sheaf
which is generated locally by the functions $\delta_R:=\Delta(E/R, \omega)$'s, where $\omega$ is an $R$-basis of $\omega_{E/R}$. It coincides with the ideal
$\rm{Hdg}^{\frac{1}{p-1}}$, where ${\rm Hdg}$ is the ideal of \S\ref{sec:pdivconstructions}.
\end{remark}

\

\noindent
 We have the following result:

\begin{lemma}\label{lemma:CokerOmega} The induced map $\eta^\ast\bigl(\Omega^1_{\fX/\Z_p}\bigr)\lra \Omega^1_{\fIG_{n,r,I}/\Lambda_I^0}$
has kernel and cokernel annihilated by a power of $\udelta$ and in particular by a power of $\alpha$, depending on $n$.
\end{lemma}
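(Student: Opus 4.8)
The plan is to reduce the statement to an étale-local computation on $\fX_{r,I}$ (in fact over the generic fiber of the Igusa tower), where the map $\gamma_n\colon \fIG_{n,r,I}\to\fIG_{1,r,I}$ is finite étale, and to compare the differentials of the two towers $\fIG_{n,r,I}\to\fX$ and $\fIG_{1,r,I}\to\fX$. First I would factor $\eta$ as $\fIG_{n,r,I}\stackrel{\gamma_n}{\to}\fIG_{1,r,I}\to\fX_{r,I}\to\fX_I\to\fX$ and treat the pieces separately. The morphism $\fX_I\to\fX$ is a base change along $\Spf(\Z_p)\to\Spf(\Lambda_I^0)$, so it contributes nothing to the relative differentials over $\Lambda_I^0$; the map $\fX_{r,I}\to\fX_I$ is a partial admissible formal blow-up, hence an isomorphism on generic fibers and thus induces an isomorphism on differentials after inverting $\alpha$ — indeed, since $\mathrm{Hdg}$ becomes invertible on $\fX_{r,I}$ and the blow-up is defined by adjoining a root $\eta$ of $\widetilde{\mathrm{Ha}}^{p^{r+1}}$, the module $\Omega^1_{\fX_{r,I}/\fX_I}$ is annihilated by a power of $\alpha$ (equivalently of $\udelta$). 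So the content is concentrated in the map $\fIG_{1,r,I}\to\fX_{r,I}$ and in $\gamma_n$.

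Next, for $\gamma_n\colon \fIG_{n,r,I}\to\fIG_{1,r,I}$: by construction $\fIG_{n,r,I}$ is the normalization of $\fX_{r,I}$ (equivalently of $\fIG_{1,r,I}$) in the finite étale Galois cover $\mathcal{IG}_{n,r,I}\to\mathcal{IG}_{1,r,I}$ of adic generic fibers. Hence $\gamma_n$ is finite and étale after inverting $p$ (equivalently $\alpha$), so $\Omega^1_{\fIG_{n,r,I}/\fIG_{1,r,I}}$ is a coherent sheaf that vanishes on the generic fiber and is therefore killed by a power of $\alpha$; by the conductor-discriminant type bound for the ramification of this tower (the ramification is controlled by the order $p^n$ of $H_n$ and by $\udelta$, cf. the description of the $\Z_p^\ast(1+p^n\mathrm{Hdg}^{-(p^n-1)/(p-1)}\mathbb{G}_a)$-action), the annihilating power depends only on $n$. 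Then the conormal/relative-cotangent exact sequence
$$
\gamma_n^\ast\Omega^1_{\fIG_{1,r,I}/\Lambda_I^0}\lra \Omega^1_{\fIG_{n,r,I}/\Lambda_I^0}\lra \Omega^1_{\fIG_{n,r,I}/\fIG_{1,r,I}}\lra 0
$$
shows the cokernel of the first arrow is killed by a power of $\udelta$ depending on $n$, and a parallel argument with the other relative cotangent sequence controls its kernel. Composing with the already-understood maps for $\fIG_{1,r,I}\to\fX_{r,I}\to\fX_I\to\fX$ gives the claim, provided we handle the first step.

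For $\fIG_{1,r,I}\to\fX_{r,I}$: here I would use Remark \ref{rmk:delta}. On $\fIG_{1,r,I}$ the ideal $\udelta=\mathrm{Hdg}^{1/(p-1)}$ is invertible and we have the canonical section $\Delta$ of $\omega_E$ with $\Delta^{p-1}=\widetilde{\mathrm{Ha}}$; equivalently, trivializing $H_1^\vee$ pins down a generator of $\omega_E$ up to the very small group, and this rigidifies the Kodaira–Spencer identification. The Kodaira–Spencer isomorphism identifies $\Omega^1_{\fX/\Z_p}(\mathrm{cusps})$ with $\omega_E^{\otimes 2}$ away from the supersingular/boundary issues, and over $\fIG_{1,r,I}$ the relative differentials $\Omega^1_{\fIG_{1,r,I}/\fX_{r,I}}$ are supported on the ramification locus of the $(\Z/p\Z)^\ast$-cover, hence annihilated by a bounded power of $\udelta$. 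Pulling everything back to $\fIG_{n,r,I}$ (which only multiplies the relevant $\udelta$-powers by a factor depending on $n$ via $\gamma_n^\ast\udelta$), one concludes via the transitivity triangles
$$
\eta^\ast\Omega^1_{\fX/\Z_p}\lra \Omega^1_{\fIG_{n,r,I}/\Lambda_I^0}\lra \Omega^1_{\fIG_{n,r,I}/\fX}\lra 0
$$
and the analogous one relative to $\fX_{r,I}$, that both kernel and cokernel of $\eta^\ast(\Omega^1_{\fX/\Z_p})\to\Omega^1_{\fIG_{n,r,I}/\Lambda_I^0}$ are annihilated by a power of $\udelta$, hence of $\alpha$, depending on $n$. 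The main obstacle I anticipate is getting an \emph{effective} $n$-dependent bound on the ramification of the Igusa-type cover $\fIG_{n,r,I}\to\fIG_{1,r,I}$ over these non-ordinary integral models — one cannot simply cite tame/étale vanishing since wild ramification appears, and the honest argument must track how the generator $P$ of $H_n^\vee$ and the congruence $\omega\equiv \mathrm{dlog}(P)$ mod $p^n\mathrm{Hdg}^{-(p^n-1)/(p-1)}$ force the differentials to be divisible by a controlled power of $\mathrm{Hdg}$; this is exactly where the hypotheses relating $n$, $r$, and $I$ (cases (1) and (2) above) enter.
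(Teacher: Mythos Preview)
Your approach is correct but takes a considerably longer route than the paper's. You factor $\eta$ as $\fIG_{n,r,I}\to\fIG_{1,r,I}\to\fX_{r,I}\to\fX_I\to\fX$ and control the relative differentials of each step separately via cotangent sequences, culminating in what you flag as the ``main obstacle'': bounding the wild ramification of $\fIG_{n,r,I}\to\fIG_{1,r,I}$ over the non-ordinary locus. The paper instead argues globally in one stroke. Over the ordinary locus the blow-up $\fX_{r,I}\to\fX_I$ is an isomorphism and $H_n^\vee$ is an \'etale group scheme, so the whole tower $\fIG_{n,r,I}^{\rm ord}\to\fX_I^{\rm ord}$ is finite \'etale; hence the map on differentials is an isomorphism there. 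The kernel and cokernel are therefore coherent sheaves on $\fIG_{n,r,I}$ vanishing on the ordinary locus, which (modulo $\alpha$) is the complement of $V(\udelta)$; by coherence and noetherianity they are annihilated by some power of $\udelta$, and since $\alpha\in\udelta$ also by a power of $\alpha$. The dependence on $n$ enters only because the formal scheme $\fIG_{n,r,I}$ itself depends on $n$.

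In particular, no effective ramification estimate is needed for this lemma: the statement asks only for \emph{some} power of $\udelta$, and the support-plus-coherence argument delivers that automatically. Your step-by-step method would be the right one if explicit bounds were required --- as indeed happens in the next lemma, Lemma~\ref{lemma:kerloc}, where the different of $\fIG_{n,r,I}$ over $\fIG_{1,r,I}$ is bounded --- but here it is more work than necessary. As a side remark, since $\eta^\ast\Omega^1_{\fX/\Z_p}$ is locally free of rank one and $\fIG_{n,r,I}$ is normal (hence its structure sheaf has no $\udelta$-torsion), the kernel is in fact zero.
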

\begin{proof} The morphism $\fX_{r,I}\lra \fX_I$ is an isomorphism over the ordinary locus. The morphism $\fIG_{n,r,I}^{\rm ord}\to \fX^{\rm ord}_I$ is the Igusa tower classifying trivializations of the \'etale group scheme  $H_n^\vee$. In particular, it is \'etale and Galois with group $(\Z/p^n\Z)^\ast$. Thus the induced map on differentials is an isomorphism. The ordinary locus is defined, modulo $\alpha$, by inverting $\udelta$. This implies the lemma for the differentials modulo $\alpha$.  As $\Omega^1_{\fX/\Z_p}$ and $\Omega^1_{\fIG_{n,r,I}/\Lambda_I^0} $ are coherent $\cO_{\fIG_{n,r,I}/\Lambda_I^0}$-modules and $\alpha\in \udelta$, the claim follows. \end{proof}

We next prove the following:

\begin{lemma}\label{lemma:kerloc}  For every $h\in\N$ the kernel of the map
$\cO_{\fX_{r,I}}/\alpha^h \cO_{\fX_{r,I}} \to j_\ast \bigl(\cO_{\fX^{\rm ord}}/\alpha^h \cO_{\fX^{\rm ord}} \bigr)$ is annihilated by $\mathrm{Hdg}^{h p^{r+1}}$.
Similarly, the kernel of $\cO_{\fIG_{n,r,I}}/\alpha^h \cO_{\fIG_{n,r,I}} \to \iota_\ast \bigl(\cO_{\fIG_{n,r,I}^{\rm ord}}/\alpha^h \cO_{\fIG_{n,r,I}^{\rm ord}}
\bigr)$ is annihilated by $\udelta^{h p^{r+1} (p-1) + p^n-p} $.
\end{lemma}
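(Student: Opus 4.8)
The statement concerns the kernel of the localization map from a structure sheaf modulo $\alpha^h$ to its pushforward from the ordinary locus, both for $\fX_{r,I}$ and for the Igusa tower $\fIG_{n,r,I}$. I would treat the two assertions separately but in parallel, since the first feeds into the second. For the first assertion, the plan is to work locally: pick an affine $U = \Spf(R) \subset \fX_{r,I}$ on which $\omega_E$ is free, so that $\mathrm{Hdg}_R = (p, \widetilde{\mathrm{Ha}})$ becomes invertible after the blow-up construction of \S\ref{sec:omega}, and the ordinary locus $\fX^{\rm ord}$ corresponds to inverting a generator $\fw$ of $\mathrm{Hdg}_R$. The key input is the defining relation $\eta \cdot \widetilde{\mathrm{Ha}}^{p^{r+1}} = \alpha \pmod{p^2}$ from the representable functor defining $\fX_{r,I}$: this says that $\alpha$ (a generator of $\cI$) lies in $\mathrm{Hdg}^{p^{r+1}}$ up to the $p^2$-ambiguity, hence $\mathrm{Hdg}^{p^{r+1}}$ contains $\alpha$ as a multiple. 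An element $x \in R/\alpha^h R$ mapping to zero in $R[\tfrac{1}{\fw}]/\alpha^h$ is killed by a power of $\fw$; the point is to bound that power uniformly by $h \cdot p^{r+1}$. I would do this by noting that multiplication by $\fw^{p^{r+1}}$ carries $R/\alpha^h R$ into $\alpha R/\alpha^h R + (\text{torsion})$ and iterating, tracking the $\alpha$-torsion carefully — each multiplication by $\mathrm{Hdg}^{p^{r+1}}$ lowers the "$\alpha$-order" of a torsion element by one, so $h$ steps suffice.

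For the second assertion, the morphism $\gamma_n \colon \fIG_{n,r,I} \to \fX_{r,I}$ (or rather its factorization through $\fIG_{1,r,I}$) is finite, and over the ordinary locus $\fIG_{n,r,I}^{\rm ord} \to \fX^{\rm ord}_I$ is finite étale Galois with group $(\Z/p^n\Z)^\ast$, as recalled in \S\ref{sec:localcoo}. The plan is to compare the kernel upstairs with the kernel downstairs pulled back along $\gamma_n$. Since normalization can introduce new torsion, one cannot simply pull back the bound from the first part; instead I would use Remark \ref{rmk:delta}, which identifies the ideal $\udelta$ of $\cO_{\fIG_{1,r,I}}$ with $\mathrm{Hdg}^{1/(p-1)}$, so that $\udelta^{p-1} = \mathrm{Hdg}$ and $\udelta^{h p^{r+1}(p-1)}$ is exactly $\mathrm{Hdg}^{h p^{r+1}}$ — this accounts for the first summand in the exponent $h p^{r+1}(p-1) + p^n - p$. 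The extra summand $p^n - p$ should come from the difference between the Igusa tower $\fIG_{n,r,I}$ and $\fIG_{1,r,I}$: passing from level $1$ to level $n$ in the Igusa tower involves the canonical subgroups $H_n$, whose discriminant (relative to $H_1$) contributes a power of $\udelta$ governed by the quantity $p^n - p$ (compare the exponents $\tfrac{p^n-1}{p-1}$ and $\tfrac{p^n-p}{p-1}$ appearing throughout \S\ref{sec:omega} in the $\mathrm{dlog}$ congruences and the definition of $\mathfrak{F}_{n,r,I}$; multiplying by $p-1$ turns $\tfrac{p^n-p}{p-1}$ into $p^n - p$). Concretely I would factor $\gamma_n$ and bound the kernel of $\cO_{\fIG_{n,r,I}}/\alpha^h \to \iota_\ast(\cO_{\fIG_{n,r,I}^{\rm ord}}/\alpha^h)$ by first reducing to $\fIG_{1,r,I}$ — where the kernel is controlled by the first assertion together with Lemma \ref{lemma:CokerOmega}-type étaleness — and then controlling the finite map $\fIG_{n,r,I} \to \fIG_{1,r,I}$, which differs from an étale map only after inverting $\udelta$ to a power tied to $p^n - p$.

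The main obstacle I anticipate is pinning down the exact exponents rather than merely "a power of $\udelta$": the étale-over-the-ordinary-locus argument gives control up to an unspecified power, and upgrading to the precise bound $h p^{r+1}(p-1) + p^n - p$ requires the explicit descriptions — the functorial equation $\eta \cdot \widetilde{\mathrm{Ha}}^{p^{r+1}} = \alpha \pmod{p^2}$, the identification $\udelta^{p-1} = \mathrm{Hdg}$, and the precise power of $\mathrm{Hdg}$ by which the canonical-subgroup data at level $n$ fails to be defined integrally. I would organize the proof so that these three contributions are isolated cleanly: the factor $h$ from iterating $\alpha^h$, the factor $p^{r+1}(p-1)$ per step from the blow-up relation translated into $\udelta$, and the additive correction $p^n - p$ from the level-$n$ Igusa structure. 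A secondary technical point is that the sheaves in question are coherent, so annihilation can be checked on an affine cover and the "a power of $\alpha$ hence a power of $\udelta$" passage (as in the proof of Lemma \ref{lemma:CokerOmega}) legitimizes working with honest ideals rather than just up-to-isogeny statements.
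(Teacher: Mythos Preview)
Your overall architecture is right — work locally, get the bound for $\fX_{r,I}$, then transport it up the Igusa tower picking up the correction $p^n-p$ from the level-$n$ structure. Your identification of the three contributions to the exponent (the factor $h$ from iteration, $(p-1)p^{r+1}$ from the blow-up relation expressed in $\udelta$, and the additive $p^n-p$ from passing level $1$ to level $n$) matches what the paper does. For the second assertion your plan is essentially the paper's: factor through $\fIG_{1,r,I}$, use flatness to transport the first bound, then control $\fIG_{n,r,I}\to\fIG_{1,r,I}$ via the different of $H_n^\vee/H_1^\vee$, for which $\udelta^{p^n-p}\subset\mathcal{D}(H_n^\vee/H_1^\vee)$ (this is \cite[Prop.~3.5]{halo_spectral}).

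There is, however, a genuine gap in your first-assertion argument. Your plan is to iterate multiplication by a generator of $\mathrm{Hdg}^{p^{r+1}}$, claiming that each such multiplication ``lowers the $\alpha$-order by one.'' This is correct as an intuition, but making it precise requires knowing exactly what the local ring looks like, and you have not supplied that. The paper does the following: over a small enough open of $\fX$ where $\omega_E$ is trivialized, write $x=\widetilde{\mathrm{Ha}}(E,\omega)$; by \emph{Igusa's theorem} that the Hasse invariant has simple zeroes modulo $p$, the inclusion $\Z_p\langle x\rangle\subset R'$ is \'etale. This \'etaleness is the missing key input. It makes the local ring of $\fX_{r,I}$ \'etale (hence flat) over the explicit model $A_0=\Lambda_I^0\langle X,Y\rangle/(X^{p^{r+1}}Y-\alpha)$, and in $A_0/\alpha^h$ one computes the kernel of localization at $X$ to be the principal ideal $(Y^h)$, which is visibly annihilated by $X^{hp^{r+1}}$ since $X^{hp^{r+1}}Y^h=\alpha^h$. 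Flatness then transports the bound to the actual local ring. Without this reduction your iteration step ``$\fw^{p^{r+1}}\cdot(\text{kernel})\subset\alpha\cdot(\text{kernel at level }h-1)$'' is not obviously true in an arbitrary ring satisfying $\eta\fw^{p^{r+1}}=\alpha$: you would need to know the kernel is generated by $\eta^h$, which is exactly what the explicit model gives. The same Igusa input is needed a second time for the $n=1$ Igusa step, to show that $A[z]/(z^{p-1}-x)$ is already normal (so that $\fIG_{1,r,I}\to\fX_{r,I}$ is flat and the bound transfers cleanly); you gesture at ``Lemma~\ref{lemma:CokerOmega}-type \'etaleness'' but that lemma only gives control up to an unspecified power, not the flatness you need.
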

\begin{proof} We prove the first statement. We'll work locally so let $U=\Spf(R')\subset \fX:=\fX_1(N)$ be an affine open such that $\omega_E\vert _U$ is free of rank one and we choose a basis $\omega$ of
$\omega_E\vert_U$. If we denote $x:=\widetilde{\mathrm{Ha}}(E/R', \omega)\in R'$, then $\Z_p\langle x\rangle \subset R'$ is an \'etale extension after possibly shrinking $U$, i.e.,  in a small enough neighborhood of the supersingular points,  as Igusa proved that
$x$ has simple zeroes on $R'/pR'$ exactly at the supersingular points in $U$. Now let $U_I=\Spf(R)$ with $R=R'\widehat{\otimes}_{\Z_p} \Lambda_I^0$ the inverse image of $U$ in $\fX_I$ and let  $V=\Spf(A)$ be the inverse image of $U$ via  the morphsim $ \fX_{r,I}\lra \fX$. As $\fX_{r,I}$ is the open of the blow-up of $\fX_I$ along the ideal $\bigl(p, x^{p^{r+1}}\bigr)$ where this ideal is generated
by $x^{p^{r+1}}$, we have $A=R\langle y\rangle/\bigl(x^{p^{r+1}}y-\alpha \bigr)$.

We have that $U^{\rm ord}_I=\Spf(R^{\rm
ord})$ and $V^{\rm ord}=\Spf(A^{\rm ord})$ where  $R^{\rm
ord}=R\langle 1/x\rangle$ and $A^{\rm ord}=A\widehat{\otimes}_R
R\langle 1/x\rangle$. For every $h$ the morphism $R/\alpha^h R \to
R^{\rm ord}/\alpha^h R^{\rm ord}$ is injective since the special fiber
of $X_1(N)$ at $p$ is irreducible.

We claim that the kernel of $A/\alpha^h A \to A^{\rm ord}/\alpha^h A^{\rm
ord}$ is annihilated by $x^{hp^{r+1}}$. This is equivalent to proving the first statement of the Lemma. Notice that, as $\Z_p\langle x\rangle \subset R'$ is \'etale, then
$A_0:=\Lambda^0_I\langle X,Y \rangle/\bigl(X^{p^{r+1}}Y-\alpha
\bigr)\to A$ sending $X\mapsto x$ ad $Y\mapsto y$ is \'etale. In
particular it is flat and, hence, it suffices to prove the
statement replacing $A$ with $A_0$ and $A^{\rm ord}$ with
$\Lambda^0_I\langle X, X^{-1}\rangle$.  The kernel $I_h$ of the map
$$A_0/\alpha^h A_0= \Lambda^0_I/\alpha^h \Lambda^0_I [X,Y]
/\bigl(X^{p^{r+1}}Y-\alpha \bigr) \lra \Lambda^0_I/\alpha^h
\Lambda^0_I[X,X^{-1}]$$is $I_h=\bigl(Y^h,\alpha
Y^{h-1},\ldots,\alpha^{h-1} Y\bigr)$ which is principal and
generated by $Y^h$ since $\alpha=Y X^{p^{r+1}}$ so that $\alpha^j
Y^{h-j}=Y^h X^{jp^{r+1}}$ for every $1\leq j\leq h$. Since $X^{h
p^{r+1}} Y^h=\alpha^h$, the claim follows.

We consider now the morphism $\fIG_{1,r,I}\to \fX_{r,I} $ and let $\Spf(C)$ be the inverse image of  $V$. Since $x$ admits a $p-1$-th root in $C$ (see Remark
\ref{rmk:delta}) then $C$ is the normalization of $A[z]/(z^{p-1}- x)$. Since $x$ has simple poles modulo $p$ then $R''=R[z]/(z^{p-1}-x)$ is normal and $A[z]/(z^{p-1}-
x)= R'' \langle y\rangle/\bigl(z^{p^{r+1}}y-\alpha \bigr)$ is already normal (cf.~\cite[Lemme 3.4]{halo_spectral}) and, hence, equal to $C$. We conclude that $C$ is
flat over $A$ and the second statement of the Lemma for $n=1$ follows.

From the proof of Proposition 3.5 of \cite{halo_spectral} it follows that there is  a natural morphism $ \fIG_{n,r,I} \to H_n^\vee$ and that $ \fIG_{n,r,I}$ is the
normalization of $\fIG_{n,r,I}':=H_n^\vee\times_{H_1^\vee} \fIG_{1,r,I} $. Note that $\fIG_{n,r,I}' \to   \fIG_{1,r,I}$ is flat so that the kernel of
$\cO_{\fIG_{n,r,I}'}/\alpha^h \cO_{\fIG_{n,r,I}'} \to \iota_\ast \bigl(\cO_{\fIG_{n,r,I}^{\rm ord}}/\alpha^h \cO_{\fIG_{n,r,I}^{\rm ord}} \bigr)$ is annihilated by
$\udelta^{h p^{r+1} (p-1)} $. Again from  \cite[Prop.~3.5]{halo_spectral} we know that the different ideal  $\mathcal{D}(H_n^\vee/H_1^\vee)$  of $H_n^\vee$ over $H_1^\vee$ is
such that $\udelta^{p^n-p}\subset \mathcal{D}(H_n^\vee/H_1^\vee)$. We conclude that the same must be true for the different ideal
$\mathcal{D}(\fIG_{n,r,I}'/\fIG_{r,1,I}')$ of $\fIG_{n,r,I}'$ over $\fIG_{r,1,I}'$, i.e., $\udelta^{p^n-p}\subset \mathcal{D}(\fIG_{n,r,I}'/\fIG_{1,r,I}')$. Since $
\fIG_{n,r,I}\to \fIG_{n,r,I}'$ is defined by taking the normalization, it is finite and  we conclude that $\udelta^{p^n-p} \cO_{\fIG_{n,r,I}} \subset
\cO_{\fIG_{n,r,I}'}$ and the second statement of the Lemma for $n\geq 2$ follows.

\end{proof}

\subsection{A new definition of $\fw^{k,0}.$}
\label{sec:new}

We'd now like to use the theory  of Section \ref{sec:FVBMS}, i.e.,  the vector bundles with marked sections in order to give a new definition of the sheaves $\fw^k$ defined
in \cite{halo_spectral} and recalled in Section \ref{sec:omega} of this article.

We choose $I$, $r$, $n$ satisfying the assumptions of Definition \ref{defi:wnrI} and $k=k_I\colon \Z_p^\ast\lra \Lambda_I^\ast$ the universal character and $k_I^0=k_I
k_{I,f}^{-1}\colon \Z_p^\ast \to (\Lambda^0_I)^\ast$ its ``restriction" to $\Lambda^0_I$. There exists  a unique element $u=u_k\in p^{1-n} \Lambda_I^0$ such that
$t^k:=k(t)=\exp\bigl(u\log(t)\bigr)$ for all $t\in 1+p^n\Z_p^\ast$. Let $E\lra \fIG_{n,r,I}$ be the semiabelian scheme over the level $n$-th Igusa curve.

It would be natural to use as a pair $(\cE,s)$ consisting of a locally free sheaf $\cE$ of rank one and marked section $s$, the $\cO_{\fIG_{n,r,I}}$-module
$\omega_E$ and $s={\rm dlog}(P_n)$ seen as a section of  $\omega_E/\underline{\beta}_n\omega_E$ via the following diagram  (see the Section
\ref{sec:pdivconstructions}).

\begin{equation}\label{eq:dlog}
\begin{array}{cccccccc}
&&\omega_E\\
&&\downarrow\\
H_n^\vee&\stackrel{d\log}{\lra}&\omega_{H_n}\\
&&\downarrow\\
&&\omega_E /\underline{\beta}_n\omega_E
\end{array}
\end{equation}

Here,  $\ubeta_n=p^n\mathrm{Hdg}(E)^{-\frac{(p^n-1)}{p-1}}$ and  $P_n$ is the universal generator of $H_n^\vee$ over $\fIG_{n,r,I}$. Unfortunately the
pair $(\omega_E, {\rm dlog}(P_n))$ does not satisfy the conditions of  Section \ref{sec:vbfs} because the cokernel of the inclusion ${\rm
dlog}(P_n)\bigl(\cO_{\fIG_{n,r,I}}/\ubeta_n\bigr)\hookrightarrow \omega_E/\ubeta_n\omega_E$ is precisely annihilated by the ideal $\udelta$. Therefore one of $\omega_E$ or
${\rm dlog}(P_n)$ must be modified.

\begin{definition} \label{def:bigomega}  We denote by $\Omega_E$ the $\cO_{\fIG_{n,r,I}}$-submodule
of $\omega_E$ generated by all the lifts of ${\rm dlog}(P_n)$.
\end{definition}
Recall the following properties (see Section \ref{sec:pdivconstructions}):

\begin{itemize}

\item[a)] $\Omega_E$ is a locally free $\cO_{\fIG_{n,r,I}}$-module of rank $1$.

\item[b)] The map ${\rm dlog}$ defines an isomorphism:
$$
H_n^\vee\bigl(\fIG_{n,r,I}\bigr)\otimes_{\Z}\cO_{\fIG_{n,r,I}}/\ubeta_n\cong \Omega_E/\ubeta_n\Omega_E.
$$

\end{itemize}
In particular it follows that the pair $(\Omega_E, s)={\rm dlog}(P_n))$ is a locally free sheaf with a marked section. Concerning (a) we have
$\Omega_E=\udelta\omega_E$ (with the notation of Remark \ref{rmk:delta}). In particular for $p\geq 5$ the sheaf $\Omega_E$ is a {\it free}
$\cO_{\fIG_{n,r,I}}$-module of rank one (instead of only a locally free one). We now apply the theory of Section \ref{sec:vbfs} to the pair $(\Omega_E,s)$ and we have the morphisms of formal schemes
$$
\bV_0(\Omega_E,s)\stackrel{\pi}{\lra}\fIG_{n,r,I}\stackrel{h_n}{\lra}\fX_{r,I},
$$
and we denote by $f_0:=h_n\circ \pi\colon \bV_0(\Omega_E,s)\lra \fX_{r,I}$.

\medskip
\noindent
Denote by $\fT\subset \fT^{\rm ext}$ the formal groups over $\fX_{r,I}$ defined on points by: if $\rho\colon S\lra \fX_{r,I}$ is a morphism of formal schemes, we
let $\fT(S):=1+\rho^\ast(\ubeta)_n \cO_S\subset \fT^{\rm ext}(S):=\Z_p^\ast\bigl(1+\rho^\ast(\ubeta_n) \cO_S\bigr)\subset \bG_{m,S}$. We only need to remark that
$\ubeta_n=p^n{\rm Hdg}^{\frac{p^n-1}{p-1}}$, which was so far used as an ideal of $\cO_{\fIG_{n,r,I}}$  is in fact an ideal of $\cO_{\fX_{r,I}}$.

We have natural actions of $\fT$ and respectively $\fT^{\rm ext}$ on $\bV_0(\Omega_E, s)$ over $\fIG_{n,r,I}$ and respectively
$\fX_{r,I}$, defined on points as follows:
\smallskip

(1) Let $\rho\colon S\lra \fIG_{n,r,I}$ be a morphism of formal schemes and let $t$ be an element of $\fT(S)$ and $v$ a point in $\bV_0(\Omega_E, s)(S)$. Let us
recall that $v\colon\rho^\ast(\Omega_E)\lra \cO_S$ is an $\cO_S$-linear map such that if denote by $\overline{v}:=v\bigl(\mbox{mod
}\rho^\ast(\ubeta_n\Omega_E)\bigr)$, then $\overline{v}(s)=1$. We define the action of $\fT(S)$ on $\bV_0(\Omega_E,s)(S)$  by $t\ast v:=tv.$ This is functorial and so
it defines an action of $\fT$ on on the morphism $\bV_0(\Omega_E, s)\lra \fIG_{n,r,I}$.

\smallskip
(2) Let now $u\colon S \lra \fX_{r,I}$ be a formal scheme. Then a point of $\bV_0(\Omega_E,s)(S)$ is a pair $(\rho, v)$ consisting of a lift $\rho\colon S \lra
\fIG_{n,r,I}$ of $u\colon S\lra \fX_{r,I}$ and an $\cO_S$-linear map $v\colon \rho^\ast(\Omega_E)\lra \cO_S$ such that
$\overline{v}\bigl(\overline{\rho}^\ast(s)\bigr)=1$.

\smallskip
Take $\lambda\in \Z_p^\ast$ and let $\overline{\lambda}$ be its image in $(\Z/p^n\Z)^\ast$, seen  as the Galois group of the adic generic fiber $IG_{n,r,I}$ of
$\fIG_{n,r,I}$ and let us denote by $\overline{\lambda}\colon \fIG_{n,r,I}\cong \fIG_{n,r,I}$ the automorphism over $\fX_{r,I}$ that it defines. Associated to
$\lambda$ there is a natural isomorphism $\gamma_\lambda\colon \Omega_E\cong \overline{\lambda}^\ast(\Omega_E)$ characterized  by the relation:
$\overline{\gamma}_\lambda\bigl({\rm dlog}(P_n)\bigr)=\overline{\lambda}^\ast\bigl({\rm dlog}(P_n)  \bigr)=(\overline{\lambda})^{-1}\cdot {\rm dlog}(P_n)$. Now if
$(\rho, v)$ is a point of $\bV_0(\Omega_E,s)(S)$ and $\lambda\in \Z_p^\ast$ we define $\lambda\ast (\rho, v):=\bigl(\overline{\lambda}\circ \rho,  v\circ
\gamma_\lambda^{-1}\bigr)$. One easily shows that with this definition $t\ast (\rho, v)\in \bV_0(\Omega_E, s)(S)$.

\begin{definition}\label{def:newomegak}
We define the sheaf $\fw^{\rm new, k,0}:=f_{0,\ast}\bigl(\cO_{\bV_0(\Omega_E, s)}  \bigr)\bigl[k_I^0\bigr]$, i.e., $\fw^{\rm new, k,0}$ is the sheaf on $\fX_{r,I}$
on whose sections $x$, the sections $t$ of $\fT^{\rm ext}$ act by $ t\ast x=k_I^0(t)\cdot x.$

\end{definition}

We first have

\begin{lemma}\label{lemma:fisovzero}
We have a natural isomorphism of formal schemes $a\colon \fF_{n,r,I}\lra \bV_0(\Omega_E, s)$ over $\fX_{r,I}$, wich behaves as follows with respect to the $\fT^{\rm ext}$-action: if $\sigma,x$ are section of $\fT^{\rm ext}$ and respectively of $\fF_{n,r,I}$, then $a(\sigma\ast x)=\sigma^{-1}\ast a(x)$.
\end{lemma}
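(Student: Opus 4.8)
The assertion is that $\fF_{n,r,I}$ and $\bV_0(\Omega_E,s)$ represent the same functor on formal schemes over $\fX_{r,I}$, up to the sign on the $\fT^{\rm ext}$-action. The natural strategy is to write out both moduli functors explicitly and exhibit a functorial bijection between their sets of $S$-points, then check compatibility with the group actions. Recall that, by the construction recalled in \S\ref{sec:omega}, for $\Spf(R)\to \fIG_{n,r,I}$ the set $\fF_{n,r,I}(R)$ consists of pairs $(\omega,P)$ with $\omega\in\omega_E(R)$, $P$ a generator of $H_n^\vee$ (not lying in $H_n^\vee[p^{n-1}]$), and $\omega={\rm dlog}(P)$ modulo $\ubeta_n\omega_E$; equivalently, an $S$-point of $\fF_{n,r,I}$ over $\fX_{r,I}$ is a lift $\rho\colon S\to\fIG_{n,r,I}$ of the structure map together with a generator $\omega$ of $\omega_E$ reducing to $\rho^\ast({\rm dlog}(P_n))$ modulo $\ubeta_n$. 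On the other side, by Definition \ref{def:V0} and the discussion following Definition \ref{def:bigomega}, an $S$-point of $\bV_0(\Omega_E,s)$ over $\fX_{r,I}$ is a pair $(\rho,v)$ with $\rho\colon S\to\fIG_{n,r,I}$ a lift of the structure map and $v\colon\rho^\ast(\Omega_E)\to\cO_S$ an $\cO_S$-linear map with $\overline{v}(\rho^\ast(s))=1$ in $\cO_S/\ubeta_n$.

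**The bijection.** The key point is that $\Omega_E$ is an \emph{invertible} $\cO_{\fIG_{n,r,I}}$-module with a distinguished element $s={\rm dlog}(P_n)$ that generates $\Omega_E/\ubeta_n\Omega_E$ (property (b) above); hence $s$ generates $\Omega_E$ itself by Nakayama, so $\Omega_E\cong\cO_{\fIG_{n,r,I}}\cdot s$, and dually an $\cO_S$-linear $v\colon\rho^\ast(\Omega_E)\to\cO_S$ is the same datum as the scalar $v(\rho^\ast(s))\in\cO_S^\ast$ — invertible precisely because $\overline{v}(\rho^\ast(s))=1$. I would send such a $v$ to the generator $\omega_v := v(\rho^\ast(s))^{-1}\cdot \rho^\ast(s)\in\omega_E(S)$ (using $\Omega_E\subset\omega_E$); then $\omega_v\equiv\rho^\ast(s)={\rm dlog}(\rho^\ast P_n)$ modulo $\ubeta_n$ because $v(\rho^\ast(s))\equiv 1$, so $(\rho,\omega_v)$ is a legitimate $S$-point of $\fF_{n,r,I}$. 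Conversely, given $(\rho,\omega)\in\fF_{n,r,I}(S)$, the condition $\omega\equiv{\rm dlog}(\rho^\ast P_n)$ modulo $\ubeta_n$ forces $\omega$ to lie in $\Omega_E(S)$ and to be a generator, so $\omega=\lambda\cdot\rho^\ast(s)$ for a unique $\lambda\in\cO_S^\ast$ with $\lambda\equiv 1$ modulo $\ubeta_n$, and one sets $v$ to be the functional with $v(\rho^\ast(s))=\lambda^{-1}$. These assignments are mutually inverse and evidently functorial in $S$, yielding the isomorphism of formal schemes $a\colon\fF_{n,r,I}\xrightarrow{\ \sim\ }\bV_0(\Omega_E,s)$ over $\fX_{r,I}$.

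**Compatibility with the $\fT^{\rm ext}$-action.** Finally I would match the two actions. Writing $\sigma=\lambda t$ with $\lambda\in\Z_p^\ast$ and $t\in\fT(S)=1+\rho^\ast(\ubeta_n)\cO_S$, the action on $\fF_{n,r,I}$ sends $(\omega,P)$ to $((\lambda t)\omega,\lambda P)$, while the action on $\bV_0(\Omega_E,s)$ was defined (cases (1) and (2) after Definition \ref{def:newomegak}) by $t\ast v = tv$ and $\lambda\ast(\rho,v)=(\overline\lambda\circ\rho,\ v\circ\gamma_\lambda^{-1})$, where $\overline\gamma_\lambda({\rm dlog}(P_n))=\overline\lambda^{-1}\cdot{\rm dlog}(P_n)$. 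Tracing $a$ through: if $\omega$ corresponds to $v$ with $v(\rho^\ast s)=\lambda_0^{-1}$, then scaling $\omega$ by $t$ scales $v(\rho^\ast s)$ by $t^{-1}$, i.e. $t$ acts on the scalar by $t^{-1}$ on the $v$-side versus $t$ on the $\omega$-side; and the generator-change $\lambda P$ together with the normalization $\overline\gamma_\lambda$ introduces the inverse on the $\Z_p^\ast$-part as well. Carefully bookkeeping these inverses gives exactly $a(\sigma\ast x)=\sigma^{-1}\ast a(x)$ for every section $\sigma$ of $\fT^{\rm ext}$ and $x$ of $\fF_{n,r,I}$. The main obstacle, though conceptually minor, is purely the sign/inverse bookkeeping in this last step — one must be consistent about whether $v$ records $\omega$ or its reciprocal and about the direction of the Galois normalization of $\gamma_\lambda$ — and about checking that the ``$P\notin H_n^\vee[p^{n-1}]$'' condition on the $\fF_{n,r,I}$ side is automatically preserved, which follows since $s$ is by construction the image of the universal \emph{generator} $P_n$ and scaling by a unit congruent to $1$ mod $\ubeta_n$ does not change the property of being a generator.
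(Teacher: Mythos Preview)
Your approach is correct and coincides with the paper's: both send a generator $\omega$ of $\Omega_E$ to its dual functional $\omega^\vee$ (the unique $\cO_S$-linear map with $\omega^\vee(\omega)=1$). Your presentation goes through an implicit choice of lift of $s$ to $\Omega_E$, writing $\omega=\lambda\cdot\tilde s$ and $v(\tilde s)=\lambda^{-1}$; since this forces $v(\omega)=1$, your $v$ is exactly the paper's $\omega^\vee$, and in particular the construction is independent of the lift (as one checks immediately, or as becomes obvious once you phrase it as $\omega\mapsto\omega^\vee$). One small imprecision to clean up: $s={\rm dlog}(P_n)$ lives only in $\Omega_E/\ubeta_n\Omega_E$, so ``$s$ generates $\Omega_E$ by Nakayama'' and ``$\rho^\ast(s)\in\omega_E(S)$'' are abuses---what is true is that any lift of $s$ generates $\Omega_E$, and your formulas should be read with such a lift; the coordinate-free description $\omega\mapsto\omega^\vee$ avoids this entirely.
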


\begin{proof}
We will define the morphism $a$ on $S$-points, where $s\colon S\lra \fX_{r,I}$ is a morphism of formal schemes. A point of $\fF_{n,r,I}(S)$ is a pair $(\rho,
\omega)$ where $\rho=\rho_P$ is a lift of $s$ to a morphism $\rho: S \lra \fIG_{n,r,I}$, $P=\overline{\rho}(P_n)$ is a generator of  $H_n^\vee(S)$ and $\omega\in {\rm
H}^0(S, \rho^\ast(\Omega_E))$ is such that $\overline{\omega}={\rm dlog}(P)$. We define $a_S(\rho, \omega):=(\rho, \omega^\vee)$, i.e. if $\overline{\omega}={\rm
dlog}(P)$ and $P$ is a generator of $H_n^\vee(S)$, then $\omega$ is an $\cO_S(S)$-basis of ${\rm H}^0(S, \rho^\ast(\Omega_E))$ and we denote by $\omega^\vee$ the
unique $\cO_S$-linear map $\omega^\vee\colon \rho^\ast(\Omega_E)\lra \cO_S$ such that $\omega^\vee(\omega)=1$.

It is obvious that $(\rho, \omega^\vee)\in \bV_0(\Omega_E, s)(S)$
and we leave it to the reader to check that all the properties
claimed in the lemma follow easily.
\end{proof}
Lemma \ref{lemma:fisovzero} implies the following

\begin{corollary}\label{cor:wnew=w}
We have an isomorphism of $\cO_{\fX_{r,I}}$-modules $\fw^{\rm new, k,0}\cong \fw^{k,0}$ on $\fX_{r,I}$.
\end{corollary}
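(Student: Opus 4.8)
The plan is to deduce this corollary formally from Lemma \ref{lemma:fisovzero}, by pushing the isomorphism $a\colon \fF_{n,r,I}\stackrel{\sim}{\lra}\bV_0(\Omega_E,s)$ down to $\fX_{r,I}$ and then passing to $k_I^0$-isotypic components for the $\fT^{\rm ext}$-action. Since $a$ is an isomorphism of formal schemes over $\fX_{r,I}$ and the structural morphisms are $f_0=h_n\circ\pi$ on the $\bV_0(\Omega_E,s)$-side and $g_n\circ f_n$ on the $\fF_{n,r,I}$-side, one has $f_0\circ a=g_n\circ f_n$, and pulling back functions along $a$ yields an isomorphism of $\cO_{\fX_{r,I}}$-algebras
$$a^\ast\colon f_{0,\ast}\bigl(\cO_{\bV_0(\Omega_E,s)}\bigr)\stackrel{\sim}{\lra}(g_n\circ f_n)_\ast\bigl(\cO_{\fF_{n,r,I}}\bigr).$$

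Next I would record the compatibility of $a^\ast$ with the two $\fT^{\rm ext}$-actions. By the last assertion of Lemma \ref{lemma:fisovzero} we have $a(\sigma\ast x)=\sigma^{-1}\ast a(x)$ for every local section $\sigma$ of $\fT^{\rm ext}$ and every local section $x$ of $\fF_{n,r,I}$; translating this into a statement about functions shows that $a^\ast$ intertwines the induced action of $\sigma$ on $f_{0,\ast}\cO_{\bV_0(\Omega_E,s)}$ with the induced action of $\sigma^{-1}$ on $(g_n\circ f_n)_\ast\cO_{\fF_{n,r,I}}$. Because the $\fT^{\rm ext}$-actions are $\cO_{\fX_{r,I}}$-linear, the isomorphism $a^\ast$ therefore restricts, for every continuous character $\chi$ of $\fT^{\rm ext}$, to an isomorphism between the subsheaf of $f_{0,\ast}\cO_{\bV_0(\Omega_E,s)}$ on which $\sigma$ acts by $\chi(\sigma)$ and the subsheaf of $(g_n\circ f_n)_\ast\cO_{\fF_{n,r,I}}$ on which $\sigma$ acts by $\chi(\sigma)^{-1}$.

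Finally, taking $\chi=k_I^0$ (which extends to $\fT^{\rm ext}$ under the standing hypotheses on $n$, $r$, $I$ of Definition \ref{defi:wnrI}), the left-hand subsheaf is by Definition \ref{def:newomegak} exactly $\fw^{\rm new,k,0}=f_{0,\ast}(\cO_{\bV_0(\Omega_E,s)})[k_I^0]$, whereas the right-hand subsheaf --- the one on which $\fT^{\rm ext}$ acts through $(k_I^0)^{-1}$ --- is by Definition \ref{defi:wnrI} exactly $\fw^{k,0}=\bigl((g_n\circ f_n)_\ast\cO_{\fF_{n,r,I}}\bigr)[(k_I^0)^{-1}]$. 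Hence $a^\ast$ induces the desired $\cO_{\fX_{r,I}}$-linear isomorphism $\fw^{\rm new,k,0}\cong\fw^{k,0}$. There is no serious obstacle here: the entire content sits in Lemma \ref{lemma:fisovzero} and in the constructions of $\bV_0(\Omega_E,s)$, of $a$, and of the two $\fT^{\rm ext}$-actions; the one point to handle with care is the inversion $\sigma\mapsto\sigma^{-1}$ built into $a$, which is precisely what makes the eigensheaf for $k_I^0$ on one side match the eigensheaf for $(k_I^0)^{-1}$ on the other, reconciling the two sign conventions in Definitions \ref{def:newomegak} and \ref{defi:wnrI}.
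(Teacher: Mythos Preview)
Your proof is correct and takes essentially the same approach as the paper, which simply states that the corollary follows from Lemma \ref{lemma:fisovzero} without writing out the details. You have spelled out precisely the formal argument the paper leaves implicit: pushing forward the isomorphism $a$ to $\fX_{r,I}$ and using the inversion $a(\sigma\ast x)=\sigma^{-1}\ast a(x)$ to match the $k_I^0$-eigensheaf on the $\bV_0(\Omega_E,s)$-side with the $(k_I^0)^{-1}$-eigensheaf on the $\fF_{n,r,I}$-side, exactly reconciling Definitions \ref{def:newomegak} and \ref{defi:wnrI}.
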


\subsubsection{Local description of $\fw^{\rm new, k,0}$.}

Let $\rho\colon S=\Spf(R)\lra \fIG_{n,r,I}$ be a morphism of
formal schemes, where $S$ does not have $\alpha$-torsion and such
that $\rho^\ast(\omega_E)$ is a free $R$-module. We choose an
$R$-basis $\omega$ of $\rho^\ast(\omega_E)$ and denote
$\beta_n:=p^n/(\widetilde{\mathrm{Ha}}(E/R,
\omega))^{\frac{p^n-1}{p-1}}$, $\delta:=\Delta(E/R, P_1, \omega)$
generators of $\rho^\ast(\ubeta_n)$ and respectively
$\rho^\ast(\udelta)$. Let $e$ denote an $R$-basis of
$\rho^\ast(\Omega_E)$ such that $e(\mbox{mod
}\beta_nR)=\rho^\ast({\rm dlog}(P_n))$. Then we have:
$\bV_0(\Omega_E, s)(S)= \{v\colon\rho^\ast(\Omega_E)\lra R,
R-\mbox{linear, such that }\overline{v}(s)=1
\}=(1+\beta_nR)e^\vee$, where $e^\vee$ is the dual basis to $e$.
As described in Section \ref{sec:vbfs},
$\rho^\ast\bigl(\cO_{\bV_0(\Omega_E, s)}\bigr)=R\langle Z\rangle$,
where the point $v=(1+\beta_nr)e^\vee\in \bV_0(\Omega_E, s)(S)$
corresponds to the $R$-algebra homomorphism $\displaystyle
R\langle Z\rangle \lra R$ sending $Z\rightarrow
r=\frac{(1+\beta_nr)-1}{\beta_n}$. We define the action of $\fT(S)$ on $R\langle Z\rangle$ by:
$$
t\ast Z \mbox{ is the element of } R\langle Z\rangle \mbox{ such that } (t\ast v)(Z)=v(t\ast Z)\mbox{ for all } v\in \bV_0(\Omega_E,s)(S), t\in \fT(S).
$$

More precisely, suppose that $t=1+\beta_n b\in \fT(S)$ and $v=(1+\beta_n a)e^\vee$. Let us denote
$t\ast Z=\sum_{n=0}^\infty a_nZ^n$ with $a_n\in R$ and $a_n\rightarrow 0$ as $n\rightarrow \infty$.
Then we have: $v(t\ast Z)=\sum_{n=0}^\infty a_n a^n$ and $(t\ast v)(Z)=a+b+\beta_n ab.$
Therefore we obtain: $b+(1+\beta_n b)a=\sum_{n=0}^\infty a_n a^n$ for all $a\in R$.

It follows that for $t\in \fT(S)$, the action of $t$ on $R\langle Z\rangle$ id given by:
$$
t\ast Z:=\frac{t-1}{\beta_n}+tZ.
$$
Let us denote by $\fw^{',k}:=\pi_\ast\bigl(\cO_{\bV_0(\Omega_E, s)}\bigr)[k_I]$, where the action is the action of $\fT$. Here $\fw^{',k}$ is a sheaf on
$\fIG_{n,r,I}$ which can be described locally by the following lemma.

\begin{lemma}\label{lemma:local1}
a) $\rho^\ast\bigl(\fw^{', k}  \bigr)=R\langle Z\rangle [k]=R\cdot (1+\beta_nZ)^{k}=R\cdot k(1+\beta_nZ)$.

b) $\fw^{',k}$ is a locally free $\cO_{\fIG_{n,r,I}}$-module of rank one.
\end{lemma}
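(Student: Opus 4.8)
The plan is to pin down the $k$-isotypic part of $R\langle Z\rangle$ by exhibiting an explicit unit eigenvector. Since $n$ satisfies the hypotheses of Definition \ref{defi:wnrI}, the character $k=k_I$ extends to a homomorphism of formal groups $\fT^{\rm ext}\to\bG_m$; evaluating it on the tautological $R\langle Z\rangle$-valued point $1+\beta_n Z$ of $\fT$ produces an element
$$ w_0 := k(1+\beta_n Z) = (1+\beta_n Z)^{k} \in \bG_m\bigl(R\langle Z\rangle\bigr) = R\langle Z\rangle^\times. $$
(Equivalently $w_0=\sum_{j\ge 0}\binom{u_k}{j}(\beta_n Z)^j$, whose convergence in the integral Tate algebra $R\langle Z\rangle$ is precisely the content of the extension of $k$ to $\fT^{\rm ext}$.)

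Next I would check that $w_0$ lies in $\bigl(R\langle Z\rangle\bigr)[k]$. Using the formula $t\ast Z=\frac{t-1}{\beta_n}+tZ$ recorded just above the statement, a direct computation gives $1+\beta_n(t\ast Z)=t\,(1+\beta_n Z)$. Since $F(Z)\mapsto F(t\ast Z)$ is a continuous $R$-algebra automorphism of $R\langle Z\rangle$ and $k$ is a homomorphism of formal groups, it follows that for every section $t$ of $\fT$
$$ t\ast w_0 = k\bigl(1+\beta_n(t\ast Z)\bigr) = k\bigl(t(1+\beta_n Z)\bigr) = k(t)\,k(1+\beta_n Z) = k(t)\,w_0, $$
so $w_0$ transforms via $k$.

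Now multiplication by the unit $w_0$ is an $R\langle Z\rangle$-linear automorphism of $R\langle Z\rangle$, and by the eigenvector property $g$ lies in $\bigl(R\langle Z\rangle\bigr)[k]$ if and only if $g\cdot w_0^{-1}$ is $\fT$-invariant; so part (a) reduces to the identity $\bigl(R\langle Z\rangle\bigr)^{\fT}=R$. For this I would observe that over $S=\Spf R$ as in the statement, the basis $e$ of $\rho^\ast(\Omega_E)$ lifting $\rho^\ast({\rm dlog}(P_n))$ trivialises $\bV_0(\Omega_E,s)|_S$ as a $\fT$-torsor: the assignment $v\mapsto v(e)\in 1+\beta_n R=\fT(R)$ is an isomorphism $\bV_0(\Omega_E,s)|_S\stackrel{\sim}{\lra}\fT_S$ intertwining the $\ast$-action with translation. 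Hence $\rho^\ast\bigl(\cO_{\bV_0(\Omega_E,s)}\bigr)=R\langle Z\rangle$ is identified with $\cO(\fT_S)$ equipped with the translation action of $\fT$, and an invariant function, being constant along all translates, is determined by its value at the identity section; thus $\bigl(R\langle Z\rangle\bigr)^{\fT}=R$. Combining the two reductions, $\rho^\ast(\fw^{',k})=R\langle Z\rangle[k]=R\cdot w_0=R\cdot(1+\beta_n Z)^{k}$, which is part (a). Part (b) is then immediate: $\fw^{',k}$ is globally defined as the $k$-isotypic subsheaf of $\pi_\ast\cO_{\bV_0(\Omega_E,s)}$, and the local computation just made exhibits it as free of rank one over every affine $\Spf R\subset\fIG_{n,r,I}$ over which $\omega_E$ (hence $\Omega_E$) is free; such affines cover $\fIG_{n,r,I}$.

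The step I expect to require the most care is the first one: checking that $w_0=k(1+\beta_n Z)$ genuinely lies in, and is a unit of, the integral Tate algebra $R\langle Z\rangle$ rather than merely of $R\langle Z\rangle[1/p]$ — this is exactly where the lower bounds on $n$ in Definition \ref{defi:wnrI} (guaranteeing that $k$ extends to a character of $\fT^{\rm ext}$) are used. Granting that, both the eigenvector identity and the torsor/invariants argument are formal.
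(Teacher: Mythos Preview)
Your proof is correct and follows the same structure as the paper's: exhibit $(1+\beta_nZ)^k$ as a unit eigenvector for the $\fT$-action, then reduce the opposite inclusion to $\bigl(R\langle Z\rangle\bigr)^{\fT}=R$. The only difference is in this last step, where the paper argues by a direct power-series computation (evaluating an invariant series at $Z=0$), while you invoke the $\fT$-torsor structure and the standard fact that translation-invariant functions on a formal group are constant; both arguments are short and essentially equivalent.
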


\begin{proof}
b) is a consequence of a) so let us prove a).

Let us first see that because of the analyticity properties of $k$, we have $(1+\beta_nZ)^{k}:=k(1+\beta_nZ)\in R\langle Z\rangle^\ast$. Moreover if $t\in \fT(S)$ we
have:
$$
t\ast (1+\beta_nZ)=t\cdot (1+\beta_nZ)\mbox{ which implies } t\ast(1+\beta_nZ)^{k}=
k(t)(1+\beta_nZ)^{k},
$$
i.e., $R(1+\beta_nZ)^{k}\subset R\langle Z\rangle [k].$ To show
the inverse inclusion it would be enough to see that $R\langle
Z\rangle ^{\fT(S)}=R$. Let $g(Z)=\sum_{n=0}^\infty a_nZ^n\in
R\langle Z\rangle^{\fT(S)}$. Then if $t\in \fT(S)$ we have
$$
g(Z)=t\ast g(Z)=\sum_{n=0}^\infty a_n\Bigl(\frac{t-1}{\beta_n}+tZ\Bigr)^n,
$$
for $Z=0$ the above relation implies: for all $u\in R$, $\sum_{n=1}^\infty a_nu^n=0$ which implies that $a_n=0$ for all $n\ge 1$. Therefore $g(Z)\in R$.
\end{proof}

\

\noindent If we wish to describe $\fw^{\rm new, k}$ we need to consider the residual $\Z_p^\ast$-action on $\fw^{',k}$ and descend from $\fIG_{n,r,I}$ to
$\fX_{r,I}$.

Suppose first $n=1$. In that case $\Omega_E=\Delta\cO_{\fIG_{1,r,I}}$ and if $\rho:S=\Spf(R) \lra\fIG_{1,r,I}$ is a morphism of formal schemes lifting $s:S\lra
\fX_{r,I}$, and $\omega,\beta_1, \delta$ are as above, we can choose a basis $e$ of $\rho^\ast(\Omega_E)$ to be: $e=\delta\omega=\Delta$. We then know that
$e(\mbox{mod }\beta_1R)={\rm dlog}(P_1)$ and $e^\vee:=\delta^{-1}\omega^\vee$. If $\lambda\in \Z_p^\ast$ and denote by $\overline{\lambda}$ its image in
$(\Z/p\Z)^\ast$, then we have $\overline{\lambda}\ast e^\vee= w(-\overline{\lambda})e^\vee$, where $w:(\Z/p\Z)^\ast \lra \mu_{p-1}$ is the Teichm\"uller map.

Therefore if we denote by $Z$ the function on $\bV_0(\Omega_E, s)_S$ corresponding to this choice of basis we have: if $\lambda \in \Z_p^\ast$ then
$$
\lambda\ast Z=\frac{\lambda-w(\overline{\lambda})}{p}+\lambda Z,
$$
and so $\lambda\ast (1+pZ)^{k}=k(\lambda)(1+pZ)^{k}=k(\lambda)(1+pZ)^{k}$. Therefore $s^\ast (\fw^{', k})=R\langle Z\rangle [k]$ for the action of the big torus $\fT^{\rm
ext}(S)$, i.e., $s^\ast(\fw^{', k})=R\cdot (1+pZ)^{k}$. In particular $\fw^{\rm new, k}$ is a locally free $\cO_{\fX_{r,I}}$-module of rank $1$.

\

\noindent If $n\ge 2$ then such explicit actions of $\Z_p^\ast$ on $\fw^{',k}$ cannot be found and so in order to descend to $\fX_{r,I}$ one has to use traces as in
\cite{halo_spectral}.

\subsection{The sheaf $\bW_k$.}
\label{sec:wk}

We fix a closed  interval $I\subset [0,\infty)$ as in  Section \S \ref{sec:omega} and denote by $n$, $r$ integers compatible with this choice of interval as in Definition \ref{defi:wnrI}. Let us
also denote by $(\Lambda_I^0,\alpha)$ a pair as in Section \S \ref{sec:omega} and denote by $\Omega_E$ the subsheaf of $\omega_E$ given in Definition
\ref{def:bigomega}.

Let ${\rm H}_E$ denote the contravariant Dieudonn\'e-module attached to the $p$-divisible group of the universal elliptic curve of the complement of the cusps in
$X=X_1(N)$. It is a locally free coherent sheaf on this complement with an integrable connection $\nabla$ and a Hodge filtration. The sheaf ${\rm H}_E$ extends
naturally to a locally free $\cO_X$-module over the whole $X$, denoted also ${\rm H}_E$ with:
\smallskip

a)  a logarithmic connection $\nabla\colon {\rm H}_E\lra {\rm H}_E\otimes_{\cO_X}\Omega^1_{X/\Z_p}({\rm log}(C))$, where $C$ is the divisor of the cusps.

\smallskip

b) a Hodge filtration
$$
0\lra \omega_E\lra {\rm H}_E\lra \omega_E^{-1}\lra 0.
$$

Having fixed $I$, $r$, $n$ we have natural formal schemes with morphisms $\fIG_{n,r,I}\lra \fX_{r,I}\lra \fY$ (the formal scheme associated to $X_1(N)$) and we can base-change the triple $({\rm H}_E,
\nabla, {\rm Fil}^\bullet)$ over $\fIG_{n,r,I}$ where we denote it by the same symbols: $({\rm H}_E, \nabla, {\rm Fil}^\bullet)$.

The data $I$, $r$, $n$ also fixes a universal weight $k\colon \Z_p^\ast\lra \Lambda_I^\ast$. Let us denote by (see Section \ref{sec:pdivconstructions}) ${\rm
H}_E^\#:=\Omega_E+\udelta^p{\rm H}_E$. As $\udelta$ is a locally free $\cO_{\fIG_{n,r,I}}$-module of rank $1$, then $\Hsharp$ is a locally free $\cO_{\fIG_{n,r,I}}$-module of
rank $2$, with Hodge filtration given by the exact sequence
$$
0\lra \Omega_E\lra \Hsharp\lra \udelta^p\omega_E^{-1}\lra 0.
$$
Therefore if we consider the ideal $\ubeta_n=p^n/ {\rm Hdg}^{\frac{p^n-1}{p-1}}$ of
$\cO_{\fX_{r,I}}$ and we denote by $s:={\rm dlog}(P_n)\in \Omega_E/\ubeta_n\Omega_E\hookrightarrow \Hsharp/\ubeta_n\Hsharp$, then the pair $(\Hsharp, s)$ is a pair consisting
of a locally free sheaf and a marked section. We therefore have the sequence of formal schemes and morphisms of formal schemes $\displaystyle \bV_0(\Hsharp,
s)\stackrel{\pi}{\lra}\fIG_{n,r,I}\stackrel{h_n}{\lra}\fX_{r,I}$.

We denote by  $\mathrm{H}_{E,\sharp}$ the dual of $\mathrm{H}_E^\sharp$ and by $f_0\colon  \V_0(\Hsharp,s)\lra \mathfrak{X}_{r,I}$ the structure morphism, i.e. $f_0:=h_n\circ \pi$.

\subsubsection{Actions of formal tori on  $\bV_0(\Hsharp,s)$.}

Let us recall that we have denoted by $\fT\subset \fT^{\rm ext}$
the formal groups over $\fX_{r,I}$ defined by: if $\rho\colon S \lra
\fX_{r,I}$ is a morphism of formal schemes, then
$\fT(S):=1+\rho^\ast(\ubeta_n) \cO_S\subset \fT^{\rm
ext}(S):=\Z_p^\ast\bigl(1+\rho^\ast(\ubeta_n) \cO_S\bigr)\subset
\bG_{m,S}$.

As in Section \ref{sec:new}, we have natural actions of $\fT$ and respectively $\fT^{\rm ext}$ on $\bV_0(\Hsharp, s)$ over $\fIG_{n,r,I}$ and respectively
$\fX_{r,I}$. Let us quickly recall how this action is defined on points:
\smallskip

(1) Let $\rho\colon S\lra \fIG_{n,r,I}$ be a morphism of formal schemes and let $t$ be an element of $\fT(S)$ and $v$ a point in $\bV_0(\Hsharp, s)(S)$.
We define the action of $\fT(S)$ on $\bV_0(\Hsharp,s)(S)$  by $t\ast v:=tv.$ This is functorial and so
it defines an action of $\fT$ on $\bV_0(\Hsharp, s)$ over $\fIG_{n,r,I}$.

\smallskip
(2) Let now $u\colon S \lra \fX_{r,I}$ be a formal scheme. Then a point of $\bV_0(\Hsharp,s)(S)$ is a pair $(\rho, v)$ consisting of a lift $\rho:S \lra
\fIG_{n,r,I}$ of $u\colon S\lra \fX_{r,I}$ and an $\cO_S$-linear map $v\colon \rho^\ast(\Hsharp)\lra \cO_S$ such that
$\overline{v}\bigl(\overline{\rho}^\ast(s)\bigr)=1$.

\smallskip
Let now $\lambda\in \Z_p^\ast$ and let $\overline{\lambda}$ be its image in $(\Z/p^n\Z)^\ast$, seen  as the Galois group of the adic generic fiber $IG_{n,r,I}$ of
$\fIG_{n,r,I}$ and let us denote by $\overline{\lambda}\colon \fIG_{n,r,I}\cong \fIG_{n,r,I}$ the automorphism over $\fX_{r,I}$ that it defines.

Associated to $\lambda$ there is a natural isomorphism $\gamma_\lambda\colon \Hsharp\cong \overline{\lambda}^\ast(\Hsharp)$ characterized  by:
$\overline{\gamma}_\lambda\bigl({\rm dlog}(P_n)\bigr)=\overline{\lambda}^\ast\bigl({\rm dlog}(P_n)  \bigr)=(\overline{\lambda})^{-1}\cdot {\rm dlog}(P_n)$.

Therefore if $(\rho, v)$ is a point of $\bV_0(\Hsharp,s)(S)$ and $\lambda\in \Z_p^\ast$ we define $\lambda\ast (\rho, v):=\bigl(\overline{\lambda}\circ \rho,  v\circ
\gamma_\lambda^{-1}\bigr)$. As in Section \ref{sec:new} one can show that with this definition $t\ast (\rho, v)\in \bV_0(\Hsharp, s)(S)$.

Let us recall that we have a universal weight associated to our choices of $r$, $I$, $n$ and that the analyticity properties of this weight imply that if $t\in
\fT^{\rm ext}(S)$ for some formal scheme $S\lra \fX_{r,I}$ then we can evaluate $k(t)$ and we get a section of $\cO_S^\ast$.

\

\begin{definition}\label{def:omegak} Fix $r$, $n$ and a closed interval $I:=[p^a, p^b]\subset [0, \infty)$ as in Definition \ref{defi:wnrI}.  We define the sheaf $\bW^0_{k,I}:=f_{0,\ast}\bigl(\cO_{\bV_0(\Hsharp, s)} \bigr)[k]$, i.e.
$\bW^0_{k,I}$ is the sheaf on $\fX_{r,I}$ on whose sections $x$, the sections $t$ of $\fT^{\rm ext}$ act by $ t\ast x=k(t)\cdot x$. 

For $r\geq 3$ if $p\geq 3$ and $r\geq 5$ for $p=2$ and $I=[p,\infty]$ we
define $\bW^0_{k,I}=\lim_{n \geq 1} \bW^0_{k,[p^n,p^{n+1}]}$.

We let $\bW_{k,I}:=\bW_{k,I}^0\otimes_{\cO_{\fX_{r,I}}}\fw^{k_f}$.

\end{definition}

Let us point out that the inclusion $\Omega_E\subset \Hsharp$
gives a filtration of locally free sheaves with marked sections
$(\Omega_E, s)\hookrightarrow (\Hsharp, s)$, therefore the sheaf
$f_{0,\ast}\bigl(\cO_{\bV_0(\Hsharp, s)}\bigr)$  has a canonical
filtration ${\rm
Fil}_\bullet\Bigl(f_{0,\ast}\bigl(\cO_{\bV_0(\Hsharp, s)}\big)\Bigr):=f_{0,\ast}\bigl(\Fil_\bullet \cO_{\bV_0(\Hsharp, s)}\bigr)$.

\begin{theorem}\label{thm:descentbWk}
The action $\fT^{\rm ext}$ on $f_{0,\ast}\bigl(\cO_{\bV_0(\Hsharp,s)}\bigr)$ preserves the filtration $f_{0,\ast}\bigl(\Fil_\bullet \cO_{\bV_0(\Hsharp, s)}\bigr)$
defined in Corollary \ref{cor:fil}. For every $h$ define $\Fil_h\bW^0_{k,I}:=f_{0,\ast}\bigl(\Fil_h \cO_{\bV_0(\Hsharp, s)}\bigr)[k].$ for $r$, $n$ and  $I\subset [0, \infty)$ as in Definition \ref{defi:wnrI} and, for $r\geq 3$ if $p\geq 3$ and $r\geq 5$ for $p=2$,  as $\Fil_h\bW^0_{k,I}:=\lim_{ n \geq 1} \Fil_h \bW^0_{k,[p^n,p^{n+1}]}$ for $I=[p,\infty]$. Then,

\begin{itemize}

\item[i.]  $\Fil_h\bW^0_{k,I}$ is a locally free $\cO_{\fX_{r,I}}$-module for the Zariski topology on $\fX$;

\item[ii.] $\bW^0_{k,I}$ is the $\alpha$-adic completions of $\lim_h \Fil_h\bW^0_{k,I}$.

\item[iii.] $\Fil_0\bW^0_{k,I}\cong \fw^{k,0}_I$ and $ \mathrm{Gr}_h\bW^0_{k,I} \cong \fw^{k,0}_I \otimes_{\cO_{\mathfrak{X}_{r,I}}} \mathrm{Hdg}^h \omega_{E}^{-2h}$.

\end{itemize}

Define  ${\rm Fil}_h\bW_{k,I}:=
{\rm Fil}_h\bW_{k,I}^0\otimes_{\cO_{\fX_{r}}}\fw^{k_f}$. It defines an increasing filtration $\{\Fil_h\bW_{k,I}\}_h$ by direct summands such that the analogous Claims (ii) and (iii) hold
(replacing $\fw^{k,0}_I$ with $\fcw^{k}_I$). \smallskip

The sheaves $\bW_{k,I}^0$, $\bW_{k,I}$ and their filtrations glue to sheaves $\bW_{k}^0$, $\bW_{k}$,  $\{\Fil_h\bW^0_{k}
\}_h$, $\{\Fil_h\bW_{k}
\}_h$ over $\fX_{r,[0,\infty)}$ (resp. $\fX_{r,[0,\infty]}$) if $r\geq 1$ (resp. $r\geq 3$) if $p\geq 3$ and $r\geq 3$ (resp. $r\geq 5$) for $p=2$.\smallskip

Finally if $k\in \N$ is a classical weight then we have a canonical identification $${\rm Sym}^k\bigl({\rm
H}_E\bigr)[1/\alpha]={\rm Fil}_k(\bW_k)[1/\alpha]$$as sheaves on $\mathcal{X}_{r,I}$, compatibly with the filtrations considering on ${\rm Sym}^k\bigl({\rm
H}_E\bigr)$ the natural Hodge filtration.

\end{theorem}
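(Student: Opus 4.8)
The plan is to exhibit a canonical filtered morphism ${\rm Sym}^k({\rm H}_E)[1/\alpha]\to \Fil_k(\bW_k)[1/\alpha]$ out of the tautological identification of $\cO_{\bV(\Hsharp)}$ with a symmetric algebra, and then to prove it is an isomorphism by reducing to the associated graded, where Theorem~\ref{thm:descentbWk}(iii) forces agreement. The first step is to record the behaviour of the sheaves involved after inverting $\alpha$: on $\cX_{r,I}$ the ideal $\mathrm{Hdg}$ — equivalently $\udelta$, by Remark~\ref{rmk:delta} — becomes the unit ideal, so the inclusions $\Omega_E=\udelta\,\omega_E\hookrightarrow\omega_E$ and $\udelta^p{\rm H}_E\hookrightarrow{\rm H}_E$ become isomorphisms. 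Since $\Hsharp=\Omega_E+\udelta^p{\rm H}_E$, this gives a canonical identification $\Hsharp[1/\alpha]={\rm H}_E[1/\alpha]$ under which the Hodge filtration $\Omega_E\subset\Hsharp$ corresponds to $\omega_E\subset{\rm H}_E$; in particular $\Hsharp/\Omega_E=\udelta^p\omega_E^{-1}$ maps isomorphically onto $\omega_E^{-1}$.

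Next I would build the map. By Lemma~\ref{lem:V(E)}, $f_\ast\cO_{\bV(\Hsharp)}$ is the $\alpha$-adic completion of ${\rm Sym}(\Hsharp)$, so there is a tautological inclusion ${\rm Sym}^k(\Hsharp)\hookrightarrow f_\ast\cO_{\bV(\Hsharp)}$; composing with restriction along the blow-up morphism $\bV_0(\Hsharp,s)\to\bV(\Hsharp)$ of Lemma~\ref{lem:V0} yields ${\rm Sym}^k(\Hsharp)\to f_{0,\ast}\cO_{\bV_0(\Hsharp,s)}$. Three points pin down the target. (a) Under $t\ast v=tv$ a linear function scales by $t$, hence a degree-$k$ homogeneous polynomial scales by $t^k$; since $k\in\N$ one has $k(t)=t^k$, so — once the $\Z_p^\ast$-part of $\fT^{\rm ext}$ is matched through the ${\rm dlog}$-twist $\gamma_\lambda$ of Section~\ref{sec:wk} together with the $\fw^{k_f}$-twist of Definition~\ref{def:omegak} — the image lies in $\bW_k$. (b) The filtration $\Fil_h\cO_{\bV_0(\Hsharp,s)}$ of Corollary~\ref{cor:fil} is by degree $\le h$ in the $\Hsharp/\Omega_E$-direction, so a monomial of total degree $k$ involving $j$ factors from a lift of $\Hsharp/\Omega_E$ lands in $\Fil_j$; in particular ${\rm Sym}^k(\Hsharp)$ maps into $\Fil_k$. (c) Inverting $\alpha$ and invoking the previous paragraph turns this into a morphism $\phi_k\colon {\rm Sym}^k({\rm H}_E)[1/\alpha]\to\Fil_k(\bW_k)[1/\alpha]$ carrying the $h$-th step of the Hodge filtration into $\Fil_h\bW_k[1/\alpha]$.

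Finally I would show $\phi_k$ is an isomorphism by checking it on $\mathrm{gr}_\bullet$. Both sides carry finite filtrations by locally free direct summands — Theorem~\ref{thm:descentbWk}(i)--(iii) on one side, the Hodge filtration on ${\rm Sym}^k({\rm H}_E)$ on the other — so it suffices that $\mathrm{gr}_h(\phi_k)$ be an isomorphism for $0\le h\le k$. By Theorem~\ref{thm:descentbWk}(iii) and the first paragraph, $\mathrm{Gr}_h\bW_k[1/\alpha]\cong\fw^k[1/\alpha]\otimes\mathrm{Hdg}^h\omega_E^{-2h}\cong\omega_E^{k-2h}$ on $\cX_{r,I}$ (using $\fw^k[1/\alpha]=\omega_E^k$ for classical $k$, as recalled in the introduction, and $\mathrm{Hdg}$ a unit), while $\mathrm{Gr}_h{\rm Sym}^k({\rm H}_E)\cong\omega_E^{k-h}\otimes\omega_E^{-h}=\omega_E^{k-2h}$. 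A local computation in the coordinates of Section~\ref{sec:new} — where, for an integer weight, $\Fil_k\bW^0_{k,I}$ is free on the monomials $(1+\beta_nZ)^{k-j}Y^j$ with $0\le j\le k$ (by Corollary~\ref{cor:fil} together with the argument of Lemma~\ref{lemma:local1} applied to each $Y$-coefficient), and $\phi_k$ sends $\omega^{k-j}\eta^j$ to $\delta^{-(k-j)-pj}(1+\beta_nZ)^{k-j}Y^j$ with $\delta$ a unit — shows that $\mathrm{gr}_h(\phi_k)$ is multiplication by a unit. Hence $\phi_k$ is a filtered isomorphism, i.e. the asserted canonical identification, strictly compatible with the two filtrations.

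The step I expect to be the main obstacle is the normalisation in (a): checking that the tautological map is equivariant for the \emph{full} $\fT^{\rm ext}$ — including the $\Z_p^\ast$-part with its $\gamma_\lambda$-twist and the accompanying $\fw^{k_f}$-twist — so that its image is $\bW_k$ itself rather than some other character twist. Everything else is the bookkeeping of $\udelta$-powers, which all become invertible after inverting $\alpha$, and the graded comparison, which is dictated by Theorem~\ref{thm:descentbWk}(iii).
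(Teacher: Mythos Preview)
Your proposal addresses only the final claim of the theorem (the classical-weight identification), and for that claim it is correct and follows essentially the same route as the paper: both build the map from the tautological inclusion ${\rm Sym}^k(\Hsharp)\hookrightarrow f_\ast\cO_{\bV(\Hsharp)}$ composed with restriction along $\bV_0(\Hsharp,s)\to\bV(\Hsharp)$, and both use that $\Hsharp\subset{\rm H}_E$ is an isomorphism after inverting $\alpha$. The only real difference is in the verification step: the paper invokes the explicit local basis of $\Fil_k\bW^0_k$ from Lemma~\ref{lemma:localwkappa} directly to see that the map is an isomorphism, whereas you reduce to the associated graded and appeal to part~(iii). Your route is slightly more conceptual and makes the filtration-compatibility more visible; the paper's is shorter because the local description already identifies both sides as free on the same monomials. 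Your worry about the $\fT^{\rm ext}$-equivariance in step~(a) is legitimate bookkeeping but not a genuine obstacle: over $\fIG_{n,r,I}$ the map ${\rm Sym}^k(\Hsharp)\to f_{0,\ast}\cO_{\bV_0(\Hsharp,s)}$ is manifestly $\fT$-equivariant (degree-$k$ homogeneity), and the $(\Z/p^n\Z)^\ast$-equivariance is automatic from functoriality of the construction in the universal section $P_n$, so the image does land in $\bW^0_k$ after descent; the $\fw^{k_f}$-twist is trivial here since $k\in\N$ makes $k_f$ an integral power.
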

\begin{proof}
The proof of Claims (i), (ii) and (iii) of the theorem for $r$, $n$ and a closed interval $I\subset [0, \infty)$ as in Definition \ref{defi:wnrI} for $\bW^0_k$ will be given in Section \ref{sec:proof}. As $\Fil_h\bW^0_k
\subset \bW^0_k$ is locally a direct summand by (i) and (ii), then the analogous statements for $\bW_k$ follow. Since the construction of $\bV_0(\Hsharp,s)$ does not depend on $I$ and is functorial in $n$ and since $\fw^{k,0}$ arises from an invertible sheaf on the whole $\fX_{r,[0,\infty)}$ by \cite[Thm. 5.1]{halo_spectral}, then Claims (ii) and (iii) imply that $\bW_{k,I}^0$, $\bW_{k,I}$ and their filtrations do not depend on $n$ and glue for varying intervals $I$  to sheaves  $\bW_{k}^0$, $\bW_{k}$ on $\fX_{r,[0,\infty)}$.

We deduce from this the theorem for $I=[p,\infty]$ assuming that  $r\geq 3$ if $p\geq 3$ and $r\geq 5$ for $p=2$. Claim (i) holds for $\Fil_0\bW^0_{k,I}\cong \fw^{k,0}_I$: it is free $\fX_{r,I}$-module over every affine formal subscheme of $\fX$ on which $\omega_E$ is free and it coincides with the limit $\lim_{n \geq 1} \fw^{k,0}_{k,[p^n,p^{n+1}]}$ due to \cite[Rmk. 6.2]{halo_spectral}. Then the same statements hold true for the sheaves $\Fil_h\bW^0_{k,I}$ thanks to claims (i) and (iii) for $\Fil_h\bW^0_{k,[p^n,p^{n+1}]}$ and their functoriality in the interval $[p^n,p^{n+1}]$.  As $\fw^{k,0}_I=\lim_{n \geq 1}
\fw^{k,0}_{[p^n,p^{n+1}]}$ by \cite[Thm. 6.4]{halo_spectral} and $\cO_{\fX_{r,I}}=\lim_{m\geq n} \cO_{\fX_{r,[p^n,p^{n+1}]}}$ by \cite[Lemme 6.5]{halo_spectral}  claims  (ii) and (iii) hold also for $I=[p,\infty]$.

For the last part of the Theorem for integral weights $k$, recall that we have an inclusion $\Hsharp\subset {\rm H}_E$ of sheaves over $\fIG_{n,r,I}$, compatible with the filtrations, which
is an isomorphism after inverting $\alpha$. By Definition \ref{def:V0} and Lemma \ref{lem:V(E)} this provides natural morphisms $\bV_0(\Hsharp, s)\to \bV(\Hsharp)
\leftarrow \bV({\rm H}_E)\vert_{\fIG_{n,r,I}}$ of formal vector bundles (with section) over $\fIG_{n,r,I}$. Notice that structure sheaf of $\bV(\Hsharp)$,
resp.~$\bV({\rm H}_E)$ is identified with the $\alpha$-adic completion of the symmetric algebra of $\Hsharp$, resp.~${\rm H}_E$ (see the proof of Lemma
\ref{lem:V(E)}). It follows from the local description of $\Fil_k\bW^0_k$ in Lemma \ref{lemma:localwkappa} that these morphisms map ${\rm Sym}^k\bigl(\Hsharp\bigr)
\to \Fil_k\bW^0_k$ and clearly  ${\rm Sym}^k\bigl(\Hsharp\bigr) \to {\rm Sym}^k\bigl({\rm H}_E\bigr)$ and that these are isomorphisms of sheaves  over
$\fIG_{n,r,I}$ after inverting $\alpha$. This provides the claimed identification over $\fIG_{n,r,I}$. It follows directly  from the definition of the filtration in \S \ref{sec:vbfil}
that this isomorphism is compatible with
the filtrations.

\end{proof}

\subsubsection{Local description of $\bV_0\bigl(\mathrm{H}^\sharp_E, s\bigr)$}\label{sec:Hzero}

\medskip
\noindent Let $\rho\colon S=\Spf(R)\lra \fIG_{n,r,I}$ be a
morphism of formal schemes over $\Lambda_I^0$ (without
$\alpha$-torsion)  such that $\rho^\ast(\omega_E)$ is a free
$R$-module of rank one. Let as usual $\omega, \beta_n, \delta$
denote an $R$-basis of $\rho^\ast(\omega_E)$, the appropriate
generator of $\rho^\ast(\ubeta_n)$ and an appropriate generator of
$\rho^\ast(\udelta)$. We fix an $R$-basis $(f,e)$ of
$\rho^\ast(\Hsharp)$ such that $f(\mbox{mod
}\beta_nR)=\rho^\ast({\rm dlog}(P_n))$,  where, let us recall,
$P_n$ is the universal generator of $\mathrm{H}_n^\vee$ over
$\mathfrak{IG}_{n,r,I}$. We denote by $(f^\vee, e^\vee)$ the dual
$R$-basis of $\mathrm{H}_{E,\sharp}$. Since
$f^\vee(f)=1=f^\vee\bigl(\rho^\ast({\rm dlog}(P_n)\bigr)$ and
$e^\vee(f)=0=e^\vee\bigl({\rm dlog}(P_n))\bigr)$ modulo
$\beta_nR$, it follows from Definition \ref{def:V0} that
$$
\bV_0\bigl(\mathrm{H}^\sharp_E, s\bigr)(S)=\{a f^\vee+b e^\vee\ \vert  \ a\in 1+\beta_n R\mbox{ and } \gamma\in R   \}
$$
and thanks to Lemma \ref{lem:V0} we have that $\bV_0\bigl(\mathrm{H}^\sharp_E\bigr)\times_{\fIG_{n,r,I}} S=\Spf\bigl(R\langle Z,Y \rangle\bigr)$. A point $x=a
f^\vee+b e^\vee \in \bV_0(\Hsharp)(S)$ corresponds to the $R$-algebra homomorphism $R\langle Z,Y\rangle \lra R$ sending $Z\mapsto \frac{a-1}{\beta_n}$ and $Y\mapsto
b$.

\begin{remark}
We have an interesting interpretation of the sections of the sheaf
$\rho^\ast\bigl(\cO_{\bV_0(\Hsharp, s)}\bigr)$. Let us first
remark that $\bV_0(\Hsharp)(S)$ can be naturally identified with
$${\rm H}_{E,0}(S)=\{u\in \rho^\ast\bigl({\rm H}_{E,\sharp}  \bigr)\quad \vert \quad u(\mbox{mod }\beta_nR)\rho^\ast\bigl({\rm dlog}(P_n)\bigr)=1\}.
$$Recall that ${\rm H}_{E,\sharp} $ is the dual of $\Hsharp$ and the expression $ u(\mbox{mod }\beta_nR)\rho^\ast\bigl({\rm dlog}(P_n)\bigr)$ stands for the pairing of $u$, modulo $\beta_n$, and  ${\rm dlog}(P_n)$.  
Therefore the sections of $\rho^\ast\bigl(\cO_{\bV_0(\Hsharp, s)}\bigr)$ can be seen as functions $\gamma\colon {\rm H}_{E,0}(S)\lra R$ which are analytic in the sense that there is $g(Z,Y)\in
\rho^\ast\bigl(\cO_{\bV_0(\Hsharp, s)}\bigr)=R\langle Z,Y\rangle$ such that for all $z$, $y\in R$ we have $\gamma\bigl((1+\beta_n z)f^\vee+y e^\vee\bigr)= g(z,y)$. We recall that $(f^\vee,
e^\vee)$ is the basis of $\rho^\ast({\rm H}_{E,\sharp})$ which is $R$-dual to $(f,e)$.

\end{remark}

\medskip
\noindent
Let $v\in \bV_0(\Hsharp)(S)$, then $v\colon \rho^\ast(\Hsharp)\to \cO_S$  is $\cO_S$-linear and
$\overline{v}(s)=1$, i.e. $v\in \mathrm{H}_{E, 0}(S)$. Take $t\in
\fT(S)\subset \fT^{\rm ext}(S)$ and $\gamma\colon {\rm H}_{E,0}(S)\lra R$ an analytic function, we have $(t^{-1}\ast \gamma)\bigl( v)=\gamma(tv)$. If $v= a f^\vee+  b e^\vee\in \mathrm{H}_{E,0}(S)$ then
$(t^{-1}\ast \gamma)(af^\vee+be^\vee)=\gamma\bigl(t a f^\vee+t b e^\vee \bigr)=g\bigl(t a , t b\bigr)$, where $g(Z,Y)\in R\langle Z,Y\rangle=\rho^\ast\bigl(\cO_{\bV_0(\Hsharp)}\bigr)$ is the
section associated to $\gamma$. Then $(t^{-1}\ast g)(Z,Y)=g\Bigl(\frac{t-1}{\beta_n}+t Z, t Y\Bigr)$, i.e., then $t\ast
(1+\beta_nZ)=t^{-1}(1+\beta_nZ)$ and $t\ast Y=t^{-1}Y$.

\

Let us recall that we denoted $\pi\colon\bV_0(\Hsharp, s)\lra \fIG_{n,r,I}$ the structure morphism and that we have an action of $\fT$ on this morphism.

\begin{lemma}
\label{lemma:localwkappa}
$$\rho^\ast\Bigl(\pi_{\ast}\bigl(\cO_{\bV_0(\Hsharp, s}\bigr)[k]\Bigr)=R\langle Z,Y\rangle [k]=\Bigl\{\sum_{m=0}^\infty a_m(1+\beta_nZ)^{k}\frac{Y^m}{(1+\beta_nZ)^m}\Bigr\},$$ where
$a_m\in R$ for all $m\ge 0$ such that $a_m\rightarrow 0$ as $m\rightarrow \infty$. Similarly $\displaystyle \pi_\ast\bigl(\Fil_h\bW^0_k\bigr)[k]=\bigl\{\sum_{m=0}^h
a_m(1+\beta_nZ)^{k}\frac{Y^m}{(1+\beta_nZ)^m}, \mbox{ where } a_m\in R \mbox{ for all } m=0,...,h\bigr\}$.
\end{lemma}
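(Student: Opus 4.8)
The statement is local on $\fIG_{n,r,I}$, so I would work over a morphism $\rho\colon S=\Spf(R)\to\fIG_{n,r,I}$ as in the discussion preceding the statement, with the basis $(f,e)$ of $\rho^\ast(\Hsharp)$, its dual $(f^\vee,e^\vee)$, the chosen generator $\beta_n$ of $\rho^\ast(\ubeta_n)$, and the coordinates $Z,Y$ on $\bV_0(\Hsharp,s)\times_{\fIG_{n,r,I}}S$. By Lemma \ref{lem:V0} this base change is $\Spf\bigl(R\langle Z,Y\rangle\bigr)$, so $\rho^\ast\bigl(\pi_\ast\cO_{\bV_0(\Hsharp,s)}\bigr)=R\langle Z,Y\rangle=R\langle Z\rangle\langle Y\rangle$; and, applying Corollary \ref{cor:fil} to the inclusion of marked pairs $(\Omega_E,s)\hookrightarrow(\Hsharp,s)$ (with $\Omega_E$ in the role of $\cF$, accounting for the variable $Z$), the submodule $\rho^\ast\bigl(\pi_\ast\Fil_h\cO_{\bV_0(\Hsharp,s)}\bigr)$ consists of the polynomials in $Y$ of degree $\le h$ with coefficients in $R\langle Z\rangle$. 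The $\fT(S)$-action on $R\langle Z,Y\rangle$ is the one recalled just above, under which $1+\beta_nZ$ and $Y$ are eigenvectors scaling by the \emph{same} character. So it suffices to compute the $[k]$-eigenspace of $R\langle Z,Y\rangle$ and its intersection with the $Y$-degree filtration, and then to apply $\rho^\ast$ --- legitimate, since the blow-up presentation of $\bV_0(\Hsharp,s)$, the filtration of Corollary \ref{cor:fil}, and the $\fT$-action are all compatible with base change.

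The device is the change of variable $T:=Y\,(1+\beta_nZ)^{-1}$. First, $\beta_n=p^n\mathrm{Hdg}^{-(p^n-1)/(p-1)}$ is topologically nilpotent in $R$: writing $\mathrm{Hdg}_R$ for a local generator of $\mathrm{Hdg}$ on $\fX_{r,I}$, the defining conditions of $\fX_{r,I}$ make $\mathrm{Hdg}_R$ topologically nilpotent and give $\mathrm{Hdg}_R^{n p^{r+1}}\mid p^n$ (resp.\ $\mathrm{Hdg}_R^{n p^{a+r+1}}\mid p^n$ in case (2)), while the hypotheses $n\le r$ (resp.\ $n\le a+r$) force the exponent $n p^{r+1}-(p^n-1)/(p-1)$ (resp.\ with $a+r+1$) to be strictly positive, so $\beta_n$ lies in a positive power of $(\mathrm{Hdg}_R)$. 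Hence $1+\beta_nZ$ is a unit of Gauss norm $1$ in $R\langle Z\rangle$, and, by the analyticity of the weight $k$, the element $(1+\beta_nZ)^k:=k(1+\beta_nZ)=\exp\bigl(u_k\log(1+\beta_nZ)\bigr)$ is a well-defined unit of $R\langle Z\rangle$ as well (exactly as in the proof of Lemma \ref{lemma:local1}(a)). The continuous $R\langle Z\rangle$-algebra homomorphism $R\langle Z\rangle\langle T\rangle\to R\langle Z,Y\rangle$, $T\mapsto Y(1+\beta_nZ)^{-1}$, and the one the other way, $Y\mapsto(1+\beta_nZ)T$, are norm-decreasing and mutually inverse, so $R\langle Z,Y\rangle=R\langle Z\rangle\langle T\rangle$; every element is a unique series $\sum_{m\ge0}b_m(Z)T^m$ with $b_m\in R\langle Z\rangle$ and $b_m\to0$, and the $Y$-degree-$\le h$ part is exactly $\{\sum_{m=0}^h b_m(Z)T^m\}$, because $Y^m=(1+\beta_nZ)^mT^m$ and $1+\beta_nZ$ is a unit.

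Now $t\ast T=T$ (since $Y$ and $1+\beta_nZ$ scale by the same character), $\fT(S)$ fixes $R$, and it acts stably on the subring $R\langle Z\rangle$; therefore $t\ast\bigl(\sum_m b_m(Z)T^m\bigr)=\sum_m(t\ast b_m)(Z)\,T^m$, so a series lies in the $[k]$-eigenspace if and only if every $b_m$ lies in $R\langle Z\rangle[k]$. By Lemma \ref{lemma:local1}(a), $R\langle Z\rangle[k]$ is free of rank one over $R$ on the generator $(1+\beta_nZ)^k$ (indeed it contains $(1+\beta_nZ)^k$, and for $g\in R\langle Z\rangle[k]$ the quotient $g(1+\beta_nZ)^{-k}$ is $\fT(S)$-invariant, hence lies in $R\langle Z\rangle^{\fT(S)}=R$ --- the invariants being $R$ because $\bV_0(\Omega_E,s)\to\fIG_{n,r,I}$ is a $\fT$-torsor, or directly by the ``evaluate at $Z=0$'' argument in the proof of that lemma). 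Combining, the $[k]$-eigenspace of $R\langle Z,Y\rangle$ is $\{\sum_{m\ge0}a_m(1+\beta_nZ)^kT^m:a_m\in R,\ a_m\to0\}$ and its intersection with the $Y$-degree-$\le h$ part is $\{\sum_{m=0}^h a_m(1+\beta_nZ)^kT^m:a_m\in R\}$; since $\|(1+\beta_nZ)^k\|=1$, the decay of $a_m(1+\beta_nZ)^k$ is equivalent to $a_m\to0$, matching the stated convergence condition. Rewriting $T^m=Y^m(1+\beta_nZ)^{-m}$ gives the two displayed formulas verbatim, and applying $\rho^\ast$ completes the proof.

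The content is elementary: the only points needing care are the topological legitimacy of the substitution $Y\rightsquigarrow T$ --- which is exactly where the numerical hypotheses relating $n$ to $r$ (resp.\ to $a+r$) enter, through the topological nilpotence of $\beta_n$ --- its compatibility with the $Y$-degree filtration, and keeping the sign of the $\fT$-action straight so that the $[k]$-eigenspace is spanned by $(1+\beta_nZ)^k$ and not by its inverse. Once one observes that $T$ is $\fT$-invariant, the rank-two eigenspace computation collapses to the rank-one statement of Lemma \ref{lemma:local1}, so I expect no genuine obstacle.
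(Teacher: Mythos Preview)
Your argument is correct and takes the same route as the paper: both introduce the $\fT$-invariant variable $V=T=Y/(1+\beta_nZ)$ and thereby reduce the two-variable eigenspace computation to the rank-one case; the paper recomputes the invariants $R\langle Z,Y\rangle^{\fT(S)}=R\langle V\rangle$ directly via a Weierstrass-preparation argument, whereas you observe that the action is coefficient-wise in $T$ and invoke Lemma~\ref{lemma:local1} for each coefficient, which is slightly cleaner but the same idea. One small correction: your claim that $\mathrm{Hdg}_R$ is topologically nilpotent is false (on the ordinary locus it is a unit), so your justification that $\beta_n$ is topologically nilpotent does not work as written; the conclusion is nonetheless true, since from $p/\mathrm{Hdg}^{p^{r+1}}\in R$ (resp.\ $p/\mathrm{Hdg}^{p^{a+r+1}}\in R$ in case~(2)) one gets $\beta_n^N\in(\alpha)$ as soon as $N\bigl(np^{r+1}-(p^n-1)/(p-1)\bigr)\ge p^{r+1}$ --- a point the paper itself does not spell out.
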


\begin{proof}

Clearly  $(1+\beta_nZ)^{k-m}Y^m\in \rho^\ast(\cO_{\bV_0(\Hsharp, s)})=R\langle Z,Y\rangle$ and if $a\in \fT(S)$, then $a\ast
(1+\beta_nZ)^{k-m}Y^m=k(a)\bigl((1+\beta_nZ)^{k-m}Y^m  \bigr)$ for every $m\ge 0$.

In order to prove the converse let us first prove: $\bigl(R\langle Z,Y\rangle\bigr)^{\fT(S)}=R\langle V\rangle$, where we denoted by $\displaystyle V:=\frac{Y}{1+\beta_nZ}\in
R\langle Z,Y\rangle$. It is obvious that $R\langle V\rangle\subset \bigl(R\langle Z,Y\rangle \bigr)^{\fT(S)}$.

Let us notice that every element $f(Z,Y)=\sum_{i,m=0}^\infty a_{i,m}Z^iY^m\in R\langle Z,Y\rangle$ can be written uniquely as $f(Z,Y)=g(Z,V):=\sum_{u,v=0}^\infty
b_{u,v}Z^uV^v$ by writing $Y^m=V^m(1+\beta_nZ)^m$, where $b_{u,v}\rightarrow 0$ as $u+v\rightarrow \infty$. Then if $a\in 1+\beta_nR=\fT(S)$, we have that $a\ast
g(Z,V)=g(Z,V)$ if and only if:
$$
\sum_{u,v=0}^\infty b_{u,v}\bigl(\frac{a-1}{\beta_n}+aZ \bigr)^uV^v=\sum_{u,v=0}^\infty
b_{u,v}Z^uV^v.
$$

Regarding the above equality as an equality in $R\langle Z\rangle [[V]]$ for every $v\ge 0$ we must have: $\sum_{u=0}^\infty
b_{u,v}\bigl(\frac{a-1}{\beta_n}+aZ \bigr)^u=\sum_{u=0}^\infty b_{u,v}Z^u.$ For $Z=0$ this gives $\sum_{u=1}^\infty
b_{u,v}\Bigl(\frac{a-1}{\beta_n}\Bigr)^u=0$, for every $a\in 1+\beta_nR$. The Weierstrass preparation theorem implies that for every $v\ge 0$, $b_{u,v}=0$
for $u\ge 1$, i.e. $g(Z,V)=\sum_{v=0}^\infty b_{0,v}V^v\in R\langle V\rangle$, which proves the claim.

Now obviously $R\langle Z,Y\rangle[k]$ is naturally an $R\langle V\rangle$-module and if $f(Z,Y)\in R\langle Z,Y\rangle[k]$ then $$\frac{f(Z,Y)}{(1+\beta_nZ)^{k}}\in \bigl(R\langle
Z,Y\rangle \bigr)^{\fT(S)}=R\langle V\rangle,$$ therefore $f(T,Y)=(1+\beta_n Z)^{k}R\langle V\rangle$ which proves the lemma.
\end{proof}

\subsubsection{The proof of theorem \ref{thm:descentbWk}.}\label{sec:proof}

Let us recall the sequence of formal schemes and morphisms:
$$
\bV_0(\Hsharp, s)\stackrel{\pi}{\lra}\fIG_{n,r,I}\stackrel{h_n}{\lra}\fX_{r,I}\mbox{  we denoted by }f_0:=h_n\circ \pi.
$$
We have denoted by $\bW^0_k:=f_{0,\ast}\bigl(\cO_{\bV_0(\Hsharp, s)}  \bigr)[k]$, where the action is that of $\fT^{\rm ext}$ and by
$\widetilde{\bW}^0_k:=\pi_\ast\bigl(\cO_{\bV_0(\Hsharp, s)}\bigr)[k]$, for the action of $\fT$. Then we obviously have
$\bW^0_k=h_{n,\ast}\bigl(\widetilde{\bW}^0_k \bigr)[k]$, for the action of  $\Z_p^\ast$.

Let us remark that lemma \ref{lemma:localwkappa} implies that the filtration of $\widetilde{\bW^0}_k$ defined as
$\Fil_h(\widetilde{\bW^0}_k):=\pi_\ast\bigl(\Fil_h(\cO_{\bV_0(\Hsharp, s)}  \bigr)[k]$ is a locally free $\cO_{\fIG_{n,r,I}}$-module of rank $h+1$ and
$\widetilde{\bW}^0_k$ is the $\alpha$-adic completion of $\lim_h \Fil_h(\widetilde{\bW}^0_k)$. Moreover we have ${\rm Gr}^i\widetilde{\bW}^0_k\cong
(\fw')^{k}\otimes\bigl({\rm Hdg}^i_E \omega_E^{-2i}\bigr)$, where let us recall that $(\fw')^{k}= \pi_\ast\bigl(\cO_{\bV_0(\Omega_E, s)}  \bigr)[k]$ for the action of $\fT$.

Let $U=\Spf(R)$ be an open affine subscheme of $\fX_{r,I}$ such that $\omega_E\vert_U$ is free and let $\omega$ be an $R$-basis of $\omega_E(U)$. Let us denote by
$V=\Spf(R_n):=h_n^{-1}(U)\subset \fIG_{n,r,I}$.

It is shown in  Lemma 5.3 of \cite{halo_spectral} that the map $\bV_0(\Omega_E, s) \to \fIG_{n,r,I}$ induces an $\fT$-equivariant isomorphism
$R_n/qR_n\cong (\fw')^{k}(V)/q(\fw')^{k}(V)$ (using Definition \ref{def:newomegak} and Corollary \ref{cor:wnew=w} to identify $(\fw')^{k}$ as a subsheaf of the structure sheaf of $\bV_0(\Omega_E, s)$).
For every $i\ge 0$ choose an element $$\overline{s}_i\in \bigl(
(\fw')^{k}(V)\otimes {\rm Hdg}^i_E \omega_E^{-2i}(V)/(q)\bigr)(V) \cong (R_n/qR_n)\otimes \bigl({\rm Hdg}^i_E \omega_E^{-2i}(V)/(q)\bigr)(V), $$
mapping  to the class of ${\rm Hdg}^i_E  \omega^{-2i}$.
In particular $\overline{s}_i$ is an $R_n/pR_n$-generator of $\bigl({\rm
Gr}^i\widetilde{\bW}^0_k/q{\rm Gr}^i\widetilde{\bW}^0_k\bigr)(V)$ such that
for every $t\in \fT^{\rm ext}(V)$ we have $t\ast
\overline{s}_i=\overline{s}_i$ as ${\rm Hdg}^i_E  \omega^{-2i}$ is invariant for the $\fT$-action. Let $s_i\in
\Fil_i(\widetilde{\bW}^0_k)(V)$ be such that $s_i\bigl(\mbox{mod
}p\Fil_i(\widetilde{\bW}^0_k)\bigr)=\overline{s}_i$ for every
$i\ge 0$. We denote by
$h:=\widetilde{\mathrm{Hdg}}(E/R, \omega)$.
By Corollary 3.1 of \cite{halo_spectral}, there is an element
$c_n\in h^{-\frac{p^n-p}{p-1}}R_n$ such that
we have ${\rm Tr}(c_n)=1$, where ${\rm Tr}$ denotes the trace of
$R_n$ over $R$.

Let us denote for every $i\ge 0$, by
$$
\tilde{s}_i:=e_{c_n}(s_i):=\sum_{\sigma\in (\Z/p^n\Z)^\ast}k(\tilde{\sigma})\bigl(\sigma(c_ns_i)\bigr) \in {\rm H}^0\Bigl(U,
h_{n,\ast}\bigl(\widetilde{\bW}^0_k\bigr)\Bigr).
$$
Here $\tilde{\sigma}\in \Z_p^\ast$ is a lift of $\sigma$. We have

\begin{lemma}\label{lemma:gradedpieces}
$\tilde{s}_i\in {\rm H}^0\Bigl(U, h_{n,\ast}\bigl(\Fil_i(\bW^0_k)  \bigr)  \Bigr)$ and $\tilde{s}_iR\cong {\rm Gr}^i(\bW^0_k)(U)=\fw^{k,0}(U)\otimes_R
\omega_E^{-2i}(U)$
\end{lemma}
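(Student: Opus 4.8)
The plan is to check both statements locally on $\fX_{r,I}$, over an affine $U=\Spf(R)$ with $\omega_E\vert_U$ free with basis $\omega$ and $V=\Spf(R_n)=h_n^{-1}(U)$, using the elements $c_n$, $s_i$, $\overline{s}_i$ introduced above, the $\fT$-equivariant isomorphism $R_n/qR_n\cong(\fw')^{k}(V)/q(\fw')^{k}(V)$ and the identity ${\rm Tr}_{R_n/R}(c_n)=1$ of \cite{halo_spectral}, together with the facts recalled just before the statement: $\Fil_h\widetilde{\bW}^0_k$ is $\cO_{\fIG_{n,r,I}}$-locally free of rank $h+1$ and ${\rm Gr}^i\widetilde{\bW}^0_k\cong(\fw')^{k}\otimes_{\cO_{\fIG_{n,r,I}}}\bigl(\mathrm{Hdg}^i\omega_E^{-2i}\bigr)$.

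First I would show that $\tilde{s}_i$ lies in $\Fil_i\bW^0_k(U)$. The operator $e_{c_n}$ is a composite of operations each preserving the filtration on $h_{n,\ast}\widetilde{\bW}^0_k$: multiplication by the function $c_n\in R_n={\rm H}^0(V,\cO_{\fIG_{n,r,I}})$, which preserves every $\cO_{\fIG_{n,r,I}}$-submodule $\Fil_j\widetilde{\bW}^0_k$; the action of the Galois group $(\Z/p^n\Z)^\ast$ of $h_n$, which preserves $\Fil_\bullet$ because, by \S\ref{sec:vbfil} and Corollary \ref{cor:functfil}, the filtration on $\bV_0(\Hsharp,s)$ is functorially induced by the inclusion of pairs $(\Omega_E,s)\hookrightarrow(\Hsharp,s)$ and this inclusion is equivariant for the descent isomorphisms $\gamma_\lambda$ of \S\ref{sec:new} and \S\ref{sec:wk} (which carry $\Omega_E$ to $\overline{\lambda}^\ast(\Omega_E)$ compatibly inside $\Hsharp$); and multiplication by the scalars $k(\tilde{\sigma})$. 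Hence $\tilde{s}_i$ lands in $\Fil_i$. It remains to check that $\tilde{s}_i$ lies in $\bW^0_k=h_{n,\ast}\bigl(\widetilde{\bW}^0_k\bigr)[k]$, i.e.\ in the $[k]$-eigenspace for the residual $\Z_p^\ast$-action; this is the standard fact that $e_{c_n}$ is an idempotent $R$-linear projector with image that eigenspace — for $\mu\in\Z_p^\ast$ one reindexes $\sum_\sigma$ over $(\Z/p^n\Z)^\ast$ by $\sigma\mapsto\overline{\mu}\sigma$, uses that $k$ is a character and that $s_i$ is already $[k]$-equivariant for $\fT(V)\supseteq 1+p^n\Z_p$, and the relation $e_{c_n}\vert_{[k]\text{-eigenspace}}={\rm id}$ uses ${\rm Tr}_{R_n/R}(c_n)=1$. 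Combined with the definition $\Fil_i\bW^0_k=h_{n,\ast}\bigl(\Fil_i\widetilde{\bW}^0_k\bigr)[k]$ from Theorem \ref{thm:descentbWk}, this yields $\tilde{s}_i\in\Fil_i\bW^0_k(U)$.

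Next I would identify the target and conclude by reduction modulo $p$. Descending ${\rm Gr}^i\widetilde{\bW}^0_k\cong(\fw')^{k}\otimes_{\cO_{\fIG_{n,r,I}}}\mathrm{Hdg}^i\omega_E^{-2i}$ along $h_n$ — using that $\mathrm{Hdg}^i\omega_E^{-2i}$ is pulled back from $\fX_{r,I}$ and that $(\fw')^{k}$ descends to $\fw^{k,0}$ (Corollary \ref{cor:wnew=w}) — gives ${\rm Gr}^i\bW^0_k\cong\fw^{k,0}\otimes_{\cO_{\fX_{r,I}}}\mathrm{Hdg}^i\omega_E^{-2i}$, which over $U$ is a free $R$-module $M$ of rank one (here $\mathrm{Hdg}=(h)$ with $h=\widetilde{\mathrm{Hdg}}(E/R,\omega)$ a non-zero-divisor and $\fw^{k,0}$ invertible), $\alpha$-adically complete and separated; moreover $M=e_{c_n}\bigl({\rm Gr}^i\widetilde{\bW}^0_k(V)\bigr)$ is an $R$-module direct summand of ${\rm Gr}^i\widetilde{\bW}^0_k(V)$, and $(\fw')^{k}\cong h_n^\ast\fw^{k,0}$ gives $(M/pM)\otimes_{R/p}(R_n/p)\cong{\rm Gr}^i\widetilde{\bW}^0_k(V)/p$. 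Since $s_i$ reduces to a generator $\overline{s}_i$ of ${\rm Gr}^i\widetilde{\bW}^0_k(V)/q$ (a fortiori of ${\rm Gr}^i\widetilde{\bW}^0_k(V)/p$ over $R_n/p$), and $e_{c_n}$ is the normalized-trace projector of \cite{halo_spectral} (built from the identification $R_n/qR_n\cong(\fw')^{k}(V)/q$, the element $c_n$ with ${\rm Tr}_{R_n/R}(c_n)=1$, and the integrality of $e_{c_n}$), one computes that the image of $\tilde{s}_i=e_{c_n}(s_i)$ in ${\rm Gr}^i\widetilde{\bW}^0_k(V)/p$ equals $\overline{s}_i$; hence $\tilde{s}_i$ generates $M/pM$ over $R/p$. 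As $(p)\subseteq(\alpha)$ lies in the Jacobson radical of the $\alpha$-adically complete ring $R$, Nakayama's lemma gives that $\tilde{s}_i$ generates $M$; since $M$ is free of rank one, the surjection $R\twoheadrightarrow M$ it defines is an isomorphism, so $\tilde{s}_iR\cong M={\rm Gr}^i(\bW^0_k)(U)=\fw^{k,0}(U)\otimes_R\omega_E^{-2i}(U)$.

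The hard part will not be any single estimate but the precise bookkeeping of the three interacting structures — the $\fT$-action whose $[k]$-eigenspace defines $\widetilde{\bW}^0_k$, the residual $(\Z/p^n\Z)^\ast$-Galois action (and its lift to $\Z_p^\ast$) effecting the descent to $\bW^0_k$, and the descent isomorphisms $\gamma_\lambda$ on $\Omega_E\subset\Hsharp$ carrying the Teichm\"uller twist — and, concretely, checking that the weight twists $k(\tilde{\sigma})$ in $e_{c_n}=\sum_\sigma k(\tilde{\sigma})\sigma(c_n\,\cdot)$ combine with the descent datum so that $e_{c_n}$ indeed lands in the $[k]$-eigenspace and reduces modulo $p$ to the normalized trace ${\rm Tr}_{R_n/R}(c_n)=1$ applied to the generator $\overline{s}_i$ (in particular that $e_{c_n}$ takes integral sections to integral sections even though $c_n$ may have a pole along $h$). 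Once these conventions are pinned down, the rest is the normalized-trace descent argument of \cite{halo_spectral}, completeness of $R$, and freeness of ${\rm Gr}^i(\bW^0_k)(U)$.
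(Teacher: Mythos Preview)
Your overall strategy matches the paper's: apply the twisted trace projector $e_{c_n}$ to the chosen lift $s_i$, verify that $\tilde s_i$ lands in the right filtered piece and eigenspace, then conclude by a Nakayama-type argument that its image generates the rank-one graded piece. The paper does exactly this, citing \cite[Lemma~5.4]{halo_spectral} for the key estimates.

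There is, however, a genuine gap in your argument for the first part. You claim that multiplication by ``the function $c_n\in R_n$'' preserves each $\cO_{\fIG_{n,r,I}}$-submodule $\Fil_j\widetilde{\bW}^0_k$. But the paper records only that $c_n\in h^{-\frac{p^n-p}{p-1}}R_n$; for $n\ge 2$ the element $c_n$ genuinely has a pole along $\mathrm{Hdg}$, so multiplication by $c_n$ does \emph{not} carry $\Fil_i\widetilde{\bW}^0_k(V)$ into itself, and your step-by-step decomposition of $e_{c_n}$ breaks down. You flag this at the end as a difficulty, but your main argument already uses the false assertion $c_n\in R_n$, so the proof as written does not go through.

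The paper avoids this obstacle by a different maneuver: rather than arguing that each constituent of $e_{c_n}$ preserves the filtration, it uses the explicit decomposition $s_i=h^i\omega^{-2i}+p\,f_i$, where the first summand is pulled back from $\fX_{r,I}$ and hence Galois-invariant. Then
\[
\tilde s_i=\Bigl(\sum_{\sigma}k(\tilde\sigma)\,\sigma(c_n)\Bigr)h^i\omega^{-2i}
\;+\;p\sum_{\sigma}k(\tilde\sigma)\,\sigma(c_nf_i),
\]
and the integrality and filtration statement are deduced from two estimates drawn from \cite[Lemma~5.4]{halo_spectral}: first, $\sum_\sigma k(\tilde\sigma)\sigma(c_n)\in 1+R^{oo}R_n$ (the divisibility of $k(\tilde\sigma)-1$ cancels the pole of $c_n$, so the averaged coefficient is a unit modulo topologically nilpotent elements); second, $p\sum_\sigma k(\tilde\sigma)\sigma(c_nf_i)\in R^{oo}\cdot\cO_{\bV_0(\Hsharp,s)}(f_0^{-1}(U))$. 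From this one reads off simultaneously that $\tilde s_i\in\Fil_i(\bW^0_k)(U)$ and that its image in $\mathrm{Gr}^i$ is a unit times $h^i\omega^{-2i}$, hence a generator. Your Nakayama endgame is then unnecessary: the explicit computation already exhibits the generator directly.
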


\begin{proof}

Let us first remark that the elements $\tilde{s}_i$ belong to ${\rm H}^0(U, \bW^0_k)$. We write $s_i=h^i\omega^{-2i}+p f_i$ where $f_i\in {\rm H}^0(V,
\widetilde{\bW}^0_k)$. Therefore
$$
\tilde{s}_i=\Bigl(\sum_{\sigma\in (\Z/p^n\Z)^\ast}k(\tilde{\sigma})\sigma(c_n)  \Bigr)h^i\omega^{-2i}+ p\sum_{\sigma\in
(\Z/p^n\Z)^\ast}k(\tilde{\sigma})\sigma(c_nf_i).
$$
Let us observe that if we denote by $R^{oo}$ the ideal of $R$ of its topologically nilpotent elements, then following the arguments of Lemma 5.4 of
\cite{halo_spectral} we have: $\Bigl(\sum_{\sigma\in (\Z/p^n\Z)^\ast}k(\tilde{\sigma})\sigma(c_n)  \Bigr)\in 1+R^{oo}R_n$ and $p\sum_{\sigma\in
(\Z/p^n\Z)^\ast}k(\tilde{\sigma})\sigma(c_nf_i)\in R^{oo} \cO_{\bV_0(\Hsharp, s)}(f_0^{-1}(U))$. Again the arguments in the proof of Lemma 5.4 of
\cite{halo_spectral} imply that $\tilde{s}_i\in \Fil_i(\bW^0_k)(U)$ and its image in ${\rm Gr}^i(\bW^0_k)(U)=\fw^{k,0}(U)\otimes_R (h^i \omega_E^{-2i}(U))$ generates this
$R$-module.
\end{proof}

Lemma \ref{lemma:gradedpieces} proves the part related to the filtration in the statement of Theorem \ref{thm:descentbWk}.

To prove the rest of Theorem \ref{thm:descentbWk} let us also remark that we have (using the arguments
in the proof of Lemma 5.4 of \cite{halo_spectral} and the notation of Lemma \ref{lemma:gradedpieces}) that:
$$
\tilde{s}_i-s_i=\sum_{\sigma\in (\Z/p^n\Z)^\ast}k(\tilde{\sigma})\bigl( \sigma(c_ns_i)-s_i\bigr)= \sum_{\sigma\in
(\Z/p^n\Z)^\ast}k(\tilde{\sigma})\bigl(\sigma(c_ns_i)-\sigma(c_n)s_i\bigr)=
$$

$$
=\sum_{\sigma\in (\Z/p^n\Z)^\ast}\bigl(\sigma(c_n)(\sigma(s_i)-s_i  \bigr)\in R^{oo}\Fil_h(\widetilde{\bW}^0_k).
$$
It follows that $(\tilde{s}_i)_{i=0}^h$ is an $R_n$-basis of $\Fil_h(\widetilde{\bW}^0_k)$ and also an $R$-basis of $\Fil_h(\bW^0_k)$. Therefore $\bW^0_k(U)$ is the
$\alpha$-adic completion of the $R$-module  ${\rm lim}_h \Fil_h(\bW^0_k)$.

For future applications it is also useful to denote by $\bW^0:=f_{0,\ast}\bigl(\cO_{\bV_0({\rm H}_E^\#,s)}\bigr)$. It is a sheaf of $\cO_{\fX_{r,I}}$-algebras on
$\fX_{r,I}$ containing all $\bW^0_k$ for various weights $k$. We have

\begin{lemma}\label{lemma:wfree}
Suppose that $\alpha=p$ and let $i>0$. Then $\bW^0/p^i\bW^0$ is a locally free $\cO_{\fX_{r,I}}/p^i\cO_{\fX_{r,I}}$-module.
\end{lemma}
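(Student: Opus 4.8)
The plan is to reduce the assertion, which is local on $\fX$ for the Zariski topology, to an explicit computation of $\bW^0$ on affine opens and then to a local freeness property of the partial Igusa tower. Fix an affine $U=\Spf(R)\subset\fX_{r,I}$ small enough that $\omega_E|_U$ is free, and put $V=\Spf(R_n):=h_n^{-1}(U)\subset\fIG_{n,r,I}$. As in \S\ref{sec:Hzero} and Lemma \ref{lemma:localwkappa}, after choosing an $R_n$-basis $(f,e)$ of $\rho^\ast(\Hsharp)$ adapted to the marked section $s$, one has $\pi_\ast\bigl(\cO_{\bV_0(\Hsharp,s)}\bigr)(V)=R_n\langle Z,Y\rangle$, so that $f_0^{-1}(U)=\Spf\bigl(R_n\langle Z,Y\rangle\bigr)$ and, $f_0=h_n\circ\pi$ being affine, $\bW^0(U)=R_n\langle Z,Y\rangle$ regarded as an $R$-module through $R\to R_n$.

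Next I would reduce modulo $p^i$. Since $R_n$ and $R$ are normal and flat over $\Z_p$, the ring $\bW^0(U)=R_n\langle Z,Y\rangle$ has no $p$-torsion, and because $f_0$ is affine the functor $f_{0,\ast}$ is exact, so it commutes with reduction modulo $p^i$; hence $\bigl(\bW^0/p^i\bW^0\bigr)(U)=R_n\langle Z,Y\rangle/p^i=(R_n/p^iR_n)[Z,Y]=\bigoplus_{a,b\ge 0}(R_n/p^iR_n)\,Z^aY^b$ as an $R/p^iR$-module. A direct sum of copies of a fixed finitely generated module over the Noetherian ring $R/p^iR$ is locally free precisely when that module is locally free (a direct summand of a free module over a local ring is free, and conversely a finitely presented module all of whose localizations are free is projective); thus the Lemma is equivalent to the statement that $\cO_{\fIG_{n,r,I}}/p^i\cO_{\fIG_{n,r,I}}$ is a locally free $\cO_{\fX_{r,I}}/p^i\cO_{\fX_{r,I}}$-module.

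It remains to prove this local freeness, the key input being the explicit local description of the Igusa tower in the proof of Lemma \ref{lemma:kerloc}. For $n=1$ one has, over an affine $U=\Spf(A)\subset\fX_{r,I}$, an equality $\cO_{\fIG_{1,r,I}}\bigl(h_1^{-1}(U)\bigr)=A[z]/(z^{p-1}-x)$ with $x=\widetilde{\mathrm{Ha}}(E/R,\omega)\in A$ and $z^{p-1}-x$ a \emph{monic} polynomial, so $\cO_{\fIG_{1,r,I}}$ is finite free of rank $p-1$ over $\cO_{\fX_{r,I}}$ locally on $\fX$, and this persists modulo $p^i$; this settles the Lemma for $n=1$. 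For general $n$ one must check that $\cO_{\fIG_{n,r,I}}$ is still finite locally free over $\cO_{\fX_{r,I}}$, and here I would argue along $\fIG_{n,r,I}\to\fIG_{n,r,I}'=H_n^\vee\times_{H_1^\vee}\fIG_{1,r,I}\to\fIG_{1,r,I}$: the right-hand arrow is flat, so $\cO_{\fIG'_{n,r,I}}$ is finite locally free over $\cO_{\fIG_{1,r,I}}$ and hence over $\cO_{\fX_{r,I}}$, and the normalization $\cO_{\fIG'_{n,r,I}}\subset\cO_{\fIG_{n,r,I}}$ is controlled by the different bound $\udelta^{p^n-p}\subset\mathcal{D}(\fIG_{n,r,I}'/\fIG_{1,r,I}')$ recalled in that proof.

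The main obstacle is exactly this last step for $n\ge 2$: the partial Igusa tower is genuinely ramified over the supersingular points and $\fIG_{n,r,I}$ is obtained by a normalization, so finite local freeness of $\cO_{\fIG_{n,r,I}}$ over the non-regular formal blow-up $\fX_{r,I}$ — equivalently, flatness — is not a formal consequence and must be extracted from the explicit presentation and the different computation of the proof of Lemma \ref{lemma:kerloc}; for $n=1$ the Kummer-type relation $z^{p-1}=x$ makes the tower finite free outright, which is the case one needs in practice. Everything else in the argument (the local identification of $\bW^0$ with a restricted power series ring over $\cO_{\fIG_{n,r,I}}$, $p$-torsion-freeness, affineness of $f_0$ so that $f_{0,\ast}$ commutes with $\otimes\,\cO/p^i$, and the passage from local freeness of a module to local freeness of its direct sums) is routine.
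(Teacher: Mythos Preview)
Your reduction is correct and matches the paper's: both arguments identify $\bW^0(U)$ with $R_n\langle Z,Y\rangle$ and reduce the Lemma to showing that $\cO_{\fIG_{n,r,I}}/p^i$ is locally free over $\cO_{\fX_{r,I}}/p^i$, i.e.\ that $\fIG_{n,r,I}\to\fX_{r,I}$ is flat.

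The gap is in how you finish. For $n\geq 2$ you try to push flatness through the factorization $\fIG_{n,r,I}\to\fIG_{n,r,I}'\to\fIG_{1,r,I}$ together with the different bound, and you correctly note that this does not formally give flatness of the normalization step. It does not, and no amount of the different computation in Lemma~\ref{lemma:kerloc} will extract it: a bound on the different controls the cokernel of $\cO_{\fIG_{n,r,I}'}\subset\cO_{\fIG_{n,r,I}}$ but says nothing about projectivity of the latter over $\cO_{\fX_{r,I}}$.

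What you are missing is precisely the use of the hypothesis $\alpha=p$. Under this hypothesis the formal schemes $\fX_{r,I}$ and $\fIG_{n,r,I}$ are obtained by base change along $\Z_p\to\Lambda_I^0$ from $p$-adic formal schemes $\fX_r$ and $\fIG_{n,r}$ over $\Z_p$ of absolute dimension $2$. The point is that $\fX_r$ is \emph{regular}: locally it is \'etale over $\Z_p\langle X,Y\rangle/(X^{p^{r+1}}Y-p)$, which one checks directly is regular of dimension $2$. Since $\fIG_{n,r}$ is normal (it is defined as a normalization) and finite over $\fX_r$, the standard ``miracle flatness'' argument (finite, equidimensional, Cohen--Macaulay source, regular target) forces $\fIG_{n,r}\to\fX_r$ to be flat; in dimension $2$ normality already gives Cohen--Macaulayness. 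Flatness then persists after base change to $\Lambda_I^0$ and after reduction modulo $p^i$, which is exactly what you need. Your assertion that the blow-up is ``non-regular'' is therefore the wrong intuition here: over $\Lambda_I^0$ the relative picture may be complicated, but the absolute $2$-dimensional model over $\Z_p$ is regular, and that is where the paper's argument lives.
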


\begin{proof}
Let us remark that if we denote by $\widetilde{\bW}^0:=\pi_\ast\bigl(\cO_{\bV_0({\rm H}_E^\#,s)} \bigr)$, then the local description of this sheaf in Section
\ref{sec:Hzero} implies immediately that $\widetilde{\bW}^0/p^i\widetilde{\bW}^0$ is a locally free $\cO_{\fIG_{n,r,I}}/p^i\cO_{\fIG_{n,r,I}}$. Now if $\alpha=p$
then $\fX_I, \fX_{r,I}, \fIG_{n,r,I}$ are all base changes to $\Spf(\Lambda_I^0)$ of $p$-adic formal schemes $\fX$, $\fX_r$, $\fIG_{n,r}$ which have absolute
dimension $2$ and such that $\fX_r$ is regular and $\fIG_{n,r}\lra \fX_r$ is finite and normal, therefore this morphism is finite and flat. As a consequence
$\cO_{\fIG_{n,r,I}}/p^i\cO_{\fIG_{n,r,I}}$ is a locally free $\cO_{\fX_{r,I}}/p^i \cO_{\fX_{n,r,I}}$ module, which proves the lemma.
\end{proof}

\subsubsection{An alternative construction of $\bW_{k,\infty}$.}

In this section we provide a purely characteristic $p$ construction of $\bW^0_{k,\infty}$ and $\bW_{k,\infty}$. We work with the pair $(A_0,\alpha)$ with
$A_0:=\Lambda^0/p\Lambda^0\cong \F_p[\![T]\!]$ and $\alpha=T$.

Fix an integer $r\geq 2$ if $p$ is odd and $r\geq 3$ if $p=2$. As in Section \ref{sec:omega} let  $\mathfrak{X}_\infty$ be the $T$-adic formal scheme
$\mathfrak{X}_\infty:=Y_{\F_p} \otimes A_0$ and let $\fX_{r,\infty}$ be the $T$-adic formal scheme over $\mathfrak{X}_\infty$ representing the functor associating
to every $A_0$-algebra $T$-adically complete $R$ the set of equivalence classes of pairs $(f, \eta)$, where $f\colon {\rm Spf}(R)\lra \mathfrak{X}_\infty$ and
$\eta\in {\rm H}^0\bigl({\rm Spf}(R), f^\ast(\omega^{(1-p)p^{r+1}})\bigr)$ such that
$$
\eta\cdot \mathrm{Hdg}^{p^{r+1}}=\alpha.
$$

Thanks to \cite[\S 4.3]{halo_spectral} for every  $n$ we have a natural formal scheme $\fIG_{n,r,\infty}\lra \fX_{r,\infty}$ given as the normalization of the Igusa
tower of level $n$ over the adic fiber of $ \fX_{r,\infty} $. By loc. cit. we have a canonical subgroup $H_n$ over $\fIG_{n,r,\infty}$ and a section $\psi_n\colon
\Z/p^n\Z \to H_n^\vee$ of its Cartier dual, which is an isomorphism over the ordinary locus of $\fX_{r,\infty}$.

Let $\fIG_{\infty,r,\infty}$ be the projective limit $\lim_n \fIG_{n,r,\infty}$ in the category of $T$-adic formal schemes. Thanks to \cite[Prop. 4.2]{halo_spectral} we
have a canonical section $\psi\colon \Q_p/\Z_p\to \mathrm{colim} H_n^\vee$. Proceeding as in Section \ref{sec:new} we have a sheaf $\Hsharp$ and an exact sequence
$$
0\lra \Omega_E\lra \barHsharp\lra \udelta^p\omega_E^{-1}\lra 0.
$$
with $\Omega_E$ an invertible sheaf over $\fIG_{\infty,n,\infty}$, endowed with a canonical generator $\gamma$ of $\Omega_E$ defined as the image of $1$ via  the
map $\Z_p \to \lim_n H_n^\vee$ provided by $\psi$, the limit of the maps ${\rm dlog}\colon H_n^\vee \to \omega_{H_n}$ and the isomorphism $\omega_E \to \lim_n \omega_{H_n}$ defined by the inclusions $H_n\subset E$ using (\ref{eq:dlog})..

\begin{definition}\label{def:V0charp} We define the formal scheme
$$
\pi\colon \bV_0(\barHsharp,\gamma) \lra \bV_0(\Omega_E,s)\lra \fIG_{\infty,r,\infty}\lra \fX_{r,\infty},
$$
requiring that for every formal scheme $\rho\colon S \to \fIG_{\infty,r,\infty}$ the $S$-valued points of $\bV_0(\barHsharp,\gamma)$
(resp.~$\bV_0(\Omega_E,\gamma)$) over $\rho$ are the $\cO_S$-linear homomorphisms $v\colon \rho^\ast\bigl(\barHsharp \bigr)\to \cO_S$ (resp.~$v\colon
\rho^\ast\bigl(\Omega_E \bigr)\to \cO_S$) such that $v(\gamma)=1$.
\end{definition}

Notice that in this case the map $\bV_0(\Omega_E,\gamma)\lra \fIG_{\infty,r,\infty}$ is an isomorphism as $\gamma$ is a generator of $\Omega_E$. Denote by $f$ the morphism from these formal schemes to
$\fX_{r,\infty}$. As in Definition \ref{def:newomegak} we set  $$\fw_\infty^{k,0}:=f_{\ast}\bigl(\cO_{\bV_0(\Omega_E, s)}  \bigr)\bigl[k^0\bigr],$$where $k$ is the universal
weight and $k^0:=k k_f^{-1}$. As $\bV_0(\Omega_E,\gamma)\cong \fIG_{\infty,r,\infty}$ this coincides with the sheaf defined in \cite[Thm. 4.1]{halo_spectral} in terms of the structure sheaf  of $ \fIG_{\infty,r,\infty}$.
Twisting by the sheaf $\fw^{k_f}_\infty$ as in Definition \ref{defi:wnrI}, which is invertible by \cite[\S 4.4.2]{halo_spectral}, we get an invertible sheaf
$\fcw_\infty^k$ over $\fX_{r,\infty}$.

Proceeding as in Section \ref{sec:wk} we have an action of $\Z_p^\ast$ on $\bV_0(\barHsharp,\gamma)$ and one  defines sheaves
$\bW^0_{k,\infty}$ and $\bW_{k,\infty}$ as in Definition \ref{def:omegak} with filtrations  $\Fil_\bullet\bW^0_{k,\infty}$ and $\Fil_\bullet\bW_{k,\infty}$. We will see in \S \ref{sec:ordinaryandqexp} that if we invert $T$, or equivalently if we restrict to the ordinary locus, the  filtration is canonically split.

\begin{theorem}\label{thm:bWkinfty} The following hold

\begin{itemize}

\item[i.] $\Fil_h\bW^0_{k,\infty}$ and $\Fil_h\bW_{k,\infty}$ are locally free $\cO_{\fX_{r,\infty}}$-modules;

\item[ii.] $\bW^0_k$ and $\bW_k$ are the $\alpha$-adic completions of $\lim_h \Fil_h\bW^0_{k,\infty}$, respectively $\lim_h \Fil_h\bW_{k,\infty}$.

\item[iii.] $\Fil_0\bW^0_{k,\infty}\cong \fw^{k,0}_\infty$ and $ \mathrm{Gr}_h\bW^0_k \cong \fw^{k,0}_\infty \otimes_{\cO_{\mathfrak{X}_{r,I}}}{\rm Hdg}_E^{h}\omega_{E}^{-2h}$.

\item[iv.] $\Fil_0\bW_{k,\infty}\cong \fcw^{k}_\infty$ and $ \mathrm{Gr}_h\bW_{k,\infty} \cong \fcw^{k}_\infty \otimes_{\cO_{\mathfrak{X}_{r,I}}}
{\rm Hdg}_E^h\omega_{E}^{-2h}$.

\item[v.] The sheaves $\bW^0_{k,\infty}$ and $\Fil_h\bW^0_{k,\infty}$ are the base changes to $\fX_{r,\infty}$ of the sheaves $\bW^0_{k,[p,\infty]}$ and
$\Fil_h\bW^0_{k,[p,\infty]}$   over $\fX_{r,[p,\infty]}$ of  Theorem \ref{thm:descentbWk}, respectively.

\end{itemize}
\end{theorem}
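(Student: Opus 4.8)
The plan is to establish the five claims by transporting the arguments of Theorem \ref{thm:descentbWk} to the purely characteristic $p$ situation and then identifying the two constructions via base change. First I would observe that all the formal-geometric constructions (the torsor $\fIG_{n,r,\infty}$, the canonical subgroups $H_n$, the sheaves $\Omega_E$ and $\barHsharp$, the Hodge exact sequence) are compatible under passing to the limit $n\to\infty$ and under base change to $A_0=\Lambda^0/p\Lambda^0$: this is exactly the content of \cite[\S 4.3]{halo_spectral} together with the discussion preceding Definition \ref{def:V0charp}. In particular $\Omega_E$ is invertible over $\fIG_{\infty,r,\infty}$ with a \emph{canonical} generator $\gamma$, so that $\bV_0(\Omega_E,\gamma)\cong \fIG_{\infty,r,\infty}$; this is the feature that makes the characteristic $p$ picture simpler than the general one. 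I would then invoke Corollary \ref{cor:fil} applied to the inclusion $(\Omega_E,\gamma)\hookrightarrow (\barHsharp,\gamma)$ of locally free sheaves with marked section: this produces the filtration $\Fil_\bullet$ on $f_\ast\bigl(\cO_{\bV_0(\barHsharp,\gamma)}\bigr)$, whose graded pieces are $f_\ast\bigl(\cO_{\bV_0(\Omega_E,\gamma)}\bigr)\otimes_{\cO} \mathrm{Sym}^h(\barHsharp/\Omega_E)$, and $\barHsharp/\Omega_E\cong \udelta^p\omega_E^{-1}$ so that $\mathrm{Sym}^h(\barHsharp/\Omega_E)\cong \mathrm{Hdg}_E^h\omega_E^{-2h}$ exactly as in the proof of Theorem \ref{thm:descentbWk}.

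Next I would run the descent from $\fIG_{\infty,r,\infty}$ to $\fX_{r,\infty}$ under the $\Z_p^\ast$-action, defining $\bW^0_{k,\infty}$ and $\Fil_h\bW^0_{k,\infty}$ as in Definition \ref{def:omegak}. Claims (i) and (iii) for the graded pieces follow because $\mathrm{Gr}_h$ of the descended sheaf is $\fw^{k,0}_\infty\otimes \mathrm{Hdg}_E^h\omega_E^{-2h}$ by the same trace-idempotent argument used in Lemma \ref{lemma:gradedpieces} and Lemma 5.4 of \cite{halo_spectral}: one lifts a basis element of each graded piece, symmetrizes by $e_{c_n}$, and checks the symmetrized elements form a basis of $\Fil_h$. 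Here I would be slightly careful: since we are over $\F_p[\![T]\!]$ with $\alpha=T$, the trace element $c_n$ lives in the appropriate power of $\mathrm{Hdg}^{-1}$ just as before, and the estimates of Lemma 5.4 of loc.~cit.\ go through verbatim with $p$ replaced by the relevant power of $\mathrm{Hdg}$ needed to make things integral; the invertibility of $\fw^{k_f}_\infty$ (from \cite[\S 4.4.2]{halo_spectral}) gives claim (iv) from claim (iii). Claim (ii) is then formal: $\Fil_h$ is locally a direct summand by (i) and (iii), and $f_\ast\bigl(\cO_{\bV_0(\barHsharp,\gamma)}\bigr)$ is by construction (Lemma \ref{lem:V0}, via the blow-up description) the $\alpha$-adic completion of $\lim_h \Fil_h$, since locally it is $R\langle Z,Y\rangle$ with $\Fil_h$ the polynomials of degree $\le h$ in $Y$ — the same local picture as in Lemma \ref{lemma:localwkappa}.

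For claim (v), the strategy is to compare $\fX_{r,\infty}$ with the base change of $\fX_{r,[p,\infty]}$. By the definition of $\fX_{r,I}$ in \S\ref{sec:omega}, when $I=[p,\infty]$ the pair is $(\Lambda_I^0,\alpha)$ with $\alpha=T$, and reducing modulo $p$ identifies $\fX_{r,[p,\infty]}\times_{\Spf\Lambda^0}\Spf(\Lambda^0/p)$ with $\fX_{r,\infty}$; likewise $\fIG_{n,r,[p,\infty]}$ reduces to $\fIG_{n,r,\infty}$ because the normalization defining the Igusa tower is flat (indeed finite flat, by the argument in Lemma \ref{lemma:wfree} and \cite[\S 4.3]{halo_spectral}) and hence commutes with this base change. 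Since $\bV_0(\barHsharp,\gamma)$ is defined functorially from $\barHsharp$ and its marked section, and since $\barHsharp$, the Hodge filtration, and the canonical section $s=\mathrm{dlog}(P_n)$ are all obtained by base change from the $[p,\infty]$ situation, the formal scheme $\bV_0(\barHsharp,\gamma)$, its structure sheaf, and the filtration all base-change correctly; taking $\Z_p^\ast$-invariants for the weight $k$ commutes with the flat base change $\Lambda^0\to\Lambda^0/p$, giving (v). I expect the main obstacle to be claim (v): one must check carefully that taking $\lim_n$ (to form $\fIG_{\infty,r,\infty}$) and reducing mod $p$ commute, and that the canonical generator $\gamma$ of $\Omega_E$ over $\fIG_{\infty,r,\infty}$ is literally the mod-$p$ reduction of the corresponding object used to build $\bW^0_{k,[p,\infty]}$ — this requires unwinding the limit of the $\mathrm{dlog}$ maps and the identification $\omega_E\cong \lim_n\omega_{H_n}$ from \cite[Prop.~4.2]{halo_spectral}, together with the fact (from the end of the proof of Theorem \ref{thm:descentbWk}) that $\bW^0_{k,[p,\infty]}=\lim_{n\ge 1}\bW^0_{k,[p^n,p^{n+1}]}$, so that the comparison at finite level propagates to the limit.
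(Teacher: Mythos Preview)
Your treatment of claims (i)--(iv) is broadly correct and close in spirit to the paper's, though the paper takes a shortcut you overlook: since $\gamma$ is a genuine generator of $\Omega_E$ (not merely a marked section modulo an ideal), the local picture of $\bV_0(\barHsharp,\gamma)$ has \emph{no $Z$-variable} --- it is simply $R_\infty\langle Y\rangle$ --- and the descended sheaf is $s^k\vert_S\, R\langle Y'\rangle$ for a suitable twist $Y'$ of $Y$, with the filtration being the $Y'$-adic one. The invertibility of $\fw^{k,0}_\infty$ is quoted directly from \cite[Thm.~4.1]{halo_spectral}, so no trace-idempotent argument \`a la Lemma~\ref{lemma:gradedpieces} is needed.

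For claim (v), however, your direct base-change argument has a genuine gap. The sheaf $\bW^0_{k,[p,\infty]}$ is built (via the limit in Definition~\ref{def:omegak}) from $\bV_0(\Hsharp,s)$ in the sense of Definition~\ref{def:V0}, where $s=\mathrm{dlog}(P_n)$ is a marked section of $\Omega_E/\ubeta_n\Omega_E$ over a \emph{finite}-level Igusa cover; by contrast, $\bW^0_{k,\infty}$ is built from $\bV_0(\barHsharp,\gamma)$ in the sense of Definition~\ref{def:V0charp}, where $\gamma$ is an \emph{exact} generator of $\Omega_E$ over the \emph{infinite}-level Igusa tower. These are different functors over different bases, and neither is literally the mod-$p$ reduction of the other; in particular $\gamma$ is not the reduction of any $s$ at finite level. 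The obstacle you flag --- that $\gamma$ should be the mod-$p$ reduction of the object used for $[p,\infty]$ --- is precisely where the argument breaks.

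The paper resolves this by passing through an auxiliary intermediary: the anticanonical tower $h_r\colon\fX_{\infty,I}\to\fX_{r,I}$ and its Igusa tower $\fIG_{\infty,\infty,[p,\infty]}$, over which $\Omega_E$ carries a canonical generator $\mathrm{HT}^{\rm un}$ (\cite[\S 6.5]{halo_spectral}). One then forms $\bV_0^\infty(\Hsharp,\mathrm{HT}^{\rm un})$ in the style of Definition~\ref{def:V0charp} and the resulting sheaves $\bW^{\rm perf,0}_{k,[p,\infty]}$. There are natural comparison maps $h_r^\ast(\bW^0_{k,J})\to\bW^{\rm perf,0}_{k,J}$ for each closed $J\subset[p,\infty)$ and $h^\ast(\bW^0_{k,\infty})\to\bW^{\rm perf,0}_\infty$; both are shown to be filtered isomorphisms by checking graded pieces (using \cite[Prop.~6.6 and 6.8]{halo_spectral}). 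Since $\bW^{\rm perf,0}$ thus descends uniquely (via $\Z_p^\ast$-invariants) both to $\bW^0_{k,[p,\infty]}$ and to $\bW^0_{k,\infty}$, claim (v) follows. This perfectoid-style intermediary is the missing ingredient in your outline.
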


\begin{proof} Let $\rho\colon S=\Spf(R)\lra \fX_{r,\infty}$ be an affine open formal subscheme
such that $\rho^\ast(\omega_E)$ is a free $R$-module with $R$-basis element $\omega$.  Let $S_\infty:=\Spf(R_\infty)$ the corresponding open of
$\fIG_{\infty,r,\infty}$ over $S$. Write $\rho^\ast\bigl(\barHsharp\bigr)=R_\infty s \oplus  R_\infty e$ (here $s$ is the canonical section of $\Omega_E$ over $S$
defined above and $e$ is a generator of $\udelta\omega_E^{-1}$ over $\Spf(R_\infty)$). In this case $\rho^\ast\Bigl(\pi_{\ast}\bigl(\cO_{\bV_0(\barHsharp,
\gamma)}\bigr)\Bigr)=R_\infty\langle Y\rangle$, with $Y$ is the dual of the generator $e$ of $ \udelta^p\omega_E^{-1}\vert_S$. In particular if we let
$S_1:=\Spf(R_1)$ be the inverse image of $S$ in $\fIG_{1,r,\infty}$ and we choose $e$ the generator $\delta^p \omega^{-1}$ of $\udelta^p \omega_E^{-1}$ over $S_1$,
then we have the following analogue of Lemma \ref{lemma:localwkappa}:
$$\bW^0_{k,\infty}(S)=\rho^\ast\Bigl(\pi_{\ast}\bigl(\cO_{\bV_0(\barHsharp, \gamma)}\bigr)[k^0]\Bigr)=s^k\vert_S R\langle Y'\rangle$$
where $s^k$ is the generator of $\fw_\infty^{k,0}\vert_S$ defined by $s$ via \cite[Thm. 4.1]{halo_spectral} and $Y'=\frac{Y}{u}$ with $u\in R_1$ such that $\lambda
\ast u= \frac{\lambda \ast \udelta}{\udelta} u$ for every $\lambda \in (\Z/p\Z)^\ast$. The filtration $\Fil_\bullet \bW^0_{k,\infty}(S)$ is the $Y'$-adic
filtration. Using this local description Claims (i)--(iv) of the Theorem follow.

We next sketch the proof of  Claim (v). Write $I:=[p,\infty]$. One introduces auxiliary objects; consider the anticanonical tower $h_r\colon \fX_{\infty,I} \to
\fX_{r,I}$ and the Igusa tower $\fIG_{\infty,\infty,[p,\infty]}\to \fX_{\infty,I}$ as $T$-adic formal schemes.  Over $\fIG_{\infty,\infty,[p,\infty]}$ the
pull--back of $\Omega_E$ admits a canonical generator $\mathrm{HT}^{\rm un}$, see \cite[\S 6.5]{halo_spectral}. This allows to define $\bV_0^\infty(\Hsharp,
\mathrm{HT}^{\rm un})$ over $\fIG_{\infty,\infty,[p,\infty]}$ as in Definition \ref{def:V0charp} and hence sheaves $\bW_{k,[p,\infty]}^{\rm perf,0}$ over
$\fX_{\infty,I}$. Arguing as in \cite[Prop. 6.4]{halo_spectral} one gets that $\bW_{k,[p,\infty]}^{\rm perf,0}$ is endowed with a filtration $\Fil_\bullet
\bW_{k,[p,\infty]}^{\rm perf,0}$ by locally free sheaves such that $\mathrm{Gr}_h \bW_{k,[p,\infty]}^{\rm perf,0}\cong \fw_I^{\rm perf}
\otimes_{\cO_{\mathfrak{X}_{\infty,I}}} {\rm Hdg}_E^{h}\omega_{E}^{-2h}$. Here $\fw_I^{\rm perf,0}=f_\ast\bigl(\cO_{\bV_0^\infty(\Omega_E, \mathrm{HT}^{\rm
un})}\bigr)[(k^0){-1}]$ can be identified with $h_r^\ast(\fw_k)$ by \cite[Prop. 6.6]{halo_spectral}.

Note that for every closed interval $J\subset [p,\infty)$ and every integer $n$ adapted to $J$ we have a natural commutative diagram
$$\begin{matrix} \bV_0^\infty(\Hsharp, \mathrm{HT}^{\rm un}) & \lra & \bV_0(\Hsharp, s) \cr \big\downarrow & &\big\downarrow \cr
\fIG_{\infty,\infty,J} & \lra & \fIG_{n,r, J} \cr \big\downarrow & &\big\downarrow \cr \fX_{\infty,J} & \stackrel{h_r}{\lra} & \fX_{r,I}\cr
\end{matrix}$$where $\bV_0(\Hsharp, s)$ is as in Section \ref{sec:wk}. This defines a morphism $h_r^\ast\bigl(\bW^0_{k,J}\bigr)\lra \bW_{k,J}^{\rm perf,0}$ that
respects filtrations and induces the isomorphism $$h_r^\ast(\fw^{k,0})  \otimes_{\cO_{\mathfrak{X}_{\infty,I}}} {\rm Hdg}_E^h\omega_{E}^{-2h} \cong \fw_J^{\rm
perf,0} \otimes_{\cO_{\mathfrak{X}_{\infty,I}}} {\rm Hdg}_E^h\omega_{E}^{-2h}$$on graded pieces and, hence, it is an isomorphism, also on the filtrations. Arguing as
in the end of the proof of \cite[Thm. 6.4]{halo_spectral} one concludes that the sheaf $\bW_{k,[p,\infty]}^{\rm perf,0}$ descends to the sheaf
$\bW^0_{k,[p,\infty]}$ over $\fX_{r,[p,\infty]}$ defined in Definition \ref{def:omegak} (for $r\geq 3$ if $p$ is odd and $r\geq 5$ if $p=2$).

By construction we also have a commutative diagram $$\begin{matrix} \bV_0^\infty(\Hsharp, \mathrm{HT}^{\rm un})_\infty & \lra & \bV_0(\barHsharp, \gamma) \cr
\big\downarrow & &\big\downarrow \cr \fIG_{\infty,\infty,\infty} & \lra & \fIG_{\infty,r,\infty} \cr \big\downarrow & &\big\downarrow \cr \fX_{\infty,\infty} &
\stackrel{h}{\lra} & \fX_{r,\infty}\cr
\end{matrix}$$where $ \bV_0^\infty(\Hsharp, \mathrm{HT}^{\rm un})_\infty $ is the restriction of $\bV_0^\infty(\Hsharp, \mathrm{HT}^{\rm un})$
to $\fIG_{\infty,\infty,\infty}$ and $\bV_0(\barHsharp, \gamma) $ is as defined in \ref{def:V0charp}. Note that $h^\ast\bigl(\fw^{k,0}_I \bigr)\cong \fw_I^{\rm
perf,0}$ by \cite[Prop. 6.8]{halo_spectral}.

This commutative diagram provides a morphism from $h^\ast\bigl( \bW^0_{k,\infty}\bigr)$ to the restriction $\bW_\infty^{\rm perf,0}$ of $\bW_{k,[p,\infty]}^{\rm
perf,0}$ to $\fX_{\infty,\infty}$, that respects the filtrations and induces an isomorphism on graded pieces thanks to the cited result of \cite{halo_spectral}.
Hence it is an isomorphism. On the other hand we know that $\bW_\infty^{\rm perf,0}$  descends to the restriction of $\bW^0_{k,[p,\infty]}$ to $\fX_{r,\infty}$. By
the uniqueness of the descent -- in this case defined by taking $\Z_p^\ast$-invariant --  the claim follows.

\end{proof}

\subsection{The Gauss-Manin connection on  $\bW_k$.}\label{sec:GMwk}

Let $r$, $n$, $I$, $(\Lambda_I^0,\alpha)$ be as in the previous
sections (see Definition \ref{defi:wnrI})  with the property that $n\ge 2$ and $I\subset [0,\infty)$. The restriction of $k$
to $1+p^n\Z_p$ is analytic so there is $u_I\in p^{1-n}\Lambda_I^0$ such
that $t^k:=k(t)=\exp\bigl(u_I\log(t)\bigr)$ for all $t\in
1+p^n\Z_p$.

Consider the morphism of adic spaces $\mathcal{IG}_{n,r,I}'\lra \mathcal{IG}_{n,r,I} $ defined by the trivializations $E[p^n]^\vee\cong (\Z/p^n\Z)^2$ compatible with
the trivializations $H_n^\vee\cong \Z/p^n \Z$. Let $\fIG_{n,r,I}'\lra \fIG_{n,r,I} $ be the normalization as in \S\ref{sec:omega} and let $h_n\colon \fIG_{n,r,I}\to \fX_{r,I}$ be the natural morphism.  It then follows from Proposition \ref{prop:nablasharp}  and from Lemma \ref{lem:Griffiths} that over $\fIG_{n,r,I}'$ the sheaf
$\bW^0_k$ admits an integrable connection relatively to $\Lambda_I^0$ for which $\Fil_\bullet \bW^0_k$ satisfies Griffiths' tranversality.

\begin{theorem}
\label{theorem:griffith} The connection on the pull-back of $\bW^0_k$ over $\fIG_{n,r,I}'$  descends to an integrable connection
$$\nabla_k\colon h_n^\ast\bigl(\bW^0_k\bigr) \to
 h_n^\ast\bigl(\bW^0_k\bigr)\widehat{\otimes}_{\cO_{\mathfrak{IG}_{n,r,I}}} \Omega^1_{\mathfrak{IG}_{n,r,I}/\Lambda_I^0}[1/\alpha]$$over
$\mathfrak{IG}_{n,r,I}$ for which $ h_n^\ast\bigl(\Fil_\bullet\bW^0_k\bigr)$ satisfies Griffiths' tranversality. In particular it induces a connection
$$\nabla_k\colon \bW^0_k \to
\bW^0_k\widehat{\otimes}_{\cO_{\mathfrak{X}_{r,I}}} \Omega^1_{\fX_{r,I}/\Lambda_I^0}[1/\alpha]$$such that the induced $\cO_{\fX_{r,I}}$-linear map on th $h$ graded piece

$$
\mathrm{Gr}_h(\nabla_k)\colon \mathrm{Gr}_h(\bW^0_k)[1/\alpha]\lra \mathrm{Gr}_{h+1}(\bW^0_k)\otimes \Omega^1_{\fX_{r,I}/\Lambda_I^0}[1/\alpha]
$$is an isomorphism times $u_I-h$ and, in particular, it is an isomorphism if and only if $u_I-h$ is invertible in $\Lambda_I^0[1/\alpha]$.

It also induces a connection $\nabla_k\colon \bW_k[1/\alpha] \to \bW_k\widehat{\otimes}_{\cO_{\mathfrak{X}_{r,I}}}
\Omega^1_{\mathfrak{X}_{r,I}/\Lambda_I^0}[1/\alpha]$ that satisfies Griffiths' tranversality and such that the induced map on the $h$ graded piece is an isomorphism times
$u_I-h$.

If $k\in \N$ is an integral weight, the identification ${\rm Sym}^k\bigl({\rm H}_E\bigr)[1/\alpha]\vert_{\fX_{r,I}}={\rm Fil}_k(\bW_k)[1/\alpha]$ of Theorem
\ref{thm:descentbWk} is compatible with the connections, considering on ${\rm Sym}^k\bigl({\rm H}_E\bigr)$ the Gauss-Manin connection.

\end{theorem}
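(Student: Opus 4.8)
The plan is to produce the connection $\nabla_k$ on $h_n^\ast(\bW^0_k)$ by descent from $\fIG_{n,r,I}'$, then compute the graded pieces, and finally match with the Gauss--Manin connection on $\mathrm{Sym}^k(\mathrm{H}_E)$ for integral $k$. First I would recall from Proposition \ref{prop:nablasharp} that over $\fIG_{n,r,I}'$ the section $s = {\rm dlog}(P_n)$ becomes horizontal for $\nabla$ on $\Hsharp$ modulo a power of $\alpha$ (this is the whole point of passing to the finer tower where the full $p^n$-torsion is trivialized). Then Section \ref{sec:fvvconenction}, and in particular Lemma \ref{lem:Griffiths}, applies with $S = \fIG_{n,r,I}'$, $\cE = \Hsharp$, $\cF = \Omega_E$, giving an integrable connection $\nabla_0$ on $f_{0,\ast}\bigl(\cO_{\bV_0(\Hsharp,s)}\bigr)$ over $\Lambda_I^0$ satisfying Griffiths' transversality for $\Fil_\bullet$; extracting the $k$-isotypic part for the $\fT^{\rm ext}$-action (which commutes with $\nabla_0$, since the torus action is defined $\cO_S$-linearly and $\nabla_0$ is functorial) yields a connection on the pull-back of $\bW^0_k$. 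The descent step: since $\fIG_{n,r,I}' \to \fIG_{n,r,I}$ is finite étale with Galois group a subgroup of $\GL_2(\Z/p^n\Z)$, and $\Omega^1_{\fIG_{n,r,I}'/\Lambda_I^0} = \Omega^1_{\fIG_{n,r,I}/\Lambda_I^0}$ pulled back, a Galois-equivariant connection descends; one checks equivariance from the construction, the only subtlety being the $\alpha$-denominators, which is why the descended connection lands in $h_n^\ast(\bW^0_k)\widehat\otimes \Omega^1[1/\alpha]$. Pushing forward along $h_n$ and using Lemma \ref{lemma:CokerOmega} (which says $h_n^\ast(\Omega^1_{\fX/\Z_p}) \to \Omega^1_{\fIG_{n,r,I}/\Lambda_I^0}$ has kernel and cokernel killed by a power of $\alpha$) gives the connection on $\bW^0_k$ over $\fX_{r,I}$ after inverting $\alpha$.

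For the graded pieces, I would work in the local coordinates of Lemma \ref{lemma:localwkappa}: $\bW^0_k$ is locally spanned by $(1+\beta_n Z)^k V^m$ with $V = Y/(1+\beta_n Z)$, and $\Fil_h$ corresponds to $m \le h$. Using the explicit formulas from the proof of Lemma \ref{lem:Griffiths} for $\nabla_0(Z)$ and $\nabla_0(Y)$ (coming from $\nabla(f) = \alpha(\cdots)$, $\nabla(e) = \cdots$ where $(f,e)$ is the chosen basis of $\Hsharp$ adapted to the Hodge filtration), together with $\nabla_0\bigl((1+\beta_n Z)^k\bigr) = k\,(1+\beta_n Z)^k\,d\log(1+\beta_n Z)$, one computes $\mathrm{Gr}_h(\nabla_k)$ on the generator $(1+\beta_n Z)^{k}V^h$: the contribution that raises the filtration degree comes from the part of $\nabla(e)$ landing in $\Omega_E$ (the Kodaira--Spencer component, which is an isomorphism $\omega_E^{-1}\otimes\Omega_E \xrightarrow{\sim} \Omega^1 \cdot(\text{unit})$ up to $\mathrm{Hdg}$-powers) and from differentiating $(1+\beta_n Z)^k$. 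Collecting the two contributions produces the factor $u_I - h$: the term from $\nabla(e)$ contributes $-h$ (the power of $V$ in the denominator being differentiated $h$ times via $(1+\beta_n Z)^{-1}$, each step contributing $-1$), while the term from $(1+\beta_n Z)^k$ contributes $k$ in the classical case and $u_I$ in general, since $t^k = \exp(u_I \log t)$ forces $d\bigl((1+\beta_n Z)^k\bigr) = u_I (1+\beta_n Z)^k\,\frac{\beta_n\,dZ}{1+\beta_n Z}$ on $1+p^n\Z_p$. Hence $\mathrm{Gr}_h(\nabla_k)$ equals $(u_I - h)$ times a fixed isomorphism $\mathrm{Gr}_h(\bW^0_k)[1/\alpha] \cong \mathrm{Gr}_{h+1}(\bW^0_k)\otimes\Omega^1[1/\alpha]$ (the Kodaira--Spencer map, invertible after inverting $\alpha$ by standard theory of the modular curve together with Theorem \ref{thm:descentbWk}(iii)), which gives the stated criterion for it to be an isomorphism. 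The statement for $\bW_k = \bW^0_k \otimes \fw^{k_f}$ follows by tensoring, since $\fw^{k_f}$ carries the (flat, finite-order) connection making $k_f$ horizontal and contributes nothing to the graded computation.

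For the last assertion, with $k \in \N$: Theorem \ref{thm:descentbWk} gives ${\rm Sym}^k(\mathrm{H}_E)[1/\alpha] = \Fil_k(\bW_k)[1/\alpha]$ as filtered sheaves, coming from the morphisms of formal vector bundles $\bV_0(\Hsharp,s) \to \bV(\Hsharp) \leftarrow \bV(\mathrm{H}_E)$ over $\fIG_{n,r,I}$. The Gauss--Manin connection on $\mathrm{H}_E$ induces, via the functoriality of the connection construction in Section \ref{sec:fvvconenction} applied to these morphisms of vector bundles, a connection on the symmetric-power sheaf that agrees on both sides: on $\bV(\mathrm{H}_E)$ the induced $\nabla'$ restricted to the degree-$\le k$ part is exactly the Gauss--Manin connection on ${\rm Sym}^k(\mathrm{H}_E)$ by the characterization "$\nabla'$ is the unique connection extending $\nabla$ on $\cE \subset f_\ast\cO_{\bV(\cE)}$", and by the same uniqueness the connection pulls back to $\nabla_0$ on $\bV_0(\Hsharp,s)$, hence to $\nabla_k$. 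So the identification is automatically horizontal. The main obstacle I anticipate is the descent step together with controlling the $\alpha$-powers: verifying that $s$ is horizontal only modulo $\alpha^N$ over $\fIG_{n,r,I}'$ (not on the nose) means the functoriality of \S\ref{sec:fvvconenction} is not directly applicable, and one must argue that the induced isomorphism $\epsilon_0$ on $\bV_0$ still exists and is Galois-equivariant after inverting $\alpha$ — this is where Proposition \ref{prop:nablasharp} and Lemma \ref{lemma:CokerOmega} do the real work, and where the bookkeeping of which power of $\mathrm{Hdg}$ (equivalently $\alpha$) is needed must be done carefully.
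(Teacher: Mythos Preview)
Your proposal is correct and follows essentially the same route as the paper: construct $\nabla_0$ over $\fIG_{n,r,I}'$ via Proposition~\ref{prop:nablasharp} and \S\ref{sec:fvvconenction}, descend, compute the graded pieces in the local coordinates of Lemma~\ref{lemma:localwkappa}, and invoke functoriality for the comparison with $\mathrm{Sym}^k(\mathrm{H}_E)$.

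The one methodological difference worth noting is in the descent step. You argue abstractly: $\fIG_{n,r,I}'\to\fIG_{n,r,I}$ is finite \'etale (after inverting $\alpha$) with Galois group inside $\GL_2(\Z/p^n\Z)$, and the connection is equivariant because the input data $(\Hsharp,s,\nabla^\sharp)$ already lives over $\fIG_{n,r,I}$. The paper instead does a single explicit local computation via Grothendieck's formalism: writing the inverse connection matrix $A=\begin{pmatrix}a&b\\c&d\end{pmatrix}$ for $\nabla^\sharp$ on $\Hsharp$ and acting by $A$ on $V^m(1+\beta_nZ)^k$ yields the closed formula
\[
\nabla_k\bigl(V^m(1+\beta_nZ)^k\bigr)=\bigl(mV^m\otimes d_0+(u-m)V^m\otimes a_0+mV^{m-1}\otimes b+(u-m)V^{m+1}\otimes c\bigr)(1+\beta_nZ)^k,
\]
from which both the descent (the right-hand side visibly involves only data on $\fIG_{n,r,I}$) and the graded-piece statement (the $V^{m+1}$ term is $(u-m)\otimes c$, and $\mathrm{Hdg}\cdot c$ is the Kodaira--Spencer generator by Lemma~\ref{lemma:propM}) fall out at once. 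Your heuristic for the factor $u_I-h$ is the correct intuition, but the paper's use of $(a+cV)^{k-m}=1+(u-m)(a_0+cV)$ (exploiting $a_0^2=c^2=0$ in $\cP^{(1)}$) makes the bookkeeping cleaner and avoids the need to separately verify Galois equivariance.
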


\begin{proof} The proof of the first part is local on $\mathfrak{X}_{r,I}$ and will follow from the computations of \S\ref{section:explicitnablak}.
The statement for the descent to $\fX_{r,I}$ after inverting $\alpha$ follows from Lemma \ref{lemma:CokerOmega} taking $(\Z/p^n\Z)^\ast$-invariants.

For the second part of the Theorem recall from Definition \ref{def:omegak} that $\bW_{k}= \bW^0_{k}\otimes_{\cO_{\fX_{r,I}}} \fw^{k_f}$ and
$\fw^{k_f}=\left(g_{i,\ast} \bigl(\cO_{\mathfrak{IG}_{i,r,I}}\bigr)\otimes_{\Lambda^0} \Lambda\right) \bigl[k_{I,f}^{-1}\bigr] $ by Definition \ref{defi:wnrI}. As
$\Omega^1_{\mathfrak{IG}_{i,r,I}/\fX_{r,I}} $ is annihilated by a power of $\alpha$, the universal derivation $g_{i,\ast} \bigl(\cO_{\mathfrak{IG}_{i,r,I}}\otimes_{\Lambda^0} \Lambda\bigr) \to g_{i,\ast} \bigl(\Omega^1_{\mathfrak{IG}_{i,r,I}/\Lambda_I^0}\otimes_{\Lambda^0} \Lambda\bigr)$ defines
a connection on $\fw^{k_f}[1/\alpha]$. This connection and the connection on $\bW^0_k$ induce a connection on the tensor product $\bW_{k}[1/\alpha]$.

For the third part the local expression of the connections is described in \S \ref{section:explicitnablak} and directly implies that the given identification is
compatible with the connections.

\end{proof}

The multiplication structure on $\pi_\ast\bigl(\cO_{\bV_0(\Hsharp)}\bigr)$ induces a multiplication $\bW^0_k \otimes_{\cO_{\mathfrak{X}_{r,I}}} \bW^0_2 \to
\bW^0_{k+2}$. Since $\Fil_0 \bW^0_2=\Omega^{\otimes 2}_E$ we have a morphism $\bW^0_k\otimes_{\cO_{\mathfrak{X}_{r,I}}} \Omega_E^{\otimes 2} \to \bW^0_{k+2}$ which
is easily checked, using Lemma \ref{lemma:localwkappa}, to be an isomorphism, preserving the filtrations. We have an identification
$\Omega^1_{\mathfrak{X}/\Lambda_I^0}\cong \omega_E^{\otimes 2}$ via Kodaira-Spencer.  Thanks to Lemma \ref{lemma:CokerOmega} we also have a positive integer $c_n$,
depending on $n$, such that $\mathrm{Hdg}^{c_n}$ annihilates $\Omega^1_{\fIG_{n,r,I}/\fX} $, i.e., $\mathrm{Hdg}^{c_n} \Omega^1_{\fIG_{n,r,I}/\Lambda_I^0}$ is
contained in the pull-back of $\Omega^1_{\fX/\Lambda_I^0}$ to $\fIG_{rn,r,I}$. In conclusion replacing $c_n$ with $c_n+3+c_i$, with $i=1$ for $p$ odd and $i=2$ for
$p=2$ and using the explict formula for the connection over $\fIG_{n,r,I}$ provided in (\ref{eq:nablak}),  we can write the Gauss-Manin connections as morphisms:

$$\bW^0_k \lra \Bigl(\frac{1}{p^{n-1}\mathrm{Hdg}^{c_n}} \Bigr) \cdot \bW^0_{k+2} $$and $$\bW_k \lra \Bigl(\frac{1}{p^{n-1}\mathrm{Hdg}^{c_n}} \Bigr) \cdot \bW_{k+2},$$
here the factor $p^{1-n}$ comes from the fact that $u\in p^{1-n} \Lambda_I^0$.

\begin{remark} One could refine Theorem \ref{theorem:griffith}
in order to control the denominators $c_n$ of $\mathrm{Hdg}$, and hence of $\alpha$, appearing in the connection of $\bW^0_k$ over $\fX_{r,I}$ in terms of the
integer $n$, adapted to $I$. Unfortunately due to Lemma \ref{lemma:CokerOmega} and the more detailed analysis of the inverse different of $\mathfrak{IG}_{n,r,I}\to
\fX_{r,I} $ in Lemma \ref{lemma:kerloc} such powers grow as $p^n$. In particular if we take the limit over intervals $[p,p^h]$ for $h\to \infty$ we find a
connection with unbounded denominators in $\alpha=T$.

The conclusion is that the connection $\nabla$ can not be iterated over the whole weight space, including $\infty$, using the methods of Section
\ref{sec:IterateManin}.

\end{remark}

\subsubsection{Explicit, local calculation of the connection $\nabla_k$.}
\label{section:explicitnablak}

Let $\rho\colon S=\Spf(R)\lra \fIG'_{n,r,I}$ be a morphism of
formal schemes over $\Spf(\Lambda_I^0)$. Assume that the composite
of $\rho$ with the projection to the modular curve $\mathfrak{X}$
factors through some open affine neighborhood of $\mathfrak{X}$
over which $\mathrm{H}_E$ is free with bases $\{\omega,\eta\}$
where $\omega$ spans $\omega_E$. Let $\delta $ be the generator
$\Delta(E/R, \omega)$ of $\rho^\ast(\udelta)$ of Remark
\ref{rmk:delta}. By definition of $\Hsharp$, the $R$-modules $\rho^\ast\bigl(\mathrm{H}_E\bigr)$
and  $\rho^\ast\bigl(\mathrm{H}_E^\sharp\bigr)$ are free of rank
$2$ with bases $\{\rho^\ast(\omega),\rho^\ast(\eta)\}$ and
$\{f,e\vert f:=\delta \omega, e:=\delta^p \eta\}$ respectively.  We
also deduce that  $\rho^\ast(\ubeta_n)$ is a principal ideal of
$R$ with generator $\beta_n$ and that the given $R$-basis $\{f,
e\}$ of $\rho^\ast\bigl(\mathrm{H}_E^\sharp\bigr)$ satisfies
$f(\mbox{mod }\beta_n R)=\rho^\ast\bigl({\rm dlog}(P_n)\bigr)$.

Let $\cP^{(1)}_{R/\Lambda_I^0}\subset
\Spf(R\widehat{\otimes}_{\Lambda_I^0} R)$ be the closed immersion
defined by the square of the ideal ${\rm I}(\Delta)$ associated to
the diagonal embedding $\Delta\colon S\hookrightarrow
S\times_{\Lambda_I^0} S$. Thanks to Proposition
\ref{prop:nablasharp} the $R$-module
$\rho^\ast\bigl(\mathrm{H}_E^\sharp\bigr)$ admits an integrable
connection $\nabla^\sharp$ that can be expressed via
Grothendieck's formalism (see in \S\ref{sec:fvvconenction}) as an
isomorphism $\epsilon^\sharp\colon
j_2^\ast\bigl(\rho^\ast(\mathrm{H}_E^\sharp)\bigr) \cong
j_1^\ast\bigl(\rho^\ast(\mathrm{H}_E^\sharp)\bigr)$. Let
$$A:=\left(
\begin{array}{cc} a & b \\ c & d
\end{array} \right) \in \mathrm{GL}_2\bigl(\cP^{(1)}_{R/\Lambda_I^0}\bigr)$$be the inverse of the
matrix of $\epsilon^\sharp$ with respect to the basis $\{f\otimes 1, e\otimes 1\}$ of $j_2^\ast\bigl(\rho^\ast(\mathrm{H}_E^\sharp)\bigr)$ and $\{1\otimes f,
1\otimes e\}$ of $j_1^\ast\bigl(\rho^\ast(\mathrm{H}_E^\sharp)\bigr)$.

\begin{lemma}
\label{lemma:propM}
We have

\begin{itemize}

\item[a)] $a=1+a_0$, $d=1+d_0$ with $a_0,b,c,d_0\in {\rm
I}(\Delta)$ and so $a_0^2=b^2=c^2=d_0^2=0$ in
$\cP^{(1)}_{R/\Lambda_I^0}$.

\item[b)] interpreting $a_0$, $b$, $c$, $d_0\in {\rm
I}(\Delta)/{\rm I}(\Delta)^2\cong \Omega^1_{R/\Lambda_I^0}$ we
have that $a_0$, $b$, $c$, $d_0\in \frac{1}{\mathrm{Hdg}} \cdot
\rho^\ast\bigl(\Omega^1_{\mathfrak{X}/\Z_p}\bigr)$ and $-\mathrm{Hdg}\cdot  c$ is  the
Kodaira-Spencer  differential $\mathrm{KS}(\omega,\eta)$
associated to the local basis $\{\omega, \eta\}$ of
$\mathrm{H}_E$.
\end{itemize}

\end{lemma}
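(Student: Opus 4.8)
The plan is to work entirely in the local ring $\cP^{(1)}_{R/\Lambda_I^0} = R\widehat{\otimes}_{\Lambda_I^0}R / {\rm I}(\Delta)^2$, where the two pull-back maps $j_1,j_2$ agree modulo ${\rm I}(\Delta)$ and ${\rm I}(\Delta)^2 = 0$. First I would establish part (a): since $\Delta^\ast(\epsilon^\sharp) = {\rm Id}$, the matrix of $\epsilon^\sharp$ reduces to the identity modulo ${\rm I}(\Delta)$, hence so does its inverse $A$; thus $a-1, b, c, d-1 \in {\rm I}(\Delta)$, and because the square of that ideal vanishes in $\cP^{(1)}_{R/\Lambda_I^0}$ we get $a_0^2 = b^2 = c^2 = d_0^2 = 0$ as claimed. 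This part is essentially formal from Grothendieck's description of connections recalled in \S\ref{sec:fvvconenction}.

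For part (b) I would identify ${\rm I}(\Delta)/{\rm I}(\Delta)^2$ with $\Omega^1_{R/\Lambda_I^0}$ in the standard way and then trace through the construction of $\nabla^\sharp$ from Proposition \ref{prop:nablasharp}. The key point is that $\Hsharp = \Omega_E + \udelta^p {\rm H}_E$ sits inside ${\rm H}_E$ with $\Hsharp = \delta\omega_E + \delta^p {\rm H}_E$ locally, and the connection $\nabla^\sharp$ on $\Hsharp$ is induced by the Gauss–Manin connection $\nabla$ on ${\rm H}_E$ (which, over the modular curve, has entries in $\rho^\ast(\Omega^1_{\fX/\Z_p})$ with respect to $\{\omega,\eta\}$). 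Writing $f = \delta\omega$, $e = \delta^p\eta$ and using the Leibniz rule $\nabla^\sharp(f) = (d\delta/\delta)\otimes f + \delta\,\nabla(\omega)$, etc., the entries of the connection matrix of $\nabla^\sharp$ in the basis $\{f,e\}$ are obtained from those of $\nabla$ in $\{\omega,\eta\}$ by conjugating by ${\rm diag}(\delta,\delta^p)$ and adding the logarithmic-derivative contributions of $\delta$. Since $\mathrm{Hdg} = \delta^{p-1}$ up to units (Remark \ref{rmk:delta}), the worst denominator introduced is a single power of $\delta^{-1}$, equivalently of $\mathrm{Hdg}^{-1/(p-1)}$; to be safe one bounds this by $\mathrm{Hdg}^{-1}$, giving $a_0, b, c, d_0 \in \frac{1}{\mathrm{Hdg}}\rho^\ast(\Omega^1_{\fX/\Z_p})$. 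Finally, for the identification of $-\mathrm{Hdg}\cdot c$ with the Kodaira–Spencer differential: the $(2,1)$-entry $c$ of the connection matrix is, by definition of $\nabla^\sharp$, the coefficient of $f = \delta\omega$ in $\nabla^\sharp(e) = \nabla^\sharp(\delta^p\eta)$; modulo the filtration the $\omega$-component of $\nabla(\eta)$ is exactly the Kodaira–Spencer class $\mathrm{KS}(\omega,\eta)$, and one checks the factor $\delta^p/\delta = \delta^{p-1} = \mathrm{Hdg}$ (up to the sign coming from the chosen normalization of $\mathrm{KS}$) appears, so that $-\mathrm{Hdg}\cdot c = \mathrm{KS}(\omega,\eta)$.

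The main obstacle I expect is bookkeeping of the twist: one must be careful that the connection on $\Hsharp$ is the restriction of $\nabla$ on ${\rm H}_E$ (so Proposition \ref{prop:nablasharp} is used precisely to guarantee $\Hsharp$ is stable, or stable up to the controlled $\mathrm{Hdg}$-denominator) and that the logarithmic derivative $d\delta/\delta$ — which a priori lives on $\fIG_{n,r,I}$, not on $\fX$ — does not worsen the denominator beyond $\mathrm{Hdg}^{-1}$; here one invokes that $\delta^{p-1} = \widetilde{\mathrm{Ha}}$ is (a local generator of the pullback of) the Hasse invariant on $\fX$, so $d(\delta^{p-1})/\delta^{p-1} = d\widetilde{\mathrm{Ha}}/\widetilde{\mathrm{Ha}}$ has entries in $\rho^\ast(\Omega^1_{\fX/\Z_p})$, whence $d\delta/\delta = \frac{1}{p-1}\,d\widetilde{\mathrm{Ha}}/\widetilde{\mathrm{Ha}}$ contributes only through division by $\widetilde{\mathrm{Ha}}$, i.e.\ by $\mathrm{Hdg}$ up to a unit. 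Everything else is a direct matrix computation in $\cP^{(1)}_{R/\Lambda_I^0}$ using $a_0^2 = \cdots = 0$, which I would not carry out in detail here.
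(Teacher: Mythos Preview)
Your approach for both parts matches the paper's: part (a) is exactly the formal observation that $\Delta^\ast(\epsilon^\sharp)=\mathrm{Id}$ forces $A\equiv\mathrm{Id}$ modulo $\mathrm{I}(\Delta)$, and for part (b) the paper likewise computes $\nabla^\sharp$ on the basis $\{f=\delta\omega,\,e=\delta^p\eta\}$ via the Leibniz rule, together with the observation that $\delta^{p-1}=\rho^\ast(\widetilde{\mathrm{Ha}})$ comes from the base so that $\mathrm{dlog}(\delta)=(p-1)^{-1}\mathrm{dlog}(\widetilde{\mathrm{Ha}})\in\mathrm{Hdg}^{-1}\cdot\rho^\ast(\Omega^1_{\fX/\Z_p})$.

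However, your identification of the Kodaira--Spencer entry is backwards. The Kodaira--Spencer map is the composite $\omega_E\hookrightarrow\mathrm{H}_E\xrightarrow{\nabla}\mathrm{H}_E\otimes\Omega^1\twoheadrightarrow\omega_E^{-1}\otimes\Omega^1$, so $\mathrm{KS}(\omega,\eta)$ is the $\eta$-coefficient of $\nabla(\omega)$, not the $\omega$-coefficient of $\nabla(\eta)$. Correspondingly, the relevant entry $c$ arises from the $e$-coefficient of $\nabla^\sharp(f)$: writing $\nabla(\omega)=m\,\omega\otimes\Theta+\eta\otimes\Theta$ with $\Theta=\mathrm{KS}(\omega,\eta)$, one gets $\nabla^\sharp(f)=\delta\nabla(\omega)+f\otimes\mathrm{dlog}(\delta)$, whose $e$-coefficient is $\delta\cdot\delta^{-p}\Theta=\delta^{-(p-1)}\Theta$. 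So the scaling factor is $\delta/\delta^p=\mathrm{Hdg}^{-1}$, not $\delta^p/\delta=\mathrm{Hdg}$ as you wrote; the sign in $-\mathrm{Hdg}\cdot c=\Theta$ then comes from $A$ being the \emph{inverse} of the matrix of $\epsilon^\sharp=\mathrm{Id}+N$, so $A=\mathrm{Id}-N$ modulo $\mathrm{I}(\Delta)^2$. If you carried out the computation as you described you would land on the wrong matrix entry (namely $\delta^{p-1}q\,\Theta$ for the coefficient $q$ in $\nabla(\eta)=q\,\omega\otimes\Theta+\cdots$) with the wrong power of $\mathrm{Hdg}$.
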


\begin{proof}
For a) as $A\bigl(\mbox{ mod }{\rm I}(\Delta)\bigr)=\mathrm{Id}$
we have that $a_0=a-1,b,c,d_0=d-1\in {\rm I}(\Delta)$. Moreover
${\rm I}(\Delta)^2=0$ in $\cP^{(1)}_{R/\Lambda_I^0}$.

For b) recall that the connection $\nabla^\sharp$ is uniquely
determined by the Gauss-Manin connection $\nabla$ on
$\rho^\ast\bigl(\mathrm{H}_E\bigr) $ via the inclusions $\Hsharp \subset \mathrm{H}_E$. Also $f=\delta
\rho^\ast(\omega)$ and $e=\delta^p \rho^\ast(\eta)$ with $\omega$ a
generator of $\omega_E$ over some open affine subscheme $U\subset
\mathfrak{X}$ and $\eta$ a  generator of the quotient
$\mathrm{H}_E/\omega_E=\omega_E^\ast$ over $U$. In particular
$\delta^{p-1}=\widetilde{\mathrm{Ha}}(E/R,\omega)=\rho^\ast(u)$
for a  section $u\in {\rm H}^0\bigl(U,\cO_{U}\bigr)$ so
that $$d \rho^\ast(u)=d \delta^{p-1}=(p-1) \delta^{p-2} d
\delta=(p-1) \delta^{p-1} {\rm dlog}(\delta)=(p-1) \rho^\ast(u)
{\rm dlog}(\delta).$$ Hence, ${\rm dlog}(\delta)=(p-1)^{-1} {\rm
dlog}\bigl(\rho^\ast(u)\bigr)\in \frac{1}{\mathrm{Hdg}} \cdot
\rho^\ast\bigl(\Omega^1_{\mathfrak{X}/\Z_p}\bigr)$. The
Kodaira-Spencer isomorphism $${\rm KS}\colon \omega_E \to
\omega_E^\ast \otimes_{\cO_\mathfrak{X}}
\Omega^1_{\mathfrak{X}/\Z_p}$$obtained by restricting $\nabla$ to
$\omega_E\subset \mathrm{H}_E$ and then taking the projection onto
$\bigl(\mathrm{H}_E/\omega_E\bigr)\otimes_{\cO_\mathfrak{X}}
\Omega^1_{\mathfrak{X}/\Z_p}$ provides a basis element
$\Theta:={\rm KS}(\omega,\eta)$ of $\Omega^1_{\mathfrak{X}/\Z_p}$
over $U$ characterized by the property that ${\rm
KS}(\omega)=\eta\otimes {\rm KS}(\omega,\eta)$. Write the
connection
$$
\begin{array}{ccccccccc}
\nabla(\omega)&=& m\, \omega\otimes \Theta &+& \eta \otimes \Theta \\
\nabla(\eta)&=& q \, \omega\otimes \Theta &+& r \, \eta\otimes \Theta
\end{array}
$$
with $m$, $q$, $r\in {\rm H}^0\bigl(U,\cO_\mathfrak{X}\bigr)$. Therefore we have, omitting $\rho^\ast$ for simplicity:
$$
\begin{array}{cccccccccc}
\nabla^\sharp(f)&=&\nabla(\delta\omega)&=& \delta \nabla(\omega)+ \delta \omega \otimes d\log(\delta) & = & \bigl(m+ \frac{d u}{(p-1)u}\bigr) f \otimes \Theta &+& \frac{1}{\delta^{p-1}} e
\otimes \Theta\\
\nabla^\sharp(e)&=&\nabla(\delta^p \eta)&=& \delta^p \nabla(\eta)+p \delta^p \eta \otimes d\log(\delta) & = & \delta^{p-1}q  f \otimes \Theta &+& \bigl(r+ p \frac{d u}{(p-1)u}\bigr) e
\otimes \Theta. \end{array}
$$
This proves the first statement and shows that $-\delta^{p-1} c=\Theta={\rm KS}(\omega,\eta)$, implying also the second statement.

\end{proof}

{\bf Proof of Theorem \ref{theorem:griffith}.} \enspace Let now $k\colon \Z_p^\ast\lra \Lambda_I^\ast, \bW^0_k,\nabla_k$ be as in the previous section.  Recall from
Lemma \ref{lemma:localwkappa} that
$$(g_n\circ \rho)^\ast(\bW^0_k)=\Bigl\{\sum_{n=0}^\infty a_nV^n(1+\beta_nZ)^{k}\ \vert  \ a_n\in R\mbox{ with } a_n\rightarrow 0\mbox{ and } V=\frac{Y}{1+\beta_nZ} \Bigr\}.$$
Moreover
$$j_i^\ast\bigl(\rho^\ast(\bW^0_k)  \bigr)=\Bigl\{\sum_{m=0}^\infty a_mV^m(1+\beta_nZ)^{k}\ \vert
\ a_m\in j_i^\ast(R)=\cP^{(1)}_{R/\Lambda_I^0}\mbox{ for each } m\ge 0, \mbox{ with }
a_m\rightarrow 0 \Bigr\}$$for $i=1$, $2$. Therefore $\epsilon_k$
is given by the action of the matrix
$A=\left(
\begin{array}{cc} a & b \\ c & d
\end{array} \right)$ on
$V^m(1+\beta_nZ)^{k}$, for $m\ge 0$. More precisely
$$
\epsilon_k\bigl(V^m(1+\beta_nZ)^{k})\bigr)=A\cdot\bigl(V^m(1+\beta_nZ)^{k}  \bigr)
=(a+cV)^k\Bigl(\frac{b+dV}{a+cV}  \Bigr)^m(1+\beta_nZ)^{k}=
$$
$$
=(a+cV)^{k-m}(b+dV)^m(1+\beta_nZ)^{k}.
$$
Let us recall that given $k$ there is a positive integer $n$ and an element $u\in p^{1-n} \Lambda_I^0$ such that $t^k:=k(t)=\exp\bigl(u\log(t)\bigr)$ for all $t\in 1+p^n\Z_p^\ast$. Using Lemma
\ref{lemma:propM} we can write: $(a+cV)^{k-m}=\exp\Bigl((u-m)\log\bigl(1+(a_0+cV)\bigr)\Bigr)=(u-m)(a_0+cV)$. On the other hand we have
$\bigl(b+dV\bigr)^m=\bigl(V+(b+d_0)V\bigr)^m=V^m+mV^{m-1}(b+d_0V)$, and therefore
$$
\epsilon_k\bigl(V^m(1+\beta_nZ)^k\bigr)=\Bigl(\bigl(1+md_0+(u-m)a_0   \bigr)V^m+mbV^{m-1}+(u-m)c V^{m+1}\Bigr)(1+\beta_nZ)^k.
$$
Thus we have

\begin{equation}\label{eq:nablak}\begin{array}{c} \nabla_k\bigl(V^m(1+\beta_nZ)^{k}\bigr)=\epsilon_k\bigl(V^m(1+\beta_nZ)^{k}\bigr)-V^m(1+\beta_nZ)^{k}= \cr
=\Bigl(mV^m\otimes d_0+(u-m)V^m\otimes a_0+mV^{m-1}\otimes
b+(u-m)V^{m+1}\otimes c\Bigr)\bigl((1+\beta_nZ)\otimes
1\bigr)^{k}\in \cr
 \in p^{1-n}(1+\beta_nZ)^{k}R\langle V\rangle\otimes_R\Omega^1_{R/\Lambda_I^0}=p^{1-n}\rho^\ast(\bW^0_k)\otimes_R\Omega^1_{R/\Lambda_I^0}.\end{array}
\end{equation}

Here the factor $p^{1-n}$ comes from the fact that $u\in p^{1-n} \Lambda_I^0$. In particular
$\nabla_k\bigl(V^m(1+\beta_nZ)^{k}\bigr)=(u-m)V^{m+1}(1+\beta_nZ)^{k}\otimes
c$ modulo $Y^m$. Since the map
$\rho^\ast\bigl(\Omega^1_{\mathfrak{X}/\Z_p}\bigr)\to
\Omega^1_{R/\Lambda_I^0}$   is an isomorphism after inverting
$\alpha$ due to by Lemma \ref{lemma:CokerOmega}, the second claim
of Theorem \ref{theorem:griffith} is proven as $\mathrm{Hdg} c$ is a generator
of $\rho^\ast\bigl(\Omega^1_{\mathfrak{X}/\Z_p}\bigr)$ due to
Lemma \ref{lemma:propM}.

\subsection{$q$-Expansions of sections of $\bW_k$ and nearly overconvergent modular
forms.}\label{sec:ordinaryandqexp}

Given a formal scheme $\mathfrak{S}\to \fX$ we will denote $\mathfrak{S}^{\rm ord}\subset \mathfrak{S}$ to be the open formal subscheme defined by the inverse image
of the ordinary locus of $\fX$. In particular $\fIG_{n,r,I}^{\rm ord}$ is the $n$-th layer of the Igusa tower of $\fX^{\rm ord}$. Over $\fIG_{n,I}^{\rm ord}$ we
have $\Hsharp=\mathrm{H}_E=\omega_E \oplus \omega_E^{-1}$ as the Hodge filtration splits canonically, via the so called {\it unit root decomposition}: one has a lift of Frobenius on $ \fX^{\rm ord}$ and the universal semiabelian scheme $E$ defined by taking the quotient by the canonical subgroup $H_1$ and $\omega_E^{-1}$ is identified with the submodule of $\mathrm{H}_E$ on which such isogeny is an isomorphism. In particular we have a morphism $\bV_0(\Hsharp, s)^{\rm
ord}\lra \bV(\omega_E^\ast)^{\rm ord}$ by \S\ref{sec:vbfil} and the induced morphism
$$\bV_0(\Hsharp, s)^{\rm ord}\lra \bV_0\bigl(\omega_E, s\bigr)\times_{\fIG_{n,I}^{\rm ord}}
\bV\bigl(\omega_E^{-1}\bigr)$$is an isomorphism of formal schemes. Recall that we have divided the universal weight $k\colon \Z_p^\ast\to \Lambda^\ast$ into the
product $k^0 \cdot k_f$ where  $k_f$ is the finite part and $k^0\colon \Z_p^\ast \to (\Lambda^0)^\ast$.

Note that over $\fIG_{n,r,I}^{\rm ord}$  the image fo the universal section of $H_n^\vee$ defines via the map $d\log$ a basis element $s$ of $\omega_E/p^n \omega_E$. In particular,  as we are assuming that $k$ restricted to $1+p^n\Z_p$ is analytic, 
and if $\pi\colon \bV\bigl(\omega_E,s\bigr)^{\rm ord} \to \fX^{\rm ord}$ is the canonical projection, then the global sections of $\omega_{E,\fX^{\rm
ord}}^{k^0}:=\pi_\ast\bigl(\cO_{\bV\bigl(\omega_E,s\bigr)^{\rm ord}} \bigr)[k^0]$ over $\fX^{\rm ord}$ coincide with Katz's $p$-adic modular forms of weight $k^0$.
The space of Katz's $p$-adic modular forms of weight $k$ is then obtained by taking the global sections of  the tensor product $\omega_{E,\fX^{\rm
ord}}^{k}:=\omega_{E,\fX^{\rm ord}}^{k^0}\otimes_{\cO_{\fX^{\rm ord}}} \fw^{k_{I,f}}\vert_{\fX^{\rm ord}}$ (see  Definition \ref{defi:wnrI} for $\fw^{k_{I,f}}$).
Denote by $\bW_k^{\rm ord,0}$, resp.~$\bW_k^{\rm ord}$ the space $\bW^0_k\vert_{\fX^{\rm ord}}$, resp.~$\bW_k\vert_{\fX^{\rm ord}}$. We  obtain a canonical
decomposition

\begin{equation}\label{eq:FilnWkord}
\bW^0_k\vert_{\fX^{\rm ord}}\cong \omega_{E,\fX^{\rm ord}}^{k^0}\widehat{\otimes}_{\cO_{\fX^{\rm ord}}} \mathrm{Sym}\bigl(\omega_E^{-2}\bigr),\qquad
\bW_k\vert_{\fX^{\rm ord}}\cong \omega_{E,\fX^{\rm ord}}^{k}\widehat{\otimes}_{\cO_{\fX^{\rm ord}}} \mathrm{Sym}\bigl(\omega_E^{-2}\bigr),
\end{equation}

where $\mathrm{Sym}\bigl(\omega_E^{-2}\bigr)$ is the symmetric
algebra and $\widehat{\otimes}$ is the $\alpha$-adic completed
tensor product. In particular we get morphisms
$$\bW^0_k \stackrel{\rho}{\lra} \bW_k^{\rm ord,0}
\stackrel{\Phi}{\lra}\omega_{E,\fX^{\rm ord}}^{k^0 },$$and upon twisting with $\fw^{k_{I,f}}$
$$\bW_k \lra \bW_k^{\rm ord} \lra \omega_{E,\fX^{\rm ord}}^k
$$which provide a splitting of the first step of the filtration $\Fil_0 \bW^0_k$, resp.~$\Fil_0 \bW^k$ and that,  upon taking global sections, defines a  projection from the global sections of $\bW_k$ to the weight $k$ $p$-adic modular forms of Katz.

\begin{definition}\label{def:qexp} Using the $q$-expansion map for
Katz $p$-adic modular forms at a given unramified cusp we obtain the ``$q$-expansion map" which is the composition of the following morphisms: $${\rm
H}^0\bigl(\fX_{r,I},\bW_k\bigr) {\lra} {\rm H}^0\bigl(\fX^{\rm ord},\bW_k^{\rm ord}\bigr) \lra {\rm H}^0\bigl(\fX^{\rm ord},\omega_{E,\fX^{\rm ord}}^k\bigr) \lra
\Lambda_I(\!(q)\!).$$\end{definition}

We can now give the definition of nearly overconvergent modular forms of weight $k$.

\begin{definition}\label{def:overconv}
Let $g$ be a Katz $p$-adic modular form of weight $k$. We say that $g$ is {\bf nearly overconvergent} if there exists an $r$  compatible with the interval $I$
determined by $k$ such that  $g$ is in the image of ${\rm H}^0\bigl(\fX_{r,I},\bW_k\bigr)$ or equivalently if its $q$-expansion in $\Lambda_I[\![q]\!]$ is the
$q$-expansion of an element of ${\rm H}^0\bigl(\fX_{r,I},\bW_k\bigr)$.
\end{definition}

\begin{remark}\label{rmk:overconv}
Several authors have already introduced the notion of nearly overconvergent modular forms of finite degree,  notably \cite{HX}, \cite{darmon_rotger} and especially \cite{UNO} and \cite{zheng}. 
Their definitions provide alternative sheaf theoretic constructions of the  sheaves ${\rm Fil}_\bullet\bW_k$  over $\cX_{r,[0,\infty)}$ but neither did they work with the whole of $\bW_k$ formally (i.e. integrally) nor did they define the connection on it. As it will become clear later,
cf.~Theorem \ref{theorem:mostgeneral} and Proposition \ref{prop:nablas},  the definition of the whole $\bW_k$ is necessary if one wants to $p$-adically interpolate powers of the
Gauss-Manin connection. This is necessary in order to define triple product $L$-functions.

\end{remark}

We make the $q$-expansion map more explicit by working with the Tate curve.  Consider the Tate curve $E={\rm Tate}(q^N)$ over $\Spf(R)$ with
$R=\Lambda_I^0(\!(q)\!)$ and fix basis $\bigl(\omega_{\rm can}, \eta_{\rm can}:=\nabla(\partial)(\omega_{\rm can})\bigr)$ of ${\rm H}_E$, where $\partial$ is the
derivation dual to $\displaystyle {\rm KS}(\omega_{\rm can}^2)=\frac{dq}{q}$, i.e., $\displaystyle \partial:=q\frac{d}{dq}$. Let us remark that the canonical
subgroup $H_{E,n}$ of order $p$ of $E$ is isomorphic to $\mu_{p^n}$ and therefore its dual is isomorphic to $\Z/p^n\Z$, i.e., $\fIG_{n,r,I}$ over $\Spf(R)$ is
isomorphic to $\Spf(R)$. Hence if we denote by $\bW^0_k(q)$ the module $\bW^0_k$ for the Tate curve, we have a description of this $R$-module using the given basis
as described in Section \S\ref{section:explicitnablak}:  $\bW^0_k(q)=R\langle V\rangle (1+pZ)^{k}$ and, if we set $V_{k,n}(q):=Y^n(1+pZ)^{k-n}$, then
$\Fil_h \bW^0_k(q)=\sum_{i=0}^h R V_{k,i}(q)$.   The $q$-expansion map corresponds to the projection $\bW^0_k(q)\to R$ sending $\sum_i a_i V_{k,i}(q)\mapsto a_0$
and similarly twisting with $\fw^{k_{I,f}}$.

\subsection{The $U$-operator}\label{sec:upoperator}

Considering the morphisms $p_1$, $p_2\colon \fX_{r+1,I} \to \fX_{r,I}$ defined  on the universal elliptic curve by  $E\mapsto E$ and $E \mapsto E':=E/H_1$.  Over $\fIG_{1,r+1,I}$ we have the isogeny $\lambda\colon E'\to E$, dual to the projection $E\to E'$.

\begin{proposition}\label{prop:cU}  The isogeny $\lambda$ defines morphisms of $\cO_{\fX_{r,I}}$-modules
$$\cU\colon p_{2,\ast} p_1^\ast\bigl(\bW^0_k\bigr)  \to p_{2,\ast} p_2^\ast\bigl(\bW^0_k\bigr) $$and
$$\cU\colon p_{2,\ast} p_1^\ast\bigl(\bW_k\bigr)  \to p_{2,\ast} p_2^\ast\bigl(\bW_k\bigr)$$ which commute with the Gauss-Manin connections  $\nabla_k$ of Theorem \ref{theorem:griffith} and preserve the filtrations
defined in Theorem \ref{thm:descentbWk}. Futhermore the induced map on the $m$-graded pieces of the filtration is $0$ modulo $\alpha^{[m/p]}$ with $\alpha=p$ if $I\subset [0,1]$ and $\alpha=T$ if $I\subset [1,\infty]$ and where $[m/p]$ the integral part of $m/p$.

\end{proposition}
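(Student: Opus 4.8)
The plan is to construct $\cU$ from the isogeny $\lambda\colon E'\to E$ purely by functoriality of the constructions of Chapters \ref{sec:FVBMS} and \ref{sec:appmodular}, and then to read off the divisibility on graded pieces from a local matrix computation.

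\emph{Construction of $\cU$.} I would work over a suitable layer $\fIG_{n+1,r+1,I}$ of the Igusa tower over $\fX_{r+1,I}$, which carries the level--$(n+1)$ canonical structure of the universal curve $E$ and hence, via the identification $H_n^{E'}=H_{n+1}^E/H_1^E$, the level--$n$ canonical structure of $E'=E/H_1$. The isogeny $\lambda$ induces $\lambda^\ast\colon \mathrm{H}_E\to \mathrm{H}_{E'}$, horizontal for Gauss--Manin and compatible with the Hodge filtrations. Using the improvement of the Hasse invariant under quotient by the canonical subgroup — so that $\mathrm{Hdg}_{E'}=\mathrm{Hdg}_E^{1/p}$ and $\udelta_{E}=\udelta_{E'}^{\,p}$ — I would check that a unit multiple of $\udelta_{E'}^{\,-(p-1)}\lambda^\ast$ restricts to a map $\mathrm{H}^\sharp_E\to \mathrm{H}^\sharp_{E'}$ sending the marked section $s={\rm dlog}(P_n^E)$ to ${\rm dlog}(P_n^{E'})$ modulo $\ubeta_n$ (compatibility of ${\rm dlog}$ with isogenies, together with the relation between the canonical subgroups of $E$ and $E'$). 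By Definition \ref{def:V0} this produces a morphism $\bV_0(\mathrm{H}^\sharp_{E'},s)\to \bV_0(\mathrm{H}^\sharp_E,s)$ over $\fIG_{n+1,r+1,I}$; passing to structure sheaves and taking $[k]$--parts gives a map $p_1^\ast(\bW^0_k)\to p_2^\ast(\bW^0_k)$. Corollary \ref{cor:functfil} shows it preserves $\Fil_\bullet$, and the functoriality of §\ref{sec:fvvconenction} (through Grothendieck's formalism) shows it commutes with the connections $\nabla_k$ of Theorem \ref{theorem:griffith}. Since the renormalized $\lambda^\ast$ is equivariant for the residual $(\Z/p^n\Z)^\ast$--action, the map descends to $\fX_{r+1,I}$; twisting by $\fw^{k_f}$ (again functorially) gives the $\bW_k$--version, and applying $p_{2,\ast}$ yields $\cU$ with all the asserted compatibilities with $\nabla_k$ and the filtrations.

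\emph{Divisibility on graded pieces.} By Lemma \ref{lem:Griffiths} and its functoriality, $\mathrm{gr}_\bullet(\cU)$ is the $\cO_{\fX_{r,I}}$--linear $\mathrm{Sym}^\bullet$ of the degree--one map $\mathrm{gr}_1(\cU)$ induced on $\cE/\cF=\udelta^p\omega_E^{-1}$; so it is enough to estimate the valuation of this single map of invertible sheaves and to see that it forces $\mathrm{Sym}^m$ to be divisible by $\alpha^{[m/p]}$. The divisibility has two sources: that $\lambda$ is ``Verschiebung--like'' (from $\phi\circ\lambda=[p]_{E'}$ with $\phi\colon E\to E'$ ``Frobenius--like'', the map $\lambda^\ast$ acquires a factor of $p$ in the $\mathrm{H}/\omega$ direction, since $\phi^\ast$ is invertible there), and the improvement $\mathrm{Hdg}_{E'}=\mathrm{Hdg}_E^{1/p}$, which — combined with the $\udelta$--renormalization that fixes the marked section — contributes the relevant powers of $\mathrm{Hdg}$; uniformly over $\fX_{r,I}$ this amounts to a gain of one power of $\alpha$ for every $p$ steps of the filtration, which is exactly the exponent $[m/p]$. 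I would make this precise by an explicit matrix computation in the local coordinates of §\ref{sec:Hzero} and §\ref{section:explicitnablak}: write the renormalized $\lambda^\ast$ in bases $\{f,e\}$, compute its action on $V^m(1+\beta_n Z)^k$, and use $T^{p^a}\mid p$ in $\Lambda_I$ in case (2) to treat $\alpha=p$ and $\alpha=T$ uniformly.

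\emph{Main obstacle.} The delicate step is the uniformity of the valuation estimate away from the ordinary locus, where the unit--root splitting is unavailable and one must argue directly with $\mathrm{H}^\sharp_E$ and the $\mathrm{Hdg}$--bounds controlling $\fIG_{n,r,I}\to\fX_{r,I}$ (Lemmas \ref{lemma:CokerOmega} and \ref{lemma:kerloc} and the different ideal), as well as at the cusps; one must also check carefully that the $\udelta$--renormalization which makes the marked section a unit does not introduce poles into $\lambda^\ast$ on the rest of $\mathrm{H}^\sharp$. By contrast, the construction of $\cU$ and its compatibility with $\nabla_k$ and with $\Fil_\bullet$ are essentially formal once the compatibility of the renormalized $\lambda^\ast$ with the pair $(\mathrm{H}^\sharp,s)$ has been established.
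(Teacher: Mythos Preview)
Your overall architecture is exactly that of the paper: pass to $\fIG_{n+1,r+1,I}$, use functoriality of $\bV_0$ with marked sections for the map $\lambda^\ast\colon \mathrm{H}_E\to \mathrm{H}_{E'}$, invoke Corollary~\ref{cor:functfil} and \S\ref{sec:fvvconenction} for compatibility with filtration and connection, and descend. The divisibility is also analyzed, as in the paper, via the degree–one graded piece and $\mathrm{Sym}^m$.

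There are, however, two related errors. First, the direction of the Hasse relation is reversed: quotienting by the canonical subgroup makes the curve \emph{more} supersingular, so in fact $\mathrm{Hdg}_{E'}=\mathrm{Hdg}_E^{\,p}$ and $\udelta_{E'}=\udelta_E^{\,p}$ (this is how $p_2\colon\fX_{r+1,I}\to\fX_{r,I}$ is well defined). Second, and consequently, your $\udelta_{E'}^{-(p-1)}$–renormalization is both unnecessary and wrong: since $\lambda^\ast(\omega_E)=\mathrm{Hdg}(E)\cdot\omega_{E'}$, one computes $\lambda^\ast(\Omega_E)=\udelta_E\cdot\mathrm{Hdg}(E)\cdot\omega_{E'}=\udelta_E^{\,p}\,\omega_{E'}=\udelta_{E'}\,\omega_{E'}=\Omega_{E'}$, so $\lambda^\ast$ already maps $\Omega_E$ isomorphically onto $\Omega_{E'}$ and carries the marked section to the marked section; multiplying by $\udelta_{E'}^{-(p-1)}$ would introduce genuine poles. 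The paper packages this as Lemma~\ref{lemma:fsharp} (Appendix I), which shows directly that $\lambda$ induces $\lambda^\sharp\colon \mathrm{H}^\sharp_E\to \mathrm{H}^\sharp_{E'}$ with $\lambda^\ast\colon\Omega_E\stackrel{\sim}{\to}\Omega_{E'}$ and identifies $\mathrm{H}^\sharp_E/\Omega_E$ with $\tau\cdot\mathrm{H}^\sharp_{E'}/\Omega_{E'}$, where $\tau=p/\mathrm{Hdg}_E^{\,p+1}$. Via Proposition~\ref{prop:functfilbV0} this gives $\cU$ with all compatibilities, and $\mathrm{Gr}_m\cU$ is zero modulo $\tau^m$.

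For the exponent $[m/p]$, the paper's argument is then a one–line ideal computation rather than a local matrix calculation: over $\fX_{r+1,I}$ one has $\alpha/\mathrm{Hdg}_E^{\,p^{r+2}}\in\cO$ and $p/\alpha\in\Lambda_I$, hence (using $r\ge 1$) $\tau^{pm}\subset\bigl(\alpha/\mathrm{Hdg}_E^{\,p^2}\bigr)^{pm}\subset\alpha^m\cdot\cO$, i.e.\ $\tau^p\in(\alpha)$ and therefore $\tau^m\in(\alpha^{[m/p]})$. Your heuristic ``one power of $\alpha$ per $p$ steps'' is the right intuition; the precise input making it uniform across $\fX_{r,I}$ is exactly the formula for $\tau$ in Lemma~\ref{lemma:fsharp}, not Lemmas~\ref{lemma:CokerOmega}/\ref{lemma:kerloc}.
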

\begin{proof} 
Assume first that $I\subset [0,\infty)$. Consider the morphisms $p_1$, $p_2\colon\fIG_{n+1,r+1,I} \to \fIG_{n,r,I}$ defined as above  on the universal elliptic curve by $E\mapsto E$ and $E \mapsto E':=E/H_1$
respectively. Over $\fIG_{n+1,r+1,I}$ the isogeny $\lambda\colon E'\to E$ induces a morphism of the canonical subgroups of level $n$ of $E'$ and $E$, which is an isomorphism on analytic fibers. It follows from Lemma \ref{lemma:fsharp} that the map induced by $\lambda$ on de Rham cohomology induces a morphism 
$\lambda^\sharp \colon {\rm H}_E^\sharp \lra {\rm H}_{E'}^\sharp$ which provides an isomorphism $f^\ast\colon \Omega_E\cong \Omega_{E'}$ and identifies marked sections.
Thanks to Proposition \ref{prop:functfilbV0},  we get a morphism $$\cU\colon p_{2,\ast} p_1^\ast\bigl(\bW^0_k\bigr)  \to p_{2,\ast} p_2^\ast\bigl(\bW^0_k\bigr)$$over $\fIG_{n+1,r+1,I}$, preserving the filtration and commuting with Gauss-Manin connections.

Let $\tau:=p/{\rm Hdg}_E^{p+1}=p/\udelta_E^{p^2-1} $. It follows from Lemma \ref{lemma:fsharp} that the map 
$\lambda^\sharp \colon {\rm H}_E^\sharp \lra {\rm H}_{E'}^\sharp$ gives an isomorphism $\lambda^\ast\colon \Omega_E\cong \Omega_{E'}$ and  identifies
${\rm H}_E^\sharp/\Omega_E={\rm Hdg}(E)^{\frac{p}{p-1}} \omega_E^\vee$ with $\tau \cdot  {\rm H}_{E'}^\sharp/\Omega_{E'}=\tau \cdot  {\rm Hdg}(E')^{\frac{p}{p-1}} \omega_{E'}^\vee$. 
Using  the description of the map on graded pieces provided in Proposition \ref{prop:functfilbV0} we conclude that $\cU$  on the $m$-graded piece of  $\Fil_\bullet \bW_k^0$ defines a map ${\rm Gr}_m \cU$ which is zero modulo $\tau^m$. By construction $\alpha/ {\rm Hdg}_E^{p^{r+2}} $ is a well defined section of  $ \fX_{r+1,I}$ and $p/\alpha\in \Lambda_I$. Since $r\geq 1$ by assumption, then $\tau^{pm} \subset  (\alpha/  {\rm Hdg}_E^{p^2} )^{pm}\subset \alpha^{m}  (\alpha/  {\rm Hdg}_E^{p^3} )^{m}$ we conclude that $\tau^{pm}$ is in the ideal generated by $\alpha^m$. 

The descent Theorem \ref{thm:descentbWk} provides the descent of  $\cU$ for $\bW^0_k$  to $\fIG_{1,r,I}$ with the claimed properties.
By twisting $\bW_k^0$ and its filtration with the sheaf $\fw^{k_{I,f}}$ as in Definition \ref{def:omegak} we get $\bW_k$ and its filtration and the claim for $\bW_k$ follows as well, considering the $\cU$ corespondence for $\fw^{k_{I,f}}$. The construction of $\cU$ extends also to the case that $\infty \in I$ by passing to limits as in Theorem \ref{thm:descentbWk} and we get a morphism preserving the filtration, which is zero on the $m$-graded piece modulo $\alpha^{[m/p]}$.

\end{proof}

As the morphism $p_2\colon \fX_{r+1,I} \to \fX_{r,I}$ is finite and flat of degree $p$ by \cite[Prop. 3.3]{halo_spectral} there is a well defined trace map with respect to $\cO_{\fX_{r,I}} \to p_{2,\ast}\bigl(\cO_{\fX_{r+1,I}}\bigr)$ and we get the definition of the operator
$U$ on global sections of $\bW_k$.

$$U\colon {\rm H}^0\bigl(\fX_{r,I},\bW_k\bigr) \stackrel{\cU \circ p_1^\ast}{\lra} {\rm H}^0\bigl(\fX_{r,I},p_{2,\ast} p_2^\ast\bigl(\bW_k\bigr)\bigr)
\stackrel{\frac{1}{p} {\rm Tr}_{p_2}}{\lra}  {\rm H}^0\bigl(\fX_{r,I},\bW_k\bigr)[p^{-1}].$$

\begin{proposition}\label{prop:upfiltration} Assume that $I\subset [0,1]$ and $\alpha=p$ or that $I\subset [1,\infty]$ and $\alpha=T$.  Then 
$U\bigl({\rm H}^0(\fX_{r,I}, \bW_k\bigr)\subset \frac{1}{\alpha} {\rm H}^0\bigl(\fX_{r,I},\bW_k  \bigr)$ and
 $U$ induces a map on $\displaystyle {\rm H}^0\bigl(\fX_{r,I}, \bW_k/{\rm Fil}_m(\bW_k)\bigr)$ which  is $0$  modulo $\alpha^{[m/p]-1} $ for $m\geq p$.

Moreover if $k\in \N$ is an integral weight, the identification ${\rm Sym}^k\bigl({\rm H}_E\bigr)[p^{-1}]\vert_{\fX_{r,I}}={\rm Fil}_k(\bW_k)[p^{-1}]$ of
Theorem \ref{thm:descentbWk} is compatible with the $U$ operators defined on the global sections ${\rm H}^0(\fX_{r,I}, \ -\ )$ of the two sheaves.

\end{proposition}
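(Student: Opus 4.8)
The plan is to read everything off the factorisation $U=\frac1p\,{\rm Tr}_{p_2}\circ\cU\circ p_1^\ast$ and the filtered structure of the correspondence $\cU$ established in Proposition \ref{prop:cU}. First I would record the arithmetic facts that make the estimates run. Writing $\tau:=p\,\mathrm{Hdg}_E^{-(p+1)}=p\,\udelta_E^{-(p^2-1)}$, Proposition \ref{prop:cU} (and its proof) tells us that $\cU$ preserves $\Fil_\bullet$, that the induced map on $\mathrm{Gr}_m$ is divisible by $\tau^m$, and that $\tau^{pl}$ lies in $(\alpha^l)$ for every $l$, whence $\tau^l\in(\alpha^{[l/p]})$. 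Using $r\ge 1$ and the blow--up relation $\alpha/\mathrm{Hdg}_E^{p^{r+1}}\in\cO_{\fX_{r,I}}$ one has $p+1\le p^{r+1}$, hence $\mathrm{Hdg}_E^{-(p+1)}\in\frac1\alpha\cO$, and therefore $\tau$ is a section of $\cO_{\fX_{r,I}}$ lying in $\frac p\alpha\cO_{\fX_{r,I}}$ (and likewise over $\fX_{r+1,I}$); the explicit form of $\Lambda_I$ gives $p/\alpha\in\cO_{\fX_{r,I}}$ in both cases. Finally ${\rm Tr}_{p_2}$ is $\cO_{\fX_{r,I}}$--linear and preserves integrality, and — crucially — on $\Fil_0(\bW_k)=\fcw^k$ the composite $\frac1p{\rm Tr}_{p_2}\circ\cU\circ p_1^\ast$ is the $U_p$--operator on overconvergent modular forms, hence integral; concretely this reflects that near the ordinary locus $p_2$ is a lift of the $p$--power Frobenius, so that ${\rm Tr}_{p_2}$ is divisible by $p$ there, the defect towards the supersingular locus being bounded by a power of $\mathrm{Hdg}_E$.

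For the first statement the case $\alpha=p$ is immediate, since $\cU$, $p_1^\ast$ and ${\rm Tr}_{p_2}$ are all integral and so $pU={\rm Tr}_{p_2}\circ\cU\circ p_1^\ast$ already preserves ${\rm H}^0(\fX_{r,I},\bW_k)$. For $\alpha=T$ the content is that ${\rm Tr}_{p_2}\bigl(\cU(p_1^\ast w)\bigr)$ is divisible by $p/\alpha$. I would argue as follows: since $\cU$ preserves the subsheaf $\Fil_0(\bW_k)$, it induces $\bar\cU$ on $\bW_k/\Fil_0$, and since the lowest graded piece there is $\mathrm{Gr}_1$ and $\cU$ is divisible by $\tau$ on it, $\bar\cU$ is divisible by $\tau$; using that $\Fil_\bullet\bW_k$ is a filtration by local direct summands (Theorem \ref{thm:descentbWk}) one writes $\cU(p_1^\ast w)=u_0+\tau u'$ with $u_0\in{\rm H}^0(\fX_{r+1,I},\fcw^k)$ and $u'\in{\rm H}^0(\fX_{r+1,I},\bW_k)$ integral. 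Since $\tau\in\frac p\alpha\cO_{\fX_{r+1,I}}$ one has $\tau u'\in\frac p\alpha\,{\rm H}^0(\fX_{r+1,I},\bW_k)$, so $\frac1p{\rm Tr}_{p_2}(\tau u')\in\frac1\alpha\,{\rm H}^0(\fX_{r,I},\bW_k)$; the remaining term $\frac1p{\rm Tr}_{p_2}(u_0)$ is the weight--$k$, $\Fil_0$ contribution, which is controlled by the overconvergent $U_p$ mechanism above, the supersingular defect costing at most one further power of $\alpha$. This last balancing — that the powers of $\mathrm{Hdg}_E$ hidden in $\tau$ and in the Frobenius--defect of ${\rm Tr}_{p_2}$ cancel up to $\alpha^{-1}$ — is where I expect the real work to lie, and I would carry it out on the explicit local presentations of $\cU$ of \S\ref{section:explicitnablak} and the Tate--curve $q$--expansions of \S\ref{sec:ordinaryandqexp}.

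The second statement is then proved by running the same analysis on the quotient $\bW_k/\Fil_m\bW_k$, whose graded pieces are $\mathrm{Gr}_{m+1},\mathrm{Gr}_{m+2},\dots$: there $\cU$ is divisible by $\tau^{m+1}$, and combining $\tau^{pl}\in(\alpha^l)$ with the factor $p^l$ inside $\tau^l=p^l\mathrm{Hdg}_E^{-l(p+1)}$ and offsetting the $\frac1p$ coming from the trace, one finds that $U$ induces on ${\rm H}^0(\fX_{r,I},\bW_k/\Fil_m)$ a map which is $0$ modulo $\alpha^{[m/p]-1}$ as soon as $m\ge p$. The numerical inequality that forces the exponent to be exactly $[m/p]-1$ (and not worse, once $p$ is small) is where the stronger hypotheses on $r$ for $p=2,3$ are used, and it is the main computational obstacle; again it is checked on the local models. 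Finally, the assertion about $U$ on $\mathrm{Sym}^k(\mathrm{H}_E)$ for integral $k$ is a formal consequence of the construction: $\cU$ was built from the map $\lambda^\sharp\colon\mathrm{H}_E^\sharp\to\mathrm{H}_{E'}^\sharp$ induced on de Rham cohomology by the isogeny $\lambda$, and the identification ${\rm Sym}^k(\mathrm{H}_E)[p^{-1}]={\rm Fil}_k(\bW_k)[p^{-1}]$ of Theorem \ref{thm:descentbWk} comes from the inclusions $\mathrm{Sym}^k\mathrm{H}_E^\sharp\hookrightarrow\mathrm{Sym}^k\mathrm{H}_E$ and $\mathrm{Sym}^k\mathrm{H}_E^\sharp\hookrightarrow\Fil_k\bW_k$; hence, on $\mathrm{Sym}^k\mathrm{H}_E^\sharp$ and after inverting $\alpha$, $\cU$ becomes $\mathrm{Sym}^k(\lambda^\ast)$ on $\mathrm{Sym}^k\mathrm{H}_E[\alpha^{-1}]$, which is precisely the correspondence defining $U$ on the de Rham side, while $p_1^\ast$ and $\frac1p{\rm Tr}_{p_2}$ are literally the same on the two sides, so the two $U$--operators agree on global sections.
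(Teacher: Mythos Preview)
Your argument for $\alpha=p$ and for the compatibility with ${\rm Sym}^k({\rm H}_E)$ is fine and matches the paper. The trouble is the case $\alpha=T$.

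You try to avoid a direct trace estimate by splitting $\cU(p_1^\ast w)=u_0+\tau u'$ and handling the two pieces separately. But this decomposition does not buy you anything: for the $u_0\in\Fil_0$ piece you still need to know that $\frac{1}{p}{\rm Tr}_{p_2}$ applied to an arbitrary section of $p_{2,\ast}p_2^\ast(\fcw^k)$ lands in $\frac{1}{T}\fcw^k$. Note that $\frac{1}{p}{\rm Tr}_{p_2}(u_0)$ is \emph{not} the $U_p$--operator applied to a modular form---$u_0$ is the $\Fil_0$--projection of $\cU(p_1^\ast w)$ for $w$ in all of $\bW_k$, not $\cU(p_1^\ast w_0)$ for some $w_0\in\Fil_0$---so appealing to ``the overconvergent $U_p$ mechanism'' is a non sequitur. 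What you actually need is precisely the estimate
\[
\tfrac{1}{p}\,{\rm Tr}_{p_2}\bigl(p_{2,\ast}\cO_{\fX_{r+1,I}}\bigr)\subset \tfrac{1}{T}\,\cO_{\fX_{r,I}},
\]
and once you have this there is no reason to decompose at all: by the projection formula it gives $\frac{1}{p}{\rm Tr}_{p_2}\colon p_{2,\ast}p_2^\ast(\bW_k)\to \frac{1}{T}\bW_k$ directly, and since $\cU$ and $p_1^\ast$ are integral (Proposition~\ref{prop:cU}) the inclusion $U\bigl({\rm H}^0(\fX_{r,I},\bW_k)\bigr)\subset\frac{1}{T}{\rm H}^0(\fX_{r,I},\bW_k)$ follows immediately. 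This trace estimate is exactly what the paper invokes, citing \cite[Lemme~6.1 \& Cor.~6.2]{halo_spectral}; it is a nontrivial fact about the anticanonical tower and is not something you can read off from ``local presentations'' or $q$--expansions at the Tate curve as you suggest. Your own admission that ``this is where I expect the real work to lie'' is correct---but the work has been done elsewhere and should simply be cited.

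For the quotient statement your instinct that $\cU$ is divisible by $\tau^{m+1}$ on $\bW_k/\Fil_m\bW_k$ is correct (it follows from the explicit form $Y\mapsto dY'+c(1+\beta_n Z')$ with $d\in\tau R^\times$: expanding $(dY'+c(\cdots))^l$ the terms surviving modulo $\Fil_m$ all carry $d^j$ with $j\ge m+1$), and combined with $\tau^{p}\in(\alpha)$ from the proof of Proposition~\ref{prop:cU} and the $\frac{1}{\alpha}$ cost of the trace this gives the bound. But you should write ``offsetting the $\frac{1}{\alpha}$'' rather than ``the $\frac{1}{p}$'', and your remark about ``stronger hypotheses on $r$ for $p=2,3$'' is a red herring---no such extra hypotheses enter here.
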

\begin{proof} 
The first part follows directly from Proposition \ref{prop:cU}  for $I\subset [0,1]$ and $\alpha=p$. In the case that $I\subset [1,\infty]$ and $\alpha=T$ it follows from loc.~cit.~and the result
  $  \frac{1}{p} {\rm Tr}_{p_2}\left( p_{2,\ast} \bigl(\cO_{\fX_{r+1,I}}\bigr)\right)\subset  \frac{1}{T}\cO_{\fX_{r,I}}$ proven in \cite[lemme 6.1 \& Cor. 6.2]{halo_spectral}. 
The last claim of the proposition is clear as the
$U$-operator on ${\rm H}^0\bigl(\fX_{r,I}, {\rm Sym}^k\bigl({\rm H}_E\bigr)\bigr)$ is defined in the same way using the universal isogeny $\lambda\colon E'\to E$.

\end{proof}

Using the proposition we get the following result on slope decompositions with respect to the $U$-operator (in the sense of \cite[\S 4]{ash_stevens}) and passing to the analytic adic space $\cX_{r,I}$ of $\fX_{r,I}$ we have:

\begin{corollary}\label{cor:FredholmUp}  
The operator $U$  on $ {\rm H}^0\Bigl(\cX_{r,I}, \bW_{k} \Bigr)$
admits a Fredholm determinant $P_I(k,X)\in\Lambda_I[\![X]\!]$ and
for every non-negative rational $h$ the group $ {\rm
H}^0\Bigl(\cX_{r,I}, \bW_{k} \Bigr)$ admits a slope
$h$-decomposition.

For every $n\in \N$ also the groups $ {\rm
H}^0\Bigl(\cX_{r,I},{\rm Fil}_n \bW_{k} \Bigr)$ admits a Fredholm
determinant  $P_I^n(k,X)\in\Lambda_I[\![X]\!]$  and a slope
$h$-decomposition. The series $P_I^n(k,X)$ is the product
$$P_I^n(k,X):=\prod_{i=0}^n \mathcal{P}_I(k-2i,p^iX),$$
where $\mathcal{P}_I(k-2i,X)$ is the Fredholm determinant of $U$
on ${\rm H}^0\Bigl(\cX_{r,I},\fw^{k-2i}\Bigr)$. Finally, the
inclusion $ {\rm H}^0\Bigl(\cX_{r,I},{\rm Fil}_n \bW_{k}
\Bigr)^{\leq h}\subset {\rm H}^0\Bigl(\cX_{r,I}, \bW_{k}
\Bigr)^{\leq h}$ is an equality for $n$ large enough.

\end{corollary}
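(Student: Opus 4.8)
The plan is to reduce all four assertions to the complete continuity of the $U$-operator together with the behaviour of $U$ along the filtration $\Fil_\bullet\bW_k$ recorded in Propositions \ref{prop:cU} and \ref{prop:upfiltration}, the known complete continuity of $U$ on the sheaves $\fw^{k-2i}$ from \cite{halo_spectral}, and the formalism of \cite[\S 4]{ash_stevens}. First, by Theorem \ref{thm:descentbWk} the sheaves $\Fil_n\bW_k$ are locally free $\cO_{\fX_{r,I}}$-modules which are direct summands of $\bW_k$, $\bW_k$ is the $\alpha$-adic completion of $\lim_n\Fil_n\bW_k$, and ${\rm Gr}_h\bW_k\cong\fw^k\otimes\mathrm{Hdg}^h\omega_E^{-2h}$; on the adic space $\cX_{r,I}$, where $\mathrm{Hdg}$ is invertible, this gives ${\rm Gr}_h\bW_k\cong\fw^{k-2h}$. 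Using a finite affine cover of $\fX$ and \v{C}ech cohomology one checks, exactly as for $\fw^k$ in \cite{halo_spectral}, that ${\rm H}^0(\cX_{r,I},\Fil_n\bW_k)$ and ${\rm H}^0(\cX_{r,I},\bW_k)$ are potentially orthonormalizable Banach modules over $\Lambda_I[1/\alpha]$. Complete continuity of $U$ then follows by combining two facts: $U$ is completely continuous on each ${\rm H}^0(\cX_{r,I},\fw^{k-2i})$ by \cite{halo_spectral}, and on the $m$-th graded piece $\cU$ is divisible by $\alpha^{[m/p]}$ by Proposition \ref{prop:cU}; since $\alpha$ is topologically nilpotent on the quasi-compact $\cX_{r,I}$, the contribution of the tail ${\rm Gr}_m\bW_k$, $m\gg 0$, to $U$ tends to $0$ in operator norm, so $U$ on ${\rm H}^0(\cX_{r,I},\bW_k)$ and, a fortiori, on ${\rm H}^0(\cX_{r,I},\Fil_n\bW_k)$, is a norm-limit of finite-rank operators.

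Given complete continuity, \cite[\S 4]{ash_stevens} produces the Fredholm determinants $P_I(k,X)$ and $P_I^n(k,X)$ and, for every non-negative rational $h$, slope $\le h$ decompositions of ${\rm H}^0(\cX_{r,I},\bW_k)$ and of ${\rm H}^0(\cX_{r,I},\Fil_n\bW_k)$. To obtain the product formula I would use the $U$-equivariant short exact sequences of sheaves $0\to\Fil_{i-1}\bW_k\to\Fil_i\bW_k\to{\rm Gr}_i\bW_k\to 0$ on $\cX_{r,I}$. Since $\cX_{r,I}$ has relative dimension one over $\cW_I$, only ${\rm H}^0$ and ${\rm H}^1$ occur, and I would pass to the Euler-characteristic Fredholm determinant $\det(1-XU\mid R\Gamma)=\det(1-XU\mid {\rm H}^0)/\det(1-XU\mid {\rm H}^1)$, which is multiplicative in short exact sequences of sheaves with no exactness hypothesis on global sections. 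The point is then to show that $U$ acts quasi-nilpotently on ${\rm H}^1(\cX_{r,I},\fw^j)$ for every $j$, i.e. $\det(1-XU\mid {\rm H}^1)=1$; this propagates along the same exact sequences (quasi-nilpotence being stable under subquotients and extensions) to give $\det(1-XU\mid {\rm H}^1(\cX_{r,I},\Fil_n\bW_k))=1$, so that the Euler-characteristic and ${\rm H}^0$ Fredholm determinants agree throughout. One thus gets $P_I^n(k,X)=\prod_{i=0}^n\det\bigl(1-XU\mid {\rm H}^0(\cX_{r,I},{\rm Gr}_i\bW_k)\bigr)$, and a direct computation with the explicit description of $\cU$ on graded pieces in Proposition \ref{prop:cU} (the factor $\tau^i$ with $\tau=p/\mathrm{Hdg}^{p+1}$, the Kodaira--Spencer normalization, and ${\rm Gr}_i\bW_k\cong\fw^{k-2i}$ over $\cX_{r,I}$) identifies, up to conjugation, $U$ on ${\rm H}^0(\cX_{r,I},{\rm Gr}_i\bW_k)$ with $p^i$ times the usual $U$ on ${\rm H}^0(\cX_{r,I},\fw^{k-2i})$, whence its Fredholm determinant is $\mathcal{P}_I(k-2i,p^iX)$. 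Letting $n\to\infty$ yields $P_I(k,X)=\prod_{i\ge 0}\mathcal{P}_I(k-2i,p^iX)$, the product converging in $\Lambda_I[\![X]\!]$ because $\mathcal{P}_I(k-2i,p^iX)$ tends to $1$ $X$-adically as $i\to\infty$; in particular $P_I(k,X),P_I^n(k,X)\in\Lambda_I[\![X]\!]$.

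For the last assertion I would use Proposition \ref{prop:upfiltration}: $U$ on ${\rm H}^0(\fX_{r,I},\bW_k/\Fil_m\bW_k)$ is $0$ modulo $\alpha^{[m/p]-1}$, so on $\cX_{r,I}$ the operator norm of $U$ acting on the quotient ${\rm H}^0(\cX_{r,I},\bW_k)/{\rm H}^0(\cX_{r,I},\Fil_m\bW_k)\hookrightarrow {\rm H}^0(\cX_{r,I},\bW_k/\Fil_m\bW_k)$ tends to $0$ as $m\to\infty$. Fixing $h$ and choosing $m$ so large that this norm is $<p^{-h}$, the slope $\le h$ part of the quotient vanishes; by functoriality of slope decompositions under the $U$-equivariant projection, ${\rm H}^0(\cX_{r,I},\bW_k)^{\le h}\subseteq {\rm H}^0(\cX_{r,I},\Fil_m\bW_k)$, and the reverse inclusion is clear from the $U$-equivariant inclusion $\Fil_m\bW_k\subseteq\bW_k$, so the two are equal.

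The step I expect to be the main obstacle is the quasi-nilpotence of $U$ on ${\rm H}^1(\cX_{r,I},\fw^j)$ used in the product formula: this is the overconvergent-cohomology counterpart of the fact that degree-one cohomology of a strict neighbourhood of the ordinary locus carries no finite slope, and I would expect to deduce it either from the methods of \cite{halo_spectral} or from a direct argument showing that $U$ factors, on ${\rm H}^1$, through the cohomology of an ever smaller neighbourhood. (Alternatively one could try to prove directly that the global-section sequences $0\to{\rm H}^0(\cX_{r,I},\Fil_{i-1}\bW_k)\to{\rm H}^0(\cX_{r,I},\Fil_i\bW_k)\to{\rm H}^0(\cX_{r,I},{\rm Gr}_i\bW_k)\to 0$ are exact, but that does not look easier.)
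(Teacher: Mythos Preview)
Your overall approach is correct and close to the paper's, but you are creating an unnecessary obstacle for yourself in the product formula step. The space $\cX_{r,I}$ is an affinoid adic space (the paper says this explicitly later, in the de Rham cohomology section, and uses it in the proof of Lemma \ref{lemma:H1drFiln}), and the sheaves $\Fil_n\bW_k$ and $\fw^{k-2i}$ are coherent on it. Hence ${\rm H}^1(\cX_{r,I},\Fil_{i-1}\bW_k)=0$ by Kiehl--Tate, and the short exact sequences $0\to\Fil_{i-1}\bW_k\to\Fil_i\bW_k\to{\rm Gr}_i\bW_k\to 0$ already give short exact sequences on global sections. There is no need for an Euler-characteristic argument, no ${\rm H}^1$ contribution to worry about, and the ``quasi-nilpotence'' problem you flag as the main obstacle simply does not arise. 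The product formula then drops out by multiplicativity of Fredholm determinants in short exact sequences of Banach modules, exactly as in \cite[\S 3.4.2]{UNO}, which the paper cites at this point.

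Otherwise your argument matches the paper's: coherence of $\Fil_n\bW_k$ plus compactness of $U$ gives slope decompositions on the finite filtration pieces; Proposition \ref{prop:upfiltration} forces the $\leq h$ part of the quotient $\bW_k/\Fil_n\bW_k$ to vanish for $n$ large, yielding both the slope decomposition on the whole ${\rm H}^0(\cX_{r,I},\bW_k)$ and the final equality $\Fil_n^{\le h}=\bW_k^{\le h}$. Your identification of $U$ on ${\rm Gr}_i\bW_k$ with $p^iU$ on $\fw^{k-2i}$ is also what is needed, though note that the paper does not spell out this computation and simply defers to \cite{UNO}.
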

\begin{proof}
Since ${\rm Fil}_n(\bW_{k})$ is coherent and $U$ is compact, the
usual discussion on slope decompositions applies to the groups
${\rm H}^0\bigl(\cX_{r,I}, {\rm Fil}_n(\bW_{k}) \bigr)$, i.e.,
given a finite slope $h\ge 0$ we have, locally on the weight
space, a slope $h$ decomposition
$$
{\rm H}^0\bigl(\cX_{r,I}, {\rm Fil}_n(\bW_{k})   \bigr)={\rm
H}^0\bigl(\cX_{r,I}, {\rm Fil}_n(\bW_{k})   \bigr)^{\le h}\oplus
{\rm H}^0\bigl(\cX_{r,I}, {\rm Fil}_n(\bW_{k})   \bigr)^{>h}.
$$

Thanks to Proposition \ref{prop:upfiltration} the $U$ operator
on the quotient ${\rm H}^0\bigl(\fX_{r,I}, \bW_{k}/{\rm
Fil}_n(\bW_{k}) \bigr)$ is divisible by $p^{h+1}$ for $n$ large
enough. It follows that  ${\rm H}^0\bigl(\cX_{r,I}, \bW_{k}/{\rm
Fil}_n(\bW_{k}) \bigr)$ also admits a slope $h$-decomposition and
in fact ${\rm H}^0\bigl(\cX_{r,I}, \bW_{k}/{\rm Fil}_n(\bW_{k})
\bigr)^{\leq h}=0$. Finally notice that $\mathrm{Gr}_{i}\bW_k
\cong \fw^{k-2i} $ thanks to Theorem \ref{thm:descentbWk}. The
claimed factorization $P_I^n(k,X):=\prod_{i=0}^n
\mathcal{P}_I(k-2i,p^iX)$ follows as in \cite[\S 3.4.2]{UNO}.
\end{proof}

\subsection{The $V$ operator and $p$-depletion on overconvergent modular forms}\label{sec:depletion}

\medskip
\noindent In section \S 2 of \cite{coleman} R.~Coleman defines the $V$ operator on overconvergent modular forms of integer weight and the goal of this paragraph is
to recall his definition in our setting and so make it work on integral, overconvergent modular forms of arbitrary weight.

Let, in the notations of the beginning of this section, $E$ be an elliptic curve defining a point of $\fX_{r+1,I}$. In particular, $E$ has a canonical subgroup
$H_{n+1}$ of order $p^{n+1}$  and we let $\pi\colon E \lra E':=E/H_1$ denote the natural isogeny. We remark that $E'$ defines a point on $\fX_{r,I}$ and has a canonical subgroub $H_n'=H_{n+1}/H_1$. This morphism $\Phi\colon \fX_{r+1,I}\to \fX_{r,I}$ naturally lifts to a morphism  $\Phi\colon \fIG_{n+1,r+1,I} \to \fIG_{n,r,I}$ as a generic trivialization of $H_{n+1}$ provides a generic trivialization of $H_n'$. Let
$\pi^\vee\colon E'\lra E$ be the dual isogeny; then  $\pi^\vee$ defines a morphism ${\rm
H}_{E',n}\lra {\rm H}_{E,n}$ which is an isomorphism if we invert $\alpha$. We are in the setting of \S \ref{sec:functoriality} and therefore  $(\pi^\vee)^\ast$ induces a morphism ${\rm H}_E^\sharp\to {\rm H}_{E'}^\sharp=\Phi^\ast\bigl({\rm H}_E^\sharp\bigr)$ which defines an isomorphism $(\pi^\vee)^\ast\colon
\Omega_E\lra \Omega_{E'}=\Phi^\ast\bigl(\Omega_E\bigr)$ over $\fIG_{n+1,r+1,I}$. By Corollary \ref{prop:functfilbV0} this  gives a morphism $\bW_k \lra \Phi^\ast\bigl(\bW_k\bigr)$ over $\fX_{r+1,I}$ that provides  an isomorphism $(\pi^\vee)^\ast\colon \fw^k \lra \Phi^\ast\bigl(\fw^k\bigr)$ . Define the operator  $$V\colon
{\rm H}^0(\fX_{r,I}, \mathfrak{w}^k) \lra {\rm H}^0(\fX_{r+1,I}, \mathfrak{w}^k), \quad V(\gamma):=\left((\pi^\vee)^\ast\right)^{-1}\bigl(\Phi^\ast(\gamma)\bigr) .$$Its expression on $q$-expansions is:
$$
V(\sum_{n=0}^\infty a_nq^n)=\sum_{n-0}^\infty a_n q^{pn}.
$$

It follows that $U\circ V={\rm Id}_{{\rm H}^0(\fX_{r,I}, \mathfrak{w}^k)}$, as this is so on $q$-expansions.

\begin{definition}
\label{def:pdepletion}
Let $f\in {\rm H}^0(\fX_{r+1,I}, \mathfrak{w}^k)$. We denote by $f^{[p]}:=f-V(U(f))\in {\rm H}^0(\fX_{r+1,I}, \mathfrak{w}^k)$ and call $f^{[p]}$ the $p$-depletion of $f$.
\end{definition}

\begin{remark}
1) If $f\in {\rm H}^0(\fX_{r+1,I}, \mathfrak{w}^k)$, then $U(f^{[p]})=0$.

2) If the $q$-expansion of $f$ is $\displaystyle f(q)=\sum_{n=0}^\infty a_n q^n$ then the $q$-expansion of its
$p$-depletion is $\displaystyle f^{[p]}(q)=\sum_{n\in \N, (n,p)=1} a_n q^n$.

\end{remark}

\subsection{Twists by finite characters}\label{sec:twists}

Let $n$ be a positive integer and fix a primitive $n$-th root of unity $\zeta\in \overline{\Q}_p$.
Let $\chi\colon (\Z/p^n\Z)^\ast\to
\bigl(\Lambda_I[\zeta]\bigr)^\ast$ be a character. The aim of this section is to prove the following:

\begin{proposition}
\label{prop:thetachi} There exists  a unique morphism,  called  {\it the twist  by $\chi$} and denoted $\theta^\chi$ or $\nabla^\chi$:
$$\theta^\chi \colon {\rm H}^0\bigl(\fX_{r,I},\bW_k\bigr) \lra
{\rm H}^0\bigl(\fX_{r+n,I}, \bW_{k+2\chi}\bigr),$$that preserves the filtration $\Fil_\bullet \bW_k$ and the Gauss-Manin connection and such that the induced map on $q$-expansions $\bW_k(q)$, using the
notation of Section \ref{sec:ordinaryandqexp}, is $$\theta^\chi\bigl(\sum_{i=0}^\infty a_i(q) V_{k,i}(q)\bigr) =\sum_{i=0}^\infty \chi\bigl(a_i(q)\bigr)
V_{k+2\chi,i}, \quad \theta^\chi\bigl(\sum_n c_n q^n\bigr)=\sum_n \chi(n) c_n q^n.$$
\end{proposition}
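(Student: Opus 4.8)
The strategy is to construct the twist $\theta^\chi$ on the level of the torsors $\bV_0(\Hsharp,s)$, by exploiting the extra level structure available once we pass from $\fX_{r,I}$ to $\fX_{r+n,I}$ and to the Igusa tower $\fIG_{n',r+n,I}$ for $n'$ large enough. Concretely, a character $\chi\colon (\Z/p^n\Z)^\ast\to (\Lambda_I[\zeta])^\ast$ should be realized geometrically via the universal trivialization $P_n$ of $H_n^\vee$: over $\fIG_{n,r+n,I}$ we have the canonical generator $P_n$, and Galois descent along $\fIG_{n,r+n,I}\to \fX_{r+n,I}$ with the Galois group $(\Z/p^n\Z)^\ast$ acting on $P_n$ is exactly the mechanism by which a modular form can be twisted by $\chi$. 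The first step is therefore to use the local description of $\bW_k$ from Lemma \ref{lemma:localwkappa} together with the decomposition along characters of $(\Z/p^n\Z)^\ast$ in $g_{n,\ast}\cO_{\fIG_{n,r+n,I}}$ (the same averaging-with-traces technique used in \S\ref{sec:proof} and in \cite{halo_spectral}) to define, on local sections $\sum_i a_i(1+\beta Z)^{k-i}Y^i$, the operation replacing each coefficient $a_i$ by its $\chi$-component $\chi(a_i)$; the passage from $\omega_E^{-2i}$-part to the analogous part for weight $k+2\chi$ accounts for the shift $k\mapsto k+2\chi$ in the target weight.

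The key structural point to check is that this locally-defined operation glues and descends. For gluing: the characterization of the filtration and of $V=Y/(1+\beta_n Z)$ as the $\fT(S)$-invariant coordinate (Lemma \ref{lemma:localwkappa}) shows that the variables $V_{k,i}(q)$ transform correctly, and the $\fT^{\rm ext}$-equivariance forces the target to be a section of $\bW_{k+2\chi}$; one then verifies that the local formula is independent of the chosen basis $(f,e)$ of $\Hsharp$ exactly as in Corollary \ref{cor:fil}. For descent from $\fIG_{n,r+n,I}$ to $\fX_{r+n,I}$: one uses that $\chi$ factors through the Galois group and applies the trace/idempotent $e_{c_n}$ of \S\ref{sec:proof}, noting that the extra denominators introduced by the different ideal of $\fIG_{n,r+n,I}/\fX_{r,I}$ (Lemma \ref{lemma:kerloc}) are absorbed by allowing $r+n$ in place of $r$; this is why the target is over $\fX_{r+n,I}$ and not $\fX_{r,I}$. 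Compatibility with $\nabla_k$ follows because the twisting operation is, locally, multiplication of coefficients by $\chi$-values which are locally constant (pulled back from the finite étale Igusa cover), hence horizontal for the Gauss–Manin connection of Theorem \ref{theorem:griffith}; compatibility with $\Fil_\bullet$ is immediate from the coordinate description since the operation does not change the $Y$-degree. The $q$-expansion formula is then read off from the description of the $q$-expansion map in terms of the Tate curve in \S\ref{sec:ordinaryandqexp}, where $\fIG_{n,r,I}$ over $\Spf(\Lambda_I^0(\!(q)\!))$ is trivial and the action of $(\Z/p^n\Z)^\ast$ on $q$-expansions is the standard one $\sum c_n q^n\mapsto \sum \chi(n)c_n q^n$; uniqueness follows since a morphism is determined by its effect on $q$-expansions once one knows $\fX_{r+n,I}$ has irreducible special fiber and the $q$-expansion map is injective on sections.

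The main obstacle I expect is the bookkeeping of $\mathrm{Hdg}$- (equivalently $\alpha$-) denominators in the descent step: the character decomposition of $\cO_{\fIG_{n,r+n,I}}$ along $(\Z/p^n\Z)^\ast$ is only integral up to a bounded power of $\mathrm{Hdg}$ controlled by the inverse different (Lemma \ref{lemma:kerloc}), so one must verify that choosing $r$ increased by exactly $n$ suffices to clear these denominators and land in the \emph{integral} sheaf $\bW_{k+2\chi}$ rather than $\bW_{k+2\chi}[1/\alpha]$; this requires the same kind of careful estimate as in Lemma \ref{lemma:gradedpieces} and the computations in \S\ref{sec:proof}, and is the only genuinely delicate point — the rest is a formal consequence of the functoriality of the vector-bundle-with-marked-sections construction established in \S\ref{sec:FVBMS}.
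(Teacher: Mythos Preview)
Your approach has a genuine conceptual gap: the Galois action of $(\Z/p^n\Z)^\ast$ on the Igusa tower $\fIG_{n,r+n,I}\to\fX_{r+n,I}$ does \emph{not} realize the twist $\sum_n c_n q^n\mapsto\sum_n\chi(n)c_n q^n$. Over the Tate curve the Igusa tower is already split (a disjoint union of copies of $\Spf(R)$, as noted in \S\ref{sec:ordinaryandqexp}), so the Galois action permutes the components but acts trivially on each $q$-expansion; projecting to a $\chi$-eigenspace of this action produces forms with nebentypus $\chi$ at $p$, not the twist. Your final assertion that ``the action of $(\Z/p^n\Z)^\ast$ on $q$-expansions is the standard one $\sum c_nq^n\mapsto\sum\chi(n)c_nq^n$'' is therefore false, and the whole construction collapses at this point.

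The paper's proof is completely different and follows the classical Gauss-sum recipe. One passes to $\fIG_{2n,r+n,I}$, forms $E'=E/H_n$, and observes that $E'[p^n]=H_n'\times H_n''$ splits, with the Weil pairing identifying $\mathrm{Hom}(H_n'',H_n')\cong\Z/p^n\Z$ (after fixing $\zeta$). For each $j\in\Z/p^n\Z$ one obtains a subgroup $H_{\rho_j}\subset E'[p^n]$ and an isogeny $E'\to E_j':=E'/H_{\rho_j}$; together with the dual isogeny $E'\to E$ this gives, via Lemma \ref{lemma:fsharp} and Proposition \ref{prop:functfilbV0}, maps $f_j^\ast\colon t_j^\ast(\bW_k)\to\bW_k$ over $\fIG_{2n,r+n,I}$ which automatically preserve filtration and connection. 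The twist is then defined as the weighted average
\[
\theta^\chi:=\frac{g_\chi}{p^n}\sum_{j\in(\Z/p^n\Z)^\ast}\chi(j)^{-1}\,f_j^\ast\circ t_j^\ast,
\]
with $g_\chi$ the Gauss sum. The weight shift to $k+2\chi$ comes from a direct computation (Lemma \ref{lemma:firstthetachi}) showing that replacing the trivialization $s$ by $[\alpha]s$ sends $\eta$ to $\alpha^2\eta$; the $q$-expansion formula is the classical identity $\sum_n\chi(n)c_nq^n=\frac{g_\chi}{p^n}\sum_j\chi(j)^{-1}f(q\zeta^j)$, for which the paper cites \cite[Prop.~III.3.17(b)]{Koblitz}. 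The key geometric input you are missing is this family of isogenous curves $E_j'$ indexed by $\Z/p^n\Z$; character decomposition along the Igusa Galois group cannot substitute for it.
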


The requirement on $q$-expansions provides the uniqueness. We need to prove that such an operator exists. We first construct it on $\fIG_{2n,r+n,I}$.
Consider the morphism $$t\colon\mathcal{IG}_{2n,r,I} \lra \mathcal{IG}_{n,r,I},$$defined  as follows (remark that we first work on the adic spaces, i.e. $\alpha$ is inverted). We send the
universal generalized elliptic curve $E$ to $ E':=E/H_n$, where $H_n\subset E$ is the canonical subgroup of level $n$. Notice that, denoting $H_{2n}\subset E[p^{2n}]$ the
canonical subgroup of level $2n$, then $H_n':=H_{2n}/H_n \subset E'$ is the canonical subgroup of level $n$ of $E'$. Furthermore if $\gamma$ is the universal section
of $H_{2n}^\vee$ then $\gamma':=p^n \gamma $ defines a section of $H_n^{',\vee}=H_{2n}^\vee[p^n] \subset H_{2n}^\vee$ that generates $H_n^{',\vee}$.  The
morphism $t$ sends $(E,\gamma)\mapsto (E',\gamma')$ with the $\Gamma_1(N)$-level structure on $E'$ defined via the projection $\pi \colon E\to E'$.

Let $\lambda \colon E'\to E$ be the dual isogeny. Then $\lambda$
defines an isomorphism of canoical subgroups $H_n'\cong H_n$. If we set $H_n'':=\Ker(\lambda)$, the $p^n$-torsion of $E'$ decomposes
as
$$E'[p^n]=H_n' \times H_n'',$$as group schemes over
$\mathcal{IG}_{2n,r,I}$, and the Weil pairing induces an isomorphism $H_n':=\bigl(H_n''\bigr)^\vee$. The universal section $\gamma'$ defines  isomorphisms of group schemes
$$s\colon \Z/p^n\Z \to H_n'', \quad s^\vee\colon H_n' \to \mu_{p^n}$$ over
$\mathcal{IG}_{2n,r,I}$ (the second morphism is obtained by duality). Assume that $K$ contains a primitive $p^n$-th root of unity $\zeta$. The choice of $\zeta$ identifies
$\mathrm{Hom}\bigl(\Z/p^n\Z,\mu_{p^n}\bigr) \cong \Z/p^n\Z$: an element $j\in \Z/p^n\Z$ corresponds to the homomorphism $\Z/p^n\Z \to \mu_{p^n}$ sending $1\mapsto
\zeta^j$. We then get a bijection $$\eta\colon \mathrm{Hom}\bigl(H_n'',H_n'\bigr) \to
\mathrm{Hom}\bigl(\Z/p^n\Z,\mu_{p^n}\bigr)\cong \Z/p^n\Z, \quad g\mapsto s^\vee \circ g \circ s$$

\begin{lemma}\label{lemma:firstthetachi} 
Given $\alpha\in (\Z/p^n\Z)^\ast$, if we
let $[\alpha] s$ be the multiplication of $s$ by $\alpha$, the induced map $[\alpha]\eta\colon \mathrm{Hom}\bigl(H_n'',H_n'\bigr) \to \Z/p^n\Z$ is $\alpha^2 \eta$.
\end{lemma}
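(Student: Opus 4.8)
The statement is essentially a bookkeeping computation with the explicit identifications set up just before the lemma, so the plan is to unwind all the maps and track how multiplication by $\alpha$ on $s$ propagates through them. First I would recall the precise definitions of the three isomorphisms in play: $s\colon \Z/p^n\Z \to H_n''$ and $s^\vee\colon H_n' \to \mu_{p^n}$ coming from the universal section $\gamma'$, and the bijection $\eta$ sending $g\in \mathrm{Hom}(H_n'',H_n')$ to $s^\vee \circ g \circ s$, followed by the identification $\mathrm{Hom}(\Z/p^n\Z,\mu_{p^n})\cong \Z/p^n\Z$ given by the fixed root of unity $\zeta$. The key point to extract is that $s^\vee$ is \emph{obtained by duality} from $s$ (via the Weil pairing isomorphism $H_n' \cong (H_n'')^\vee$), so replacing $s$ by $[\alpha]s$ has two simultaneous effects: it replaces $s$ (appearing on the right in $s^\vee \circ g \circ s$) by $[\alpha]\circ s$, and it replaces $s^\vee$ (appearing on the left) by the dual, which is $s^\vee \circ [\alpha]$, i.e. precomposition by multiplication by $\alpha$ on $H_n'$.

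Next I would carry out the computation: with $s$ replaced by $[\alpha]s$, the map $[\alpha]\eta$ sends $g$ to $(s^\vee \circ [\alpha]_{H_n'}) \circ g \circ ([\alpha]_{H_n''}\circ s) = s^\vee \circ [\alpha]\circ g \circ [\alpha] \circ s$. Since $g$ is a homomorphism of group schemes and $[\alpha]$ is central (it commutes with every homomorphism), this equals $s^\vee \circ g \circ [\alpha^2] \circ s = [\alpha^2]\cdot(s^\vee \circ g \circ s)$ as an element of $\mathrm{Hom}(\Z/p^n\Z,\mu_{p^n})$. Finally, under the identification $\mathrm{Hom}(\Z/p^n\Z,\mu_{p^n})\cong \Z/p^n\Z$ fixed by $\zeta$, multiplication by $\alpha^2$ on the source (or equivalently on $\mu_{p^n}$) corresponds exactly to multiplication by $\alpha^2$ in $\Z/p^n\Z$, since $\zeta^{\alpha^2 j} = (\zeta^{\alpha^2})^j$. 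Hence $[\alpha]\eta = \alpha^2 \eta$, which is the claim.

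The only genuinely delicate point — and the one I would be most careful about — is the claim that replacing $s$ by $[\alpha]s$ replaces $s^\vee$ by $s^\vee \circ [\alpha]$ rather than by $[\alpha^{-1}]\circ s^\vee$ or $[\alpha]^{-1}\circ s^\vee$. This is where one must be precise about which duality is being used: $s^\vee\colon H_n' \to \mu_{p^n}$ is the composite of the Weil pairing isomorphism $H_n' \cong (H_n'')^\vee = \mathrm{Hom}(H_n'',\mu_{p^n})$ with evaluation against $s$ (thought of as a generator of $H_n''$), so it is literally ``pair with $s$''. Therefore scaling $s$ by $\alpha$ scales $s^\vee$ by $\alpha$ \emph{covariantly}, i.e. $s^\vee$ becomes $\alpha \cdot s^\vee = s^\vee \circ [\alpha]_{H_n'}$ by bilinearity of the pairing. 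I would spell this out carefully using the compatibility of the Weil pairing with multiplication-by-$\alpha$ on either argument. Once this sign/exponent issue is pinned down, everything else is the short formal manipulation above, using only that $[\alpha]$ is central among group scheme homomorphisms and that the $\zeta$-identification intertwines $[\alpha^2]$ with $[\alpha^2]$.
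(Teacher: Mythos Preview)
Your proposal is correct and follows exactly the same approach as the paper: the paper's proof is the single line $([\alpha]s)^\vee \circ g \circ ([\alpha]s)=\alpha^2\, s^\vee \circ g \circ s$, and your write-up simply unpacks this identity carefully, in particular justifying that $([\alpha]s)^\vee = s^\vee \circ [\alpha]$ via the bilinearity of the Weil pairing (equivalently, that Cartier duality sends $[\alpha]$ to $[\alpha]$). There is no substantive difference in method.
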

\begin{proof} For every
$g\in \mathrm{Hom}\bigl(H_n'',H_n'\bigr)$,  we have
$([\alpha]s)^\vee \circ g \circ ([\alpha]s)=\alpha^2 s^\vee \circ
g \circ s$.

\end{proof}

Thanks to the Lemma \ref{lemma:firstthetachi} for every $j\in \Z/p^n\Z$ we get a map $\rho_j\colon H_n''\to H_n'$, inducing the morphism
$\Z/p^n\Z\to \mu_{p^n}$ given by sending $1\mapsto \zeta^j$ (identifying $\Z/p^n\Z\cong H_n''$ via $s$ and $H_n'\cong \mu_{p^n}$ via $s^\vee$). We then let $$H_{\rho_j}:=\bigl(\rho_j \times \mathrm{Id}\bigr)(H_n'') \subset H_n'\times
H_n''=E'[p^n]$$be the closed subgroup scheme given by the image of $\rho_j \times \mathrm{Id}$. Define $$t_j\colon\mathcal{IG}_{2n,r+n,I} \lra \mathcal{IG}_{n,r,I},$$the map given as follows.  Notice that the image of $H_n'$ via the projection map  $\lambda_j\colon E' \to E_j':=E'/H_{\rho_j}$ defines the canonical subgroup $H_{n,j}'\subset E_j'[p^n]$ of order $n$  so that the trivialization $\gamma'\colon \Z/p^n\Z \to (H_n')^\vee$ defines a trivialization $\gamma_j'\colon \Z/p^n\Z \to (H_{n,j}')^\vee$. Then $t_j(E,\gamma)=(E_j',\gamma_j')$ with $\Gamma_1(N)$-level structure on $E_j'$ induced by the one on $E'$. We let $$t\colon \fIG_{2n,r+n,I} \to \fIG_{n,r,I} \qquad t_j\colon \fIG_{2n,r+n,I} \to \fIG_{n,r,I}$$to be the morphisms defined by $t$ and $t_j$, upon taking normalizations. Over $\fIG_{2n,r+n,I} $ we have the isogenies $$E \stackrel{\lambda}{\longleftarrow} E' \stackrel{\lambda_j}{\longrightarrow} E_j',$$where $E$ is the universal ellitpic curve over $\fIG_{2n,r+n,I}$. Moroever by construction $\lambda$ and $\lambda_j$ map the canonical subgroup of $E'$ to the canonical subgroups of $E$ and $E_j'$ respectively, compatibly with the universal sections $\gamma'$, $\gamma$ and $\gamma_j$.  It follows from Lemma \ref{lemma:fsharp} that $\lambda$ and $\lambda_j$ induce morphisms $${\rm H}_{E}^\sharp \stackrel{\lambda^\sharp}{\longrightarrow}{\rm H}_{E'}^\sharp \stackrel{\lambda_j^\sharp}{\longleftarrow}{\rm H}_{E_j'}^\sharp$$with the same image.  In particular, we get isomorphsms $f_j\colon {\rm H}_{E_j'}^\sharp\lra {\rm H}_{E}^\sharp$ as submodules of ${\rm H}_{E'}^\sharp$. Using Proposition \ref{prop:functfilbV0} we finally get morphisms
$$f_j^\ast\colon t_j^\ast\bigl(\bW_k\bigr) \lra \bW_k$$over $\fIG_{2n,r+n,I}$  that preserves the filtration $\Fil_\bullet \bW_k$ defined in Theorem \ref{thm:descentbWk} and the Gauss-Manin connection $\nabla_k$ of Theorem \ref{theorem:griffith}.

\begin{lemma} Let $g_{\chi}:=\sum_{j\in (\Z/p^n\Z)^\ast} \chi(j) \zeta^j$ be the Gauss
sum associated to $\chi$. The map
 $$ \theta^\chi:=\frac{g_{\chi}}{p^n} \cdot \Bigl(\sum_{j\in (\Z/p^n\Z)^\ast} \chi(j)^{-1} f_{j}^\ast \circ t_j^\ast\Bigr)\colon {\rm H}^0\bigl(\fX_{r,I},\bW_k\bigr) \lra
{\rm H}^0\bigl(\fX_{r+n,I}, \bW_{k+2\chi}\bigr)$$has the properties claimed in Proposition \ref{prop:thetachi}.

\end{lemma}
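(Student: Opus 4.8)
The plan is to verify directly that the map $\theta^\chi:=\frac{g_\chi}{p^n}\sum_{j\in(\Z/p^n\Z)^\ast}\chi(j)^{-1}f_j^\ast\circ t_j^\ast$ has the stated properties, which are: (a) it lands in ${\rm H}^0(\fX_{r+n,I},\bW_{k+2\chi})$; (b) it preserves $\Fil_\bullet$ and commutes with $\nabla_k$; (c) its effect on $q$-expansions is multiplication of the $n$-th Fourier coefficient by $\chi(n)$, and more generally $\theta^\chi(\sum a_i(q)V_{k,i}(q))=\sum\chi(a_i(q))V_{k+2\chi,i}$. Points (a) and (b) are essentially formal once one inspects how the individual constituents $f_j^\ast\circ t_j^\ast$ behave. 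Each $t_j$ changes the weight-$k$ datum (through the level structure and canonical-subgroup trivialization on $E_j'$) so that $t_j^\ast(\bW_k)$ is naturally the weight-$k$ sheaf on $\fIG_{2n,r+n,I}$; the morphism $f_j^\ast$, which comes from the isogeny-induced isomorphism ${\rm H}^\sharp_{E_j'}\cong{\rm H}^\sharp_E$ of submodules of ${\rm H}^\sharp_{E'}$, already preserves $\Fil_\bullet$ and $\nabla$ by Proposition \ref{prop:functfilbV0} and the compatibility of $\lambda^\sharp,\lambda_j^\sharp$ with the connection; the shift from $k$ to $k+2\chi$ comes from the descent along the $(\Z/p^n\Z)^\ast$-action, where by Lemma \ref{lemma:firstthetachi} multiplication by $\alpha$ on the universal section scales the identification $\eta$ by $\alpha^2$, so the twisted average transforms under $\alpha$ via $\chi(\alpha)\cdot\alpha^2=\chi(\alpha)\cdot k_{\rm triv}$, i.e. it descends to a section of $\bW_{k+2\chi}$ on $\fX_{r+n,I}$.

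Concretely I would proceed as follows. First I would record, using the description of the $U$-correspondence in Section \ref{sec:upoperator} and of $V$ in Section \ref{sec:depletion}, that each composite $f_j^\ast\circ t_j^\ast$ acting on a section $w$ with $q$-expansion $\sum c_n q^n$ produces the $q$-expansion $\sum c_n \zeta^{jn} q^n$: this is because $t_j$ is the analogue of the $U$-type degeneracy twisted by the subgroup $H_{\rho_j}$, and on Tate curves the composite isogeny $E\xleftarrow{\lambda}E'\xrightarrow{\lambda_j}E_j'$ has the effect of replacing $q$ by $q^p$ and then inverting, while the cyclotomic character ${\rm dlog}$-section contributes the $p^n$-th root of unity factor $\zeta^{jn}$ in degree $n$. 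Then summing, $\frac{1}{p^n}\sum_{j\in(\Z/p^n\Z)^\ast}\chi(j)^{-1}\zeta^{jn}=\frac{1}{p^n}\bar g_{\bar\chi}(\zeta^n)$; multiplying by the Gauss sum $g_\chi=\sum_j\chi(j)\zeta^j$ and using the standard Gauss-sum identity $g_\chi\cdot\frac{1}{p^n}\sum_j\chi(j)^{-1}\zeta^{jn}=\chi(n)$ (valid for $n$ prime to $p$, and for $p\mid n$ both sides vanish) gives exactly the coefficient $\chi(n)$. The same bookkeeping on the variables $V_{k,i}(q)=Y^i(1+pZ)^{k-i}$ gives the formula $\theta^\chi(\sum a_i(q)V_{k,i}(q))=\sum\chi(a_i(q))V_{k+2\chi,i}$: the twisting only touches the Katz $q$-expansion coordinate $a_i(q)$ and the weight, not the symmetric-power variable, because $f_j^\ast$ identifies the marked sections and hence the $Y$-variables compatibly.

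The main obstacle I expect is the weight-shift bookkeeping at the descent step, i.e. checking carefully that the $\alpha^2$ twist coming from Lemma \ref{lemma:firstthetachi} together with the $\chi^{-1}$ weights in the average conspires to give precisely a $(\Z/p^n\Z)^\ast$-eigensection of eigen-character $k+2\chi$ rather than something off by a finite character; this requires tracking how the isomorphism $\gamma'\mapsto\gamma_j'$ of trivializations interacts with the Galois action of $(\Z/p^n\Z)^\ast$ on $\fIG_{2n,r+n,I}$ and the definition of $\bW_{k}$ as the $k$-eigenspace for $\fT^{\rm ext}$ in Definition \ref{def:omegak}. Once that is pinned down, the rest is a matter of assembling already-established functorialities: Proposition \ref{prop:functfilbV0} for the filtration and connection, Lemma \ref{lemma:fsharp} for the isogeny action on ${\rm H}^\sharp_E$, and Theorem \ref{thm:descentbWk} plus Theorem \ref{theorem:griffith} to push everything down to $\fX_{r+n,I}$. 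Uniqueness is immediate from the injectivity of the $q$-expansion map (Definition \ref{def:qexp}) on global sections, which holds because $\bW_k$ is obtained from a flat family of sheaves over the irreducible special fiber of $X_1(N)$.
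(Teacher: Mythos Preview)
Your approach matches the paper's: verify the weight shift via the $\alpha^2$-scaling of Lemma~\ref{lemma:firstthetachi}, get filtration and connection compatibility from Proposition~\ref{prop:functfilbV0}, and compute the $q$-expansion by the standard Gauss-sum identity (the paper cites \cite[Prop.~III.3.17(b)]{Koblitz} and \cite[Lemma~3.3]{Loeffler} rather than spelling it out).

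Two points where your write-up is imprecise. First, the weight-shift factor is not ``$\chi(\alpha)\cdot\alpha^2$'' but $k(\alpha)\,\chi(\alpha)^2=(k+2\chi)(\alpha)$. The paper's computation is: $[\alpha]\,t_j^\ast(s)=k(\alpha)\,t_j^\ast(s)$ because $s$ has weight $k$, while Lemma~\ref{lemma:firstthetachi} gives $[\alpha]f_j=f_{\alpha^2 j}$ (and likewise $t_j\mapsto t_{\alpha^2 j}$); reindexing $j\mapsto\alpha^2 j$ in $\sum_j\chi(j)^{-1}f_j^\ast t_j^\ast(s)$ then produces the extra $\chi(\alpha)^2$. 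This is exactly the ``main obstacle'' you flagged, and it resolves cleanly once you note that both $f_j$ and $t_j$ shift by $\alpha^2$. Second, your description of $f_j^\ast\circ t_j^\ast$ on Tate curves as ``replacing $q$ by $q^p$ and then inverting'' is not the right picture; the correct classical analogue is the translation $z\mapsto z+j/p^n$, i.e.\ $q\mapsto\zeta^j q$, which gives $\sum c_m q^m\mapsto\sum c_m\zeta^{jm}q^m$ directly. Your Gauss-sum summation after that is correct.
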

\begin{proof}
We  first check the assertion on the weights, i.e., that $\theta^\chi$ goes from $\bW_k$ to $\bW_{k+2\chi}$.  Take $\alpha\in \Z_p^\ast$. Given $s\in {\rm H}^0\bigl(\fX_{r,I},\bW_k\bigr)$ we have $[\alpha] t_j^\ast(s) =
k(\alpha) t_j^\ast(s)$ by definition. Thanks to Lemma \ref{lemma:firstthetachi} we also have $[\alpha] f_j =f_{\alpha^2 j}$.  Then

$$\begin{aligned}[]   [\alpha] \bigl( \sum_{j\in (\Z/p^n\Z)^\ast} \chi(j)^{-1} f_{j}^\ast(t_j^\ast(s)\bigr) & =\sum_{j\in (\Z/p^n\Z)^\ast} \chi(j)^{-1} \bigl([\alpha] (f_{j}^\ast t_j^\ast) (s)\bigr)=\cr & = \sum_{j\in (\Z/p^n\Z)^\ast} \chi(j)^{-1}
k(\alpha) f_{\alpha^{2} j}^\ast\bigl(t_{\alpha^{2}
j}^\ast(s)\bigr)= \cr & =\sum_{j\in
(\Z/p^n\Z)^\ast} \chi(\alpha^2 j)^{-1} \chi(\alpha)^{2} k(\alpha)
f_{\alpha^{2} j}^\ast\bigl(t_{\alpha^{2}
j}^\ast(s)\bigr) = \cr & =(k+2\chi)(\alpha) \bigl(
\sum_{j\in (\Z/p^n\Z)^\ast} \chi(\alpha^{2} j)^{-1} f_{\alpha^{2}
j}^\ast(t_{\alpha^{2}
j}^\ast(s))\bigr) =\cr & =(k+2\chi)(\alpha) \bigl( \sum_{j\in
(\Z/p^n\Z)^\ast} \chi(j)^{-1} f_{j}^\ast(t_j^\ast(s))\bigr)
.\cr\end{aligned}$$The compatibility with filtrations and Gauss-Manin connection is clear. The assertion on $q$-expansions of modular
forms follows from the proof of \cite[Prop. III.3.17(b)]{Koblitz}.
See also \cite[Lemma 3.3]{Loeffler}.

\end{proof}

\subsection{De Rham cohomology with coefficients in $\bW_k$ and
the overconvergent projection.}

Let $r$, $I\subset [0,\infty)$ and the universal weight  be as in the previous sections. As we will use the universal weight and classical weights as well, at least for this section we write   $\bf k$ for the universal weight to avoid confusion. Let us regard $\displaystyle \bW_{{\bf k}}^\bullet\colon
\bW_{{\bf k}}\stackrel{\nabla_{\bf k}}{\lra}\bW_{{\bf k}+2}$ as a de Rham complex of sheaves on the adic space $\cX_{r,I}$ and denote by ${\rm H}_{\rm dR}^i\bigl(\cX_{r,I},
\bW_{{\bf k}}^\bullet  \bigr)$  the $i$-th hypercohomology group of the de Rham complex $\bW_{{\bf k}}^\bullet$. We observe that because $p$ is a unit in $\cO_{\mathcal{X}_{r,I}}$, the connection
$\nabla_{\bf k}$ does not have poles so that $\displaystyle \bW_{{\bf k}}^\bullet$ and ${\rm H}_{\rm dR}^i\bigl(\cX_{r,I}, \bW_{{\bf k}}^\bullet  \bigr)$ are well defined.

Let us recall that the  sheaf $\bW_{\bf k}$ has a natural filtration preserved by $\nabla_{\bf k}$ therefore we have the following commutative diagram of sheaves on $\cX_{r,I}$ with exact rows:
$$
\begin{array}{ccccccccccc}
0&\lra&{\rm Fil}_n(\bW_{{\bf k}})&\lra&\bW_{{\bf k}}&\lra&\bW_{{\bf k}}/{\rm Fil}_n(\bW_{{\bf k}})&\lra&0\\
&&\downarrow \nabla_{\bf k}&&\downarrow \nabla_{\bf k}&&\downarrow \nabla_{\bf k}\\
0&\lra&{\rm Fil}_{n+1}(\bW_{{\bf k}+2})&\lra&\bW_{{\bf k}+2}&\lra&\bW_{{\bf k}+2}/{\rm Fil}_{n+1}(\bW_{{\bf k}+2})&\lra&0
\end{array}
$$
We denote by ${\rm Fil}_n^\bullet(\bW_{{\bf k}})$ and respectively by $\Bigl(\bW_{{\bf k}}/{\rm Fil}(\bW_{{\bf k}})  \Bigr)^\bullet$ the first, respectively the last, column of the
above diagram.

With these notations we have an exact sequence of de Rham complexes on $\cX_{r,I}$:
 \begin{equation}\label{eq:ast}  0\lra {\rm Fil}_n^\bullet(\bW_{{\bf k}})\lra
\bW_{{\bf k}}^\bullet\lra \Bigl(\bW_{{\bf k}}/{\rm Fil}(\bW_{{\bf k}})  \Bigr)^\bullet\lra 0,
\end{equation}
which gives a long exact sequence of hypercohomology groups

\begin{equation}\label{eq:astast}  0\lra {\rm H}_{\rm dR}^0\bigl(\cX_{r,I}, {\rm Fil}_n^\bullet(\bW_{{\bf k}})\bigr)\lra {\rm H}_{\rm dR}^0\bigl(\cX_{r,I}, \bW_{{\bf k}}^\bullet\bigr)\lra {\rm H}^0_{\rm
dR}\Bigl(\cX_{r,I},   \Bigl(\bW_{{\bf k}}/{\rm Fil}_n(\bW_{{\bf k}})  \Bigr)^\bullet\Bigr)\lra
\end{equation}
$$
\lra {\rm H}_{\rm dR}^1\bigl(\cX_{r,I}, {\rm Fil}_n^\bullet(\bW_{{\bf k}})\bigr)\lra {\rm H}_{\rm dR}^1\bigl(\cX_{r,I}, \bW_{{\bf k}}^\bullet\bigr)\lra {\rm H}^1_{\rm
dR}\Bigl(\cX_{r,I},   \Bigl(\bW_{{\bf k}}/{\rm Fil}_n(\bW_{{\bf k}})  \Bigr)^\bullet\Bigr)\lra \ldots
$$

Moreover, let us recall that the sheaves ${\rm Fil}_m(\bW_{{\bf k}})$ for $m=n$, $n+1$ are coherent and as $\cX_{r,I}$ is a Stein adic space (an affinoid in this case)
the hypercohomology of the complex ${\rm Fil}_n^\bullet(\bW_{{\bf k}})$ is simply calculated as the cohomology of the complex of global sections, i.e. for all $i\geq 0$
we have

\begin{equation}\label{eq:hypedR} {\rm H}^i_{\rm dR}\bigl(\cX_{r,I}, {\rm
Fil}_n^\bullet(\bW_{{\bf k}}) \bigr)={\rm H}^i\Bigl({\rm H}^0\bigl(\cX_{r,I}, {\rm Fil}_n(\bW_{{\bf k}})\bigr)\stackrel{\nabla_{\bf k}}{\lra}{\rm H}^0\bigl(\cX_{r,I}, {\rm
Fil}_{n+1}(\bW_{{\bf k}+2})\bigr)\Bigr).
\end{equation}

\begin{lemma}\label{lemma:H1drFiln} We have an exact sequence, with morphisms equivariant for the action of $U$,

$$0 \lra {\rm H}^0\bigl(\cX_{r,I}, \fw^{{\bf k}+2} \bigr) \lra  {\rm H}^1_{\rm dR}\bigl(\cX_{r,I}, {\rm Fil}_n^\bullet(\bW_{{\bf k}})  \bigr) \lra
\oplus_{i=0}^{n} {\rm H}^0\bigl(\cX_{r,I}, j_{i,\ast} \bigl(\omega_E\bigr)^{-i} \bigr) \lra 0,$$where $\fw^{{\bf k}+2}$ is the universal sheaf of Definition
\ref{defi:wnrI}, the first arrow is induced by the inclusion $\fw^{{\bf k}+2}={\rm Fil}_0(\bW_{{\bf k}+2}) \subset \bW_{{\bf k}+2}$, $j_i$ is the closed immersion
$\cX_{r,I}\times_{\cW_I} \Q_p \subset \cX_{r,I}$ defined by the $\Q_p$-valued point ${\bf k}=i$ of $\cW_I$, $\omega_E$ is the sheaf of invariant differentials
of the universal elliptic curve over $\cX_{r,I}\times_{\cW_I} \Q_p$ and the action of $U$ on ${\rm H}^0\bigl(\cX_{r,I}, j_{i,\ast} \bigl(\omega_E\bigr)^{-i} \bigr)$ is divided by $p^{i+1}$. Moreover, the $\Lambda_I$-torsion of  ${\rm H}^1_{\rm dR}\bigl(\cX_{r,I}, {\rm Fil}_n^\bullet(\bW_{{\bf k}})
\bigr)$ is identified with  ${\rm H}^0\bigl(\cX_{r,I}, j_{i,\ast} \bigl( \overline{\omega}\bigr)^0 \bigr)$ and if we denote by ${\rm H}^1_{\rm
dR}\bigl(\cX_{r,I}, {\rm Fil}_n^\bullet(\bW_{{\bf k}}) \bigr)^{\rm tf}$ the torsion free part, we have an exact sequence, with morphisms equivariant for the action of $U$,

$$0 \lra {\rm H}^0\bigl(\cX_{r,I}, \fw^{{\bf k}+2} \bigr) \lra  {\rm H}^1_{\rm dR}\bigl(\cX_{r,I}, {\rm Fil}_n^\bullet(\bW_{{\bf k}})  \bigr)^{\rm tf} \lra
\oplus_{i=0}^{n} \theta^{i+1}\Bigl({\rm H}^0\bigl(\cX_{r,I}, j_{i,\ast} \bigl( {\omega}_E\bigr)^{-i} \bigr) \Bigr)\lra 0,$$
where $\theta^{i+1} \colon {\rm H}^0\bigl(\cX_{r,I}, j_{i,\ast} \bigl( {\omega}_E\bigr)^{-i} \bigr) \to {\rm H}^0\bigl(\cX_{r,I}, j_{i,\ast} \bigl({\omega}_E\bigr)^{i+2} \bigr)$
is the theta operator defined in \cite[Prop. 4.3]{coleman} and we consider on ${\rm H}^0\bigl(\cX_{r,I}, j_{i,\ast} \bigl( {\omega}_E\bigr)^{-i} \bigr)$ the action of $U$ divided by $p^{i+1}$.

\end{lemma}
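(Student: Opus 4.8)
The plan is to analyze the de Rham complex $\mathrm{Fil}_n^\bullet(\bW_{{\bf k}})$, given in degrees $0$ and $1$ by $\mathrm{Fil}_n(\bW_{{\bf k}}) \xrightarrow{\nabla_{\bf k}} \mathrm{Fil}_{n+1}(\bW_{{\bf k}+2})$, using the graded pieces of the filtration and the formula for $\mathrm{Gr}_h(\nabla_{\bf k})$ from Theorem \ref{theorem:griffith}. Since $\cX_{r,I}$ is affinoid (Stein) and the $\mathrm{Fil}_m(\bW_{{\bf k}})$ are coherent, by \eqref{eq:hypedR} the hypercohomology is computed by the two-term complex of global sections $H^0\bigl(\cX_{r,I}, \mathrm{Fil}_n(\bW_{{\bf k}})\bigr) \xrightarrow{\nabla_{\bf k}} H^0\bigl(\cX_{r,I}, \mathrm{Fil}_{n+1}(\bW_{{\bf k}+2})\bigr)$. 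First I would filter this map of modules by the $\mathrm{Fil}_\bullet$ filtration and look at the associated graded complex: on the $h$-th graded piece ($0 \le h \le n$) the map is $\mathrm{Gr}_h(\nabla_{\bf k})\colon \mathrm{Gr}_h(\bW_{{\bf k}}) \to \mathrm{Gr}_{h+1}(\bW_{{\bf k}+2})$, which by Theorem \ref{theorem:griffith} is an isomorphism times $u_I - h$, while the top graded piece of the target, $\mathrm{Gr}_{n+1}(\bW_{{\bf k}+2}) \cong \fw^{{\bf k}+2}\otimes \mathrm{Hdg}^{n+1}\omega_E^{-2(n+1)}$, is not hit. Over the open locus of weight space where $u_I - h$ is invertible for all $0 \le h \le n$, the graded complex therefore has cohomology only in degree $0$ on graded piece $h=0$ (contributing $\fw^{{\bf k}}$... wait, rather $\mathrm{Gr}_0(\bW_{{\bf k}}) = \fw^{{\bf k}}$ maps isomorphically for $h\geq 1$, so actually) — more precisely, $H^0$ of the two-term complex at the graded level is $\mathrm{Gr}_0(\bW_{{\bf k}})\cong \fw^{{\bf k}}$ only through the kernel, and $H^1$ is the cokernel, which at weight $\neq$ integers is just $\mathrm{Gr}_{n+1}(\bW_{{\bf k}+2})\cong \fw^{{\bf k}+2}$; but $\fw^{{\bf k}+2}$ embeds in $H^1_{\rm dR}$ via $\mathrm{Fil}_0$. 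Tracking these carefully, I would assemble the short exact sequence with middle term $H^1_{\rm dR}\bigl(\cX_{r,I}, \mathrm{Fil}_n^\bullet(\bW_{{\bf k}})\bigr)$, the sub $H^0\bigl(\cX_{r,I},\fw^{{\bf k}+2}\bigr)$ coming from $\mathrm{Fil}_0(\bW_{{\bf k}+2}) = \fw^{{\bf k}+2}\subset \bW_{{\bf k}+2}$, and the quotient accounting for the failure of surjectivity of $\mathrm{Gr}_h(\nabla_{\bf k})$ precisely at the classical integer weights ${\bf k}=i$ where $u_I - i$ vanishes.

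The classical-weight contributions are the heart of the matter. At the integer weight ${\bf k}=i$ (equivalently, after pulling back along $j_i$), the map $\mathrm{Gr}_i(\nabla_{\bf k})$ becomes zero, so the graded complex acquires a kernel $\mathrm{Gr}_i(\bW_i)\cong j_{i,\ast}(\omega_E^{-i})$ in the source of $\nabla$ contributing to $H^0$ — but this $H^0$ class, being in $\mathrm{Fil}_i\setminus \mathrm{Fil}_{i-1}$, is not a global section of $\bW_{{\bf k}}^\bullet$ itself; rather it is the next step $\nabla_i$ applied to it in $\bW_{{\bf k}+2}/\mathrm{Fil}_{i+1}$... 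I would instead identify the cokernel contribution: the class $\nabla_{\bf k}$ of a near-overconvergent form of degree exactly $i$ at weight $i$ lands in $\mathrm{Fil}_{i+1}$ but its $\mathrm{Gr}_{i+1}$-component vanishes, producing, in $H^1_{\rm dR}$, a class represented by an element of $\mathrm{Fil}_{n+1}(\bW_{{\bf k}+2})$ supported at weight $i$, i.e.\ an element of $H^0\bigl(\cX_{r,I}, j_{i,\ast}(\omega_E)^{-i}\bigr)$. Summing over $i = 0, \ldots, n$ gives the quotient term $\oplus_{i=0}^n H^0\bigl(\cX_{r,I}, j_{i,\ast}(\omega_E)^{-i}\bigr)$ and the exactness of the first displayed sequence. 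The identification of the $\Lambda_I$-torsion as $H^0\bigl(\cX_{r,I}, j_{i,\ast}(\overline{\omega})^0\bigr)$ at weight $0$ (i.e.\ weight ${\bf k}=0$, degree $0$) comes from the fact that $u_I - 0 = u_I$ is a non-unit exactly along the vanishing locus of a uniformizer $\varpi \in \Lambda_I$ cutting out weight $0$, so $H^1_{\rm dR}$ has $\varpi$-torsion precisely equal to the cokernel of multiplication by $u_I$ on $\mathrm{Gr}_0 \to \mathrm{Gr}_1$, which is $\fw^{{\bf k}}$ reduced mod $\varpi$, i.e.\ $H^0\bigl(\cX_{r,I}, j_{i,\ast}(\overline{\omega})^0\bigr)$; dividing out the torsion gives the torsion-free sequence.

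For the torsion-free refinement, the point is to rescale: instead of the naive boundary map I would use the fact that near the classical weight ${\bf k}=i$, iterating $\nabla_{\bf k}$ exactly $i+1$ times — i.e.\ applying the theta operator $\theta^{i+1}$ of \cite[Prop. 4.3]{coleman} — produces a class that survives to the torsion-free quotient, because $\theta^{i+1}$ has nonzero image landing in $H^0\bigl(\cX_{r,I}, j_{i,\ast}(\omega_E)^{i+2}\bigr)$. The map $\mathrm{Gr}_i(\nabla)\circ \mathrm{Gr}_{i-1}(\nabla)\circ\cdots$ carries the weight-$u_I$ factor $(u_I-i)(u_I-(i-1))\cdots$; dividing by this (a non-zero element of $\Lambda_I[1/\alpha]$ whose vanishing at $i$ is simple) isolates the classical theta-operator image, giving the factor $\theta^{i+1}\bigl(H^0(\cX_{r,I}, j_{i,\ast}(\omega_E)^{-i})\bigr)$. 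The $U$-equivariance throughout, with $U$ divided by $p^{i+1}$ on the weight-$i$ pieces, follows from Proposition \ref{prop:upfiltration} and Corollary \ref{cor:FredholmUp}: the factorization $P_I^n(k,X) = \prod_{i=0}^n \mathcal{P}_I(k-2i, p^i X)$ is exactly the statement that $U$ acts on $\mathrm{Gr}_i\bW_{\bf k}\cong\fw^{{\bf k}-2i}$ with an extra $p^i$, and the boundary map sees the next graded piece, hence $p^{i+1}$. The main obstacle I anticipate is the careful bookkeeping of the snake-lemma / spectral-sequence argument at the classical weights — ensuring that the ``defect'' class really is represented by a well-defined element of $H^0\bigl(\cX_{r,I}, j_{i,\ast}(\omega_E)^{-i}\bigr)$ rather than something larger, and checking that the normalization by $\theta^{i+1}$ is the correct one making the second sequence exact and $U$-equivariant with the stated normalization; this requires the explicit local formula \eqref{eq:nablak} for $\nabla_{\bf k}$ and a comparison with Coleman's theta operator.
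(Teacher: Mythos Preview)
Your overall strategy matches the paper's: use the description of $\mathrm{Gr}_h(\nabla_{\bf k})$ as an isomorphism times $(u_I-h)$ from Theorem~\ref{theorem:griffith} together with $\mathrm{Gr}_h\bW_{{\bf k}+2}\cong \fw^{{\bf k}-2h+2}$, assemble the first exact sequence inductively on $n$, and then analyse $\Lambda_I$-torsion via a snake-lemma argument on multiplication by $({\bf k}-i)$. The paper carries out exactly this, and the induction on $n$ (adding one graded piece at a time, with cokernel $\fw^{{\bf k}-2n}/({\bf k}-n)\cong j_{n,\ast}\omega_E^{-n}$) is a cleaner way to organise what you are groping toward in your first paragraph.

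Where your plan becomes imprecise is the torsion step. Your claim that the $({\bf k}-0)$-torsion is ``the cokernel of multiplication by $u_I$ on $\mathrm{Gr}_0\to\mathrm{Gr}_1$'' is not correct: by the snake lemma applied to multiplication by $({\bf k}-i)$ on the two-term complex of global sections, the $({\bf k}-i)$-torsion of $H^1_{\rm dR}$ is the \emph{kernel} of the complex reduced modulo $({\bf k}-i)$, not a cokernel. The substantive work---which you flag as an obstacle but do not resolve---is then to simplify this reduced complex. The paper does this in two reductions: first it passes to the subcomplex $\mathrm{Fil}_i(\bW_{\bf k})/({\bf k}-i)\to\mathrm{Fil}_{i+1}(\bW_{{\bf k}+2})/({\bf k}-i)$ (quasi-isomorphic because $\nabla$ is an isomorphism on the higher graded pieces), then to the quotient complex $\mathrm{Gr}_i(\bW_{\bf k})/({\bf k}-i)\to \mathrm{Fil}_{i+1}(\bW_{{\bf k}+2})/\bigl(({\bf k}-i)+\nabla\mathrm{Fil}_{i-1}\bigr)$. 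Using that $\nabla$ identifies $\mathrm{Fil}_{i-1}(\bW_{\bf k})/({\bf k}-i)$ with $\bigl(\mathrm{Fil}_i/\mathrm{Fil}_0\bigr)(\bW_{{\bf k}+2})/({\bf k}-i)$, this last complex becomes $j_{i,\ast}\omega_E^{-i}\to j_{i,\ast}\omega_E^{i+2}$, which is Coleman's $\theta^{i+1}$. The key input you are missing is that $\theta^{i+1}$ is injective on $q$-expansions for $i\geq 1$, so there is no torsion at those weights; only at $i=0$ does $\theta$ have a kernel, giving the stated torsion. Your ``rescaling'' description of the torsion-free sequence is not how the argument goes: one does not divide by $\prod(u_I-j)$, one simply observes that the image of the first exact sequence in the torsion-free quotient replaces each $H^0(j_{i,\ast}\omega_E^{-i})$ by its image under $\theta^{i+1}$.
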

\begin{proof}  Theorem \ref{theorem:griffith} and the identification
$\mathrm{Gr}_{i+1}\bW_{\bf k} \cong \fw^{{\bf k}-2i-2} $ of Theorem \ref{thm:descentbWk} imply that $\nabla_{\bf k} \colon {\rm Fil}_n(\bW_{{\bf k}}) \to {\rm Fil}_{n+1}(\bW_{{\bf k}+2})$
induces an isomorphism times  the multiplication by ${\bf k}-n$ map $$\fw^{{\bf k}-2n}\cong \mathrm{Gr}_n(\bW_{{\bf k}}) \to \mathrm{Gr}_{n+1}(\bW_{{\bf k}+2})\cong \fw^{{\bf k}-2n}.$$This map
is injective and the cokernel is identified with $\fw^{{\bf k}-2n}/({\bf k}-n) \fw^{{\bf k}-2n}\cong {\omega}_E^{-n}$. The first claim then follows proceeding by induction on
$n$, using for $n=0$ the identification $\fw^{\bf k}={\rm Fil}_0(\bW_{{\bf k}}) $.

Since ${\rm H}^0\bigl(\cX_{r,I}, \fw^{{\bf k}+2} \bigr)$ is torsion free and ${\rm H}^0\bigl(\cX_{r,I}, j_{i,\ast} \bigl( \omega_E\bigr)^{-i} \bigr)$ is
annihilated by multiplication by ${\bf k}-i$, it follows from the first part of the lemma that  the torsion part of ${\rm H}^1_{\rm dR}\bigl(\cX_{r,I}, {\rm
Fil}_n^\bullet(\bW_{{\bf k}}) \bigr)$ is the sum of the kernels of multiplication by ${\bf k}-i$ for $i=0,\ldots,n$. Fix such an $i$. Consider the following diagram with exact
rows:

$$\begin{matrix}
0 \lra & {\rm H}^0\bigl(\cX_{r,I}, {\rm Fil}_n(\bW_{{\bf k}})\bigr) & \stackrel{\nabla_{\bf k}}{\lra} & {\rm H}^0\bigl(\cX_{r,I}, {\rm Fil}_{n+1}(\bW_{{\bf k}+2})\bigr) & \lra & {\rm
H}_{\rm dR}^1\bigl(\cX_{r,I}, {\rm Fil}_{n}^\bullet(\bW_{{\bf k}})\bigr) & \lra 0 \cr & \downarrow \cdot ({\bf k}-i) & & \downarrow \cdot ({\bf k}-i) & & \downarrow \cdot ({\bf k}-i) \cr 0
\lra & {\rm H}^0\bigl(\cX_{r,I}, {\rm Fil}_n(\bW_{{\bf k}})\bigr) & \stackrel{\nabla_{\bf k}}{\lra} & {\rm H}^0\bigl(\cX_{r,I}, {\rm Fil}_{n+1}(\bW_{{\bf k}+2})\bigr) & \lra & {\rm
H}_{\rm dR}^1\bigl(\cX_{r,I}, {\rm Fil}_{n}^\bullet(\bW_{{\bf k}})\bigr) & \lra 0 \cr
\end{matrix}$$

The rows are exact as ${\rm H}^1\bigl(\cX_{r,I}, {\rm Fil}_n(\bW_{{\bf k}})\bigr) =0$: indeed $\cX_{r,I}$ is affinoid and ${\rm Fil}_n(\bW_{{\bf k}})$ is a coherent $\cO_{\cX_{r,I}}$-module.
Since multiplication by ${\bf k}-i$ is injective on ${\rm Fil}_n^\bullet(\bW_{{\bf k}})$ and ${\rm Fil}_{n+1}^\bullet(\bW_{{\bf k}+2})$ and hence on their global sections, using the
snake lemma we see that the kernel of multiplication by $k-i$ on $ {\rm H}_{\rm dR}^1\bigl(\cX_{r,I}, {\rm Fil}_n^\bullet(\bW_{{\bf k}})\bigr)$ is identified with the
kernel of the complex
$$\nabla\colon {\rm H}^0\bigl(\cX_{r,I}, {\rm Fil}_{n}(\bW_{{\bf k}})/({\bf k}-i)\bigr) \lra  {\rm H}^0\bigl(\cX_{r,I}, {\rm Fil}_{n+1}(\bW_{{\bf k}+2})/({\bf k}-i)\bigr).$$Using that
$\nabla$ induces an isomorphism on graded pieces except for ${\rm Fil}_{i}(\bW_{{\bf k}})/({\bf k}-i)$, this complex is quasi-isomorphic (i.e., the homology groups of the two
complexes are isomorphic) to the sub-complex $$\nabla\colon {\rm H}^0\bigl(\cX_{r,I}, {\rm Fil}_{i}(\bW_{{\bf k}})/({\bf k}-i)\bigr) \lra  {\rm H}^0\bigl(\cX_{r,I}, {\rm
Fil}_{i+1}(\bW_{{\bf k}+2})/({\bf k}-i)\bigr)
$$and, similarly, it is quasi-isomorphic to  the quotient complex $$\nabla\colon {\rm H}^0\bigl(\cX_{r,I}, {\rm Gr}_{i}(\bW_{{\bf k}})/({\bf k}-i)\bigr) \lra  {\rm
H}^0\bigl(\cX_{r,I}, {\rm Fil}_{i+1}(\bW_{{\bf k}+2})/({\bf k}-i)\bigr)/ \nabla {\rm H}^0\bigl(\cX_{r,I}, {\rm Fil}_{i-1}(\bW_{{\bf k}})/({\bf k}-i)\bigr).$$As $\nabla$ induces an
isomorphism $\nabla \colon {\rm H}^0\bigl(\cX_{r,I}, {\rm Fil}_{i-1}(\bW_{{\bf k}})/({\bf k}-i)\bigr)\cong {\rm H}^0\bigl(\cX_{r,I}, \bigl({\rm Fil}_{i}(\bW_{{\bf k}+2})/{\rm
Fil}_{0}(\bW_{{\bf k}+2}\bigr))/({\bf k}-i)\bigr)$ and the image of ${\rm H}^0\bigl(\cX_{r,I}, {\rm Fil}_{i}(\bW_{{\bf k}})/({\bf k}-i)\bigr)$ lies in ${\rm H}^0\bigl(\cX_{r,I}, {\rm
Fil}_{i}(\bW_{{\bf k}+2})/({\bf k}-i)\bigr)$, using the identification $j_{i,\ast} \bigl({\omega}_E\bigr)^{-i}  \cong  {\rm Gr}_{i}(\bW_{{\bf k}})/({\bf k}-i)$ and $j_{i,\ast}
\bigl({\omega}_E\bigr)^{i+2} \cong  {\rm \Fil}_{0}(\bW_{{\bf k}+2})/({\bf k}-i)$, we may identify the kernel of such quotient complex with the kernel of the induced map
$$ {\rm H}^0\bigl(\cX_{r,I}, j_{i,\ast} \bigl({\omega}_E\bigr)^{-i} \bigr)  \lra
 {\rm H}^0\bigl(\cX_{r,I}, j_{i,\ast} \bigl({\omega}_E\bigr)^{i+2}\bigr).$$This is identified with $\theta^{i+1}$, by the results of Coleman  and
it is injective, as it is injective on $q$-expansions,  except for $i=0$ in which case the kernel coincides with ${\rm H}^0\bigl(\cX_{r,I}, j_{i,\ast}
\bigl({\omega}_E\bigr)^0 \bigr)$. See \cite[Prop. 4.3]{coleman}. The twisted action of $U$ in the statement comes from the equality $U \circ \theta^{i+1}=p^{i+1} \theta^{i+1} \circ \theta$ proven in loc.~cit. The claim follows.

\end{proof}

If $C^\bullet$ denotes any one of the complexes in the exact sequence (\ref{eq:ast}), the discussion in Section \ref{sec:upoperator} implies that we have compact
$U$-operators on each one of the groups ${\rm H}_{\rm dR}^i\bigl(\cX_{r,I}, C^\bullet\bigr)$, for $i\ge 0$.

\begin{lemma}\label{lemma:slopeH1dr} For $h\ge 0$ and $n\in \N$ the groups $ {\rm H}^i_{\rm dR}\Bigl(\cX_{r,I},
{\rm Fil}_n(\bW_{{\bf k}})^\bullet \Bigr)$ and ${\rm H}_{\rm dR}^i\bigl(\cX_{r,I}, \bW_{{\bf k}}^\bullet\bigr)$ have  slope $h$-decompositions for every $i$ (in the sense of \cite[\S 4]{ash_stevens}).   Moreover, for $n$ large enough, the exact
sequence (\ref{eq:ast}) induces an isomorphism

$$
{\rm H}_{\rm dR}^i\bigl(\cX_{r,I}, {\rm Fil}_n^\bullet(\bW_{{\bf k}})\bigr)^{\le h}\cong {\rm H}_{\rm dR}^i\bigl(\cX_{r,I}, \bW_{{\bf k}}^\bullet\bigr)^{\le h},
$$
for all $i\geq 0$.

\end{lemma}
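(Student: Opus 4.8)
The plan is to reduce all three hypercohomology groups appearing in (\ref{eq:ast}) to cohomology of two--term complexes of Banach $\Lambda_I$--modules carrying a compact operator $U$, and then to play the abstract slope--decomposition machinery of \cite[\S 4]{ash_stevens} off against the divisibility estimate of Proposition \ref{prop:upfiltration} inside the long exact sequence (\ref{eq:astast}). Note that since each complex has only two terms, only $i=0,1$ actually occur.

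First I would treat ${\rm Fil}_n^\bullet(\bW_{{\bf k}})$. Since $\cX_{r,I}$ is affinoid and the sheaves ${\rm Fil}_n(\bW_{{\bf k}})$, ${\rm Fil}_{n+1}(\bW_{{\bf k}+2})$ are coherent by Theorem \ref{thm:descentbWk}, the identification (\ref{eq:hypedR}) exhibits ${\rm H}^i_{\rm dR}\bigl(\cX_{r,I},{\rm Fil}_n^\bullet(\bW_{{\bf k}})\bigr)$ as the cohomology of the two--term complex of global sections
$$
{\rm H}^0\bigl(\cX_{r,I},{\rm Fil}_n(\bW_{{\bf k}})\bigr)\stackrel{\nabla_{{\bf k}}}{\lra}{\rm H}^0\bigl(\cX_{r,I},{\rm Fil}_{n+1}(\bW_{{\bf k}+2})\bigr).
$$
By Corollary \ref{cor:FredholmUp} the operator $U$ is compact on both terms and by Proposition \ref{prop:cU} it commutes with $\nabla_{{\bf k}}$; hence $\ker\nabla_{{\bf k}}$ is a closed $U$--stable submodule and the Riesz theory of \cite[\S 4]{ash_stevens} (see also \cite[\S 3.4]{UNO}) equips ${\rm H}^0$ and ${\rm H}^1$ with slope $h$--decompositions, the formation of the $\leq h$ part commuting with passage to kernels and cokernels of $\nabla_{{\bf k}}$. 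The same argument applies to $\bW_{{\bf k}}^\bullet$: here $\bW_{{\bf k}}$ is a Banach sheaf on the Stein space $\cX_{r,I}$, so its higher cohomology vanishes and ${\rm H}^i_{\rm dR}\bigl(\cX_{r,I},\bW_{{\bf k}}^\bullet\bigr)$ is again the cohomology of ${\rm H}^0(\cX_{r,I},\bW_{{\bf k}})\stackrel{\nabla_{{\bf k}}}{\lra}{\rm H}^0(\cX_{r,I},\bW_{{\bf k}+2})$, with $U$ compact on the two terms by Corollary \ref{cor:FredholmUp}. This proves the first assertion.

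For the second assertion I would analyze the quotient complex $\bigl(\bW_{{\bf k}}/{\rm Fil}_n(\bW_{{\bf k}})\bigr)^\bullet$. Its terms are Banach sheaves on the Stein space $\cX_{r,I}$, so once more the hypercohomology is the cohomology of the associated two--term complex of global sections; and by Proposition \ref{prop:upfiltration}, in the sharpened $p$--divisibility form recalled in the proof of Corollary \ref{cor:FredholmUp}, there is $n_0=n_0(h)$ such that for $n\geq n_0$ the operator $U$ on both ${\rm H}^0\bigl(\cX_{r,I},\bW_{{\bf k}}/{\rm Fil}_n(\bW_{{\bf k}})\bigr)$ and ${\rm H}^0\bigl(\cX_{r,I},\bW_{{\bf k}+2}/{\rm Fil}_{n+1}(\bW_{{\bf k}+2})\bigr)$ is divisible by $p^{h+1}$. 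As $\nabla_{{\bf k}}$ commutes with $U$, the operator $U$ then acts on each ${\rm H}^i_{\rm dR}\bigl(\cX_{r,I},(\bW_{{\bf k}}/{\rm Fil}_n(\bW_{{\bf k}}))^\bullet\bigr)$ as $p^{h+1}$ times a compact operator, whence the slope $\leq h$ part of these groups vanishes for $n\geq n_0$. Applying the exact functor $(-)^{\leq h}$ to the long exact sequence (\ref{eq:astast}) and using this vanishing, the maps ${\rm H}^i_{\rm dR}\bigl(\cX_{r,I},{\rm Fil}_n^\bullet(\bW_{{\bf k}})\bigr)^{\leq h}\to {\rm H}^i_{\rm dR}\bigl(\cX_{r,I},\bW_{{\bf k}}^\bullet\bigr)^{\leq h}$ become isomorphisms for all $i\geq 0$ as soon as $n\geq n_0$.

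The part I expect to be most delicate is the functional--analytic input: that the hypercohomology on the affinoid $\cX_{r,I}$ of these (in general non--coherent, Banach) de Rham complexes is computed by the associated two--term complexes of global Banach $\Lambda_I$--modules --- which rests on the vanishing of higher coherent and Banach--sheaf cohomology on the Stein space $\cX_{r,I}$ --- and that on such a module with a commuting compact $U$ the slope $\leq h$ decomposition exists (possibly after shrinking $\cW_I$, as in Corollary \ref{cor:FredholmUp}), is functorial and exact, and commutes with the formation of cohomology of a two--term complex. Granting these standard facts, the remainder is the bookkeeping with (\ref{eq:astast}) and the divisibility estimate of Proposition \ref{prop:upfiltration} carried out above.
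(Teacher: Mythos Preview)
Your proposal is correct and follows essentially the same approach as the paper: reduce to two--term complexes of global sections, invoke Corollary \ref{cor:FredholmUp} for slope decompositions on ${\rm Fil}_n^\bullet(\bW_{{\bf k}})$, use the divisibility estimate of Proposition \ref{prop:upfiltration} to kill the slope $\le h$ part of the quotient complex for $n$ large, and conclude via the long exact sequence (\ref{eq:astast}). You spell out the functional--analytic underpinnings (Stein vanishing, compactness, commutation with $\nabla_{{\bf k}}$, exactness of $(-)^{\le h}$) more explicitly than the paper does, but the logical skeleton is identical.
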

\begin{proof} Corollary \ref{cor:FredholmUp}
implies that the groups ${\rm H}_{\rm dR}^i\bigl(\cX_{r,I}, {\rm
Fil}_n^\bullet(\bW_{{\bf k}})\bigr)$ have slope decompositions, i.e.,
given a finite slope $h\ge 0,$ locally on the weight space (i.e.,
we might have to change the interval $I$ but our notations will
not mark this change)  we have the slope decomposition:

$${\rm H}_{\rm dR}^i\bigl(\cX_{r,I}, {\rm Fil}_n^\bullet(\bW_{{\bf k}})\bigr)={\rm H}^i_{\rm dR}\bigl(\cX_{r,I}, {\rm Fil}_n^\bullet(\bW_{{\bf k}})
 \bigr)^{\le h}\oplus {\rm H}_{\rm dR}^i\bigl(\cX_{r,I}, {\rm Fil}_n^\bullet(\bW_{{\bf k}})\bigr)^{>h}.$$

Arguing as in Corollary \ref{cor:FredholmUp} again we also have
that the groups $ {\rm H}^i_{\rm dR}\Bigl(\cX_{r,I},
\Bigl(\bW_{{\bf k}}/{\rm Fil}_n(\bW_{{\bf k}})\Bigr)^\bullet \Bigr)$ have
slope $h$-decompositions for all $i\ge 0$ and  in fact
$${\rm H}^i_{\rm dR}\Bigl(\cX_{r,I}, \Bigl(\bW_{{\bf k}}/{\rm Fil}_n(\bW_{{\bf k}}) \Bigr)^\bullet\Bigr)^{\le h}=0.$$

Therefore the long exact sequence (\ref{eq:astast}) and the considerations above imply the claim.

\end{proof}

We summarize the results of Lemma \ref{lemma:H1drFiln} and of Lemma \ref{lemma:slopeH1dr} in the following 

\begin{theorem}\label{thm:slopedecomp} Given a finite slope $h\geq 0$, locally on the weight space, the groups
${\rm H}_{\rm dR}^i\bigl(\cX_{r,I}, \bW_{{\bf k}}^\bullet \bigr)$ have slope $h$-decompositions. Moreover for $n$ large enough we get exact sequences:

$$0 \lra {\rm H}^0\bigl(\cX_{r,I}, \fw^{{\bf k}+2} \bigr)^{\le h}  \lra  {\rm H}^1_{\rm dR}\bigl(\cX_{r,I},  \bW_{{\bf k}}^\bullet  \bigr)^{\le h} \lra
\oplus_{i=0}^{n} {\rm H}^0\bigl(\cX_{r,I}, j_{i,\ast} \bigl( \overline{\omega}\bigr)^{-i} \bigr)^{\le \frac{h}{p^{i+1}}} \lra 0$$and

$$0 \lra {\rm H}^0\bigl(\cX_{r,I}, \fw^{{\bf k}+2} \bigr)^{\le h}  \lra  {\rm H}^1_{\rm dR}\bigl(\cX_{r,I},  \bW_{{\bf k}}^\bullet  \bigr)^{\le h,\mathrm{tf}} \lra
\oplus_{i=0}^{n} \theta^{i+1}\Bigl({\rm H}^0\bigl(\cX_{r,I}, j_{i,\ast} \bigl( \overline{\omega}\bigr)^{-i} \bigr)\Bigr)^{\le h} \lra 0.$$

\end{theorem}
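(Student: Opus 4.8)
The plan is to deduce Theorem \ref{thm:slopedecomp} by combining the two preceding lemmas, essentially as a bookkeeping exercise on slope decompositions. First I would recall from Corollary \ref{cor:FredholmUp} that the $U$-operator acts compactly on the global sections ${\rm H}^0(\cX_{r,I},{\rm Fil}_n(\bW_{{\bf k}}))$ and ${\rm H}^0(\cX_{r,I},{\rm Fil}_{n+1}(\bW_{{\bf k}+2}))$, hence, using (\ref{eq:hypedR}) together with the fact that $U$ commutes with $\nabla_{\bf k}$ (Proposition \ref{prop:cU}), that $U$ acts compactly on the hypercohomology groups ${\rm H}^i_{\rm dR}(\cX_{r,I},{\rm Fil}_n^\bullet(\bW_{{\bf k}}))$ — these are subquotients of the finite complex of Banach modules in (\ref{eq:hypedR}). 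Since a compact operator on a (potentially orthonormalizable) Banach module over an affinoid algebra admits slope decompositions locally on the base in the sense of \cite[\S 4]{ash_stevens}, I get, after possibly shrinking $I$, slope-$h$ decompositions of ${\rm H}^i_{\rm dR}(\cX_{r,I},{\rm Fil}_n^\bullet(\bW_{{\bf k}}))$ for every $i$ and every finite slope $h\geq 0$. This is precisely the content of Lemma \ref{lemma:slopeH1dr}, which I would simply invoke.

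Next I would use the exact triangle (\ref{eq:ast}) of de Rham complexes and its long exact cohomology sequence (\ref{eq:astast}). The key input is the vanishing ${\rm H}^i_{\rm dR}(\cX_{r,I},(\bW_{{\bf k}}/{\rm Fil}_n(\bW_{{\bf k}}))^\bullet)^{\le h}=0$ for $n$ large. This follows from Proposition \ref{prop:upfiltration}: for $n$ large enough $U$ is divisible by $\alpha^{[n/p]-1}$ on ${\rm H}^0(\fX_{r,I},\bW_{{\bf k}}/{\rm Fil}_n(\bW_{{\bf k}}))$, so on both terms of the two-term complex $(\bW_{{\bf k}}/{\rm Fil}_n(\bW_{{\bf k}}))^\bullet$ the operator $U$ is topologically nilpotent after inverting $p$; hence $U$ is topologically nilpotent on the hypercohomology, which forces the slope-$\le h$ part to vanish once $[n/p]-1$ exceeds $h\cdot v_p(\alpha)^{-1}$ (equivalently, once $U$ is divisible by $p^{h+1}$ there). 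Feeding this vanishing into the slope-$\le h$ part of (\ref{eq:astast}) — slope decompositions are exact functors, so the slope-$\le h$ part of a long exact sequence is long exact — yields the isomorphism
$$
{\rm H}_{\rm dR}^i(\cX_{r,I},{\rm Fil}_n^\bullet(\bW_{{\bf k}}))^{\le h}\;\cong\;{\rm H}_{\rm dR}^i(\cX_{r,I},\bW_{{\bf k}}^\bullet)^{\le h}
$$
for all $i\geq 0$ and $n$ large. In particular the groups ${\rm H}_{\rm dR}^i(\cX_{r,I},\bW_{{\bf k}}^\bullet)$ inherit slope-$h$ decompositions, proving the first assertion of the theorem.

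Finally, for the two displayed short exact sequences I would take the slope-$\le h$ part of the exact sequence of Lemma \ref{lemma:H1drFiln}, using that the morphisms there are $U$-equivariant with the indicated normalizations. Because exactness is preserved under the exact functor ``slope-$\le h$ part'' and because $U$ acts on ${\rm H}^0(\cX_{r,I},j_{i,\ast}(\omega_E)^{-i})$ through $p^{i+1}$ times the genuine $U$ on that space, the slope-$\le h$ subspace of the $i$-th summand picks out ${\rm H}^0(\cX_{r,I},j_{i,\ast}(\omega_E)^{-i})^{\le h/p^{i+1}}$ (and similarly for the torsion-free version, where the image of $\theta^{i+1}$ carries the action of $U$ divided by $p^{i+1}$ and the relation $U\circ\theta^{i+1}=p^{i+1}\theta^{i+1}\circ\theta$ identifies its slope-$\le h$ part). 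Combined with the isomorphism from the previous paragraph, replacing ${\rm H}^1_{\rm dR}(\cX_{r,I},{\rm Fil}_n^\bullet(\bW_{{\bf k}}))^{\le h}$ by ${\rm H}^1_{\rm dR}(\cX_{r,I},\bW_{{\bf k}}^\bullet)^{\le h}$, this gives the two sequences exactly as stated. The main obstacle, as usual with this circle of ideas, is making the ``for $n$ large enough'' uniform and compatible with the local-on-weight-space shrinking: one must check that the bound on $n$ needed for the vanishing of the slope-$\le h$ quotient depends only on $h$ (via Proposition \ref{prop:upfiltration}) and not on the interval, and that the finitely many slope decompositions one invokes can be arranged over a common $I$; granting Corollary \ref{cor:FredholmUp}, Proposition \ref{prop:upfiltration} and Lemma \ref{lemma:H1drFiln} as in the excerpt, this is routine but is the only genuinely delicate point.
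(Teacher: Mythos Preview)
Your proposal is correct and follows essentially the same approach as the paper: the theorem is stated there as a summary of Lemma \ref{lemma:H1drFiln} and Lemma \ref{lemma:slopeH1dr}, and you have simply spelled out how the two combine via the slope-$\le h$ part of the long exact sequence (\ref{eq:astast}) together with the vanishing of the quotient complex in small slope. Your write-up is in fact more detailed than what the paper provides, but the logic is identical.
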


In particular, take a  rank $1$  point $\rho\colon  \mathrm{Spa}(K,\cO_K)\to \cW_I$ and  denote by $\cX_{r,K}$, $\fw^{\bf k}_K$,  the base change of
$\cX_{r,I}$,  $\fw^{\bf k}$  respectively. We immediately get:

\begin{corollary}\label{cor:special}
If $\rho$ corresponds to a weight different from the classical weights $0,\ldots,n$ we have that
$$\rho^\ast\Bigl( {\rm H}^1_{\rm dR}\bigl(\cX_{r,I},  \bW_{{\bf k}}^\bullet  \bigr)^{\le h}\Bigr)\cong  {\rm H}^0\bigl(\cX_{r, K}, \fw^{{\bf k}+2}_K \bigr)^{\le h}.$$If
$\rho$ corresponds to the weight $k=i$ for some $0\leq i\leq n$ then we have an exact sequence $$0 \lra \frac{\Bigl({\rm H}^0\bigl(\cX_{r,K},
{\omega}_E^{i+2} \bigr)^{\le h}}{\Bigl(\theta^{i+1}{\rm H}^0\bigl(\cX_{r,K}, {{\omega}_E}^{-i} \bigr)\Bigr)^{\le h}} \lra \rho^\ast\Bigl( {\rm
H}^1_{\rm dR}\bigl(\cX_{r,I},  \bW_{i}^\bullet  \bigr)^{\le h,\mathrm{tf}}\Bigr) \lra \Bigl(\theta^{i+1}{\rm H}^0\bigl(\cX_{r,K}, {{\omega}_E}^{-i}
\bigr)\Bigr)^{\le h} \lra 0. $$

\end{corollary}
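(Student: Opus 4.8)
The plan is to specialize the two exact sequences of Theorem \ref{thm:slopedecomp} along the rank $1$ point $\rho$ and analyze the resulting Tor-terms. Concretely, write $\Lambda_{I,h}$ for the (semilocal) ring over which the slope $\leq h$ pieces are finite projective modules; by the theory of slope decompositions (\cite[\S 4]{ash_stevens}), after possibly shrinking $I$ around $\rho$ the groups ${\rm H}^0\bigl(\cX_{r,I}, \fw^{{\bf k}+2}\bigr)^{\leq h}$, ${\rm H}^1_{\rm dR}\bigl(\cX_{r,I}, \bW_{{\bf k}}^\bullet\bigr)^{\leq h}$ and the $\oplus_{i=0}^n {\rm H}^0\bigl(\cX_{r,I}, j_{i,\ast}(\overline{\omega})^{-i}\bigr)^{\leq h/p^{i+1}}$ are finitely generated $\Lambda_{I,h}$-modules, the first one being moreover torsion free (it is the slope $\leq h$ part of global sections of a coherent sheaf on an affinoid, which embeds into the $\Lambda_I$-flat module of all $q$-expansions).

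First I would treat the case that $\rho$ is a weight different from $0,\ldots,n$. The key point is that $j_{i,\ast}(\overline{\omega})^{-i}$ is, by construction (Lemma \ref{lemma:H1drFiln}), supported on the fibre $\cX_{r,I}\times_{\cW_I}\Q_p$ over the classical weight $i$, so its global sections — and hence the slope $\leq h/p^{i+1}$ part — are annihilated by ${\bf k}-i$; since $\rho$ avoids all the weights $0,\ldots,n$, the element $\rho^\ast({\bf k})-i$ is a unit in $K$, so $\rho^\ast$ kills the whole right-hand term $\oplus_{i=0}^n {\rm H}^0\bigl(\cX_{r,I}, j_{i,\ast}(\overline{\omega})^{-i}\bigr)^{\leq h/p^{i+1}}$ and also all of its higher Tor's against $K$ (a module killed by a unit is zero). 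Applying $-\otimes_{\Lambda_{I,h}} K$ to the first exact sequence of Theorem \ref{thm:slopedecomp} and using that $\mathrm{Tor}_1^{\Lambda_{I,h}}(K,\ -\ )$ of the torsion term vanishes then gives the isomorphism $\rho^\ast\bigl({\rm H}^1_{\rm dR}(\cX_{r,I},\bW_{{\bf k}}^\bullet)^{\leq h}\bigr)\cong {\rm H}^0(\cX_{r,K},\fw^{{\bf k}+2}_K)^{\leq h}$, provided one checks the base-change statement $\rho^\ast\bigl({\rm H}^0(\cX_{r,I},\fw^{{\bf k}+2})^{\leq h}\bigr)\cong {\rm H}^0(\cX_{r,K},\fw^{{\bf k}+2}_K)^{\leq h}$; this last compatibility is standard because slope decompositions commute with flat base change on weight space and $\cX_{r,I}$ is affinoid, so cohomology commutes with the base change $\Lambda_I\to K$.

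Next I would treat the case $\rho$ corresponding to $k=i_0$ for some $0\leq i_0\leq n$. Now I use the torsion-free version, the \emph{second} exact sequence of Theorem \ref{thm:slopedecomp}, together with the fact that ${\rm H}^1_{\rm dR}(\cX_{r,I},\bW_{{\bf k}}^\bullet)^{\leq h,\mathrm{tf}}$ is $\Lambda_{I,h}$-torsion free, so $\mathrm{Tor}_1^{\Lambda_{I,h}}(K,\ -\ )$ of it vanishes. Applying $-\otimes K$ to that sequence gives a four-term exact sequence whose last nonzero piece is $\bigl(\theta^{i_0+1}{\rm H}^0(\cX_{r,K},\omega_E^{-i_0})\bigr)^{\leq h}$ (all the summands with $i\neq i_0$ die because they are killed by the unit $\rho^\ast({\bf k})-i$, exactly as before, and these summands are themselves $\Lambda_{I,h}$-modules killed by a unit, hence flat-base-change away cleanly), while the left term ${\rm H}^0(\cX_{r,I},\fw^{{\bf k}+2})^{\leq h}$ specializes to ${\rm H}^0(\cX_{r,K},\omega_E^{i_0+2})^{\leq h}$; one must then identify the image of the connecting/left map with $\bigl(\theta^{i_0+1}{\rm H}^0(\cX_{r,K},\omega_E^{-i_0})\bigr)^{\leq h}$ inside ${\rm H}^0(\cX_{r,K},\omega_E^{i_0+2})^{\leq h}$, which is exactly the description already extracted in the proof of Lemma \ref{lemma:H1drFiln} of how $\nabla_{{\bf k}}$ behaves modulo ${\bf k}-i_0$ (the isomorphism $j_{i_0,\ast}(\omega_E)^{-i_0}\cong {\rm Gr}_{i_0}(\bW_{{\bf k}})/({\bf k}-i_0)$ and $j_{i_0,\ast}(\omega_E)^{i_0+2}\cong {\rm Fil}_0(\bW_{{\bf k}+2})/({\bf k}-i_0)$), together with the compatibility $U\circ\theta^{i_0+1}=p^{i_0+1}\theta^{i_0+1}\circ\theta$ of \cite[Prop. 4.3]{coleman}. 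Rewriting the resulting four-term sequence as a short exact sequence $0\to {\rm H}^0(\cX_{r,K},\omega_E^{i_0+2})^{\leq h}/\bigl(\theta^{i_0+1}{\rm H}^0(\cX_{r,K},\omega_E^{-i_0})\bigr)^{\leq h}\to \rho^\ast\bigl({\rm H}^1_{\rm dR}(\cX_{r,I},\bW_{i_0}^\bullet)^{\leq h,\mathrm{tf}}\bigr)\to \bigl(\theta^{i_0+1}{\rm H}^0(\cX_{r,K},\omega_E^{-i_0})\bigr)^{\leq h}\to 0$ gives the claim.

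The main obstacle I anticipate is not the Tor-vanishing bookkeeping but making the base-change compatibilities genuinely precise: one has to be careful that "$\rho^\ast$ of the slope $\leq h$ part" really is the slope $\leq h$ part of $\rho^\ast$ of the cohomology — i.e. that forming slope decompositions, taking ${\rm H}^0$ (resp. ${\rm H}^1_{\rm dR}$) over the affinoid $\cX_{r,I}$, and specializing at $\rho$ all commute. This requires invoking that $\cX_{r,I}$ is Stein (affinoid), that ${\rm Fil}_n(\bW_{{\bf k}})$ is coherent so higher cohomology vanishes, and the general principle (as used throughout \cite{ash_stevens} and in Lemma \ref{lemma:slopeH1dr}) that slope $\leq h$ subspaces for a compact operator are locally free over the weight ring and their formation commutes with flat base change; combined with the fact, already used above, that for $n$ large the inclusion ${\rm H}^i_{\rm dR}(\cX_{r,I},{\rm Fil}_n^\bullet(\bW_{{\bf k}}))^{\leq h}\cong {\rm H}^i_{\rm dR}(\cX_{r,I},\bW_{{\bf k}}^\bullet)^{\leq h}$ is an isomorphism, so one may compute everything with the coherent ${\rm Fil}_n$-complexes where base change is unproblematic. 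Once this is set up, the two cases follow mechanically from Theorem \ref{thm:slopedecomp} by tensoring with $K$ over $\Lambda_{I,h}$. $\hfill\square$
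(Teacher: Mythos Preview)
Your argument is correct and is essentially the same as the paper's, just phrased in $\mathrm{Tor}$-language rather than via the snake lemma. The paper observes that $\rho$ is the quotient of $\Lambda_I\widehat{\otimes}_{\Z_p}K$ by a single regular element $t$, notes that multiplication by $t$ is injective on $\fw^{{\bf k}}$ and on $\bW_{{\bf k}}^\bullet$ (hence on their slope~$\leq h$ parts, slope decomposition being exact), and then applies the snake lemma to multiplication by $t$ on the two exact sequences of Theorem~\ref{thm:slopedecomp}. Since for $K=\Lambda_I/t$ one has $\mathrm{Tor}_1^{\Lambda_I}(M,K)=M[t]$ and $M\otimes K=M/tM$, this snake-lemma computation is literally your Tor computation: your four-term sequence in the case $\rho=i_0$ is exactly the snake-lemma sequence $0\to C[t]\to A/t\to B/t\to C/t\to 0$ once one uses that $A$ and $B$ (the left and middle terms of the second sequence in Theorem~\ref{thm:slopedecomp}) are $t$-torsion free. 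Your identification of the connecting map with $\theta^{i_0+1}$ is likewise the content already established in Lemma~\ref{lemma:H1drFiln}. The paper's formulation is slightly more economical in that it sidesteps your auxiliary ring $\Lambda_{I,h}$ and the explicit base-change discussion by simply invoking exactness of slope decomposition, but the substance is identical.
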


\begin{proof} Base change $\cX_{r,I}$, $\fw^{\bf k}$, $\bW_{{\bf k}}^\bullet$ to $K$.
Then $\rho$ is defined by the quotient
$\Lambda_I\widehat{\otimes}_{\Z_p} K/
t\Lambda_I\widehat{\otimes}_{\Z_p} K\cong K$ where $t$ is a
regular element of $\Lambda_I\widehat{\otimes}_{\Z_p} K$. Since
multiplication by $t$ is injective on $\fw^{\bf k} $ and on
$\bW_{{\bf k}}^\bullet$ and taking slope decomposition is an exact
operation, the Corollary follows applying the snake lemma to the
multiplication by $t$ to the sequences in Theorem
\ref{thm:slopedecomp}.

\end{proof}

We also have the following Definition inspired by \cite[\S
3.5]{UNO}:

\begin{definition}
\label{def:holomorphicproj} With the notation above, we denote by
$$H^\dagger_n \colon  {\rm H}^1_{\rm dR}\bigl(\cX_{r,I}, \Fil_n^\bullet(\bW_{{\bf k}})  \bigr)\otimes_{\Lambda_I} \Lambda_I\bigl[\prod_{i=0}^n (u_{\bf k}-i)^{-1}\bigr]
\cong  {\rm H}^0\bigl(\cX_{r,I}, \fw^{{\bf k}+2}\bigr)\otimes_{\Lambda_I} \Lambda_I\bigl[\prod_{i=0}^n (u_{\bf k}-i)^{-1}\bigr]$$the isomorphism induced by the inclusion ${\rm
H}^0\bigl(\cX_{r,I}, \fw^{{\bf k}+2} \bigr)  \lra  {\rm H}^1_{\rm dR}\bigl(\cX_{r,I}, \Fil_n^\bullet \bW_{{\bf k}}\bigr)$ of Lemma \ref{lemma:H1drFiln}. Similarly we
define $$H^\dagger \colon  {\rm H}^1_{\rm dR}\bigl(\cX_{r,I}, \bW_{{\bf k}}^\bullet \bigr)^{\le h}\otimes_{\Lambda_I} \Lambda_I\bigl[\prod_{i=0}^n (u_{\bf k}-i)^{-1}\bigr]
\cong  {\rm H}^0\bigl(\cX_{r,I}, \fw^{{\bf k}+2}\bigr)^{\le h}\otimes_{\Lambda_I} \Lambda_I\bigl[\prod_{i=0}^{n_h} (u_{\bf k}-i)^{-1}\bigr]$$as the isomorphism provided via
Theorem \ref{thm:slopedecomp} (here the integer $n_h$ depends on $h$). We call such maps the {\sl overconvergent projections in families}.

Note that for every $\rho\colon  \mathrm{Spa}(K,\cO_K)\to \cW_I$ as above such that the image of $u_{\bf k}-i$ is non-zero in $K$ for $i=0,\ldots,n$, the maps
$\rho^\ast\bigl(H^\dagger_n\bigr)$ and $\rho^\ast\bigl(H^\dagger\bigr)$ are well defined and provide the isomorphism of Corollary \ref{cor:special} upon identifying
${\rm H}^0\bigl(\cX_{r, K}, \fw^{{\bf k}+2}_K \bigr) \cong \rho^\ast\Bigl( {\rm H}^0\bigl(\cX_{r,I}, \fw^{{\bf k}+2}\bigr)\Bigr)$ (and similarly if one considers $(\le
h)$-slope decompositions).

\end{definition}

\subsection{The overconvergent projection and the Gauss-Manin connection on $q$-expansions.}

Let us recall that we have fixed a pair $I$, $r$ consisting of a closed interval $I\subset [0,\infty )$ and an integer $r>0$ adapted to $I$. Consider the Tate curve
$E={\rm Tate}(q^N)$ over $\Spf(R)$ with $R=\Lambda_I^0(\!(q)\!)$ and fix  a basis $\bigl(\omega_{\rm can}, \eta_{\rm can}:=\nabla(\partial)(\omega_{\rm can})\bigr)$
of ${\rm H}_E$ as in \S \ref{sec:ordinaryandqexp}. Using this basis the matrix of the connection $\nabla$ on ${\rm H}_E$ is given by
$$\left(
\begin{array}{cc} 0 & 0 \\ \frac{dq}{q} & 0
\end{array} \right). $$
Write $\bW^0_k(q)=R\langle V\rangle (1+pZ)^{k}$ and  set $V_{k,n}:=Y^n(1+pZ)^{k-n}$ as in loc.~cit. We have

\begin{equation}\label{eq:nablapartial}
\nabla_k(a V_{k,h})=\partial(a)V_{k+2,h}+a(u_k-h)V_{k+2,h+1} \qquad \forall h\ge 0,\end{equation} where let us recall that $u_k\in p^{1-s} \Lambda_I^0$ is such
that $k(t)={\rm exp}(u_k\log(t))$ for all $t\in 1+p^s\Z_p$ for $s \gg 0$. One immediately gets the following:

\begin{proposition}\label{prop:Hdagger(q)} Consider an element
$\gamma\in {\rm H}^0\bigl(\cX_{r,I}, \Fil_{n+1}(\bW^0_{k+2}) \bigr)$ with class $[\gamma]\in {\rm H}^1_{\rm dR}\bigl(\cX_{r,I}, \Fil_n^\bullet(\bW^0_{k})  \bigr)$
via (\ref{eq:hypedR}). Let $\gamma(q)=\sum_{i=0}^{n+1} \gamma_i(q) V_{k+2,i}$ be its evaluation at the Tate curve. Then the $q$-expansion of $\displaystyle H^\dagger_n([\gamma])$
is $\displaystyle \sum_{i=0}^{n+1} \frac{\partial^i\gamma_i(q)}{(u_k-i+1) (u_k-i+2) \cdots u_k} $.
\end{proposition}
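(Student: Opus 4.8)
\textbf{Proof plan for Proposition \ref{prop:Hdagger(q)}.}

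The plan is to unwind the definition of $H^\dagger_n$ via a concrete diagram chase, using the explicit formula (\ref{eq:nablapartial}) for $\nabla_k$ on $q$-expansions. Recall that $H^\dagger_n$ is characterized in Definition \ref{def:holomorphicproj} as the inverse of the map ${\rm H}^0(\cX_{r,I},\fw^{k+2})\to {\rm H}^1_{\rm dR}(\cX_{r,I},\Fil_n^\bullet(\bW^0_k))$ of Lemma \ref{lemma:H1drFiln}, after inverting the product $\prod_{i=0}^n(u_k-i)$; so it suffices to show that the class $[\gamma]\in {\rm H}^1_{\rm dR}(\cX_{r,I},\Fil_n^\bullet(\bW^0_k))$ equals the class of the \emph{scalar} section $\left(\sum_{i=0}^{n+1}\frac{\partial^i\gamma_i(q)}{(u_k-i+1)\cdots u_k}\right)V_{k+2,0}\in {\rm H}^0(\cX_{r,I},\Fil_0(\bW^0_{k+2}))=\fw^{k+2}$. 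Equivalently, I must produce an element $\beta\in {\rm H}^0(\cX_{r,I},\Fil_n(\bW^0_k))$ such that
$$\gamma - \Big(\sum_{i=0}^{n+1}\tfrac{\partial^i\gamma_i(q)}{(u_k-i+1)\cdots u_k}\Big)V_{k+2,0} = \nabla_k(\beta)$$
in ${\rm H}^0(\cX_{r,I},\Fil_{n+1}(\bW^0_{k+2}))$. Since $\cX_{r,I}$ is affinoid and all sheaves involved are coherent, it is enough to verify this identity after evaluating at the Tate curve, i.e. in $\bW^0_{k}(q)$ and $\bW^0_{k+2}(q)$, where everything is explicit; the $q$-expansion map is just the projection onto the $V_{k+2,0}$-coefficient, which is exactly what the statement predicts.

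The combinatorial heart is the following iteration. I would define $\beta$ by descending induction: set $\beta^{(n+1)}:=0$ and, having the ``error after subtracting the first $j$ terms'', peel off the top-degree coefficient using (\ref{eq:nablapartial}), which says $\nabla_k(aV_{k,h}) = \partial(a)V_{k+2,h} + a(u_k-h)V_{k+2,h+1}$. Concretely, one checks that
$$\nabla_k\Big(\sum_{i=0}^{n} b_i(q) V_{k,i}\Big) = \sum_{i=0}^{n+1}\big(\partial(b_i) + (u_k-i+1) b_{i-1}\big) V_{k+2,i},$$
with the conventions $b_{-1}=b_{n+1}=0$. Comparing with $\gamma(q)=\sum_{i=0}^{n+1}\gamma_i(q)V_{k+2,i}$ modulo $\Fil_0$, i.e. for the coefficients of $V_{k+2,i}$ with $i\ge 1$, forces the recursion $(u_k-i+1) b_{i-1} = \gamma_i - \partial(b_i)$ for $i=n+1,\dots,1$; solving from $i=n+1$ downward (dividing by the $u_k-j$, which is why we invert $\prod_{j=0}^n(u_k-j)$) gives
$$b_{i} = \sum_{\ell=i}^{n}\frac{(-\partial)^{\,\ell-i}\,\gamma_{\ell+1}}{(u_k-i)(u_k-i-1)\cdots(u_k-\ell)}\,,$$
and setting $\beta(q):=\sum_{i=0}^n b_i(q)V_{k,i}$ one computes that the $V_{k+2,0}$-coefficient of $\gamma(q)-\nabla_k(\beta(q))$ is exactly $\gamma_0 - \partial(b_0) = \gamma_0 + \sum_{\ell=1}^{n}\frac{\partial^{\ell}\gamma_{\ell+1}}{u_k(u_k-1)\cdots(u_k-\ell)}\cdot(-1)\cdot(-1)^{\ell-1}$; after reindexing $i=\ell+1$ this telescopes to $\sum_{i=0}^{n+1}\frac{\partial^i\gamma_i(q)}{(u_k-i+1)\cdots u_k}$ (the $i=0$ term being $\gamma_0$ itself, the $i=1$ term being $\partial\gamma_1/u_k$, etc.), which is the asserted $q$-expansion of $H^\dagger_n([\gamma])$. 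The one subtlety is bookkeeping of signs and of the exact window of factors $(u_k-i+1)\cdots u_k$ appearing in the denominators; I would double-check the $n=0$ and $n=1$ cases by hand to pin these down.

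The main obstacle, such as it is, is not conceptual but organizational: one must be careful that the chosen $\beta$ actually lies in ${\rm H}^0(\cX_{r,I},\Fil_n(\bW^0_k))$ globally (not merely at the Tate curve) — but this is automatic once we know the identity $\gamma-(\text{scalar})V_{k+2,0}=\nabla_k(\beta)$ holds at the Tate curve, because $\nabla_k$ on $\Fil_n$ is, by Theorem \ref{theorem:griffith} together with the graded description in Theorem \ref{thm:descentbWk}, injective modulo the torsion killed by $\prod(u_k-i)$ and an isomorphism onto its image after inverting that product, so $\beta$ is determined and the section of $\fw^{k+2}$ obtained by the overconvergent projection of $[\gamma]$ is forced to have the displayed $q$-expansion by the $q$-expansion principle for the sheaves $\fw^{k+2}$. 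The remaining check — that the class $[\gamma]$ is unchanged when we replace $\gamma$ by $\gamma-\nabla_k(\beta)$ — is immediate from the definition (\ref{eq:hypedR}) of ${\rm H}^1_{\rm dR}$ of the two-term complex as a cokernel. So the proof reduces to the explicit recursion above plus the observation that $H^\dagger_n$, being defined as an inverse isomorphism, sends $[\gamma]$ to the unique element of $\fw^{k+2}\otimes\Lambda_I[\prod(u_k-i)^{-1}]$ whose image in ${\rm H}^1_{\rm dR}$ is $[\gamma]$.
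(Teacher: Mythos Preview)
Your approach is correct and is precisely what the paper does: the paper's entire proof is the phrase ``One immediately gets the following'' placed after displaying formula (\ref{eq:nablapartial}), and you have simply written out that immediate recursion.

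Two small remarks. First, the worry about $\beta$ existing globally is a red herring: $H^\dagger_n([\gamma])$ is already a global section of $\fw^{k+2}\otimes\Lambda_I[\prod(u_k-i)^{-1}]$ by its very definition, and the proposition only asks for its $q$-expansion, which is a purely local computation at the Tate curve; you never need a global $\beta$, only the observation that at the Tate curve $\Fil_0\cap\mathrm{Im}(\nabla_k\vert_{\Fil_n})=0$ once $\prod_{i=0}^n(u_k-i)$ is inverted, so the representative in $\Fil_0$ is unique. Second, your caution about signs is well placed: running the $n=0$ case with (\ref{eq:nablapartial}) gives $b_0=\gamma_1/u_k$ and hence $H^\dagger_0([\gamma])(q)=\gamma_0-\partial\gamma_1/u_k$, so the displayed formula in the proposition appears to be missing a factor $(-1)^i$ in the $i$-th summand.
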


We also have the following formula describing the iterations of $\nabla_k$. For simplicity we omit the subscript $k$ and write simply $\nabla$ for the connection.

\begin{lemma}
\label{lemma:formulanablaNg} Let $g(q)\in R$ and $N\ge 1$ and
write $\nabla^N\bigl(g(q)V_{k,h}
\bigr):=\sum_{j=0}^Na_{N,k,h,j}\partial^{N-j}\bigl(g(q)\bigr)V_{k+2N,j+h}$,
with $a_{N,k,h,j}\in R$. We then we have $a_{N,k,h,0}=1$ and for
$j\geq 1$ we have
$$
a_{N,k,h,j}=\left(
\begin{array}{cc} N \\ j\end{array} \right)\frac{(u_k-h+N-1)\cdots (u_k-h+1)(u_k-h)}{(u_k-h+N-1-j)\cdots (u_k-h+1)(u_k-h)}
=\left(
\begin{array}{cc} N \\ j
\end{array} \right)\prod_{i=0}^{j-1}(u_k-h+N-1-i).
$$
In particular, if $u_k\in p\Lambda_I^0$, then $a_{N,k,h,j}\in \Lambda_I$ for all $0\le j\le N$ and $a_{N,k,h,j}\in p\Lambda_I$ if $N=p$ and $j\geq 1$.

\end{lemma}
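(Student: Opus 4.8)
The plan is to prove Lemma \ref{lemma:formulanablaNg} by induction on $N$, using the basic one-step formula (\ref{eq:nablapartial}) together with the Leibniz rule for $\nabla$ relative to the derivation $\partial=q\frac{d}{dq}$. First I would record the base case $N=1$: by (\ref{eq:nablapartial}) we have $\nabla\bigl(g(q)V_{k,h}\bigr)=\partial(g)V_{k+2,h}+g\,(u_k-h)V_{k+2,h+1}$, so $a_{1,k,h,0}=1$ and $a_{1,k,h,1}=u_k-h$, which agrees with the claimed closed form $\binom{1}{1}\prod_{i=0}^{0}(u_k-h+1-1-i)=u_k-h$. For the inductive step, I would apply $\nabla$ to $\nabla^N\bigl(g(q)V_{k,h}\bigr)=\sum_{j=0}^N a_{N,k,h,j}\,\partial^{N-j}(g)\,V_{k+2N,j+h}$. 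Since the coefficients $a_{N,k,h,j}\in\Lambda_I^0\subset R$ and actually involve only $u_k$ and integers (hence lie in the base ring, not involving $q$), applying $\nabla$ term by term via (\ref{eq:nablapartial}) with weight $k+2N$ and index $j+h$ gives
$$\nabla^{N+1}\bigl(g(q)V_{k,h}\bigr)=\sum_{j=0}^N a_{N,k,h,j}\Bigl(\partial^{N+1-j}(g)\,V_{k+2N+2,j+h}+\partial^{N-j}(g)\,\bigl(u_k-h-j\bigr)V_{k+2N+2,j+h+1}\Bigr).$$
Here I must be careful that the relevant quantity in (\ref{eq:nablapartial}) applied to weight $k+2N$ and index $j+h$ is $u_{k+2N}-(j+h)$; I would check (from the relation $t^{k+2N}=t^k t^{2N}$, i.e.\ $u_{k+2N}=u_k+2N$, combined with the shift already being accounted for) that the correct multiplier is $u_k-h+N-j$. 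This bookkeeping about how $u_k$ transforms under $k\mapsto k+2N$ is the one genuinely delicate point, and I would isolate it in a short preliminary remark.

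Next I would collect the coefficient of $\partial^{N+1-j}(g)\,V_{k+2N+2,j+h}$ in the displayed sum: it receives a contribution $a_{N,k,h,j}$ from the first group of terms and a contribution $a_{N,k,h,j-1}\,(u_k-h+N-(j-1))$ from the second. Thus $a_{N+1,k,h,j}=a_{N,k,h,j}+(u_k-h+N-j+1)\,a_{N,k,h,j-1}$. It then remains to verify that the proposed closed form satisfies this Pascal-type recursion with the correct boundary values $a_{N+1,k,h,0}=1$ and $a_{N+1,k,h,N+1}=\prod_{i=0}^{N}(u_k-h+N-i)$. Substituting $a_{N,k,h,j}=\binom{N}{j}\prod_{i=0}^{j-1}(u_k-h+N-1-i)$ and $a_{N,k,h,j-1}=\binom{N}{j-1}\prod_{i=0}^{j-2}(u_k-h+N-1-i)$, one factors out $\prod_{i=0}^{j-2}(u_k-h+N-1-i)$ and is reduced to the scalar identity $\binom{N}{j}(u_k-h+N-j)+\binom{N}{j-1}(u_k-h+N-j+1)=\binom{N+1}{j}(u_k-h+N-j)$... which upon inspection needs the standard Pascal identity $\binom{N}{j}+\binom{N}{j-1}=\binom{N+1}{j}$ after matching the product ranges; I would do this elementary manipulation carefully, re-indexing the product $\prod_{i=0}^{j-1}(u_k-h+(N+1)-1-i)$ to see it equals $(u_k-h+N-j+1)\prod_{i=0}^{j-2}(u_k-h+N-1-i)$ shifted appropriately. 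This is pure algebra and presents no conceptual obstacle.

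Finally, for the integrality statements: if $u_k\in p\Lambda_I^0$, then each factor $u_k-h+N-1-i$ lies in $\Lambda_I^0$ (being $u_k$ plus an integer), hence $a_{N,k,h,j}\in\Lambda_I$ for all $0\le j\le N$, since binomial coefficients are integers. For the refinement when $N=p$ and $j\ge 1$: each $a_{p,k,h,j}=\binom{p}{j}\prod_{i=0}^{j-1}(u_k-h+p-1-i)$, and for $1\le j\le p-1$ the binomial coefficient $\binom{p}{j}$ is divisible by $p$, giving $a_{p,k,h,j}\in p\Lambda_I$; for $j=p$ the product $\prod_{i=0}^{p-1}(u_k-h+p-1-i)=\prod_{\ell=0}^{p-1}(u_k-h+\ell)$ runs over $p$ consecutive residues modulo $p$ (shifted by $u_k\in p\Lambda_I^0$), so one of the factors is $\equiv 0\bmod p$, hence $a_{p,k,h,p}\in p\Lambda_I$ as well. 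I expect the main obstacle to be none of genuine depth, but rather the careful tracking of how $u_k$ behaves under the weight shift $k\mapsto k+2N$ in the recursion — getting that sign and shift exactly right is what makes the closed form come out clean, and I would double-check it against the $N=1$ and $N=2$ cases explicitly before trusting the general induction.
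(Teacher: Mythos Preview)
Your approach is exactly the paper's: induction on $N$ via formula (\ref{eq:nablapartial}), extracting a Pascal-type recursion for $a_{N,k,h,j}$, and verifying the closed form satisfies it. The integrality arguments at the end are also correct and match the paper's reasoning.

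There is, however, a computational slip precisely at the point you flagged as delicate. You correctly state $u_{k+2N}=u_k+2N$, so applying (\ref{eq:nablapartial}) at weight $k+2N$ and index $j+h$ gives the multiplier $u_{k+2N}-(j+h)=u_k+2N-h-j$, not $u_k-h+N-j$. The correct recursion (as in the paper) is
\[
a_{N+1,k,h,j}=a_{N,k,h,j}+(u_k-h+2N-j+1)\,a_{N,k,h,j-1},
\]
and it is the $2N$, not $N$, that makes the closed form verify. After factoring out $\prod_{i=0}^{j-2}(u_k-h+N-1-i)$, the scalar identity one needs is
\[
\binom{N}{j}(u_k-h+N-j)+\binom{N}{j-1}(u_k-h+2N-j+1)=\binom{N+1}{j}(u_k-h+N),
\]
which reduces to Pascal's rule together with $j\binom{N}{j}=(N-j+1)\binom{N}{j-1}$. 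With your coefficient the two sides differ by $N\binom{N}{j-1}$; the $N=2$ check you proposed would catch this immediately, since the true value $a_{2,k,h,1}=2(u_k-h+1)$ comes out as $2(u_k-h)+1$ under your recursion. Once this shift is corrected, your proof is complete and coincides with the paper's.
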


\begin{proof} We first prove the formula for $a_{N,k,h,j}$ by induction on $N$. For $N=1$ the statement is clear using (\ref{eq:nablapartial}).
Assume the statement true for $N=n$. For $j=0$ or $j=n+1$ the
statement is also clear. So we assume $0<j< n+1$. It follows once
more from (\ref{eq:nablapartial}) that
$a_{n+1,k,h,j}=a_{n,k,h,j}+(u_k+2n-h-j+1) a_{n,k,h,j-1}$. In
particular, $a_{n+1,k,h,j}\in \Lambda_I\subset R^{\partial=0}$.
Moreover we compute
$$ a_{n+1,k,h,j}= \left(
\begin{array}{cc} n \\ j\end{array} \right)\frac{(u_k-h+n-1)\cdots (u_k-h)}{(u_k-h+n-1-j)\cdots (u_k-h)}+ $$

$$ + (u_k-h+2n-j+1) \left(
\begin{array}{cc} n \\ j-1\end{array} \right)\frac{(u_k-h +n-1)\cdots (u_k-h)}{(u_k-h+n-j)\cdots (u_k-h)}=  $$

$$=\frac{(u_k-h+n-1)\cdots (u_k-h)}{(u_k-h+n-j)\cdots (u_k-h)} \left((u_k-h+n-j)\left( \begin{array}{cc} n \\ j\end{array} \right) +  (u_k-h+2n-j+1)
\left(\begin{array}{cc} n \\ j-1\end{array} \right) \right)=$$

$$=\frac{(u_k-h+n-1)\cdots (u_k-h)n!\left( (u_k-h+n-j)(n+1-j)+(u_k-h+2n-j+1) j  \right) }{(u_k-h+n-j)\cdots (u_k-h) j! (n+1-j)!} =$$

$$=\frac{(u_k-h+n-1)\cdots (u_k-h)n!(n+1) (u_k-h+n)}{(u_k-h+n-j)\cdots (u_k-h)j! (n+1-j)!} =
\frac{(u_k-h+n)\cdots (u_k-h)(n+1)!}{(u_k-h+n-j)\cdots (u_k-h)j!
(n+1-j)!},$$as claimed. The last two claims of the Lemma are clear
as $p$ divides $\left(\begin{array}{cc} p \\ j
\end{array} \right)$ for $0<j <p$. For $j=p$ there exists an integer $i$ with $0\leq i \leq p-1$ such that
 and $-h+N-1-i\equiv 0$ modulo $p$ and then $p$ divides $\prod_{i=0}^{p-1}(u_k-h+N-1-i)$
 as $u_k\in p\Lambda_I^0$.

\end{proof}

\begin{remark}\label{rmk:Nablasonqexp} The formula in Lemma \ref{lemma:formulanablaNg}
suggests that for an arbitrary locally analytic weight $s\colon
\Z_p^\ast\to \Lambda_{I_s}^\ast$ one should define
$$\nabla^s\bigl(g(q)V_{k,h}\bigr):=\sum_{j=0} \left(
\begin{array}{cc} u_s \\ j
\end{array} \right)\prod_{i=0}^{j-1}(u_k+u_s-h-1-i)
\partial^{s-j}\bigl(g(q)\bigr)V_{k+2s,j+h}.$$Here $u_s\in \Lambda_{I_s}^0[p^{-1}]$ is such
that $s(t)={\rm exp}(u_s\log(t))$ for all $t\in 1+p^a\Z_p$ for $a
\gg 0$ and $\left(
\begin{array}{cc} u_s \\ j
\end{array} \right)=\frac{u_s \cdot (u_s-1) \cdots (u_s-j+1)}{j!}$. In
particular, in order not to have unbounded denominators in $p$ we
must have that $u_s\in \Lambda_{I_s}$ and  $u_k\in \Lambda_I^0$
and there should be some divisibility by $p$. We will see that
these conditions are also sufficient in order to define $\nabla^s$
for overconvergent families in such a way that  the formula above
on $q$-expansions is satisfied.

\end{remark}

\begin{lemma}
\label{lemma:formulanablaZ} For every positive integers $u$ and $h$, consider the element $1+pZ\in \bW^0_0(q)$.  We then have
$$\nabla^u_0\left(
\frac{((1+pZ)^{2(p-1)}-1)^{ph}}{p^h}\right)= \sum_{j\geq
\mathrm{max}(h-u, 0) }^{h} P_{u,h,j}(1+pZ)
\frac{((1+pZ)^{2(p-1)}-1)^{pj}}{p^j} V_{0,u}$$with
$P_{u,h,j}(T)\in \Z[T]$ a polynomial with coefficients in $\Z$,
divisible by $p$ if $u\geq p$.

\end{lemma}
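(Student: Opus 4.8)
The plan is to reduce everything to the explicit formula for $\nabla_k$ on $q$-expansions, namely equation (\ref{eq:nablapartial}) and its iterated version in Lemma \ref{lemma:formulanablaNg}, applied to the specific element $a(q) = \frac{((1+pZ)^{2(p-1)}-1)^{ph}}{p^h}$ sitting in weight $0$. First I would observe that $(1+pZ)^{2(p-1)}-1 \in pZ\cdot R\langle Z\rangle$, so that $((1+pZ)^{2(p-1)}-1)^{ph}$ is divisible by $p^{ph}$ in $R\langle Z\rangle = \bW^0_0(q)$ (taking the Tate curve, where $R = \Lambda^0_I(\!(q)\!)$ and the coefficient ring of $V$-expansions involves only $Z$); in particular $\frac{((1+pZ)^{2(p-1)}-1)^{ph}}{p^h}$ lies in $p^{ph-h}R\langle Z\rangle = p^{h(p-1)}R\langle Z\rangle$, which is integral, and is a polynomial in $1+pZ$ with $\Z$-coefficients after clearing the $p^h$. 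So the element makes sense and one only has to track how $\nabla^u_0$ acts.

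The key computation is that on an element of $\bW^0_0(q)$ which is a function $\phi(1+pZ)$ of $1+pZ$ alone (i.e. of filtration degree $0$, with $V$-expansion $\phi(1+pZ)V_{0,0}$), the connection $\nabla_0$ acts by $\nabla_0(\phi(1+pZ)) = \partial(\phi(1+pZ))V_{2,0} + \phi(1+pZ)\cdot u_0 \cdot V_{2,1}$ with $u_0 = 0$ since the weight is $0$; but crucially $\partial = q\frac{d}{dq}$ annihilates $1+pZ$ (the variable $Z$ is $q$-independent for the Tate curve, as $\fIG_{n,r,I}$ over $\Spf(R)$ is just $\Spf(R)$), so $\partial(\phi(1+pZ)) = 0$. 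Hence $\nabla_0(\phi(1+pZ)) = 0$? That cannot be right as stated, so I need to be careful: the point is rather that $V = \frac{Y}{1+pZ}$ and $V_{0,u} = Y^u(1+pZ)^{-u}$, and one iterates $\nabla_0$ using (\ref{eq:nablapartial}) in the form $\nabla_0(a V_{0,h}) = \partial(a)V_{2,h} + a(u_0 - h)V_{2,h+1} = \partial(a)V_{2,h} - h\, a\, V_{2,h+1}$ for weight $k=0$. So each application of $\nabla_0$ raises the filtration index and multiplies by $(-h)$-type factors. Starting from $a(q)$ in degree $0$, after $u$ applications one lands in degree $u$, and since the initial $a$ is $\partial$-killed and $h=0$ initially (so the term $-h\,a\,V_{2,h+1}$ would vanish)... this forces me to recheck: actually $\nabla_0$ of a degree-$0$ element $a V_{0,0}$ gives $\partial(a)V_{2,0} + 0 = \partial(a)V_{2,0}$, still degree $0$ but now in weight $2$, and $\partial(a)=0$ here. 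This suggests the formula's right side must come from a genuinely nonzero mechanism, so the real content is that $\partial$ does NOT annihilate these elements — I should recompute $\partial$ acting on $1+pZ$: if $Z$ is the coordinate whose interpretation via the Tate curve involves $q$ through the canonical differential, then $\partial(Z)$ may be nonzero. I would need to revisit \S\ref{section:explicitnablak} carefully to pin down $\partial(1+pZ)$; the relation $V_{k,n}(q) = Y^n(1+pZ)^{k-n}$ together with $\nabla$ having matrix $\left(\begin{smallmatrix}0&0\\ \frac{dq}{q}&0\end{smallmatrix}\right)$ on $\{\omega_{\rm can},\eta_{\rm can}\}$ should give the precise action.

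Granting the correct local formula, the argument is then: expand $a(q) = p^{-h}\big((1+pZ)^{2(p-1)}-1\big)^{ph}$ and apply $\nabla_0^u$ using Lemma \ref{lemma:formulanablaNg} (with $k=0$, $h=0$ in the notation there, $g(q) = a(q)$), which expresses $\nabla_0^u(a V_{0,0}) = \sum_{j=0}^u a_{u,0,0,j}\,\partial^{u-j}(a)\, V_{2u,j}$. The hard part will be to show that each $\partial^{u-j}(a)$ is again (up to $\Z[1+pZ]$-coefficients) a $\Z$-linear combination of terms $\frac{((1+pZ)^{2(p-1)}-1)^{pj'}}{p^{j'}}$ with $j' \geq h - (u-j)$ — i.e. that applying $\partial$ once can drop the ``$p$-power exponent'' by at most one while staying integral, and that the range of surviving $j$ is $\max(h-u,0) \le j \le h$. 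This is a chain-rule/Leibniz bookkeeping argument: $\partial\big((1+pZ)^{2(p-1)}-1\big) = 2(p-1)(1+pZ)^{2(p-1)}\partial(1+pZ)$, and $\partial(1+pZ)$ should itself be expressible in terms of $(1+pZ)^{2(p-1)}-1$ up to units (this is where the exponent $2(p-1)$ is chosen — it matches $\mathrm{Gr}_h \bW_k \cong \fw^{k,0}\otimes \mathrm{Hdg}^h\omega_E^{-2h}$ and the Kodaira–Spencer normalization), so that $\partial\Big(\big((1+pZ)^{2(p-1)}-1\big)^{pm}\Big)$ is $p^{pm}/p^{m}$ times a $\Z[1+pZ]$-combination of $\big((1+pZ)^{2(p-1)}-1\big)^{pm - \text{(small)}}$. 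Finally, the divisibility by $p$ when $u \geq p$: this follows from the last assertion of Lemma \ref{lemma:formulanablaNg} (the coefficients $a_{u,0,0,j} \in p\Lambda_I$ once $u = p$, hence for all $u \geq p$ after one more iteration factors through $\nabla_0^p$), combined with the integrality just established. I would close by noting $P_{u,h,j}(T) := $ the resulting polynomial is manifestly in $\Z[T]$ by construction and inherits the $p$-divisibility, which is exactly the claim.

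The main obstacle I foresee is getting the exact combinatorics of the exponent shift right — controlling precisely why $j$ ranges over $[\max(h-u,0), h]$ and why no denominators beyond those absorbed into $\frac{(\cdots)^{pj}}{p^j}$ appear — since this requires an honest induction on $u$ interleaving the Leibniz expansion of $\partial^{u-j}$ with the binomial coefficients $a_{u,0,0,j}$ from Lemma \ref{lemma:formulanablaNg}, and one must check the two ``corner'' cases ($j = h$, coming from $\partial^0$, and the smallest $j$, coming from maximally many $\partial$'s) separately. The $p$-divisibility for $u \geq p$ should be the easy part, riding entirely on the corresponding statement already proven in Lemma \ref{lemma:formulanablaNg}.
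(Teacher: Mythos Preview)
Your approach has a fundamental misidentification. You try to write the element
$\frac{((1+pZ)^{2(p-1)}-1)^{ph}}{p^h}$ as $a\,V_{0,0}$ with $a\in R=\Lambda_I^0(\!(q)\!)$ and then apply
(\ref{eq:nablapartial}); but the coefficients $a$ in that formula are functions of $q$ only, whereas the element
in question depends on $Z$, not on $q$. Over the Tate curve $Z$ is a fiberwise coordinate on
$\bV_0(\Hsharp,s)$, so indeed $\partial(1+pZ)=0$; your speculation that $\partial(1+pZ)$ might be nonzero via
Kodaira--Spencer is incorrect. The correct decomposition is as a $\Z$-linear combination of the monomials
$(1+pZ)^m=V_{m,0}$, each of which lies in $\bW^0_m(q)$ (weight $m$, not weight $0$), and one works inside
$\bW'_0=\sum_n\bW^0_{2n}$ with the connection of Definition~\ref{def:oW'}. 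The nonvanishing of
$\nabla(V_{m,0})$ then comes from the second term of (\ref{eq:nablapartial}) with $u_k=m$, $h=0$:
$\nabla(V_{m,0})=m\,V_{m+2,1}=m(1+pZ)^{m+2}V_{0,1}$. This yields the chain rule
$\nabla\bigl(g(1+pZ)\bigr)=g'(1+pZ)(1+pZ)^3 V_{0,1}$ for any polynomial $g$, from which a single application
of $\nabla$ takes the $h$-th expression to (a polynomial in $1+pZ$ times) the $(h-1)$-th, and an easy induction
on $u$ gives the first claim.

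Your plan for the $p$-divisibility when $u\geq p$ is also off. Lemma~\ref{lemma:formulanablaNg} gives
$a_{N,k,h,j}\in p\Lambda_I$ only under the hypothesis $u_k\in p\Lambda_I^0$, but here the weights that actually
occur are the integers $m$ appearing as exponents of $(1+pZ)$, and these are not in $p\Z$ in general. The
paper's argument is different: one uses the Leibniz formula
$\nabla^p(fg)=\nabla^p(f)g+f\nabla^p(g)+\sum_{s=1}^{p-1}\binom{p}{s}\nabla^s(f)\nabla^{p-s}(g)$
to reduce to $h=1$, and then the identity
$\nabla^{p-1}\bigl((1+pZ)^H V_{2,1}\bigr)=(H+1)\cdots(H+p-1)(1+pZ)^{H+2p}V_{0,p}$,
noting that $(H+1)\cdots(H+p-1)\equiv 0\pmod p$ whenever $p\nmid H$, to handle the remaining terms.
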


\begin{proof} Recall that $V_{k+s,n}=(1+pZ)^s V_{k,n}$. For simplicity we omit
the subscript in $\nabla$. We use the formula
(\ref{eq:nablapartial}) that gives
$$\nabla\bigl((1+pZ)^H\bigr)=\nabla\bigl(V_{H,0}\bigr)= H V_{H+2,1}=H (1+pZ)^{H+2} V_{0,1}.$$
Hence $\nabla\bigl(
(1+pZ)^{2(p-1)}-1)^{ph}\bigr)= 2 p h (p-1)
\bigl((1+pZ)^{2(p-1)}-1\bigr)^{ph-1} (1+pZ)^{2(p-1)+2} V_{0,1}$.
As
$\bigl((1+pZ)^{2(p-1)}-1\bigr)^{ph-1}=\bigl((1+pZ)^{2(p-1)}-1\bigr)^{p(h-1)}
\bigl((1+pZ)^{2(p-1)}-1\bigr)^{p-1}$ we get that $$\displaystyle \nabla\bigl(
\frac{((1+pZ)^{2(p-1)}-1)^{ph}}{p^h}\bigr)=2 h Q(1+pZ)
\frac{((1+pZ)^{2(p-1)}-1)^{p(h-1)}}{p^{h-1}} V_{0,1},$$ where $Q(T)$ is a
polynomial with coefficients in $\Z$. Proceeding inductively on
$u$ the first claim follows. 

We prove the second statement. For $p=2$ we have divisibility applying $\nabla$ once and the claim is clear. Assume that $p\geq 3$.  It suffices to deal with the case that $u=p$. Notice that $\nabla^p(f g)=\nabla^p(f) g+ f \nabla^p(g) 
+\sum_{s=1}^{p-1} \left(\begin{matrix} p \cr
s\cr\end{matrix}\right) \nabla^s(f) \nabla^{p-s}(g)$. Thus taking $f=\frac{((1+pZ)^{2(p-1)}-1)^{p}}{p}$ and $g=f^i=\frac{((1+pZ)^{2(p-1)}-1)^{pi}}{p^i}$ and proceedng by induction on $i$ and using the first part for the contribution of $\nabla^s(f) \nabla^{p-s}(g)$,  one is reduced to prove the claim for $f$. 
Write  $$ \nabla\left(f\right)= 2 (p-1)
\bigl((1+pZ)^{2(p-1)}-1\bigr)^{p-1} (1+pZ)^{2} V_{0,1} + 2 p (p-1)
\frac{((1+pZ)^{2(p-1)}-1)^{p}}{p}  (1+pZ)^{2} V_{0,1}.$$Recall that $$\nabla^s\bigl((1+pZ)^H V_{2,1} \bigr)=(H+1) \cdots (H+s)
V_{H+2s+2,s+1}=(H+1) \cdots (H+s) (1+pZ)^{H+2s+2} V_{0,s+1}$$thanks to
formula (\ref{eq:nablapartial}). In particular, if $s=p-1$ and $H$ is prime to $p$ then $ (H+1) \cdots
(H+s)$ is divisible by $p$. We conclude that $\nabla^{p-1}$ of $
\bigl((1+pZ)^{2(p-1)}-1\bigr)^{p-1} (1+pZ)^{2} V_{0,1}=\bigl((1+pZ)^{2(p-1)}-1\bigr)^{p-1} V_{2,1}$ is divisible by $p$ as all the exponents of $(1+pZ)$ appearing  in $\bigl((1+pZ)^{2(p-1)}-1\bigr)^{p-1}$  are prime to $p$.  The second claim follows.

\end{proof}

In particular let $g(q)=\sum_{n=0}^\infty a_nq^n \in
\Lambda_I[\![q]\!]$ be the $q$-expansion of a  $p$-adic modular
form $g$ of weight $k$ and assume that $U \bigl(g(q)\bigr)=0$
that is $a_n=0$ if $p$ divides $n$. Let $c$ be a positive integer
such that $p^{c-1} u_k\in  \Lambda_I^0$.

\begin{proposition}\label{prop:iteratenabla}
For every positive integer $N$ we may write $$\Bigl( \frac{\bigl(\nabla^{p-1}-\mathrm{Id}\bigr)^{Np}}{p^N} \Bigr) \bigl(g(q)V_{k,0}\bigr)=\sum_{r=0}^{(p-1)Np}
\sum_{h\in\N} p^{N-(c+1)r-h} \frac{\bigl( (1+pZ)^{2(p-1)}-1\bigr)^{hp}}{p^h} g_{r,h}^{(N)} V_{k,r}$$with $g_{r,h}^{(N)} \in R^{U=0}[1+pZ]$ a polynomial in $1+pZ$
with coefficients in $R^{U=0}$. If we assume that $u_k\in p\Lambda_I^0$, then $p^{N-2r-h} g_{r,h}^{(N)} \in R^{U=0}[1+pZ]$ for every $r$ and $h$, i.e.,
$p^{2r+h-N}$ divides $g_{r,h}^{(N)}$ whenever $2r+h-N\geq 0$.

\end{proposition}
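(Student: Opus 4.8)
The plan is to prove the statement by induction on $N$, using the explicit formulas established in Lemma \ref{lemma:formulanablaZ} together with the Leibniz-type behaviour of $\nabla$ recorded in formula (\ref{eq:nablapartial}) and the divisibility statements of Lemma \ref{lemma:formulanablaNg} and Lemma \ref{lemma:formulanablaZ}. First I would fix the operator $D:=\bigl(\nabla^{p-1}-\mathrm{Id}\bigr)^{p}/p$ acting on $\bW^0_k(q)=R\langle V\rangle(1+pZ)^k$, so that the claim concerns $D^N\bigl(g(q)V_{k,0}\bigr)$. The key point to exploit is that, since $U(g(q))=0$, the coefficients $\partial^m(g(q))$ still lie in $R^{U=0}$ for every $m\ge 0$ (differentiation preserves the condition $a_n=0$ for $p\mid n$), and that $\nabla$ interacts with the two ``variables'' $g(q)$ and $1+pZ$ via formula (\ref{eq:nablapartial}): on a monomial $a(q)\,h(1+pZ)\,V_{k,j}$ one gets a $\partial$-term (which only touches $a(q)$) and a shift term producing $V_{k,j+1}$ with an extra factor $(u_k-j)$ that lies in $p^{1-c}\Lambda_I^0$.

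The inductive step proceeds as follows. Assume the displayed decomposition holds for $N$; apply $D$ once more to each summand $p^{N-(c+1)r-h}\,\frac{\bigl((1+pZ)^{2(p-1)}-1\bigr)^{hp}}{p^h}\,g_{r,h}^{(N)}\,V_{k,r}$. Expanding $D$ as $\frac1p\bigl(\nabla^{p-1}-\mathrm{Id}\bigr)^p$ and using the Leibniz rule $\nabla^s(fg)=\sum_{t}\binom{s}{t}\nabla^t(f)\nabla^{s-t}(g)$, I separate the action of $\nabla$ on the three factors: on $\frac{\bigl((1+pZ)^{2(p-1)}-1\bigr)^{hp}}{p^h}$ it is governed verbatim by Lemma \ref{lemma:formulanablaZ}, which outputs terms of the same shape with $h$ possibly decreased and an extra integer polynomial $P_{u,h,j}(1+pZ)$ that is divisible by $p$ once $u\ge p$; on $g_{r,h}^{(N)}$ (a polynomial in $1+pZ$ over $R^{U=0}$) it again reduces to Lemma \ref{lemma:formulanablaZ}-type computations plus $\partial$ on the $R^{U=0}$-coefficients, keeping the result in $R^{U=0}[1+pZ]$; and the index $r$ of $V_{k,r}$ increases by at most $(p-1)p$, each increment bringing a factor $(u_k-j)\in p^{1-c}\Lambda_I^0$. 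Book-keeping the powers of $p$: each unit increase in $r$ costs at most $(c+1)$ in the $p$-adic valuation of the prefactor (the $c$ from $(u_k-j)$, the $1$ from reindexing the $\frac{1}{p}$ in $D$), and each decrease in $h$ is compensated by the $\frac{((1+pZ)^{2(p-1)}-1)^{hp}}{p^h}$ normalization; the overall $\frac1p$ in $D$ combined with the divisibility ``$P$ divisible by $p$ for $u\ge p$'' from Lemma \ref{lemma:formulanablaZ} yields exactly the advertised exponent $p^{N+1-(c+1)r-h}$ in the new decomposition, after absorbing the leftover integer factors into the new $g_{r,h}^{(N+1)}$.

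The refined statement under the hypothesis $u_k\in p\Lambda_I^0$ requires tracking an extra factor of $p$ per unit increase in $r$: when $u_k\in p\Lambda_I^0$ the product $\prod_{i}(u_k-h+\cdots-i)$ appearing in Lemma \ref{lemma:formulanablaNg} gains a $p$ from the $i$ with $-h+N-1-i\equiv 0\ (p)$, and likewise the polynomials $P_{u,h,j}$ of Lemma \ref{lemma:formulanablaZ} are divisible by $p$ when $u\ge p$; since in $D$ we always iterate $\nabla$ exactly $p-1\ge p-1$ times (and, within the $p$-th power, in total $p(p-1)\ge p$ times), each application of $D$ contributes the needed extra divisibility to sharpen $(c+1)r$ to $2r$ and to produce the stated $p^{N-2r-h}\mid g_{r,h}^{(N)}$. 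The main obstacle I anticipate is purely combinatorial: keeping the Leibniz expansion under control so that all three sources of $p$-power gain (the explicit $\binom{p}{s}$ and $P_{u,h,j}$ divisibilities, the $(u_k-j)$ factors from reindexing $V_{k,j}$, and the $h$-versus-$p^h$ bookkeeping) are simultaneously matched against the target exponent for every pair $(r,h)$ and every branch of the expansion; once the shape is nailed down, each individual verification reduces to the already-proven Lemmas \ref{lemma:formulanablaNg} and \ref{lemma:formulanablaZ} and to the fact that $\partial$ and multiplication by elements of $R^{U=0}$ preserve $R^{U=0}$.
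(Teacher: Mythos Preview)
Your strategy is essentially the paper's: induction on $N$ via the operator $D=\bigl(\nabla^{p-1}-\mathrm{Id}\bigr)^{p}/p$, a Leibniz expansion separating the factor $\frac{((1+pZ)^{2(p-1)}-1)^{hp}}{p^h}$ (handled by Lemma \ref{lemma:formulanablaZ}) from $g_{r,h}^{(N)}V_{k,r}$, and bookkeeping of the resulting $p$-powers. The paper organises the inductive step exactly this way, splitting into a contribution where $D$ acts only on $g_{r,h}^{(N)}V_{k,r}$ and a cross-term contribution governed by Lemma \ref{lemma:formulanablaZ}; the case analysis $\binom{p}{s}$ versus $\binom{p(p-1)}{u}$ versus $p\mid P_{u,h,i}$ you allude to is precisely what is carried out there.

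There is, however, one arithmetic input you do not mention and which neither Lemma \ref{lemma:formulanablaNg} nor Lemma \ref{lemma:formulanablaZ} supplies. In the branch where the $V$-index does not increase ($j=0$) and no derivative lands on the $((1+pZ)^{2(p-1)}-1)^{hp}/p^h$ factor, all the sources of $p$-divisibility you list are absent, yet one still needs $(\nabla^{p-1}-\mathrm{Id})^{p}$ acting on the $R^{U=0}$-part alone to contribute $p^2$. This is where the hypothesis $U(g)=0$ is genuinely used: writing $g(q)=\sum_{(n,p)=1} a_n q^n$, Fermat's little theorem gives
\[
\partial^{(p-1)s}\bigl(g(q)\bigr)-g(q)=\sum_{(n,p)=1} a_n\bigl(n^{(p-1)s}-1\bigr)q^n\in p\,R^{U=0}\quad\text{for all }s\geq 1,
\]
and in fact $\in p^2 R^{U=0}$ when $s=p$; combined with $p\mid\binom{p}{s}$ for $1\leq s\leq p-1$ this yields the needed $p^2$. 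The paper invokes exactly this in the base step $N=1$ and again inside the ``contribution (A)'' of the inductive step. Once you insert this Fermat-type congruence into your outline, the argument closes and coincides with the paper's proof.
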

\begin{proof} We first compute $\bigl(\nabla^{p-1}-\mathrm{Id}\bigr)^H\bigl(g(q) V_{k,n}\bigr)$
for every positive integer $H$:

$$\bigl(\nabla^{p-1}-\mathrm{Id}\bigr)^H\bigl(g(q) V_{k,n}\bigr)=\sum_{s=0}^H \left(\begin{matrix} H \cr
s\cr\end{matrix}\right)(-1)^{H-s}
\nabla^{(p-1)s}\bigl(g(q)V_{k,n}\bigr) =$$

$$=\sum_{s=0}^H \sum_{j=0}^{(p-1)s} \left(\begin{matrix} H \cr
s\cr\end{matrix}\right)(-1)^{H-s} a_{(p-1)s,k,n,j}
\partial^{(p-1)s-j}\bigl(g(q)\bigr) V_{k+2(p-1)s,n+j}=$$

$$=\sum_{s=1}^H \sum_{j=1}^{(p-1)s} \left(\begin{matrix} H \cr
s\cr\end{matrix}\right)(-1)^{H-s} a_{(p-1)s,k,n,j}
\partial^{(p-1)s-j}\bigl(g(q)\bigr) (1+pZ)^{2(p-1)s} V_{k,n+j} +$$
$$+\sum_{s=0}^H  \left(\begin{matrix} H \cr
s\cr\end{matrix}\right)(-1)^{H-s}
\partial^{(p-1)s}\bigl(g(q)\bigr) (1+pZ)^{2(p-1)s} V_{k,n}=$$

$$\sum_{s=1}^H \sum_{j=1}^{(p-1)s}  \left(\begin{matrix} H \cr
s\cr\end{matrix}\right)(-1)^{H-s} a_{(p-1)s,k,n,j}
\partial^{(p-1)s-j}\bigl(g(q)\bigr) (1+pZ)^{2(p-1)s} V_{k,n+j}
+$$

$$+\sum_{s=1}^H  \left(\begin{matrix} H \cr
s\cr\end{matrix}\right)(-1)^{H-s}
\left(\partial^{(p-1)s}(g(q))-g(q)\right) (1+pZ)^{2(p-1)s} V_{k,n}
+ \bigl((1+pZ)^{2(p-1)}-1\bigr)^H V_{k,n}.
$$

\

{\it Base step $N=1$:} We prove the Lemma for $N=1$ using the previous computation with $H=p$.

For $1\leq s \leq p-1$ we have that $\partial^{(p-1)s}(g(q))-g(q)\in p R^{U=0}$ so that $\Bigl(\begin{matrix} p \cr s\cr\end{matrix}\Bigr)
\bigl(\partial^{(p-1)s}(g(q))-g(q)\bigr)\in p^2 R^{U=0}$. On the other hand also $\partial^{(p-1)p}(g(q))-g(q)\in p^2 R^{U=0}$. Considering the term
$\bigl((1+pZ)^{2(p-1)}-1\bigr)^p$, the first part of the claim is proven for $h=0$ or $h=1$ and $r=0$. Recall from Lemma \ref{lemma:formulanablaNg} that
$a_{(p-1)s,k,n,j}$ is a polynomial with coefficients in $\Z$ in $u_k$ of degree $j$ so that by assumption $p^{j(c-1)}a_{(p-1)s,k,n,j}\in \Lambda_I^0$. The first
part of the claim then follows also for the terms with $r\geq 1$.

We prove the second part. For $j\geq 1$ we have $2j-1\geq 1$ so that $p^{2j-1}\partial^{(p-1)s-j}\bigl(g(q)\bigr) (1+pZ)^{2(p-1)s}\in R^{U=0}[1+pZ]$. It follows
from Lemma \ref{lemma:formulanablaNg} that $\left(\begin{matrix} p \cr s\cr\end{matrix}\right) a_{(p-1)s,k,n,j} \in p \Lambda_I$: in fact for $1\leq s \leq p-1$ the
binomial coefficient $\left(\begin{matrix} p \cr s\cr\end{matrix}\right)$ is divisible by $p$, for $s=p$ and $j$ prime to $p$ then $a_{(p-1)p,k,n,j}$ has a factor
$\left(\begin{matrix} p (p-1) \cr j\cr\end{matrix}\right)$ which is divisible by $p$ and for $j$ divisible by $p$ then $a_{(p-1)p,k,n,j}$ has a factor
$\prod_{i=0}^{p-1}(u_k+p(p-1)-1-i) $ divisible by $p$. This proves the statement for $N=1$.

\

{\it Inductive step $N\Longrightarrow N+1$:} It suffices to prove the following:

\

CLAIM: Let $g_{r,h}^{(N)}\in R^{U=0}[1+pZ]$ and suppose that $p^{N-2r-h} g_{r,h}^{(N)}\in R^{U=0}[1+pZ]$ in case $u_k\in p\Lambda_I^0$. Then

$$\frac{(\nabla^{p-1}-\mathrm{Id})^p}{p} \left(p^{N-(c+1)r-h} \frac{\bigl( (1+pZ)^{2(p-1)}-1\bigr)^{ph}}{p^h} g_{r,h}^{(N)} V_{k,r}\right)=$$
$$=\sum_j \sum_v p^{N+1-(c+1)(r+j)-v} \frac{\bigl( (1+pZ)^{2(p-1)}-1\bigr)^{pv}}{p^v} g_{j,v}^{(N+1)} V_{k,r+j} $$ with $g_{j,v}^{(N+1)}\in R^{U=0}[1+pZ]$.
Furthermore, if we assume that $u_k\in p\Lambda_I^0$  then $p^{N+1-2(r+j)-v}g_{j,v}^{(N+1)}\in R^{U=0}[1+pZ]$ for every $j$.

\

We compute $\frac{(\nabla^{p-1}-\mathrm{Id})^p}{p} \left(p^{N-(c+1)r-2h} \bigl( (1+pZ)^{2(p-1)}-1\bigr)^{ph} g_{r,h}^{(N)} V_{k,r}\right)$ as the sum of two terms:

\begin{equation}\label{eq:A}
p^{N-(c+1)r-h} \frac{\bigl( (1+pZ)^{2(p-1)}-1\bigr)^{ph}}{p^h} \frac{(\nabla^{p-1}-\mathrm{Id})^p}{p} \bigl( g_{r,h}^{(N)} V_{k,r}\bigr)
\end{equation}

and

\begin{equation}\label{eq:B}
\sum_{s=1}^p \left(\begin{matrix} p \cr s\cr\end{matrix}\right) (-1)^{p-s} p^{N-(c+1)r-h-1} \sum_{u=1}^{s(p-1)} \left(\begin{matrix} s(p-1) \cr
u\cr\end{matrix}\right) \nabla^u\left(\frac{ \bigl( (1+pZ)^{2(p-1)}-1\bigr)^{ph}}{p^h}\right) \nabla^{s(p-1)-u} \bigl( g_{r,h}^{(N)} V_{k,r}\bigr).
\end{equation}

We start with the contribution given by (\ref{eq:A}). As $(1+pZ)^s V_{k,n}=V_{k+s,n}$, the computation at the beginning of the proof shows that if we write
$$\frac{(\nabla^{p-1}-1)^p}{p} \bigl( g_{r,h}^{(N)} V_{k,r}\bigr)=\sum_j p^{1-(c+1)j} g_{r,j}^{(N+1)} V_{k,r+j}$$then $g_{r,j}^{(N+1)}\in R^{U=0}[1+pZ]$
for $j\geq 1$ and $g_{r,0}^{(N+1)}$  is the sum of a term  $\alpha$ with $\alpha\in R^{U=0}[1+pZ]$ and a term $\frac{\bigl( (1+pZ)^{2(p-1)}-1\bigr)^p}{p^2}\beta$
with $\beta\in R^{U=0}[1+pZ]$. Those terms multiplied by $p^{N-(c+1)r-2h} \bigl( (1+pZ)^{2(p-1)}-1\bigr)^{ph}$  satisfy the claim:

We start with the terms $j\geq 1$ and we use that $N-(c+1)r-h \geq N+1-(c+1)(r+j)-h$. Then  $p^{N-(c+1)r-h} \frac{\bigl( (1+pZ)^{2(p-1)}-1\bigr)^{ph}}{p^h}
g_{r,j}^{(N+1)}$ is equal to $$ p^{N+1-(c+1)(r+j)-h} \frac{\bigl( (1+pZ)^{2(p-1)}-1\bigr)^{ph}}{p^h} \bigl(p^{(N-(c+1)r-h) -(N+1-(c+1)(r+j)-h)}
g_{r,j}^{(N+1)}\bigr)$$with $p^{(N-(c+1)r-h) -(N+1-(c+1)(r+j)-h)} g_{r,j}^{(N+1)}\in R^{U=0}[1+pZ ]$. Assuming that $u_k\in p\Lambda_I^0$ we also have
$$p^{N+1-2(r+j)-h} p^{(N-2r-h) -(N+1-2(r+j)-h)} g_{r,j}^{(N+1)}= p^{N-2r-h} g_{r,j}^{(N+1)} \in R^{U=0}[1+pZ ]$$
using the inductive hypothesis that $p^{N-2r-h} g_{r,h}^{(N)}\in R^{U=0}[1+pZ ]$.

Consider next the contribution for $j=0$, i.e., $p^{N+1-(c+1)r-h} \frac{\bigl( (1+pZ)^{2(p-1)}-1\bigr)^{ph}}{p^h} g_{r,0}^{(N+1)}$. It is the sum of two terms. The
first is $p^{N+1-(c+1)r-h} \frac{\bigl( (1+pZ)^{2(p-1)}-1\bigr)^{ph}}{p^h} \cdot \alpha$. If we assume that $u_k\in p\Lambda_I^0$ then
$p^{N+1-2r-h}\alpha=p^{N-2r-h} (p\alpha)\in R^{U=0}[1+pZ]$ by the hypothesis that $p^{N-2r-h} g_{r,h}^{(N)}\in R^{U=0}[1+pZ ]$.

On the other hand $$p^{N-(c+1)r-h} \frac{\bigl( (1+pZ)^{2(p-1)}-1\bigr)^{ph}}{p^h} \frac{\bigl( (1+pZ)^{2(p-1)}-1\bigr)^p}{p} \beta=$$ $$=p^{N+1-(c+1)r-(h+1)}
\frac{\bigl( (1+pZ)^{2(p-1)}-1\bigr)^{p(h+1)}}{p^{h+1}}\beta.$$If $u_k\in p\Lambda_I^0$ then $p^{N-2r-h} g_{r,h}^{(N)}\in R^{U=0}[1+pZ ]$ so that
$p^{N+1-2r-(h+1)}\beta=p^{N-2r-h}\beta \in R^{U=0}[1+pZ]$.

\

Consider next the contribution of the terms in (\ref{eq:B}), namely $$\left(\begin{matrix} p \cr s\cr\end{matrix}\right) p^{N-(c+1)r-h-1} \left(\begin{matrix}
s(p-1) \cr u\cr\end{matrix}\right) \nabla^u\left( \frac{\bigl( (1+pZ)^{2(p-1)}-1\bigr)^{ph}}{p^h}\right) \nabla^{s(p-1)-u} \bigl( g_{r,h}^{(N)} V_{k,r}\bigr)$$for
$1\leq s\leq p$ and $1\leq u\leq s(p-1)$ and write $$\nabla^{s(p-1)-u} \bigl( g_{r,h}^{(N)} V_{k,r}\bigr)=\sum_{j\geq 0} \alpha_{r,j}^{(N+1)} V_{k,r+j} .$$ It
follows from Lemma \ref{lemma:formulanablaZ} that $\nabla^u\bigl( \bigl( (1+pZ)^{2(p-1)}-1\bigr)^{ph} p^{-h}\bigr)$ can be written as a sum $\displaystyle
\sum_{i\geq \mathrm{max}(h-u,0)}^{h} P_{u,h,i}(1+pZ) \bigl( \frac{((1+pZ)^{2(p-1)}-1)^{pi}}{p^i}\bigr) V_{0,u}$ with $P_{u,h,j}(T)$ a polynomial with coefficients
in $\Z$. Hence we need to analyze the expression

$$\sum_{j\geq 0}\sum_{i\geq \mathrm{max}(h-u,0)}^{h} \left(\begin{matrix} p \cr s\cr\end{matrix}\right)
p^{N-(c+1)r-h-1} \left(\begin{matrix} s(p-1) \cr u\cr\end{matrix}\right) P_{u,h,i}(1+pZ) \frac{((1+pZ)^{2(p-1)}-1)^{pi}}{p^i}\alpha_{r,j}^{(N+1)} V_{k,r+j+u}$$

As $1\leq s$,  if $s\leq p-1$ then $\left(\begin{matrix} p \cr
s\cr\end{matrix}\right)$ is divisible by $p$. If $s=p$ and $u$ is
coprime to $p$ then $ \left(\begin{matrix} p(p-1) \cr
u\cr\end{matrix}\right)$ is divisible by $p$. If $s=p$ and $u$ is
divisible by $p$ then the polynomials $P_{u,h,j}(T)$ are divisible
by $p$. In all these cases $\left(\begin{matrix} p \cr
s\cr\end{matrix}\right)  \left(\begin{matrix} s(p-1) \cr
u\cr\end{matrix}\right) P_{u,h,i}(1+pZ) $ is divisible by $p$.
Write this as $p \beta_{s,u,h,i}$ with $\beta_{s,u,h,i} \in
\Z[1+pZ]$.

As $j\geq 0$, $c+1\geq 2$, $u\geq 1$ and $i\geq h-u$, we also have $N-(c+1)r-h\geq N-(c+1)(r+j)-h \geq N-(c+1)(r+j+u)-(h-u)+u \geq N+1-(c+1)(r+j+u)-i$. Hence
$$\left(\begin{matrix} p \cr s\cr\end{matrix}\right)
p^{N-(c+1)r-h-1} \left(\begin{matrix} s(p-1) \cr u\cr\end{matrix}\right) P_{u,h,i}(1+pZ) \frac{((1+pZ)^{2(p-1)}-1)^{pi}}{p^i} \alpha_{r,j}^{(N+1)}=$$
$$= \beta_{s,u,h,i}    p^{N+1-(c+1)(r+j+u)-i}
\frac{((1+pZ)^{2(p-1)}-1)^{pi}}{p^i} \bigl(p^{(N-(c+1)r-h) -(N+1-(c+1)(r+j+u)-i)}\alpha_{r,j}^{(N+1)}\bigr)
$$and $\bigl(p^{(N-(c+1)r-h)
-(N+1-(c+1)(r+j+u)-i)}\alpha_{r,j}^{(N+1)}\bigr)\in R^{U=0}[1+pZ]$.

Furthermore, assuming that $u_k\in p\Lambda_I^0$ and  that $p^{N-2r-h} \alpha_{r,j}^{(N+1)}\in R^{U=0}[1+pZ]$ by inductive hypothesis, we have
$p^{N+1-2(r+j+u)-i}  \bigl(p^{(N-2r-h) -(N+1-2(r+j+u)-i)}\alpha_{r,j}^{(N+1)}\bigr)= p^{N-2r-h} \alpha_{r,j}^{(N+1)}\in R^{U=0}[1+pZ]$. This proves the inductive
step and the Claim follows.

\end{proof}

Write $\fw^k(q)$ for the sheaf $\fw^k$ evaluated on the Tate curve. We consider it as a submodule of $\bW_k(q)$ using the identification $\fw^k(q)=\Fil_0 \bW_k(q)$.
Recall that $\bW_k(q):=\bW^0_k(q) \otimes_R \fw^{k_f}(q)$, where $\fw^{k_f}$ is the evaluation at the Tate curve of the coherent sheaf
$\bigl(g_{i,\ast}\bigl(\cO_{\mathfrak{IG}_{i,I}^{\rm ord}}\bigr)\otimes_{\Lambda^0_I} \Lambda_I\bigr) \bigl[k_{I,f}^{-1}\bigr] $ where $i=1$ for $p$ odd and $i=2$
for $p=2$.

\begin{corollary}\label{cor:iteratenabla}
Let $g(q)\in \fw^k(q)$ with $U\bigl(g(q)\bigr)=0$ and let $c$ be a positive integer such that $p^{c-1} u_k\in \Lambda_I^0$. Then for every positive integer $N$ we
have
$$\Bigl( {\bigl(\nabla^{p-1}-\mathrm{Id}\bigr)^{Np}} \Bigr) \bigl(g(q)\bigr)\in \sum_{n=0}^{(p-1)p N}p^{2N-(c+1)n} \fw^{k_f}(q)[Z] V_{k,n}.$$Moreover, if $u_k\in
p\Lambda_I^0$ then
$$\Bigl( {\bigl(\nabla^{p-1}-\mathrm{Id}\bigr)^{Np}} \Bigr) \bigl(g(q)\bigr)\in p^N \cdot \Bigl(\sum_{n=0}^{(p-1)p N} \fw^{k_f}(q)[Z] V_{k,n}\Bigr).$$

\end{corollary}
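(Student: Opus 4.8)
The plan is to deduce Corollary \ref{cor:iteratenabla} directly from Proposition \ref{prop:iteratenabla} by multiplying through by $p^N$ and then collecting terms. Recall that the corollary is stated for $g(q)\in \fw^k(q)$ with $U(g(q))=0$, so that $g(q)$ can be regarded as an element $g(q) V_{k,0}$ of $\bW^0_k(q)$ tensored with the finite part $\fw^{k_f}(q)$; since the finite part plays no role in the computation of $\nabla$ (which acts through $\bW^0_k$), it suffices to track the $\bW^0_k(q)$-component, keeping in mind that all coefficients live in $\fw^{k_f}(q)$ rather than merely in $R$. First I would apply Proposition \ref{prop:iteratenabla}, which gives
$$\frac{\bigl(\nabla^{p-1}-\mathrm{Id}\bigr)^{Np}}{p^N}\bigl(g(q)V_{k,0}\bigr)=\sum_{r=0}^{(p-1)Np}\sum_{h\in\N} p^{N-(c+1)r-h}\,\frac{\bigl((1+pZ)^{2(p-1)}-1\bigr)^{hp}}{p^h}\,g_{r,h}^{(N)}\,V_{k,r},$$
with $g_{r,h}^{(N)}\in R^{U=0}[1+pZ]$.

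Multiplying both sides by $p^N$ yields $\bigl(\nabla^{p-1}-\mathrm{Id}\bigr)^{Np}(g(q))=\sum_{r,h} p^{2N-(c+1)r-h}\bigl((1+pZ)^{2(p-1)}-1\bigr)^{hp}g_{r,h}^{(N)}V_{k,r}$. Then I would observe two things. The outer sum over $h$ is finite: since the class lies in $\bW^0_k$ and all but finitely many $g_{r,h}^{(N)}$ are forced to be zero by degree considerations (for fixed $N$ the iterated connection raises the $V$-degree by at most $Np$ and the total polynomial degree in $1+pZ$ and in the $\partial$-derivatives is bounded), the double sum has only finitely many nonzero terms. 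Next, since $\bigl((1+pZ)^{2(p-1)}-1\bigr)^{hp}\in \Z[1+pZ]\subset \fw^{k_f}(q)[Z]$ (here one uses $1+pZ = V_{0,0}$ together with the fact that, as recalled in \S\ref{sec:ordinaryandqexp}, $Z$ is a coordinate function and the expression $(1+pZ)^{2(p-1)}-1$ is divisible by $p$, hence a polynomial with $\Z$-coefficients in $1+pZ$ lies in $R[Z]$), the coefficient $p^{2N-(c+1)r-h}\bigl((1+pZ)^{2(p-1)}-1\bigr)^{hp}g_{r,h}^{(N)}$ lies in $p^{2N-(c+1)r-h}\fw^{k_f}(q)[Z]$ whenever $2N-(c+1)r-h\ge 0$, and lies in $\fw^{k_f}(q)[Z]$ in any case. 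Since the index $h$ only makes the exponent of $p$ smaller, and since for the first assertion we need membership in $\sum_{n=0}^{(p-1)pN}p^{2N-(c+1)n}\fw^{k_f}(q)[Z]V_{k,n}$, I would group, for each fixed $r=n$, all the $h$-terms: the worst (smallest) power of $p$ occurring among them is still bounded below by $2N-(c+1)n-h_{\max}$, but one checks more carefully — reorganizing using the explicit form of $g_{r,h}^{(N)}$ — that the combined coefficient of $V_{k,n}$ can be written with a factor $p^{2N-(c+1)n}$ once one absorbs the $\bigl((1+pZ)^{2(p-1)}-1\bigr)^{hp}/$ nothing, i.e. noting $\bigl((1+pZ)^{2(p-1)}-1\bigr)^{hp}$ contributes an extra factor $p^{hp}$ whereas we divided by $p^h$, so net $p^{h(p-1)}\ge p^h$. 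Thus $p^{2N-(c+1)n-h}\bigl((1+pZ)^{2(p-1)}-1\bigr)^{hp}=p^{2N-(c+1)n-h}p^{h(p-1)}\bigl(\text{unit poly}\bigr)\cdot(\cdots)$ has $p$-adic valuation at least $2N-(c+1)n$, giving the first containment.

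For the refinement under the hypothesis $u_k\in p\Lambda_I^0$, the second part of Proposition \ref{prop:iteratenabla} gives that $p^{N-2r-h}$ divides $g_{r,h}^{(N)}$ whenever $2r+h-N\ge 0$; equivalently $g_{r,h}^{(N)}\in p^{\max(N-2r-h,\,0)}R^{U=0}[1+pZ]$, so that the coefficient of $V_{k,r}$ after multiplying by $p^N$ has valuation at least $2N-(c+1)r-h+\max(N-2r-h,0)\ge N$ once one uses $c\ge 1$, $p\ge 2$, and the bookkeeping with the extra $p^{h(p-1)}$ coming from $\bigl((1+pZ)^{2(p-1)}-1\bigr)^{hp}$; a short case distinction on whether $2N-(c+1)r-h\ge N$ or not finishes it. The only genuinely delicate point — which I expect to be the main obstacle — is precisely this final valuation bookkeeping: making sure that, after summing over the (finitely many) pairs $(r,h)$ contributing to a given $V_{k,r}$, the common power of $p$ that can be factored out of the whole expression is $p^{\min_n(2N-(c+1)n)}$-compatible in the first statement and exactly $p^N$ in the second, rather than something smaller. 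Everything else is a direct translation of Proposition \ref{prop:iteratenabla}, together with the elementary observation that $\bigl((1+pZ)^{2(p-1)}-1\bigr)\in p\,\fw^{k_f}(q)[Z]$ and that twisting by $\fw^{k_f}(q)$ does not affect the connection.
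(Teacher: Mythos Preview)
Your approach is the paper's approach: reduce to Proposition~\ref{prop:iteratenabla} by handling the $\fw^{k_f}$-twist, multiply through by $p^N$, and estimate the $p$-valuation of each term using that $(1+pZ)^{2(p-1)}-1\in p\,\Z[Z]$. Two places in your write-up need tightening.

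First, the statement that ``the finite part plays no role in the computation of $\nabla$'' is not literally true: by Theorem~\ref{theorem:griffith} the connection on $\bW_k=\bW_k^0\otimes\fw^{k_f}$ involves a connection on $\fw^{k_f}$ as well. What makes the reduction work at the Tate curve is that the Igusa tower there splits as a disjoint union of copies of $\Spf(R)$, so $R'':=R'\otimes_{\Lambda_I^0}\Lambda_I$ is free over $R$ with a basis of \emph{horizontal} idempotents; the paper decomposes $g(q)=\sum_j g_j(q)e_j$ along that basis and applies the proposition to each $g_j$. You should justify this rather than assert it. Second, your bookkeeping contains two errors that cancel: when you multiply the display from Proposition~\ref{prop:iteratenabla} by $p^N$ you silently drop the $p^{-h}$ in $\frac{((1+pZ)^{2(p-1)}-1)^{hp}}{p^h}$, and then you claim $((1+pZ)^{2(p-1)}-1)^{hp}$ has $p$-valuation $h(p-1)$ instead of the correct $hp$. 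The honest computation is $p^{2N-(c+1)r-2h}\cdot p^{hp}=p^{2N-(c+1)r+h(p-2)}\ge p^{2N-(c+1)r}$, which yields both assertions (the second by taking $c=1$ and factoring out $p^N$ against $p^{N-2r-h}g_{r,h}^{(N)}\in R^{U=0}[1+pZ]$).
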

\begin{proof}  The Igusa tower $\mathfrak{IG}_{i,I}^{\rm ord}$ over $\Spf(R)$ becomes a disjoint union of copies of $\Spf(R)$,
permuted transitively by the group $G_i=(\Z/q\Z)^\ast$. We denote it by $\mathfrak{IG}(q)=\Spf(R')$. The connection on $\bW_k(q)$ is the composite of the connection
on $\bW^0(q)$ and of the connection on $\fw^{k_f}(q)$ defined by usual the derivation on $R'$ and, hence, by the derivation on $R$. It suffices to prove the claim
replacing $\fw^{k_f}(q)$ with $R'':=R'\otimes_{\Lambda_I^0} \Lambda_I$ which is a finite and free $R$-module, i.e., we work with $\bW^0_k(q) \otimes_R R''$. Fix a
basis $\{e_j\}_j$, with $j$ varying in a set of indices $J$, so that $\bW^0_k(q) \otimes_R R''=\oplus_j \bW^0_k(q) e_j$. Taken $g(q)\in \fw^k(q)\subset \bW_k(q)$ we
can decompose it as a sum $\sum_j g_j(q) e_j$ with $g_j(q)\in \fw^{k,0}(q) \subset \bW_k^0(q)$ and $\nabla(g(q))=\sum_j \nabla(g_j(q)) e_j$.  The assumption
$U(g(q))=0$ is equivalent to require that $U\bigl(g_j(q)\bigr)=0$ for every $j$. The statement then follows from Proposition \ref{prop:iteratenabla}.

\end{proof}

\section{$p$-Adic iterations of the Gauss Manin
connection.}\label{sec:IterateManin}

Let us fix closed intervals $I_s=[p^a, p^b]$ and $I=I_k=[p^c,p^d]$ with $a\le b,c\le d, a,b,c,d\in \N$ and an integer $r$ adapted to $I_s$ and $I_k$. The main topic of this chapter, in view of
applications to the construction of $p$-adic $L$-functions attached to triple products of finite slope families of eigenforms in the next chapter, is the following:
given weights $k\colon \Z_p^\ast\lra \Lambda_{I}^\ast$ and $s\colon \Z_p^\ast\lra \Lambda_{I_s}^\ast$ define the operator ``$(\nabla_k)^s$".

To see what this should be we'll  first look at $q$-expansions. Let $g(q)=\sum_{n=0}^\infty a_nq^n \in \Lambda_{I}[\![q]\!]$ be the $q$-expansion of a  $p$-adic
modular form $g$ of weight $k$. Then the $q$-expansion of $\nabla_k(g)$ is $\partial(\sum_{n=0}^\infty a_nq^n)\omega_{\rm can}^2=(\sum_{n=1}^\infty
na_nq^n)\omega_{\rm can}^2$ seen as the $q$-expansion of a $p$-adic modular form of weight $k+2$. Here $\displaystyle \partial:=q\frac{d}{d q}$. Let
$g^{[p]}(q):=\sum_{n=0, (p,n)=1}^\infty a_nq^n$ be the $p$-depletion of $g(q)$. Seeing the weight $s$ as a continuous homomorphism $s\colon \Z_p^\ast\lra
\Lambda_{I_s}^\ast$, we define the operator $\partial^s$ on $p$-depleted $q$-expansions by:
$$
\partial^s\bigl(g^{[p]}(q)\bigr):=\sum_{n=1, (n,p)=1}^\infty a_n s(n) q^n.
$$
It can be seen easily that $g^{[p]}(q)$ is the $q$-expansion of a $p$-adic modular form of weight $k$ which lies in the kernel of the $U$-operator and that
$\partial^s\bigl(g^{[p]}(q)\bigr)$, thus defined is the $q$-expansion of a $p$-adic modular form of weight $k+2s\colon \Z_p^\ast\to \Lambda_{I} \widehat{\otimes}
\Lambda_{I_s}$.

Therefore  we'd expect that $\nabla_k^s$ were a differential operator defined on ${\rm H}^0(\fX_{r,I}, \bW^0_k)^{U=0}$ with values in ${\rm H}^0(\fX_{r,I}\otimes
\Lambda_{I_s} , \bW^0_{k+2s})$, but unfortunately things are not as simple as this.

The first problem is that $\nabla_k$, seen as a connection on the sheaf $\bW^0_k$ over $\fX_{r,I}$, has poles along ${\rm Hdg}$;  see section \ref{sec:GMwk}. This
makes it difficult to iterate it.

The second problem is that the definition of $\partial^s$ on $q$-expansions given above is not algebraic enough and what we would like to interpolate is not
$\partial$ but the whole connection $\nabla$. We then incur in the problem discussed in Remark \ref{rmk:Nablasonqexp}.

To remedy this let us suppose that the weight $s$ has the property: there is $u_s\in \Lambda_{I_s}$ such that for every $t\in \Z_p^\ast$, $s(t)={\rm
exp}\bigl(u_s{\rm log}(t) \bigr)$. In particular $s\vert_{\mu_{p-1}}=1$ and $s\vert_{1+p\Z_p}$ is analytic. Then let us remark that the operator
$\partial^{p-1}-{\rm Id}$ on $p$-depleted $q$-expansions is divisible by $p$, i.e., if $g(q)=\sum_{n=1,(n,p)=1}^\infty a_nq^n\in \Lambda_I[\![q]\!]^{U=0}$, then
$\partial^{p-1}(g(q))-g(q)=\sum_{n=1,(n,p)=1}^\infty a_n\bigl(n^{p-1}-1\bigr)q^n\in p\Lambda_{I}[\![q]\!]$. So if we put:
$$
\delta(g):={\rm exp}\Bigl(\frac{u_s}{p-1}\log(\partial^{p-1})\Bigr)(g)
$$
then the definition makes sense and moreover we have that $\delta(g)$,  on $p$-depleted $q$-expansions, equals the previously defined $\partial^s(g)$. Our strategy
to define $\nabla_k^s$ in general is based on the following assumption:

\begin{assumption}\label{ass10}  $k$ and $s$ are weights satisfying the condition:
 $k=\chi\cdot k_0\cdot v$ and $s=\chi'\cdot s_0\cdot w$ where:

\begin{itemize}

\item[a)] $\chi$, $\chi'$ are finite order  characters of $\Z_p^\ast$ and $\chi$ is even;

\item[b)] $k_0$, $s_0$ are integer weights such that $k_0$ is even modulo $p$, i.e., there are integers $a$, $b$ with $a$ even modulo $p$ such that $k_0(t)= t^a$,
$s_0(t)=t^b$ for all $t\in \Z_p^\ast$.

\item[c)] $v$, $w\colon\Z_p^\ast \lra \Lambda_I^\ast$ are weights such that there exist $u_v\in p\Lambda_I$, $u_w\in q\Lambda_I$ satisfying: $v(t)={\rm
exp}\bigl(u_v\log(t)\bigr)$ and $w(t)={\rm exp}\bigl(u_w\log(t)  \bigr)$ for all $t\in \Z_p^\ast$.
\end{itemize}

\end{assumption}

We recall that $q=p$ if $p\geq 3$ and $q=4$ if $p=2$ and that a finite order character $\chi\colon \Z_p^\ast\lra \cO_K^\ast$ is called {\bf even} if there is a
finite field extension $K\subset L$ and a character $\varepsilon\colon \Z_p^\ast\lra \cO_L^\ast$ such that $\varepsilon^2=\chi$.

\begin{remark} Assume that $p$ is odd. Then $\chi\colon \Z_p^\ast\lra \cO_K^\ast$ has the form
$\chi=\epsilon\cdot \tau$, where $\epsilon|_{1+p\Z_p}=1$ and $\tau|_{(\Z/p\Z)^\ast}=1$, i.e. $\epsilon=\omega^i$ with $i$ a positive divisor of $p-1$, while $\tau$
is a character of order a power of $p$. Here we have denoted by $\omega$ the Teichm\"uller character composed with reduction modulo $p$. Let us remark that
$\epsilon$, $i$ and $\tau$ are uniquely determined by $\chi$.  Then, the character $\chi$ is even if and only if $\epsilon$ is even, i.e., if and only if $i$ is
even. In this case  the field $L$  may be taken $L=K$.
\end{remark}

For all $g\in {\rm H}^0\bigl(\fX_{r,I}, \bW^0_k \bigr)^{U=0}$, if $k$ and $s$ satisfy Assumption \ref{ass10} we set
$$
(\nabla_k)^s(g):={\rm exp}\Bigl(\frac{u_s}{(p-1)}{\rm log}\bigl(\nabla_k^{(p-1)}\bigr)  \Bigr)(g)
$$
and claim that this makes sense and it is the desired section of $\bW_{k+2s}$.

The rest of this section is devoted to the implementation of this strategy. Let $r$ and $I$ be as at the beginning of this section and let $n$ be an integer adapted
to $I$.  The main result is the following:

\begin{theorem}
\label{theorem:mostgeneral} Let $K$, $k$ and $s$ be a finite extension of $\Q_p$ and respectively a pair of weights satisfying the Assumptions \ref{ass10} such that
$K$ contains the images of the finite parts of $k$, $s$. Let  $g\in {\rm H}^0(\fX_{r,I}, \bW_k)^{U=0}$. Then there exist positive integers $b$ and $\gamma$
depending on $r$, $n$ and $p$ and an element $\nabla_k^s(g)$ of $ {\rm Hdg}^{-\gamma}{\rm H}^0(\fX_{b,I}, \bW_{k+2s})$ such that on $q$-expansions, in the notations
of \S \ref{sec:ordinaryandqexp},  if $g(q)=\sum_h g_h(q) V_{k,h}$ then
$$\nabla_k^s\bigl(g\bigr)(q):=\sum_h \sum_{j=0} \left(
\begin{array}{cc} u_s \\ j
\end{array} \right)\prod_{i=0}^{j-1}(u_k+u_s-h-1-i)
\partial^{s-j}\bigl(g_h(q)\bigr)V_{k+2s,j+h};$$here $\left(
\begin{array}{cc} u_s \\ j
\end{array} \right)=\frac{u_s \cdot (u_s-1) \cdots (u_s-j+1)}{j!}$ and if $g_h(q)=\sum_{n, p\not\vert n} a_{h,n} q^n$ then
$$\partial^{s-j}\bigl(g_h(q)\bigr)=\sum_{n, p\not\vert n} s(n) n^{-j} a_{h,n} q^n.$$

\end{theorem}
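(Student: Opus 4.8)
\textbf{Proof strategy for Theorem \ref{theorem:mostgeneral}.}

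The plan is to reduce the construction of $\nabla_k^s(g)$ to the convergence of the operator $\exp\bigl(\frac{u_s}{p-1}\log(\nabla_k^{(p-1)})\bigr)$ acting on $g$, where the key input is the $p$-adic estimates for the iterated connection already recorded in Corollary \ref{cor:iteratenabla} (and the underlying Proposition \ref{prop:iteratenabla}). First I would observe that, by Definition \ref{def:pdepletion} and the first Remark after it, the hypothesis $U(g)=0$ is exactly what allows us to replace the naive connection $\nabla_k$ by the depleted one; more precisely, writing $N:=\nabla_k^{(p-1)}-\mathrm{Id}$, the content of Corollary \ref{cor:iteratenabla} is that $N^{Np}(g)$ is divisible by $p^N$ up to a bounded power of $\mathrm{Hdg}$ (coming from the denominators $c_n$ in the connection over $\fX_{r,I}$ of \S\ref{sec:GMwk}, hence the passage from level $r$ to some $b\geq r$ and the factor $\mathrm{Hdg}^{-\gamma}$). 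Thus the formal series $\sum_{m\geq 0}\binom{u_s/(p-1)}{m}(\log$-expansion$)$ — or more efficiently, the series defining $\exp\bigl(\frac{u_s}{p-1}\log(\mathrm{Id}+N)\bigr)=\sum_{m} \lambda_m(u_s) N^m$ with $\lambda_m(u_s)\in \Lambda_{I_s}$ — converges $p$-adically in $\mathrm{Hdg}^{-\gamma}{\rm H}^0(\fX_{b,I},\bW_{k+2s})$ once we know that $N^m(g)$ acquires enough $p$-divisibility as $m\to\infty$. Here Assumption \ref{ass10}(c), namely $u_w\in q\Lambda_I$ and $u_v\in p\Lambda_I$, is precisely what forces $u_s/(p-1)$ to have denominators controlled enough that the coefficients $\lambda_m$ do not destroy the $p^{\lfloor m/p\rfloor}$-type gain in $N^m(g)$; the even-character hypotheses (a), (b) guarantee that $s$ factors through the appropriate torus action so that $\nabla_k^{(p-1)}$ preserves $\bW_k$ and lands in $\bW_{k+2(p-1)}$, keeping us inside the system of sheaves where Corollary \ref{cor:iteratenabla} applies.

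The second step is the computation of the $q$-expansion. I would evaluate everything at the Tate curve $E=\mathrm{Tate}(q^N)$ over $R=\Lambda_I^0(\!(q)\!)$ as in \S\ref{sec:ordinaryandqexp}, where $\bW^0_k(q)=R\langle V\rangle(1+pZ)^k$ and the basis $V_{k,h}=Y^h(1+pZ)^{k-h}$ is available, and invoke formula (\ref{eq:nablapartial}): $\nabla_k(aV_{k,h})=\partial(a)V_{k+2,h}+a(u_k-h)V_{k+2,h+1}$. Iterating gives Lemma \ref{lemma:formulanablaNg}, with coefficients $a_{N,k,h,j}=\binom{N}{j}\prod_{i=0}^{j-1}(u_k-h+N-1-i)$. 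The point is then purely formal: the operator $\exp\bigl(\frac{u_s}{p-1}\log(\nabla^{p-1})\bigr)$, when applied to $g_h(q)V_{k,h}$ and expanded in the $V_{k+2s,j+h}$, must reproduce the claimed coefficient $\binom{u_s}{j}\prod_{i=0}^{j-1}(u_k+u_s-h-1-i)$ together with the depleted divided power $\partial^{s-j}(g_h(q))=\sum_{n,p\nmid n} s(n)n^{-j}a_{h,n}q^n$. This I would check by the same interpolation argument used to justify $\delta(g)=\partial^s(g)$ on depleted $q$-expansions in the paragraph preceding Assumption \ref{ass10}: both sides are $p$-adically continuous in the pair $(u_k,u_s)$, and they agree at the dense set of classical integer weights $(k_0, s_0)$ where $\nabla^{s_0}$ literally is the $s_0$-th iterate, the coefficient $\prod_{i=0}^{j-1}(u_k+u_s-h-1-i)$ is the elementary combinatorial identity hidden in Lemma \ref{lemma:formulanablaNg}, and $\partial^{s_0-j}$ is honest $(s_0-j)$-fold application of $q\,d/dq$ on the depleted form. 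By density and continuity the identity of $q$-expansions holds for all admissible $(k,s)$.

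The main obstacle — and the reason Assumption \ref{ass10} is shaped the way it is — is the convergence/denominator bookkeeping in the first step: one must show that the $\mathrm{Hdg}$-denominators $\gamma$ introduced by iterating a meromorphic connection over $\fX_{r,I}$ stay \emph{bounded} independently of the number of iterations $m$, while the $p$-adic divisibility of $N^m(g)$ grows linearly in $m$. The growth of the inverse-different powers $c_n$ with $n$ (Lemma \ref{lemma:CokerOmega}, Lemma \ref{lemma:kerloc}, cf.~the Remark after Theorem \ref{theorem:griffith}) is exactly what obstructs pushing this to the boundary $\infty$ of weight space and is why we only land in $\mathrm{Hdg}^{-\gamma}{\rm H}^0(\fX_{b,I},\bW_{k+2s})$ for a \emph{fixed} finite $b$ and $\gamma$ depending on $r,n,p$; the key technical lemma to assemble is that each application of $N=\nabla^{p-1}-\mathrm{Id}$ costs at most a fixed power $\mathrm{Hdg}^{c}$ of denominator but, on the system of $p$-depleted sections, contributes a factor of $p$, so that after the renormalization in Corollary \ref{cor:iteratenabla} the series $\sum_m \lambda_m(u_s)N^m(g)$ has terms tending to $0$ in the $\alpha$-adic (equivalently $p$- or $T$-adic) topology of $\mathrm{Hdg}^{-\gamma}\bW_{k+2s}$. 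Once that estimate is in hand, the sum defines the desired global section, its independence of the choices (of $b$ large enough, of the decomposition $k=\chi k_0 v$) follows because two such constructions agree on $q$-expansions and the $q$-expansion map of Definition \ref{def:qexp} is injective on the relevant spaces, and uniqueness is immediate from the $q$-expansion principle. Finally, twisting by $\fw^{k_f}$ as in Definition \ref{def:omegak} and Corollary \ref{cor:iteratenabla} passes the whole construction from $\bW^0_k$ to $\bW_k$.
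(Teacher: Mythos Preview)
Your proposal contains the right germ for the \emph{strict} case (Assumption \ref{ass}: $u_k\in p\Lambda_I$, $u_s\in q\Lambda_I$, no finite characters or integer shifts), but it has two genuine gaps.

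\textbf{First, the reduction from Assumption \ref{ass10} to the strict case is missing.} When $k=\chi\cdot k_0\cdot v$ and $s=\chi'\cdot s_0\cdot w$ with $\chi,\chi'$ nontrivial finite characters or $k_0,s_0$ nonzero integers, there is no element $u_s\in\Lambda_I$ with $s(t)=\exp(u_s\log t)$, so the series $\exp\bigl(\frac{u_s}{p-1}\log(\nabla^{p-1})\bigr)$ does not even make sense. The paper handles this in two steps: (Case II) twist away the finite characters using the operators $\theta^{\varepsilon^{-1}}$ and $\theta^{\varepsilon\chi'}$ of Proposition \ref{prop:thetachi}, which is exactly where the \emph{evenness} of $\chi$ is used (to extract a square root $\varepsilon$); then (Case I) absorb the integer parts $k_0,s_0$ by pre- and post-composing with honest integer powers $\nabla^\alpha$, $\nabla^{\beta-\alpha+b}$ chosen so that the remaining weight satisfies the strict hypothesis. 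Your explanation of the role of (a), (b) --- that evenness makes $\nabla^{p-1}$ land in the right sheaf --- is not the point; $\nabla^{p-1}$ always maps $\bW_k\to\bW_{k+2(p-1)}$ regardless.

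\textbf{Second, the convergence argument is not correctly sourced.} Corollary \ref{cor:iteratenabla} is purely a statement about $q$-expansions (equivalently, about the ordinary locus), and divisibility by $p^N$ on $\fX_{r,I}^{\rm ord}$ does not imply divisibility on $\fX_{r,I}$. The paper's mechanism is: Proposition \ref{prop:congord} gives $p^{[N/p]}$-divisibility of $(\nabla^{p-1}-\mathrm{Id})^{Np}(g)$ on the ordinary locus; then Proposition \ref{prop:surconv} transfers this to $p^{[N/2p]}$-divisibility on $\fX_{b,I}$ for a single $b$ independent of $N$, at the cost of a fixed power $\mathrm{Hdg}^\gamma$. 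The point is subtle: each application of $\nabla^{p-1}-\mathrm{Id}$ introduces a $\mathrm{Hdg}^{c_n(p-1)}$-denominator, so the naive cost of $N^m$ grows linearly in $m$; it is only the ordinary-locus estimate combined with Proposition \ref{prop:surconv} that collapses this to a \emph{bounded} $\mathrm{Hdg}$-power (Corollary \ref{cor:mainiterations}). Your claim that ``each application of $N$ costs at most a fixed power $\mathrm{Hdg}^c$'' is the conclusion, not the input, and requires this transfer step. Likewise the factorial denominators in $\exp$ are controlled by the separate combinatorial Lemma \ref{lemma:estimate}, which you do not invoke.

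Your $q$-expansion verification by density at classical weights is correct and matches the paper's argument in Proposition \ref{prop:nablas}.
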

\begin{proof} We show how to reduce the proof of the Theorem to the case that $k$ and $s$ as in Assumption \ref{ass10}
which in addition satisfy $\chi=\chi'=1$ and $k_0=s_0=1$ (i.e. $a=b=0$) (we'll call these ``strict assumptions"). The proof in this case is postponed to \S
\ref{sec:proofmostgeneral}.

{\it Case I.} \enspace First of all assume  that  $k(t)={\rm exp}\bigl((a+u_v)\log(t)\bigr)$ and $s(t)={\rm exp}\bigl((b+u_w)\log(t) \bigr)$ for all $t\in
\Z_p^\ast$, where $a$, $b\in \Z$ with $a$ even modulo $p$, $u_v\in p\Lambda_I$ and $u_w\in q\Lambda_I$. Let then $\alpha,\beta\in \Z$ be integers such that:

i) $p \vert (a+2\alpha)$ and $\alpha>0$

and

ii) $q \vert \beta$ and $\beta-\alpha+b>0$.

Then let us remark that we can write formally:
$$
(\nabla_k)^s(g):= (\nabla_{u_v+a+2\alpha+2u_w-2\beta})^{\beta-\alpha+b}  \Bigl((\nabla_{u_v+a+2\alpha})^{u_w-\beta}\bigl((\nabla_k)^\alpha(g)\bigr)\Bigr).
$$
Remark that everything written on the right hand side makes sense either because the weights satisfy the strict assumptions or because the exponent of $\nabla$ is a
positive integer.

Here we wrote the weights additively, i.e. $u_v+a+2\alpha$ is the weight sending $t\in \Z_p^\ast$ to ${\exp}\bigl((u_v+a+2\alpha)\log(t)\bigr)=k(t)\cdot
t^{2\alpha}$ etc. We leave to the reader to prove that one obtains the expected formula on $q$-expansions  using Lemma \ref{lemma:formulanablaNg} and the assumption
that the Theorem holds for $(\nabla_{u_v+a+2\alpha})^{u_w-\beta}$.

\

{\it Case II.} \enspace We next consider weights of the form $k=k' \chi$ and $s=s' \chi'$ with $k'$ and $s'$ weights with trivial character of the type considered
in Case I and  $\chi$ and $\chi'$ characters such that $\chi$ even. Let $L$ be a finite extension of $K$ and $\varepsilon\colon \Z_p^\ast\lra \cO_L^\ast$ a finite
character such that $\chi=\varepsilon^2$. Thanks to Proposition \ref{prop:thetachi}  we have an element
$$
\theta^{\varepsilon^{-1}}(g)\in {\rm H}^0(\fX_{r,I}, \bW_{k-2\tau}\otimes_K L)^{U=0}={\rm H}^0(\fX_{r,I}, \bW_{k'}\otimes_K L)^{U=0}.
$$Define:

$$\nabla_k^s(g):=\theta^{\varepsilon \, \chi}\Bigl(\nabla_{k'}^{s'}\bigl(\theta^{\varepsilon^{-1}}(g)\bigr)  \Bigr)
\in {\rm H}^0(\fX_{r,I}, \bW_{k+2s}\otimes_K L)^{U=0}.$$ We leave to the reader to check that one gets the required formula  on $q$-expansions using Proposition
\ref{prop:thetachi}. One deduces from this that in fact $\nabla_k^s(g)\in {\rm H}^0(\fX_{r,I}, \bW_{k+2s})^{U=0}$, concluding the proof.

\end{proof}

Following a suggestion of Eric Urban define:

\begin{definition}\label{def:nablachi} If $s=\chi'$ is a finite character and $a$ is a positive integer and $f\in{\rm H}^0(\fX_{b,I}, \fw^{k})$ set
$ \nabla_k^{a+s}(f):=\theta^{\chi'}\bigl(\nabla_k^a(f)\bigr)$.

\end{definition}

\begin{remark} Notice that $\nabla_k^{a+s}(f)$ is a modular form of weight $k+2a+2\chi'$ that coincides with $\nabla_k^a\bigl(\theta^{\chi'}(f)\bigr)$
as can be checked on $q$-expansions using Lemma \ref{lemma:formulanablaNg} and Proposition \ref{prop:thetachi}.

\end{remark}

With the notations of the Theorem \ref{theorem:mostgeneral} and Assumptions \ref{ass10}, take $\cO_K$-valued points $\alpha$ (resp.~$\beta$) of $\Lambda_I$ so that
the induced weight $k_0+v$ (resp.~$s_0+w$) specialize to classical weights $v_0\in\N$ and $w_0\in\N$. Let $g_{\alpha}$ be the specialization of $g$ at $\alpha$. One
deduces from the formula on $q$-expansions and using the notations of Definition \ref{def:nablachi} the following:

\begin{corollary}\label{cor:specializas}  The specialization of
$(\nabla_k)^s(g)$ at $\alpha$ and $\beta$ is $\nabla_{v_0+\chi}^{w_0+\chi'}(g_\alpha)$.

\end{corollary}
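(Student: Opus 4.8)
\textbf{Proof proposal for Corollary \ref{cor:specializas}.}

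The plan is to reduce everything to the formula on $q$-expansions established in Theorem \ref{theorem:mostgeneral}, since the $q$-expansion map is injective on global sections of $\bW_k$ (over the connected components of $\fX_{r,I}$ the Tate curve at an unramified cusp sees all of $\Lambda_I(\!(q)\!)$, and the specialization maps are compatible with $q$-expansions by construction, see Definition \ref{def:qexp}). First I would specialize the formula
$$\nabla_k^s(g)(q)=\sum_h\sum_{j\geq 0}\binom{u_s}{j}\prod_{i=0}^{j-1}(u_k+u_s-h-1-i)\,\partial^{s-j}\bigl(g_h(q)\bigr)V_{k+2s,j+h}$$
of Theorem \ref{theorem:mostgeneral} at the two points $\alpha$ and $\beta$. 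Under Assumptions \ref{ass10} we have $k=\chi\cdot k_0\cdot v$ with $v(t)=\exp(u_v\log t)$, so the specialization of $u_k$ at $\alpha$ is $v_0-a$ (with $k_0(t)=t^a$) plus the integer $a$, giving the classical integer weight exponent of $v_0$; concretely $k$ specializes to the weight $t\mapsto \chi(t)t^{v_0}$, i.e.\ $v_0+\chi$ in the additive notation of Definition \ref{def:nablachi}. Likewise $s$ specializes at $\beta$ to $w_0+\chi'$, and $u_s$ specializes to the integer $w_0$ (again up to the finite character bookkeeping, which is exactly what $\theta^{\chi'}$ absorbs). Because $w_0\in\N$, the binomial coefficient $\binom{u_s}{j}$ specializes to $\binom{w_0}{j}$, which vanishes for $j>w_0$; hence the sum over $j$ becomes finite, and the specialized expression is precisely the $q$-expansion of $\nabla_{v_0}^{w_0}(g_\alpha^{[p]})$ twisted by $\theta^{\chi'}$ and with the finite character $\chi$ carried along, i.e.\ the $q$-expansion of $\nabla_{v_0+\chi}^{w_0+\chi'}(g_\alpha)$ in the sense of Definition \ref{def:nablachi} (using the Remark following it that $\nabla_k^{a+s}(f)$ and $\nabla_k^a(\theta^{\chi'}(f))$ agree on $q$-expansions).

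The second step is to compare the specialized $q$-expansion with the $q$-expansion of $\nabla_{v_0+\chi}^{w_0+\chi'}(g_\alpha)$ computed directly from Lemma \ref{lemma:formulanablaNg} (which gives the coefficients $a_{N,k,h,j}$ of the iterated connection $\nabla^N$ for integer $N$) together with Proposition \ref{prop:thetachi} (for the action of $\theta^{\chi'}$ on $q$-expansions). Setting $N=w_0$ and specializing $u_k\mapsto v_0$ in Lemma \ref{lemma:formulanablaNg} one recovers exactly $\binom{w_0}{j}\prod_{i=0}^{j-1}(v_0-h+w_0-1-i)=\binom{w_0}{j}\prod_{i=0}^{j-1}(u_k+u_s-h-1-i)\big|_{\alpha,\beta}$, so the two $q$-expansions coincide coefficient by coefficient. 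Since both $\nabla_k^s(g)$ specialized and $\nabla_{v_0+\chi}^{w_0+\chi'}(g_\alpha)$ are genuine (nearly overconvergent) modular forms — the former lies in ${\rm Hdg}^{-\gamma}{\rm H}^0(\fX_{b,I},\bW_{k+2s})$ by Theorem \ref{theorem:mostgeneral} and specializes into the corresponding space over $\cO_K$, the latter in ${\rm H}^0(\fX_{b,I},\bW_{v_0+w_0+2})$ — and a nearly overconvergent form is determined by its $q$-expansion, equality of $q$-expansions forces equality of the forms.

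The main obstacle I anticipate is purely bookkeeping: correctly tracking how the decomposition $k=\chi k_0 v$, $s=\chi' s_0 w$ interacts with specialization, in particular making sure the finite characters $\chi,\chi'$ appearing in the weight of the output on the two sides are literally the same (so that $\nabla_k^s$, which was defined in Case II of Theorem \ref{theorem:mostgeneral} via conjugation by $\theta^{\varepsilon^{\pm 1}}$, specializes to the $\theta^{\chi'}\circ\nabla^{a}$ of Definition \ref{def:nablachi} rather than to something differing by a twist). Once one checks that $\theta^{\varepsilon}\theta^{\varepsilon}=\theta^{\varepsilon^2}=\theta^{\chi}$ on $q$-expansions and that the specialization of $v$ (resp.\ $w$) at the chosen point is the trivial-character classical weight $t\mapsto t^{v_0-a}$ (resp.\ $t\mapsto t^{w_0-b}$), the identification of the two sides is immediate from the explicit $q$-expansion formulas. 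No deep input is needed beyond Theorem \ref{theorem:mostgeneral}, Lemma \ref{lemma:formulanablaNg}, Proposition \ref{prop:thetachi} and the injectivity of the $q$-expansion map.
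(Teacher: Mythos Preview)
Your proposal is correct and follows essentially the same approach as the paper, which simply states that the corollary is deduced from the $q$-expansion formula of Theorem \ref{theorem:mostgeneral} together with Definition \ref{def:nablachi}. You have merely unpacked this one-line justification in detail, invoking Lemma \ref{lemma:formulanablaNg} and Proposition \ref{prop:thetachi} explicitly and using injectivity of the $q$-expansion map; the minor slip $\bW_{v_0+w_0+2}$ should read $\bW_{(v_0+\chi)+2(w_0+\chi')}$, but this does not affect the argument.
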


\subsection{The proof of Theorem \ref{theorem:mostgeneral}}\label{sec:proofmostgeneral}

From now on until the end of this section we will assume that the weights $k$ and $s$ satisfy the strict assumptions:

\begin{assumption}
\label{ass}
There are $u_k\in p\Lambda_I$ and $u_s\in q\Lambda_I$ such that $k(t)={\rm exp}(u_k\log(t))$
and $s(t)={\rm exp}(u_s\log(t))$ for all $t\in \Z_p^\ast$.
\end{assumption}

Our goal for the rest of the section is to define for all $g\in {\rm H}^0\bigl(\fX_{r,I}, \bW^0_k \bigr)^{U=0}$
$$
(\nabla_k)^s(g)={\rm exp}\Bigl(\frac{u_s}{(p-1)}{\rm
log}\bigl(\nabla_k^{(p-1)}\bigr)  \Bigr)(g).
$$

\bigskip\noindent
We start with the following

\begin{definition}\label{def:oW'}
Let us denote $\bW^{0,'}_k\subset \bW^0:=\pi_\ast\bigl(\cO_{\bV_0({\rm H}^\#_E,s)}\bigr)$ the subsheaf of $\bW^0$ defined by $\bW^{0,'}_k:=\sum_{n\in\Z}\bW^0_{k+2n}\subset
\bW^0$. We let $\bW'_k\subset \bW$ be the sheaves obtained from $\bW^{0,'}_k\subset \bW^0$ by twisting by $\fw^{k_f}$ (see Definition \ref{defi:wnrI}).

We also define the differential operator $\nabla\colon \bW'\lra \Bigl(\frac{1}{{\rm Hdg}^{c_n}}\Bigr)\cdot \bW'$ by $\nabla\vert_{\bW_{k+2n}}:=\nabla_{k+2n}\colon
\bW_{k+2n}\lra \bW_{k+2n+2}\subset \bW'_k$.
\end{definition}

The fact that inside $\bW$ we have $\bW_{k+2n}\cap\bW_{k+2n'}=\{0\}$ for $n\neq n'$ implies that $\nabla$ is well defined on $\bW'$. We start with the following
result:

\begin{proposition}
\label{prop:congord} Under the Assumption \ref{ass} above for
every $g\in {\rm H}^0(\fX_{r,I}^{\rm ord}, \oW_k)^{U=0}$ and
every positive integer $N$ we have
$$\bigl(\nabla^{p-1}-\mathrm{Id}\bigr)^{Np} \bigl(g\bigr)\in
p^N{\rm H}^0(\fX_{r,I}^{\rm ord}, \bW)\cap {\rm
H}^0\bigl(\fX_{r,I}^{\rm ord}, \bW'_k\bigr).
$$The same result applies if we replace $\fX_{r,I}$ with any layer
of the Igusa tower $\fIG_{n,r,I}$.
\end{proposition}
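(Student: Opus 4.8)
\textbf{Plan for the proof of Proposition \ref{prop:congord}.}

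The strategy is to reduce the statement over the ordinary locus to the concrete $q$-expansion computations already carried out in Corollary \ref{cor:iteratenabla}, and then to globalize over $\fX_{r,I}^{\rm ord}$ using the canonical splitting of the Hodge filtration recorded in \S \ref{sec:ordinaryandqexp}. First I would recall that over $\fX_{r,I}^{\rm ord}$ (and over any layer $\fIG_{n,r,I}^{\rm ord}$ of the Igusa tower) the unit root decomposition gives $\Hsharp=\mathrm{H}_E=\omega_E\oplus\omega_E^{-1}$, so that by \S\ref{sec:vbfil} the sheaf $\bW^0_k\vert_{\fX^{\rm ord}}$ splits as $\omega_{E,\fX^{\rm ord}}^{k^0}\widehat\otimes_{\cO_{\fX^{\rm ord}}}\mathrm{Sym}(\omega_E^{-2})$, with $\Fil_h$ the degree-$\le h$ part; in particular $V_{k,h}$ makes sense globally over the ordinary locus, not just for the Tate curve. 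Since the $V$-operator, the $U$-operator and $\nabla_k$ are all defined integrally over $\fX_{r,I}^{\rm ord}$ (and commute with base change to the boundary), the hypothesis $U(g)=0$ together with Remark~1 of \S\ref{sec:depletion} lets us work with the $p$-depleted section; thus I may assume $g\in {\rm H}^0(\fX_{r,I}^{\rm ord},\oW_k)^{U=0}$ is honestly $p$-depleted.

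Next I would observe that the connection $\nabla_k$ has no poles over the \emph{ordinary} locus: indeed $\mathrm{Hdg}$ is a unit there, so the factor $\mathrm{Hdg}^{-c_n}$ in Definition \ref{def:oW'} disappears and $\nabla^{p-1}-\mathrm{Id}$ is an honest $\cO_{\fX^{\rm ord}}$-linear-over-$\Lambda_I^0$ endomorphism of ${\rm H}^0(\fX_{r,I}^{\rm ord},\bW'_k)$ raising the $\bW$-weight by $2(p-1)$. The content of the Proposition is the $p$-adic estimate $(\nabla^{p-1}-\mathrm{Id})^{Np}(g)\in p^N{\rm H}^0(\fX_{r,I}^{\rm ord},\bW)$. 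By the splitting, a section of $\bW'_k$ over $\fX_{r,I}^{\rm ord}$ is determined by its coefficients against the $V_{k+2n,h}$, and these coefficients are sections of $\omega_{E,\fX^{\rm ord}}^{k^0}$, i.e.\ (twisting by $\fw^{k_f}$) Katz $p$-adic modular forms. The $q$-expansion principle for Katz forms — injectivity of the $q$-expansion map at an unramified cusp on each connected component of the ordinary Igusa tower — then reduces the divisibility-by-$p^N$ assertion precisely to the corresponding statement for $q$-expansions. But that is exactly Corollary \ref{cor:iteratenabla}: for $g(q)\in\fw^k(q)$ with $U(g(q))=0$ one has $(\nabla^{p-1}-\mathrm{Id})^{Np}(g(q))\in p^N\cdot\bigl(\sum_{n=0}^{(p-1)pN}\fw^{k_f}(q)[Z]V_{k,n}\bigr)$, using here the strict Assumption \ref{ass} that $u_k\in p\Lambda_I$, so that the integer $c$ with $p^{c-1}u_k\in\Lambda_I^0$ may be taken to be $1$ and the stronger ``$p^N$'' conclusion of that corollary applies. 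The same argument verbatim applies replacing $\fX_{r,I}$ by $\fIG_{n,r,I}$, since there too the ordinary locus carries the unit-root splitting and $\fIG_{n,r,I}^{\rm ord}$ has an analogous $q$-expansion map.

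The two small points that need care, and which I expect to be the main (though modest) obstacle, are: (i) checking that the formal $q$-expansions in Corollary \ref{cor:iteratenabla} genuinely detect integrality of global sections over $\fX_{r,I}^{\rm ord}$ — this requires noting that $\fX^{\rm ord}$ is connected (the special fibre of $X_1(N)$ is irreducible, cf.\ the argument in Lemma \ref{lemma:kerloc}) so a single unramified cusp suffices, and that the coefficient sheaves $\omega_{E,\fX^{\rm ord}}^{k^0}$ are $\alpha$-torsion free so divisibility of $q$-expansions by $p^N$ forces divisibility of the section by $p^N$; and (ii) tracking that the twist by $\fw^{k_f}$ (a finite locally free $\cO_{\fX^{\rm ord}}$-algebra by Definition \ref{defi:wnrI}) does not disturb the estimate — this is handled exactly as in the proof of Corollary \ref{cor:iteratenabla}, by choosing a local $\cO_{\fX^{\rm ord}}$-basis $\{e_j\}$ of $\fw^{k_f}$ and decomposing $g=\sum_j g_j e_j$ with each $g_j\in{\rm H}^0(\fX^{\rm ord},\oW^0_k)^{U=0}$, since $\nabla$ acts on $\fw^{k_f}$ through the derivation of the base. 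Once these are in place the conclusion $(\nabla^{p-1}-\mathrm{Id})^{Np}(g)\in p^N{\rm H}^0(\fX_{r,I}^{\rm ord},\bW)\cap{\rm H}^0(\fX_{r,I}^{\rm ord},\bW'_k)$ is immediate, the intersection with $\bW'_k$ being automatic because $(\nabla^{p-1}-\mathrm{Id})^{Np}$ only ever outputs a finite sum of the sheaves $\bW_{k+2n}$ with $0\le n\le (p-1)pN$.
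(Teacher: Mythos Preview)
Your proposal is correct and follows essentially the same approach as the paper: reduce to Corollary \ref{cor:iteratenabla} on $q$-expansions, and use injectivity of the $q$-expansion map modulo $p$ over the ordinary locus to upgrade the $q$-expansion divisibility to genuine divisibility of the section. The paper's proof is terser---it simply invokes the injectivity of ${\rm H}^0(\fX_{r,I}^{\rm ord},\bW^0/p\bW^0)\to \bW^0(q)/p\bW^0(q)$ directly and, for the Igusa tower, observes that $\fIG_{n,r,I}^{\rm ord}\to \fX_{r,I}^{\rm ord}$ is finite \'etale---whereas you spell out the unit-root splitting, connectedness, and the $\fw^{k_f}$-twist explicitly; but the substance is the same.
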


\begin{proof} This follows from Corollary
\ref{cor:iteratenabla} and the fact that the evaluation at the Tate curve provides an injective map ${\rm H}^0\bigl(\fX_{r,I}^{\rm ord}, \bW^0/p\bW^0\bigr)\to
\bW^0(q)/p\bW^0(q)$.

The second claim follows as the map $\fIG_{n,r,I}^{\rm ord}\to
\fX_{r,I}^{\rm ord}$ is finite \'etale.

\end{proof}

\begin{proposition} \label{prop:surconv}
Let $s$ be a non-negative integer. Then there exists a positive integer $b\geq r$ depending on $r$, $n$ and $s$ with the following property: for every $w\in {\rm
Hdg}^{-s} {\rm H}^0(\fX_{r,I}, \bW)$ such that $ w\vert_{\fX_{r,I}^{\rm ord}}\in {\rm H}^0(\fX_{r,I}^{\rm ord}, p^j \bW) $ we have $w\in {\rm
H}^0\bigl(\fX_{b,I},p^{[j/2]} \bW\bigr)$.
\end{proposition}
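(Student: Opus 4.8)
The statement is a purely local ``spreading out'' assertion: a section of $\bW$ (with controlled poles along $\mathrm{Hdg}$) that is divisible by $p^j$ on the ordinary locus is automatically divisible by $p^{[j/2]}$ on a smaller strict neighbourhood $\fX_{b,I}$ of the ordinary locus. The plan is to reduce to the graded pieces of the filtration $\Fil_\bullet\bW$ of Theorem \ref{thm:descentbWk} and then to the invertible sheaves $\fw^{k-2i}$, where such a statement becomes a concrete estimate on how the ordinary locus sees sections after multiplying by powers of $\mathrm{Hdg}$, combined with Lemma \ref{lemma:kerloc}.

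First I would work Zariski-locally on $\fX$, over an affine $U=\Spf(R)$ on which $\omega_E$ is free, and use the local description of $\bW^0_k$ (Lemma \ref{lemma:localwkappa}) together with Lemma \ref{lemma:wfree}: modulo any power of $\alpha$ the sheaf $\bW^0$ is a locally free $\cO_{\fX_{r,I}}$-module, so questions about $p$-divisibility of a section reduce, via the filtration, to questions about $p$-divisibility of its images in the graded pieces $\mathrm{Gr}_i\bW^0_k\cong\fw^{k,0}\otimes\mathrm{Hdg}^i\omega_E^{-2i}$. Since multiplication by $\mathrm{Hdg}$ is invertible on the adic space and $\mathrm{Hdg}^i\omega_E^{-2i}$ is a line bundle, each such graded piece is, up to a twist by an honest line bundle, the overconvergent sheaf $\fw^{k-2i}$ on $\fX_{r,I}$. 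So the core case is: a section $f$ of $\mathrm{Hdg}^{-s}\fw^{k}$ over $\fX_{r,I}$ whose restriction to $\fX^{\rm ord}_{r,I}$ is divisible by $p^j$, and one wants $f$ divisible by $p^{[j/2]}$ on $\fX_{b,I}$ for suitable $b\ge r$.

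For this core case I would argue as follows. Pick $b$ large enough (depending on $r$, $n$, $s$) that the natural map $\fX_{b,I}\to\fX_{r,I}$ multiplies the Hodge ideal so that $\mathrm{Hdg}^s$ becomes divisible by a large power of $\alpha$ on $\fX_{b,I}$ — concretely, on $\fX_{b,I}$ one has $p/\mathrm{Hdg}^{p^{a+b+1}}\in\cO$, so $\mathrm{Hdg}^s$ divides a fixed power of $p$ times a unit ideal, and hence the pole of $f$ along $\mathrm{Hdg}^s$ contributes only a bounded loss of $p$-divisibility when passed to the ordinary locus. Using Lemma \ref{lemma:kerloc}, the kernel of $\cO_{\fX_{b,I}}/p^m\to j_\ast(\cO_{\fX^{\rm ord}}/p^m)$ is annihilated by $\mathrm{Hdg}^{m p^{b+1}}$, so a section of $\fw^k$ over $\fX_{b,I}$ that vanishes on the ordinary locus modulo $p^m$ actually lies in $p^{m'}\fw^k + (\mathrm{Hdg}$-multiples$)$ with $m'$ roughly $m$ minus the $\mathrm{Hdg}$-contribution; choosing $b$ so that on $\fX_{b,I}$ a power of $\mathrm{Hdg}$ eating the loss is itself divisible by an appropriate power of $p$ gives the factor of $2$ in $[j/2]$. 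In other words: the restriction map loses at most ``half'' the $p$-divisibility once $b$ is large enough that $\mathrm{Hdg}^{\text{(loss)}}$ is divisible by $p^{\text{(loss)}}$ on $\fX_{b,I}$.

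The main obstacle, and the step needing care, is bookkeeping the interaction of three sources of denominators/kernels: the prescribed pole $\mathrm{Hdg}^{-s}$ of $w$, the $\mathrm{Hdg}$-powers in the graded pieces $\mathrm{Gr}_i\bW\cong\fw^{k-2i}\otimes\mathrm{Hdg}^i\omega_E^{-2i}$ (which grow with $i$, hence a priori with the filtration level), and the $\mathrm{Hdg}$-torsion in $\ker(\cO_{\fX_{b,I}}/p^m\to j_\ast\cO_{\fX^{\rm ord}}/p^m)$ from Lemma \ref{lemma:kerloc}, which grows like $p^{b+1}$ per unit of $p$-divisibility. I expect one must first reduce to a \emph{finite} piece of the filtration — e.g. using Proposition \ref{prop:upfiltration}/Corollary \ref{cor:FredholmUp} is not available here, so instead one notes that for the application $w$ will be a fixed finite sum $\sum_{n}p^{2N-(c+1)n}(\cdots)V_{k,n}$ coming from Corollary \ref{cor:iteratenabla}, so only boundedly many graded levels contribute nontrivially and $b$ may depend on that bound — or, more cleanly, observe that a section of $\mathrm{Hdg}^{-s}\bW$ has image in $\mathrm{Gr}_i$ automatically more and more $\mathrm{Hdg}$-divisible as $i$ grows, so only finitely many $i$ are obstructed and $b$ can be chosen uniformly in terms of $r$, $n$, $s$. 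Once the filtration is truncated, the estimate is a finite induction on the filtration level: lift from $\mathrm{Gr}_i$ to $\Fil_i$, subtract, and repeat, at each stage invoking the core invertible-sheaf case with a $b$ increased by a bounded amount, and finally take the largest such $b$.
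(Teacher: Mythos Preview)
Your overall strategy---use Lemma \ref{lemma:kerloc} to bound the kernel of the restriction-to-ordinary map by a power of $\udelta$, then trade powers of $\mathrm{Hdg}$ for powers of $p$ after passing to a deeper $\fX_{b,I}$---is exactly what the paper does. But your reduction step has a genuine gap.

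You propose to pass via the filtration $\Fil_\bullet\bW$ to the graded pieces $\mathrm{Gr}_i\bW\cong\fw^{k}\otimes\mathrm{Hdg}^i\omega_E^{-2i}$ and treat each as a line bundle. The obstacle you flag is real and your two workarounds do not resolve it. Option (a), appealing to the particular shape of $w$ in the application, is not permitted: the statement is for \emph{every} $w$, and $b$ must be uniform in $w$. Option (b), that the image of $w$ in $\mathrm{Gr}_i$ becomes ``automatically more and more $\mathrm{Hdg}$-divisible'', is not correct: the factor $\mathrm{Hdg}^i$ in the description of $\mathrm{Gr}_i$ is a twist of the target line bundle, not a divisibility of the component of $w$. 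Concretely, over the Igusa tower $\bW^0\cong R\langle Z,Y\rangle$, and a general $w$ reduced modulo $p^j$ is a polynomial in $Y$ whose degree depends on $w$; there is no uniform truncation of the filtration. Even if you could truncate, the d\'evissage ``lift from $\mathrm{Gr}_i$, subtract, repeat'' would require either a splitting of the filtration over $\fX_{r,I}$ (not available) or would degrade the $p$-power at each of $w$-dependently many steps.

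The paper's argument dispenses with the filtration entirely, using the very fact you cite but do not exploit. By Lemma \ref{lemma:wfree} (more precisely, by the local description over the Igusa tower) one has $\bW^0(U)/p^j\cong (R/p^j)[Z,Y]$, a \emph{free} $R/p^j$-module. Hence the kernel of $\bW^0(U)/p^j\to\bW^{0,\mathrm{ord}}(U)/p^j$ is the kernel of $R/p^j\to R^{\mathrm{ord}}/p^j$ tensored with a free module, and Lemma \ref{lemma:kerloc} gives at once that it is annihilated by $\udelta^{jp^{r+1}(p-1)+p^n-p}$, uniformly in $w$. This descends from $\fIG_{n,r,I}$ to $\fX_{r,I}$ because $p^j\bW^0(V)=p^j\bW^0(U)\cap\bW^0(V)$; a short extra argument handles the twist by $\fw^{k_f}$ at the cost of one more constant power of $\udelta$. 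One obtains $\udelta^{jp^{r+1}(p-1)+C}\,w\in p^j\bW(V)$ with $C$ depending only on $n$ and $p$, and then, choosing $b$ so that $(p-1)p^{b+1}\geq 2\bigl(p^{r+1}(p-1)+C\bigr)$, the relation $p\,\udelta^{-(p-1)p^{b+1}}\in\cO_{\fX_{b,I}}$ converts this into $w\in p^{[j/2]}\bW(V_b)$, exactly as you anticipated in your last paragraph.
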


\begin{proof}
Recall that  $\bW:= \bW^0\otimes_{\cO_{\fX_{r,I}}} \fw^{k_f}$ where $\fw^{k_f}$ is the coherent $\cO_{\fX_{r,I}}$-module $\bigl(g_{i,\ast}
\bigl(\cO_{\mathfrak{IG}_{i,r}}\bigr)\otimes_{\Lambda^0} \Lambda\bigr) \bigl[k_{f}^{-1}\bigr] $ of Definition \ref{defi:wnrI}. It follows from Theorem
\ref{thm:descentbWk} that tensoring $\bW^0\otimes_{\cO_{\fX_{r,I}}}$ with a coherent $\cO_{\fX_{r,I}}$-modules is exact so that $\bW_k$ is also $\bW_k=\left(
\bW^0_{k}\otimes_{\cO_{\fX_{r,I}}} \bigl(g_{i,\ast} \bigl(\cO_{\mathfrak{IG}_{i,r}}\bigr)\otimes_{\Lambda^0} \Lambda\bigr)\right)\bigl[k_{f}^{-1}\bigr] $. Thus it
suffices to prove the statement locally on $\fX_{r,I}$ and replacing $\bW$ with $\bW\otimes_{\cO_{\fX_{r,I}}} \mathcal{F}$ with $\mathcal{F}:=\bigl(g_{i,\ast}
\bigl(\cO_{\mathfrak{IG}_{i,r}}\bigr)\otimes_{\Lambda^0} \Lambda\bigr)$.

Consider first the case of $\bW^0$. Let $V={\rm Spf}(S) \subset \fX_{r,I}$ and let $U={\rm Spf}(R)\subset \fIG_{n,r,I}$ be its inverse image.  For $V$ small enough
$\bW^0(U)/p^j \bW^0(U) \cong R/p^j R[ Z,Y] $ is a free $R/p^jR$-module and $\oW(U)/p^j \oW(U) \cong R^{\rm ord}/p^jR^{\rm ord}  R\bigl[ Z^{\rm ord},Y^{\rm
ord}\bigr] $ is a free $R^{\rm ord}/p^jR^{\rm ord}$-module. We may choose the variables so that the restriction map $\bW^0(U)/p^j \bW^0(U) \lra \oW(U)/p^j \oW(U)$
sends $Y\mapsto Y^{\rm ord}$ and $Z\mapsto p^n \beta_n^{-1} Z^{\rm ord}=\udelta^{p^n-1} Z^{\rm ord}$. It follows from Lemma \ref{lemma:kerloc}  that the kernel of
$\bW^0(U)/p^j \bW^0(U) \lra \oW(U)/p^j \oW(U) $ is annihilated by $\udelta^{i p^{r+1} (p-1) + p^n-p} $. This implies that the kernel of $\bW^0(V)/p^j \bW^0(V) \lra
\oW(V)/p^j \oW(V) $ is annihilated by $\udelta^{j p^{r+1} (p-1) + p^n-p} $ since $p^j\bW^0(V)=\bigl(p^j\bW^0(U)\bigr) \cap \bW^0(V)$ by construction. The same then
applies for the kernel of $\bW^0(U)/p^j \bW^0(U) \lra \bW^{0,\rm ord}(U)/p^j \bW^{0,\rm ord}(U) $ as  $p^j \bW^0(U)=\bigl(p^j \bW^0(V)\bigr) \cap \bW^0(U)$ by
construction.

As explained in the proof of Lemma \ref{lemma:kerloc}, the morphism $g_1\colon \mathfrak{IG}_{1,r}\to \fX_{r,I}$ is flat and $\mathfrak{IG}_{2,r} \to
\mathfrak{IG}_{1,r}$ can be factored via a flat morphism $ \mathfrak{IG}_{i,r}'\to \mathfrak{IG}_{1,r} $  with $\udelta^{p^i-p} \cO_{\fIG_{i,r,I}} \subset
\cO_{\fIG_{i,r,I}'}$. Since $\Lambda_I$ is a finite and free $\Lambda_I^0$-module, the tensor product of the pushforward of the structure sheaf of $
\mathfrak{IG}_{i,r}' \to \fX_{r,I}$ via $\Lambda_I^0\to \Lambda_I$ defines a finite and flat $\cO_{\fX_{r,I}}$-module $\mathcal{G}$ such that  $\udelta^{p^i-p}
\mathcal{F}\subset \mathcal{G} \subset \mathcal{F}$.

This implies that the kernel of $\bigl(\bW^0(V)/p^j \bW^0(V)\bigr)\otimes_R \mathcal{F}(V) \lra \bigl(\oW(V)/p^j \oW(V)\bigr)\otimes_R \mathcal{F}(V)  $ is
annihilated by $\udelta^{j p^{r+1} (p-1) + p^n-p + p^i-p} $. We also conclude that $\udelta^{j p^{r+1} (p-1)   + p^n-p + p^i-p} w \in p^j \bW(V)$ as $\udelta$ is
invertible in $R^{\rm ord}$

Then passing to $\fX_{b,I} $ with $b$ such that $(p-1)p^{b+1} \geq 2 p^{r} (p-1)  + 2(p^n-p) + 2(p^i-p) $ and considering the open $V_b:={\rm Spf}(S_b)$
corresponding to $V$ we have that $a =p \udelta^{-(p-1) p^{b+1}}\in S_b$ and

$$
\frac{p^2}{\udelta^{2 p^{r+1} (p-1)   + 2(p^n-p) + 2(p^i-p)}}=p a \bigl(\udelta^{(p-1) p^{b+1} - 2 p^{r+1} (p-1)  - 2(p^n-p)-2(p^i-p)} \bigr)\in p S_b.
$$
Hence, if we denote by $w'$ the image of $w$ in ${\rm H}^0\bigl(V_b, \bW\bigr)$,  we have that $w'  \in p^{[j/2]} \bW(V_b)$ as claimed.

\end{proof}

Consider now the connection  $\nabla\colon \bW'\lra \Bigl(\frac{1}{{\rm Hdg}^{c_n}}\Bigr)\cdot \bW'$ over $\fX_{r,I}$ defined in Section \ref{sec:GMwk}. We have the
following key result:

\begin{corollary}
\label{cor:mainiterations} There exists an integer $b$ depending on $r$ and $n$ such that for every $g\in {\rm H}^0(\fX_{r,I}, \bW_k)^{U=0}$ and every positive
integer $N$ we have
$${\rm Hdg}^{c_n(p-1)^2} \bigl(\nabla^{p-1}-\mathrm{Id}\bigr)^{N}(g)\subset
p^{[N/2p]}{\rm H}^0\bigl(\fX_{b,I}, \bW\bigr)\cap {\rm H}^0\bigl(\fX_{b,I}, \bW'_k\bigr)$$and there exists a positive integer $\gamma$ (depending on $r$, $n$ and
$p$) such that, given positive integers $h$ and $j_1,\ldots,j_h$ with $N=j_1+\cdots + j_h$, then

$${\rm Hdg}^{\gamma} \frac{p^{h'}}{h!} \Bigl(\prod_{a=1}^h \frac{\bigl(\nabla^{(p-1)}-{\rm
Id}\bigr)^{j_a}}{j_a} \Bigr)(g)\subset p^{[N/2p^2]}{\rm H}^0\bigl(\fX_{b,I}, \bW\bigr)\cap {\rm H}^0\bigl(\fX_{b,I}, \bW'_k\bigr) ;$$here $h'=h$ if $p\neq 2$ and
$h'=2h$ if $p=2$.

\end{corollary}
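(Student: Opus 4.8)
\textbf{Proof proposal for Corollary \ref{cor:mainiterations}.}

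The plan is to bootstrap from Proposition \ref{prop:congord} and Proposition \ref{prop:surconv}, which together already handle the case of the ordinary locus and the passage back to an honest strict neighbourhood. First I would recall from Definition \ref{def:oW'} and Section \ref{sec:GMwk} that $\nabla$ acts on $\bW'_k$ with poles bounded by $\mathrm{Hdg}^{c_n}$, so that a single application of $\nabla^{p-1}$ introduces a denominator $\mathrm{Hdg}^{c_n(p-1)}$; since $\nabla^{p-1}-\mathrm{Id}$ is a difference of two operators with denominator at most $\mathrm{Hdg}^{c_n(p-1)}$, the denominator of $\bigl(\nabla^{p-1}-\mathrm{Id}\bigr)^N$ is \emph{a priori} $\mathrm{Hdg}^{N c_n(p-1)}$, which grows with $N$ and is the whole difficulty. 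The point of the first displayed inequality is that after fixing the uniform factor $\mathrm{Hdg}^{c_n(p-1)^2}$ one recovers a $p$-integral statement, with $p$-divisibility $[N/2p]$, over a \emph{fixed} neighbourhood $\fX_{b,I}$. This is exactly what Proposition \ref{prop:congord} plus Proposition \ref{prop:surconv} buy us: Proposition \ref{prop:congord} tells us $\bigl(\nabla^{p-1}-\mathrm{Id}\bigr)^{Np}(g)$ is in $p^N \bW$ over the ordinary locus; and for a general $N$, writing $N = pM + t$ with $0\le t < p$, we apply $\bigl(\nabla^{p-1}-\mathrm{Id}\bigr)^{pM}$ with the $p^M$-divisibility from loc.~cit.\ and absorb the remaining $t$ factors into the $\mathrm{Hdg}$-denominator (using that $\nabla^{p-1}-\mathrm{Id}$ has $\mathrm{Hdg}$-denominator bounded by $c_n(p-1)^2$ for $p$ applications, giving the uniform $\mathrm{Hdg}^{c_n(p-1)^2}$ prefactor). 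Then Proposition \ref{prop:surconv} with $j = M$ converts this ordinary-locus $p^M$-integrality into $p^{[M/2]}$-integrality over $\fX_{b,I}$ for a suitable $b$ depending only on $r$, $n$, $s = c_n(p-1)^2$; and $[M/2] = [\,[N/p]/2\,]\ge [N/2p]$, which gives the first assertion.

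For the second assertion I would expand the product $\prod_{a=1}^h \bigl(\nabla^{(p-1)}-\mathrm{Id}\bigr)^{j_a}/j_a$ by noting it is just $\bigl(\nabla^{(p-1)}-\mathrm{Id}\bigr)^{N}/(j_1 j_2\cdots j_h)$, so the content is the $p$-adic valuation of the extra factor $p^{h'}/\bigl(h!\, j_1\cdots j_h\bigr)$ combined with the first assertion. By the first part, $\mathrm{Hdg}^{c_n(p-1)^2}\bigl(\nabla^{p-1}-\mathrm{Id}\bigr)^N(g)$ already lies in $p^{[N/2p]}\,{\rm H}^0(\fX_{b,I},\bW)$, so it remains to check that $v_p\bigl(p^{h'}/(h!\,j_1\cdots j_h)\bigr)$ is bounded below by a quantity that, when added to $[N/2p]$, is at least $[N/2p^2]$; equivalently that $[N/2p] - v_p(h!) - \sum_a v_p(j_a) + h' \ge [N/2p^2]$ (up to absorbing a bounded, $r,n,p$-dependent loss into the uniform $\mathrm{Hdg}^\gamma$). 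Using the standard estimate $v_p(h!) \le (h-1)/(p-1) \le h$, together with $v_p(j_a)\le \log_p j_a$ and $\sum_a j_a = N$, one gets $\sum_a v_p(j_a)\le \log_p\bigl(\prod_a j_a\bigr)\le \log_p\bigl((N/h)^h\bigr) = h\log_p(N/h)$, which is $O(h)$ and, crucially, $h \le N/(p-1)$ since each $j_a \ge p-1$ can be arranged---or if some $j_a$ are smaller one handles those finitely many terms separately. Choosing $h' = h$ (resp.\ $2h$ for $p=2$) exactly compensates $v_p(h!)$, so the net loss relative to $[N/2p]$ is a multiple of $h = O(N/p)$, and $[N/2p] - O(N/p)$ must be arranged to dominate $[N/2p^2]$; this forces the bookkeeping to be done with the sharper constants from Proposition \ref{prop:congord} (where the divisibility is $p^N$ for $Np$ applications, i.e.\ a genuine rate of $1/p$), and it is here that the extra factor of $p$ in the denominator $2p^2$ versus $2p$ comes from. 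The remaining bounded denominator in $\mathrm{Hdg}$, independent of $N$ but depending on $r$, $n$, $p$, gets absorbed into $\gamma$.

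The main obstacle is precisely the last bookkeeping step: controlling $v_p\bigl(h!\,j_1\cdots j_h\bigr)$ uniformly in all partitions $N = j_1+\cdots+j_h$ while keeping the $\mathrm{Hdg}$-denominator $\gamma$ independent of $N$. The delicate point is that $h$ can be as large as $N/(p-1)$ (all $j_a$ equal to $p-1$), in which case $v_p(h!)$ is of order $h/(p-1)\sim N/(p-1)^2$, and one must verify that the chosen $h' $ (namely $h$, or $2h$ for $p=2$) is exactly enough to cancel this while the residual $\sum_a v_p(j_a)$ and the conversion loss in Proposition \ref{prop:surconv} (which halves the exponent) still leave $[N/2p^2]$. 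I would organize this as a single inequality $v_p\bigl(p^{h'} \cdot p^{[N/2p]}/(h!\,\prod_a j_a)\bigr)\ge [N/2p^2]$, prove it by the elementary estimates above, and then feed the result through Proposition \ref{prop:surconv} one final time with $j = [N/2p^2] + (\text{the }\mathrm{Hdg}\text{-exponent})$, noting that all the $\mathrm{Hdg}$-exponents appearing ($c_n(p-1)^2$ from iterating $\nabla$, $p^n - p$ and $p^i - p$ from the Igusa-tower comparison in Lemma \ref{lemma:kerloc}, and the $b$-dependent gain from Proposition \ref{prop:surconv}) are bounded in terms of $r$, $n$, $p$ only, so their sum defines the required $\gamma$ and the corresponding $b$.
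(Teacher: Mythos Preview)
Your overall strategy (use Proposition~\ref{prop:congord} to get $p$-divisibility on the ordinary locus, then Proposition~\ref{prop:surconv} to transfer to a fixed strict neighbourhood) is exactly the paper's, but your execution has a genuine gap in the first part and a faulty estimate in the second.

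\textbf{First part: you cannot apply Proposition~\ref{prop:surconv} with $s=c_n(p-1)^2$.} You correctly observe that the a~priori Hdg-denominator of $\bigl(\nabla^{p-1}-\mathrm{Id}\bigr)^{N}(g)$ is $\mathrm{Hdg}^{c_nN(p-1)}$, which grows with~$N$; but then you forget this when citing Proposition~\ref{prop:surconv}. That proposition requires $w\in\mathrm{Hdg}^{-s}\,\mathrm{H}^0(\fX_{r,I},\bW)$ for a \emph{fixed} $s$, and returns a $b$ depending on~$s$. With $s=c_nN(p-1)$ you get $b$ depending on~$N$, which is useless. Multiplying first by $\mathrm{Hdg}^{c_n(p-1)^2}$ only removes the denominator from the last $N_0\le p-1$ applications; the $p[N/p]$ others still leave a denominator $\mathrm{Hdg}^{c_np(p-1)[N/p]}$ growing linearly in~$N$. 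The paper's remedy is not to cite Proposition~\ref{prop:surconv} as a black box but to re-run its proof: the kernel of restriction to the ordinary locus mod $p^j$ is killed by $\udelta^{\alpha j+\beta}$ with $\alpha,\beta$ depending only on $r,n,p$, so the total Hdg exponent needed is a linear function $\ell([N/p])$ of $[N/p]$. Over $\fX_{b,I}$ one has $p\in\mathrm{Hdg}^{p^{b+1}}$, so choosing $b$ large enough that $p^{b+1}$ dominates the linear slope of $\ell$ (plus $c_np(p-1)$) lets one trade the growing Hdg-denominator for roughly half of the available $p^{[N/p]}$, leaving $p^{[N/2p]}$. The bounded constant term is what becomes the uniform prefactor $\mathrm{Hdg}^{c_n(p-1)^2}$ (resp.\ $\mathrm{Hdg}^\gamma$).

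\textbf{Second part: the AM--GM estimate is wrong.} Your reduction to an inequality of the form $h'+[N/2p]-v_p(h!)-\sum_a v_p(j_a)\ge[N/2p^2]$ is fine, but the claim that $\sum_a v_p(j_a)\le h\log_p(N/h)$ is ``$O(h)$'' fails: take $h=1$, $j_1=N=p^m$, giving $\sum_a v_p(j_a)=m$ with $h=1$. The paper proves the required inequality cleanly as Lemma~\ref{lemma:estimate}: one reduces by additivity to $h=1$, where the statement becomes $\delta+\frac{p-1}{p^2}\,j\ge v_p(j)+\frac{1}{p-1}$ for every positive integer $j$, and this is checked directly (the exponential $p^{v_p(j)}\le j$ beats the linear bound). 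Note also that the paper applies this lemma \emph{before} passing to $\fX_{b,I}$ (i.e.\ works with $p^{[N/p]}$ over $\fX_{r,I}$, deduces $p^{[N/p^2]}$ there, then transfers), rather than after as you suggest; either order works once the inequality is in hand, but your final ``feed through Proposition~\ref{prop:surconv} one more time'' is superfluous.
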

\begin{proof}
Note that $${\rm Hdg}^{c_nN(p-1)} \bigl(\nabla^{p-1}-\mathrm{Id}\bigr)^{N}\colon  \bW' \lra \bW'\subset \bW
$$is well defined, i.e., it does not have poles.
Write $N=p[N/p]+N_0$ the division with reminder of $N$ by $p$.
Then

$${\rm Hdg}^{c_nN(p-1)} \bigl(\nabla^{p-1}-\mathrm{Id}\bigr)^{N}(g)=
{\rm Hdg}^{c_nN_0(p-1)} {\rm Hdg}^{c_np(p-1)[N/p]}
\bigl(\nabla^{p-1}-\mathrm{Id}\bigr)^{[N/p]}
\bigl(\bigl(\nabla^{p-1}-\mathrm{Id}\bigr)^{N_0}(g)  \bigr)$$

We then deduce from  Proposition \ref{prop:congord} that the
restriction to the ordinary locus belongs to $p^{[N/p]} {\rm
H}^0\bigl(\fX_{r,I}^{\rm ord}, \bW\bigr)$. Thanks to Proposition
\ref{prop:surconv} there exists $b$, depending on $n$, $r$ and $p$
such that
$${\rm Hdg}^{c_nN_0(p-1)}
\bigl(\nabla^{p-1}-\mathrm{Id}\bigr)^{[N/p]} \bigl(\bigl(\nabla^{p-1}-\mathrm{Id}\bigr)^{N_0}(g)  \bigr)\in p^{[N/2p]} {\rm H}^0\bigl(\fX_{b,I}, \bW\bigr).$$As
$N_0\leq p-1$, the first claim follows.

We prove the second claim. Write $N=p[N/p]+N_0$ Also in this case $$ {\rm Hdg}^{c_np(p-1)[N/p]+ c_n (p-1)N_0} \bigl(\nabla^{p-1}-\mathrm{Id}\bigr)^{p[N/p]} \circ
\bigl(\nabla^{p-1}-\mathrm{Id}\bigr)^{N_0}(g) \colon \bW' \lra \bW'$$over $\fX_{r,I}$ is integral. Over the ordinary locus the image of $g$ is zero modulo
$p^{[N/p]}$ thanks to Proposition \ref{prop:congord}. Arguing as in the proof of Proposition \ref{prop:surconv} there exists a linear function $\ell(X)=\alpha X +
\beta$ with $\alpha$ and $\beta$ positive integers depending on $r$, $n$ and $p$ such that
$${\rm Hdg}^{\ell([N/p])}
\bigl(\nabla^{p-1}-\mathrm{Id}\bigr)^{p[N/p]}\bigl(\bigl(\nabla^{p-1}-\mathrm{Id}\bigr)^{N_0}(g)\bigr)\in p^{[N/p]}{\rm H}^0\bigl(\fX_{r,I}, \bW\bigr)$$and hence
${\rm Hdg}^{\ell([N/p])} w\in p^{[N/p^2]}{\rm H}^0\bigl(\fX_{r,I}, \bW\bigr)$with  $$ w:=\frac{p^h}{h! j_1\cdots j_h}
\bigl(\nabla^{p-1}-\mathrm{Id}\bigr)^{p[N/p]}\bigl(\bigl(\nabla^{p-1}-\mathrm{Id}\bigr)^{N_0}(g)\bigr)
$$thanks to Lemma \ref{lemma:estimate}. Replacing $\ell([N/p])$ with
$\ell'([N/p^2]):=p \alpha [N/p^2]+ \gamma$ with $\gamma:=(p-1)
\alpha+ \beta$, noticing that $\ell'([N/p^2])\geq \ell([N/p])$ and
arguing as the proof of Proposition \ref{prop:surconv} we find a
positive integer $b$ depending on $\alpha$ and $\beta$, and hence
on $r$ and $n$, such that ${\rm Hdg}^{\gamma} w\in
p^{[N/2p^2]}{\rm H}^0\bigl(\fX_{b,I}, \bW\bigr)$, concluding the
proof of the Corollary.

\end{proof}

\begin{lemma}\label{lemma:estimate} Let $j_1,\ldots,j_h$ be positive integers and write $N=j_1 +\cdots + j_h$. Then we have
$\delta h+ \frac{N}{p}-\sum_{i=1}^h v_p(j_i)-v_p(h!)\geq \frac{N}{p^2}$ with $\delta=1$ if $p\geq 3$ and $\delta=2$ if $p=2$.
\end{lemma}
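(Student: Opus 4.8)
The statement to be proved is an elementary $p$-adic valuation estimate: for positive integers $j_1,\ldots,j_h$ with $N = j_1 + \cdots + j_h$, one has $\delta h + \frac{N}{p} - \sum_{i=1}^h v_p(j_i) - v_p(h!) \geq \frac{N}{p^2}$, where $\delta = 1$ for $p \geq 3$ and $\delta = 2$ for $p = 2$. The approach is to bound each negative term separately, term by term in the sum over $i$, and to handle $v_p(h!)$ via the Legendre formula. First I would record the two classical facts: (a) for any positive integer $m$, $v_p(m) \leq \log_p m \leq \frac{m-1}{p-1} \leq m/2$ when $p\ge 3$ (and a slightly weaker but still usable bound for $p=2$), and more usefully $v_p(m) \le \frac{m}{p}$ in fact fails for small $m$, so one instead uses $p^{v_p(j_i)} \le j_i$, i.e. $v_p(j_i) \le \log_p j_i$; and (b) $v_p(h!) = \sum_{\ell \geq 1} \lfloor h/p^\ell \rfloor \leq \frac{h}{p-1}$.

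The cleanest route is to prove the per-index inequality $\delta + \frac{j_i}{p} - v_p(j_i) - \frac{1}{p-1} \geq \frac{j_i}{p^2}$ for every positive integer $j_i$, and then sum over $i=1,\ldots,h$: the left side sums to exactly $\delta h + \frac{N}{p} - \sum_i v_p(j_i) - \frac{h}{p-1} \le \delta h + \frac{N}{p} - \sum_i v_p(j_i) - v_p(h!)$ (using (b) with the sign), which is $\le$ the quantity in the statement, while the right side sums to $\frac{N}{p^2}$. So it suffices to check the single-variable inequality $\delta - \frac{1}{p-1} + j(\frac{1}{p} - \frac{1}{p^2}) \geq v_p(j)$ for all $j \geq 1$. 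Writing $j = p^a m$ with $p \nmid m$, we have $v_p(j) = a$ and $j \geq p^a$, so it is enough that $\delta - \frac{1}{p-1} + p^a \cdot \frac{p-1}{p^2} \geq a$. For $p \geq 3$ (so $\delta = 1$): the function $a \mapsto p^a \frac{p-1}{p^2} - a$ is easily checked to be increasing in $a \geq 0$ and at $a=0$ equals $\frac{p-1}{p^2}$; combined with $1 - \frac{1}{p-1} \geq 0$ for $p \geq 3$, one gets the bound, with the small cases $a=0,1,2$ verified directly (e.g. $a=1$: need $1 - \frac{1}{p-1} + \frac{p-1}{p} \geq 1$, i.e. $\frac{p-1}{p} \geq \frac{1}{p-1}$, true for $p\ge 3$). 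For $p = 2$ (so $\delta = 2$): here $\frac{1}{p-1} = 1$, so one needs $1 + 2^{a-2} \geq a$, which holds for all $a \geq 0$ by a one-line induction.

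The only mild subtlety — and the step I would flag as needing the most care — is the interplay of the $p=2$ case with the factor $\delta = 2$: the term $\frac{j}{p} - \frac{j}{p^2} = \frac{j}{4}$ is small, and $v_2(h!)$ can be as large as $h-1$, so the extra $+h$ coming from $\delta = 2$ instead of $\delta = 1$ is exactly what is needed to absorb $v_2(h!) - \frac{h}{p-1}$... except that for $p=2$ we have $\frac{h}{p-1} = h$, and $v_2(h!) \le h-1 < h$, so the bound (b) is already enough and no slack is lost. I would double-check that the chosen per-index constant $\delta - \frac{1}{p-1}$ is nonnegative in both cases ($= 0$ for $p=3$ after rounding, $=1$ for $p=2$, and $>0$ for $p \ge 5$) so that the term-by-term summation argument is valid. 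Finally I would assemble: sum the single-variable inequalities, invoke $v_p(h!) \le h/(p-1)$, and conclude. No deeper input is required; this lemma is purely combinatorial and self-contained.
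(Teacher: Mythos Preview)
Your approach matches the paper's exactly: bound $v_p(h!)\le h/(p-1)$ via Legendre's formula, reduce by summing over $i$ to the single-variable inequality $\delta+\frac{p-1}{p^2}\,j\ge v_p(j)+\frac{1}{p-1}$, then set $j=p^a$ (the worst case) and verify case by case in $a$ for $p\ge 3$ and $p=2$ separately.

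One minor slip to fix: the function $a\mapsto p^a\frac{p-1}{p^2}-a$ is \emph{not} increasing on all of $a\ge 0$ (for $p=3$ it drops from $2/9$ at $a=0$ to $-1/3$ at $a=1$); it is increasing only for $a\ge 1$. Since you already verify $a=0,1,2$ by hand, the argument is unaffected, but you should rephrase the monotonicity claim as ``increasing for $a\ge 1$, with $a=0,1$ checked directly.''
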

\begin{proof}
Write $h=h_0 + \cdots + h_t p^t$ for the $p$-adic expansion of $h$. Then the $p$-adic valuation $v_p$ of $h!$ is
$$v_p(h!)=\frac{h-(h_0+\cdots + h_t)}{p-1}\leq
\frac{h}{p-1}.$$It suffices to prove that $\delta h+ \frac{N}{p}-\sum_i v_p(j_i)-\frac{h}{p-1}\geq \frac{N}{p^2}$ with $\delta=1$ if $p\geq 3$ and $\delta=2$ for
$p=2$. As $N=j_1 +\cdots + j_h$ it suffices to prove the claim for $h=1$, i.e., that for every positive integer $j$
$$\delta+ \frac{(p-1)}{p^2} j \geq  v_p(j)+ \frac{1}{p-1}.$$

If $v_p(j)=0$ this is holds for any $\delta\geq 1$. Else write $j=\gamma p^r$ with $p$ not dividing $\gamma$ and $r\geq 1$, then the inequality becomes  $$\delta+
\frac{(p-1)}{p} \gamma p^{r-1} \geq r+ \frac{1}{p-1} .$$It suffices to prove it for $\gamma=1$.  If $p\geq 3$,  we can take $\delta=1$ as $ p^{r-1} \geq r+1$ for
$r\geq 1$. If $p=2$,  we can take $\delta=2$ as $2^{r-1}\geq r$ for every $r\geq 1$. This concludes the proof of the Claim.

\end{proof}

\begin{proposition}
\label{prop:nablas} The notations are  as in Corollary \ref{cor:mainiterations} and let $s\colon\Z_p^\ast\lra (\Lambda_{I_s}^0)^\ast$ that satisfies the Assumption
\ref{ass}. Then, there exist  positive integers $\gamma$, $b$ depending on $r$, $n$ and $p$ such that  for every $g\in {\rm H}^0(\fX_{r,I}, \bW_k)^{U=0}$, the
sequences

$$A(g,s)_n:=\sum_{j=1}^{n+1}\frac{(-1)^{j-1}}{j} \bigl(\nabla^{p-1}-{\rm
Id}\bigr)^j(g)$$and, if we write $H_{i,n}$ for the set of $i$-uple
$(j_1,\ldots,j_i)$ of positive integers having $j_1+\cdots +
j_i\leq n+1$,
$$
B(g,s)_n:=\sum_{i=0}^n\frac{1}{i!} \frac{u_s^i}{(p-1)^i}
\Bigl(\sum_{(j_1,\ldots,j_i)\in  H_{i,n}} \bigl( \prod_{a=1}^i
\frac{(-1)^{j_a-1}}{j_a} \bigr) \bigl( \nabla^{p-1}-{\rm
Id}\bigr)^{j_1+\cdots + j_i}\Bigr)(g), \mbox{ for } n\ge 0
$$
converge in ${\rm Hdg}^{-\gamma} {\rm
H}^0(\fX_{b,I}\widehat{\otimes} \Lambda_{I_s}, \bW)$. Moreover if
we denote the limits
$${\rm lim}_{n\rightarrow \infty} A(g,s)_n=:{\rm
log}(\nabla_k^{p-1})(g)$$ and
$${\rm lim}_{n\rightarrow \infty} B(g,s)_{n}=:{\rm exp}\Bigl(\frac{u_s}{(p-1)}{\rm log}(\nabla^{p-1}_k)\Bigr)(g)=:\nabla^s_k(g),$$
we have that $\nabla^s_k(g)\in {\rm Hdg}^{-\gamma} {\rm H}^0(\fX_{b, I}\widehat{\otimes} \Lambda_{I_s}, \bW_{k+2s})$. Finally on $q$-expansions we have
$$\nabla_k^s\bigl(g\bigr)(q):=\sum_h \sum_{j=0} \left(
\begin{array}{cc} u_s \\ j
\end{array} \right)\prod_{i=0}^{j-1}(u_k+u_s-h-1-i)
\partial^{s-j}\bigl(g_h(q)\bigr)V_{k+2s,j+h}.$$
\end{proposition}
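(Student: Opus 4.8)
The strategy is to prove convergence of the two series $A(g,s)_n$ and $B(g,s)_n$ in the Banach $\Lambda_{I_s}$-module ${\rm Hdg}^{-\gamma}{\rm H}^0(\fX_{b,I}\widehat{\otimes}\Lambda_{I_s},\bW)$ by establishing $p$-adic estimates on the individual summands, using the key integrality results of Corollary \ref{cor:mainiterations}; then to identify the limits, first as sections lying in the correct weight space $\bW_{k+2s}$, and finally to check the $q$-expansion formula by a direct computation reducing to Lemma \ref{lemma:formulanablaNg}. The first step is essentially bookkeeping on top of Corollary \ref{cor:mainiterations}: by that result there are integers $b$ and $\gamma$, depending only on $r$, $n$, $p$, such that for every $N\geq 1$
$${\rm Hdg}^{c_n(p-1)^2}\bigl(\nabla^{p-1}-{\rm Id}\bigr)^N(g)\in p^{[N/2p]}{\rm H}^0(\fX_{b,I},\bW)\cap {\rm H}^0(\fX_{b,I},\bW'_k),$$
and such that for a product $\prod_{a=1}^h \frac{(\nabla^{p-1}-{\rm Id})^{j_a}}{j_a}$ of total degree $N=j_1+\cdots+j_h$, the element ${\rm Hdg}^\gamma\frac{p^{h'}}{h!}\prod_a \frac{(\nabla^{p-1}-{\rm Id})^{j_a}}{j_a}(g)$ lies in $p^{[N/2p^2]}{\rm H}^0(\fX_{b,I},\bW)$, with $h'=h$ for $p$ odd and $h'=2h$ for $p=2$. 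For $A(g,s)_n$ the $j$-th summand has the form $\frac{(-1)^{j-1}}{j}(\nabla^{p-1}-{\rm Id})^j(g)$; since $v_p(j)\leq \log_p j$ while the operator contributes $p^{[j/2p]}$, the $p$-adic valuation of the summand tends to $+\infty$, so the series converges after multiplying by ${\rm Hdg}^{c_n(p-1)^2}$, hence $a$ fortiori after multiplying by ${\rm Hdg}^{-\gamma}$ for a possibly larger $\gamma$; the limit is by definition $\log(\nabla_k^{p-1})(g)$. For $B(g,s)_n$ one uses $u_s\in q\Lambda_{I_s}$ (so $u_s^i/(p-1)^i$ is integral, in fact divisible by $q^i$ for $p$ odd and, after the $h'$-correction, still controlled for $p=2$) together with the second estimate of Corollary \ref{cor:mainiterations}: the term indexed by $(j_1,\ldots,j_i)$ is $\frac{u_s^i}{i!(p-1)^i}\bigl(\prod_a\frac{(-1)^{j_a-1}}{j_a}\bigr)(\nabla^{p-1}-{\rm Id})^{j_1+\cdots+j_i}(g)$, whose valuation, after clearing ${\rm Hdg}^\gamma$, is bounded below by $[N/2p^2]+$(a positive multiple of $i$) minus $v_p(h!)$-type terms that are exactly what Lemma \ref{lemma:estimate} was designed to absorb; summing over the finitely many $i$-tuples in $H_{i,n}$ of bounded total degree and then over $i$ gives a convergent double series. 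This identifies $\nabla_k^s(g)=\exp\bigl(\frac{u_s}{p-1}\log(\nabla_k^{p-1})\bigr)(g)$ as an element of ${\rm Hdg}^{-\gamma}{\rm H}^0(\fX_{b,I}\widehat{\otimes}\Lambda_{I_s},\bW)$.

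It then remains to pin down the weight of the limit. Each $(\nabla^{p-1}-{\rm Id})^j(g)$ lies in $\bW'_k=\sum_{n\in\Z}\bW_{k+2n}$ by Definition \ref{def:oW'}, so a priori $\nabla_k^s(g)$ lies in the $\alpha$-adic (or ${\rm Hdg}^{-\gamma}$-truncated) completion of this sum. To see that the specific combination defining $\nabla_k^s(g)$ actually lands in the single summand $\bW_{k+2s}$, I would use the action of the torus $\fT^{\rm ext}$: the eigen-decomposition $\bW'_k=\bigoplus_n \bW_{k+2n}$ is the decomposition into eigenspaces for $\fT$, and $\nabla_k^{p-1}$ raises the weight by $2(p-1)$; one checks, exactly as in the $q$-expansion computation of Lemma \ref{lemma:formulanablaNg} (which records that $\nabla^N(g(q)V_{k,h})$ contributes $V_{k+2N,\ast}$ terms), that the formal operator $\exp\bigl(\frac{u_s}{p-1}\log(\nabla_k^{p-1})\bigr)$, applied term by term, sends the weight-$k$ eigenspace into the weight-$(k+2s)$ piece, because on $V_{k,h}$ one gets precisely the prefactor $s(t)=\exp(u_s\log t)$ under $t\in\fT^{\rm ext}$. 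Concretely: the iteration formula of Lemma \ref{lemma:formulanablaNg} gives $a_{N,k,h,j}=\binom{N}{j}\prod_{i=0}^{j-1}(u_k-h+N-1-i)$; substituting $N\mapsto (p-1)m$, forming $\log$ and then $\exp$ with exponent $u_s/(p-1)$, and using the combinatorial identity that $\exp\bigl(\frac{u_s}{p-1}\log(1+x)\bigr)=(1+x)^{u_s/(p-1)}=\sum_j\binom{u_s/(p-1)}{j}x^j$ collapses the double sum to the closed form $\sum_j\binom{u_s}{j}\prod_{i=0}^{j-1}(u_k+u_s-h-1-i)\,\partial^{s-j}(g_h)V_{k+2s,j+h}$ claimed in the statement; this is a purely formal manipulation of power series over $\Lambda_I\widehat{\otimes}\Lambda_{I_s}$, valid because all the series involved converge by the estimates above. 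Since the $q$-expansion map of Definition \ref{def:qexp} is injective on the relevant spaces of sections (a section of $\bW_{k+2s}$ over $\fX_{b,I}$ is determined by its $q$-expansion at an unramified cusp, as already invoked in Proposition \ref{prop:congord} via evaluation at the Tate curve), this formal identity simultaneously proves the $q$-expansion formula and, comparing with the formula for $\theta^\chi$ and with the specialization properties, that $\nabla_k^s(g)$ lies in ${\rm H}^0(\fX_{b,I}\widehat{\otimes}\Lambda_{I_s},\bW_{k+2s})$ after inverting ${\rm Hdg}$.

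\textbf{Expected main obstacle.} The routine part is the valuation bookkeeping; the delicate point is the bound on $v_p\bigl(\frac{u_s^i}{i!(p-1)^i}\bigr)$ interacting with the $p^{[N/2p^2]}$ gain, i.e.\ making sure the factorials $i!$ appearing in the $\exp$ expansion (and the $j_a$'s in the denominators) are dominated uniformly over all partitions $N=j_1+\cdots+j_i$. This is exactly the content of Lemma \ref{lemma:estimate}: one needs $\delta i+\frac{N}{p}-\sum_a v_p(j_a)-v_p(i!)\geq \frac{N}{p^2}$, and the subtlety is that both the ``extra'' $\nabla^{p-1}-{\rm Id}$ factor (contributing $\frac{N}{p}$ from Proposition \ref{prop:congord}) and the divisibility $u_s\in q\Lambda_{I_s}$ (contributing $\delta i$, with $\delta=2$ forced when $p=2$) must be used; without both, the series would diverge. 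A secondary technical nuisance is controlling the constant $\gamma$ and the radius $b$ uniformly as one passes from $\fX_{r,I}$ to $\fX_{b,I}$ via Proposition \ref{prop:surconv}, but this is already packaged in Corollary \ref{cor:mainiterations} and only needs to be quoted. Finally, one must be slightly careful that the identity of formal operators $\exp\circ\log$ is legitimate: since $\nabla_k^{p-1}-{\rm Id}$ is ``topologically nilpotent'' only after multiplying by $p$ and a power of ${\rm Hdg}$, the manipulations take place in ${\rm Hdg}^{-\gamma}{\rm H}^0(\fX_{b,I}\widehat\otimes\Lambda_{I_s},\bW)[1/p]$ and one invokes convergence, established above, to descend back to the integral (up to ${\rm Hdg}^{-\gamma}$) statement.
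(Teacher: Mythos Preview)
Your proposal is essentially correct and follows the paper's approach for the convergence of $A(g,s)_n$ and $B(g,s)_n$ (both use Corollary \ref{cor:mainiterations} exactly as you describe) and for the weight identification via the $\fT^{\rm ext}$-action; the paper's argument for the latter is the same torus computation you sketch, compressed to the line $t\ast\nabla_k^s(g)=t^{2s}\nabla_k^s(t\ast g)$.

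The one substantive difference is in the verification of the $q$-expansion formula. You propose to obtain it by a direct formal collapse of the double $\exp$-$\log$ series, invoking the identity $\exp\bigl(\tfrac{u_s}{p-1}\log(1+x)\bigr)=(1+x)^{u_s/(p-1)}$. This is not quite enough as written: the target closed form is $\binom{u_s}{j}\prod_{i=0}^{j-1}(u_k+u_s-h-1-i)$, which is exactly the coefficient $a_{N,k,h,j}$ of Lemma \ref{lemma:formulanablaNg} with $N$ replaced by $u_s$, and a one-variable binomial identity in $x=\nabla^{p-1}-{\rm Id}$ does not directly produce this two-variable expression (nor the correct $\binom{u_s}{j}$ rather than $\binom{u_s/(p-1)}{j}$). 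The paper instead argues as follows: for integral $u_s=N$ the operator $\exp\bigl(\tfrac{N}{p-1}\log(\nabla^{p-1})\bigr)$ is literally $\nabla^N$, so Lemma \ref{lemma:formulanablaNg} gives the formula on the nose; for general $s$, each Fourier coefficient of $\nabla_k^s(g)(q)$ and of the proposed right-hand side lies in $\Lambda_{I_k}\widehat\otimes\Lambda_{I_s}$, and the two agree at all integral specializations of $u_s$, hence at infinitely many points, hence identically. This Zariski-density step is shorter and bypasses the combinatorics your direct approach would need.
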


\begin{proof}
The first convergence follows immediately from the first claim of Corollary \ref{cor:mainiterations}. Therefore ${\rm log}\bigl(\nabla_k^{p-1}\bigr)(g) $ converges
$p$-adically in ${\rm Hdg}^{-c_n(p-1)^2} {\rm H}^0\bigl(\fX_{b,I}, \bW\bigr)$.

We prove the second claim. Thanks to Corollary
\ref{cor:mainiterations} we have a positive integer $\gamma$ such
that for positive integers $h$, $N$ and $j_1,\ldots,j_h$ with $j_1
+\cdots + j_h=N$ we have

$$\mathrm{Hdg}^\gamma \frac{p^h
\bigl(\nabla^{p-1}-\mathrm{Id}\bigr)^{j_1+\ldots + j_h}(g)}{h!j_1\cdots j_h } \subset p^{[N/2p^2]}{\rm H}^0\bigl(\fX_{b,I}, \bW\bigr)\cap {\rm H}^0\bigl(\fX_{b,I},
\bW'_k\bigr).$$In particular the series $B(g,s)_m-B(g,s)_n$ for $m\geq n$ lie in $p^{[n+1/2p^2]}{\rm H}^0\bigl(\fX_{b,I}, \bW\bigr)$. i.e., $B(g,s)_n$ is a Cauchy
sequence for the $p$-adic topology and in particular converges.

To see that $\nabla_k^s(g)$ belongs to ${\rm H}^0(\fX_{b, I}, \bW_{k+2s})$ it is enough to see how a section of the torus $\fT^{\rm ext}$ acts on this section of
$\bW$. By density it is enough to see how an element $t\in \Z_p^\ast$ acts. As $t\ast \nabla_k(\_)=t^2 \nabla_k(t\ast \_)$ and $t\ast g=t^kg$, we obtain: $t\ast
\nabla_k^s(g)= t^{2s}\nabla_k^s(t^k g)$. This proves the claim.

It remains to show the claim on $q$-expansions. Assume first that $s$ is an integral weight. Then $B(g,s)_{n}$ converges $p$-adically to ${\rm
exp}\Bigl(\frac{u_s}{(p-1)}{\rm log}(\nabla^{p-1}_k)\Bigr)(g)$ which is $\nabla_k^s(g)$. Its $q$-expansion coincides with the one claimed in the Proposition thanks
to Lemma \ref{lemma:formulanablaNg}. In the general case, consider the coefficients $\sum_n c_n(q) V_{k+2s,n} $ of the $q$-expansion of $\nabla_k^s(g)$ and the
coefficients of $\sum_n b_n(q) V_{k+2s,n}$ with $$b_n(q):= \sum_{h,j, h+j=n}  \left(
\begin{array}{cc} u_s \\ j
\end{array} \right)\prod_{i=0}^{j-1}(u_k+u_s-h-1-i)
\partial^{s-j}\bigl(g_h(q)\bigr).$$For every $n$ both $c_n(q)$ and $b_n(q)$
are functions with values in $R \widehat{\otimes} \Lambda_{I_s}$, where $R=\Lambda_{I_k}(\!(q)\!)$ is the completed local ring at the cusp. For every $n$ the
coefficients in the $q$-expansion of $c_n$ and $b_n$ lie in $\Lambda_{I_k} \widehat{\otimes} \Lambda_{I_s}$ and coincide for all the integral specializations of
$u_s$, i.e., for infinitely many points. Hence they coincide. The claim follows.
\end{proof}

\section{Applications to the construction of the triple product $p$-adic $L$-function in the finite slope case.}\label{sec:triple}

In the first two sections of this chapter, in which we recall the known
construction of the triple product $p$-adic $L$-function attached to
a triple of Hida families, we follow closely the exposition in
Section \S 4 of \cite{darmon_rotger}.

\noindent Let $f$ be a newform of level $N_f$, character $\chi_f$ and let $\Q_f$ denote the number field generated by all Hecke eigenvalues of $f$. We write $f\in
S_k(N_f,\chi_f, \Q_f)$. We denote by $\pi_f$ the automorphic representation of $\GL_2(\bA_\Q)$ generated by $f$. If $N$ is a multiple of $N_f$ and $\Q_f\subset K$
we let $S_k(N, K)[\pi_f]$ denote the $f$-isotypic subspace of $S_k(N, K)$ attached to the automorphic representation $\pi_f$. For every divisor $a$ of $N/N_f$
consider the elements $[a]^\ast(f)$ of $S_k(N, K)[\pi_f]$ defined by pull--back via the morphism $[a]$ from the modular curve of level $\Gamma_1(N)$ to the modular
curve of level $\Gamma_1(N_f)$ given as follows. Take an elliptic curve $E$ with cyclic subgroup $H_N$. Let $H_{a N_f }$, resp.~$H_a$ be the kernel of
multiplication by $a N_f$, resp.~$a$ on $H_N$. Then $[a]\bigl(E,H_N\bigr)=\bigl(E',H_{N_f}'\bigr)$ with $E':=E/H_a$ and $H_{N_f}'=H_{a N_f}/H_a$. Note that
$[a]^\ast(f)=f(q^a)$. Then, as recalled in loc.~cit.:

\begin{lemma} \label{lemma:basis} The space $S_k(N, K)[\pi_f]$ is a finite dimensional $K$-vector space of dimension
$\displaystyle \sigma_0\bigl(\frac{N}{N_f}\bigr)$, where $\sigma_0(n)=\#\{d\ | \ d|n\}$, and a basis of $S_k(N, K)[\pi_f]$ is given by $\displaystyle
\bigl\{[a]^\ast(f)\bigr\}_{a\vert\frac{N}{N_f}}$.\end{lemma}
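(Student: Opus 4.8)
The plan is to prove the statement by combining strong multiplicity one / the theory of oldforms with an explicit linear algebra argument on the operators $[a]^\ast$. First I would recall that, since $f$ is a newform of level $N_f$, the automorphic representation $\pi_f=\bigotimes_v \pi_{f,v}$ is irreducible and at each place $\ell\mid N/N_f$ (with $\ell\nmid N_f$) the local component $\pi_{f,\ell}$ is an unramified principal series, while at places dividing $N_f$ the local conductor is exactly $N_f$. By the local theory of new and old vectors (Casselman), the space of vectors in $\pi_{f,\ell}$ fixed by $\Gamma_1(\ell^{v_\ell(N)})$-type level has dimension $v_\ell(N/N_f)+1$; multiplying over all $\ell\mid N/N_f$ gives that the $\pi_f$-isotypic subspace of $S_k(N,K)$ has dimension $\prod_{\ell\mid N/N_f}(v_\ell(N/N_f)+1)=\sigma_0(N/N_f)$, which is the asserted dimension. (Alternatively one can phrase this purely classically via the Atkin–Lehner–Li theory of oldforms, which is the route taken in \cite{darmon_rotger} and the references therein; I would cite that rather than reprove it.)

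Next I would show that the $\sigma_0(N/N_f)$ elements $\{[a]^\ast(f)\}_{a\mid N/N_f}$ are linearly independent, which by the dimension count already established forces them to be a basis. Here I would use the $q$-expansion principle together with the identity $[a]^\ast(f)(q)=f(q^a)$ noted in the statement: if $f(q)=\sum_{n\ge 1}a_n(f)q^n$ with $a_1(f)=1$, then the $q$-expansion of $\sum_{a\mid N/N_f}\lambda_a\,[a]^\ast(f)$ is $\sum_{a\mid N/N_f}\lambda_a\sum_{m\ge 1}a_m(f)q^{am}$. Suppose this vanishes and let $a_0$ be a divisor of $N/N_f$, minimal for divisibility, with $\lambda_{a_0}\ne 0$. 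Looking at the coefficient of $q^{a_0}$: the only pairs $(a,m)$ with $am=a_0$ and $\lambda_a\ne 0$ are those with $a\mid a_0$; by minimality of $a_0$ the only such $a$ with $\lambda_a\ne 0$ is $a=a_0$ itself, contributing $\lambda_{a_0}a_1(f)=\lambda_{a_0}$, a contradiction. Hence all $\lambda_a=0$ and the family is linearly independent.

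Finally I would assemble the two halves: the $\{[a]^\ast(f)\}_{a\mid N/N_f}$ are $\sigma_0(N/N_f)$ linearly independent elements of $S_k(N,K)[\pi_f]$, and the latter has dimension $\sigma_0(N/N_f)$ over $K$, so they form a $K$-basis; in particular $S_k(N,K)[\pi_f]$ is a finite-dimensional $K$-vector space of the stated dimension. I expect the main obstacle to be the dimension computation for the isotypic space, i.e. correctly invoking the local old-vector theory at the bad primes and checking it is compatible with the classical $\Gamma_1(N)$-oldform normalization used here (including the contribution of the nebentypus $\chi_f$ and the subtlety that $\Q_f\subset K$ must be assumed so that $\pi_f$ is actually defined over $K$); the linear-independence step is elementary once the $q$-expansion formula $[a]^\ast(f)(q)=f(q^a)$ is granted. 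Since \cite{darmon_rotger} states exactly this lemma and the references there (Atkin–Lehner–Li, and the discussion of $S_k(N,K)[\pi_f]$) contain the dimension formula, I would keep the proof short by citing those and only spelling out the linear-independence argument in detail.
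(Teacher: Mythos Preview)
The paper does not actually prove this lemma: it is stated with the preamble ``Then, as recalled in loc.~cit.:'' and simply refers the reader to \cite{darmon_rotger} (\S 2.6). Your proposal is correct and in fact supplies more than the paper does --- the dimension count via Atkin--Lehner--Li / Casselman's local newvector theory and the linear-independence argument via $q$-expansions are exactly the standard ingredients behind the cited result --- so your plan to cite \cite{darmon_rotger} and only spell out the linear-independence step is entirely in line with (indeed slightly more detailed than) what the paper itself does.
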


Let $f$, $g$, $h$ be a triple of normalized primitive cuspidal classical eigenforms of weights $k$, $\ell$, $m$, characters $\chi_f$, $\chi_g$, $\chi_h$ and tame
levels $N_f$, $N_g$, $N_h$ respectively. We write $f\in S_k(N_f,\chi_f)$, $g\in S_\ell(N_g,\chi_g)$, $h\in S_m(N_h, \chi_h)$. We set
$N:=\mathrm{\ell.c.m}(N_f,N_g,N_h)$, $\Q_{f,g,h}:=\Q_f\cdot\Q_g\cdot\Q_h$ the number field generated over $\Q$ by the Hecke eigenvalues of $f$, $g$, $h$. We assume
that $\chi_f\cdot\chi_g\cdot\chi_h=1$ and the triple of weights $(k,\ell,m)$ is unbalanced, i.e., there is $t\in \Z_{\ge 0}$ such that $k=\ell+m+2t$. We have the
following result of M.~Harris and S.~Kudla (\cite{harris_kudla}), previously conjectured by H.~Jacquet and recently refined by A.~Ichino (\cite{ichino}) and
T.C.~Watson (\cite{watson}):

\begin{theorem}[Theorem 4.2. \cite{darmon_rotger}]\label{thm:fghcirc}
Let $f$, $g$, $h$ be a triple as at the beginning of this section. Then there exist:

$\bullet$ holomorphic modular forms $$f^o\in S_k(N,
\Q_{f,g,h})[\pi_f], ,g^o\in S_\ell(N, \Q_{f,g,h})[\pi_g], h^o\in
S_m(N,\Q_{f,g,h})[\pi_h]$$

$\bullet$ for each $q\vert N\infty$ a constant $C_q\in \Q_{f,g,h}$, which only depends on the local components at $q$ of $f^o,g^o,h^o$ such that
$$
\frac{\prod_{q|N\infty}C_q}{\pi^{2k}} L\Bigl(f,g,h,
\frac{k+\ell+m-2}{2}\Bigr)=|I(f^o,g^o,h^o)|^2.
$$
Moreover, there is a choice of $f^o$, $g^o$, $h^o$ such that all $C_q\neq 0$.

\end{theorem}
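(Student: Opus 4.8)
This is a citation theorem (Theorem 4.2 of \cite{darmon_rotger}, originally due to Harris--Kudla \cite{harris_kudla}, with refinements by Ichino \cite{ichino} and Watson \cite{watson}), so the ``proof'' is really a pointer to the literature together with an indication of how the various inputs fit together. The plan is to recall that the statement is the explicit form of Jacquet's conjecture for the central value of the triple product $L$-function, proved by Harris--Kudla by relating the global trilinear form $I(f^o,g^o,h^o)$ (an integral over $[\mathrm{PGL}_2]$ of the product of the three automorphic forms) to the central $L$-value via the Garrett--Rankin integral representation and the doubling/seesaw method, and then to invoke Ichino's and Watson's refinement which makes the proportionality constant a product of purely local terms $C_q$ indexed by the places $q \mid N\infty$.

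First I would fix the automorphic representations $\pi_f,\pi_g,\pi_h$ of $\mathrm{GL}_2(\mathbb{A}_\Q)$ attached to $f,g,h$ and note that, since $\chi_f\chi_g\chi_h=1$, the product $\pi_f\otimes\pi_g\otimes\pi_h$ has trivial central character, so it descends to $\mathrm{PGL}_2$ and the global trilinear period $I(\varphi_f,\varphi_g,\varphi_h) = \int_{[\mathrm{PGL}_2]}\varphi_f\varphi_g\varphi_h$ makes sense. The Harris--Kudla theorem asserts this period is nonzero for some choice of vectors if and only if $L(f,g,h,\tfrac{k+\ell+m-2}{2})\neq 0$ and all local trilinear forms are nonzero; Ichino's formula (and Watson's, in the classical/holomorphic language) upgrades this to an exact identity $|I(\varphi_f,\varphi_g,\varphi_h)|^2 = \tfrac{1}{8}\,\tfrac{L(\tfrac12,\pi_f\times\pi_g\times\pi_h)}{L(1,\pi_f,\mathrm{ad})L(1,\pi_g,\mathrm{ad})L(1,\pi_h,\mathrm{ad})}\prod_q I_q^\ast$, with $I_q^\ast$ the normalized local integrals, almost all equal to $1$. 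I would then translate this adelic identity into the classical normalization used in the statement: choosing the new-at-$N_f$ vectors $f^o\in S_k(N,\Q_{f,g,h})[\pi_f]$ etc.\ (which exist by Lemma \ref{lemma:basis}, as one can take appropriate $\Q_{f,g,h}$-linear combinations of the $[a]^\ast(f)$), the archimedean and bad-finite-place integrals get absorbed into constants $C_q\in\Q_{f,g,h}$ depending only on the local components at $q$, and the adjoint $L$-values together with powers of $\pi$ are bookkept into the stated shape $\frac{\prod_{q\mid N\infty}C_q}{\pi^{2k}}L(f,g,h,\frac{k+\ell+m-2}{2}) = |I(f^o,g^o,h^o)|^2$; this is exactly the passage carried out in \cite[\S4]{darmon_rotger}, to which I would refer for the precise normalization of $I(\cdot,\cdot,\cdot)$ and the rationality of the $C_q$.

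Finally, for the nonvanishing clause --- that $f^o,g^o,h^o$ can be chosen so that all $C_q\neq 0$ --- I would invoke Prasad's theorem on the existence of $\mathrm{PGL}_2(\Q_q)$-invariant trilinear forms together with the local non-vanishing computations: for the unbalanced weight triple the archimedean local trilinear form is nonzero on the lowest-weight vectors, and at each finite $q\mid N$ one uses the classification of local components to exhibit test vectors (new vectors, or their translates under the $[a]$ operators) on which the local integral does not vanish. Since the $C_q$ are, up to nonzero normalizing factors, these local integrals evaluated on the chosen vectors, a simultaneous good choice exists. The main obstacle --- really the only substantive point, and the one I would be content to cite rather than reprove --- is this local test-vector analysis at the ramified primes, which is where Ichino's and Watson's work (and, for the general ramification pattern, the refinements needed beyond the squarefree case) does the real labor; everything else is formal manipulation of the Garrett integral and the seesaw identity as recorded in \cite{harris_kudla} and \cite{darmon_rotger}.
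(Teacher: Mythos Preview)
Your proposal is correct and matches the paper's treatment: this is a citation theorem that the paper does not prove, attributing it to Harris--Kudla with refinements by Ichino and Watson, and pointing to \cite[Thm.~4.2]{darmon_rotger} for the formulation used here. Your sketch of how the Harris--Kudla result, Ichino's formula, and the local test-vector analysis assemble into the stated identity is accurate and more detailed than what the paper itself provides.
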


In the above theorem $L(f,g,h,s)$ is the complex Garrett-Rankin triple product $L$-function attached to $f$, $g$, $h$ and
$$
I(f^o,g^o,h^o):=\langle (f^o)^\ast, \delta^t(g^o)h^o\rangle,
$$
where $\langle \ ,\ \rangle$ is the Peterson inner product on weight $k$-modular forms, $\delta$ is the Shimura-Maass differential operator and
$(f^o)^\ast=f^o\otimes \chi_f^{-1}$ is an eigenform having prime-to-$N$ eigenvalues equal to those of $f^o$, twisted by the character $\chi_f^{-1}$.

\subsection{The triple product $p$-adic $L$-function in the ordinary case.}\label{sec:tripleordinary}

Let $f$, $g$, $h$ be as at the beginning of Section \ref{sec:triple} with the additional assumption that $f$, $g$, $h$ are ordinary at $p$. Let $f^o$, $g^o$, $h^o$
be as in Theorem \ref{thm:fghcirc} such that all constants $C_q$ for $q\vert N\infty$ are non-zero. Let $\bof$, $\bg$, $\bh$ be Hida families of modular forms on
$\Gamma_1(N)$ (seen as $q$-expansions with coefficients in the finite flat extensions of $\Lambda$ denoted $\Lambda_f$, $\Lambda_g$, $\Lambda_h$ respectively)
deforming the ordinary $p$-stabilizations of $f$, $g$, $h$ in the weights $k$, $\ell$, $m$ respectively. As explained in \cite[\S 2.6]{darmon_rotger} the families
$\bof$, $\bg$, $\bh$ determine  Hida families $\bof^o$, $\bg^o$ and $\bh^o$ deforming the ordinary $p$-stabilization of $f^o$, $g^o$ and $h^o$ respectively. Define
also $({\bg^o})^{[p]}$, the $p$-depletion of $(\bg)^o$, on $q$-expansions by: if ${\bg^o}(q)=\sum_{n=1}^\infty a_nq^n$, then $({\bg^o})^{[p]}(q):=\sum_{n=1,
(n,p)=1}^\infty a_nq^n$. We then have

\begin{definition}[Definition 4.4 \cite{darmon_rotger}]\label{def:padiclord}
The Garrett-Rankin triple product $p$-adic $L$-function attached
to the triple $(\bof^o,\bg^o, \bh^o)$ of Hida families is the
element

$$
\mathcal{L}_p^f\bigl(\bof^o,\bg^o,\bh^o\bigr):=\frac{\langle (\bof^o)^\ast, e^{\rm ord}\bigl(d^\bullet(\bg^o)^{[p]}\times \bh^0\bigr)  \rangle}{\langle \bof^\ast,
\bof^\ast \rangle}\in \Lambda_f'\otimes_\Lambda(\Lambda_g\otimes\Lambda_h\otimes \Lambda).
$$
\end{definition}

The $p$-adic $L$-function $\mathcal{L}_p^f\bigl(\bof^o,\bg^o,\bh^o\bigr)$ in Definition \ref{def:padiclord} is a function of three weight variables. In particular
if $x$, $y$, $z\in W$ are classical weights which are unbalanced and if we denote by $t\ge 0$ the integer such that $x=y+z+2t$ then we have (see section 4 of
\cite{darmon_rotger})
$$
\mathcal{L}_p^f\bigl({{\bof^o,\bg^o,\bh^o}}\bigr)(x,y,z)=\frac{\langle \bigl(\bof_x^o\bigr)^\ast, e^{\rm ord}\bigl(d^t(\bg_y^o)^{[p]}\times \bh_z^0\bigr)
\rangle}{\langle \bof_x^\ast, \bof_x^\ast\rangle}.
$$
Thanks to \cite[Thm.~4.7]{darmon_rotger} the above value of the $p$-adic $L$-function at $x=k$, $y=\ell$, $z=m$ is related to the classical $L$-function via
$$
\mathcal{L}_p^f\bigl({{\bof^o,\bg^o,\bh^o}}\bigr){{(k,\ell,m)}}=\times{\Bigl(L^{\rm alg}\bigl(f,g,h, \frac{k+\ell+m-2}{2}\bigr)  \Bigr)^{\frac{1}{2}}}$$for some
non-zero constant $\times$. It follows from Theorem \ref{thm:fghcirc} that the $p$-adic $L$-function $\mathcal{L}_p^f\bigl(\bof^o,\bg^o,\bh^o\bigr)$ is non-zero if
the value of the special value of the classical $L$-function is non zero; see \cite[Rmk 4.8]{darmon_rotger}.

\subsection{The triple product $p$-adic $L$-function in the finite slope case.}\label{sec:tripleplfs}

Let $f\in S_k(N_f,\chi_f)$, $g\in S_\ell(N_g, \chi_g)$, $h\in S_m(N_h,\chi_h)$ be a triple of normalized primitive cuspidal eigenforms  such that $f$ has finite
slope $a$ and $\chi_f\cdot\chi_g\cdot\chi_h=1$ and assume $(k,\ell, m)$ is unbalanced and denote $t$ a non-negative integer such that $k=\ell+m+2t$. Let
$N:=\mathrm{\ell.c.m}(N_f,N_g,N_h)$ and let $f^o$, $g^o$, $h^o$ be as in Theorem \ref{thm:fghcirc} such that all constants $C_q$ are non-zero. In particular $f^o$
has finite slope $a$. We denote by $K$ a finite extension of $\Q_p$ which contains all the values of $\chi_f$, $\chi_g$, $\chi_h$.

Let $\omega_f$, $\omega_g$, $\omega_h$ denote overconvergent families of modular forms deforming $f$, $g$, $h$ and let $\omega_f^o$, $\omega_g^o$ and $\omega_h^o$
be the overconvergent families deforming $f^{o}$, $g^{o}$, $h^{o}$ and associated to $\omega_f$, $\omega_g$ and $\omega_h$ via the procedure described in
\cite[\S2]{darmon_rotger}; for example if we express $f^o$  as a $K$-linear combination $\sum_a \lambda_a \cdot [a]^\ast(f)$ of the basis elements $[a]^\ast(f)$'s,
for $a$ varying among the divisors of $N/N_f$, provided in Lemma \ref{lemma:basis}, then $ \omega_f^o:=\sum_a \lambda_a\cdot [a]^\ast\bigl(\omega_f\bigr)$.

We then have a non-negative integer $r$, closed intervals $I_f$, $I_g$ and $I_h$ such that the weights of these families, denoted respectively $k_f\colon
\Z_p^\ast\to \Lambda_{I_f,K}^\ast$, $k_g\colon \Z_p^\ast\to \Lambda_{I_g,K}^\ast$, $k_h\colon \Z_p^\ast\to \Lambda_{I_h,K}^\ast$ are all adapted to a certain
integer $n\ge 0$. This data gives a tower of formal schemes $\fIG_{n,r,I}\lra \fX_{r,I}\lra \fX$, where $\fX$ is the formal completion along its special fiber of
the modular curve $X_1(N)_{\Z_p}$ and $\fX_{r,I}=\fX_{r,I_f} \times_{\fX} \fX_{r,I_g} \times_{\fX} \fX_{r,I_h}$ and likewise for $\fIG_{n,r,I}$. We denote by
$\fw^{k_f}, \fw^{k_g}, \fw^{k_h}$ the respective modular sheaves (over $\fX_{r,I}$ or on the analytic adic fiber $\cX_{r,I}$), then $\omega_f$, $\omega_f^o\in {\rm
H}^0(\cX_{r,I_f}, \fw^{k_f})$, similarly $\omega_g$, $\omega_g^o\in {\rm H}^0(\cX_{r,I_g}, \fw^{k_g})$ and $\omega_h$, $\omega_h^o\in {\rm H}^0(\cX_{r,I_h},
\fw^{k_h})$. We make the following assumption on the weights of $\omega_f^o$, $\omega_g^o$, $\omega_h^o$: \

\begin{assumption}
\label{ass1}

 1) Suppose that the weights $k_f$, $k_g$, $k_h$ are such that $k_f-k_g-k_h$ is even, i.e. there is a weight $u\colon\Z_p^\ast\lra (\Lambda_{I,K})^\ast$ such that
$2u=k_f-k_g-k_h.$

2) the weights $k_g$, $u$ (in this order) satisfy the Assumption \ref{ass10}, i.e. $k_g=\ell\cdot \chi_g\cdot k'$ and $u=t\cdot\epsilon \cdot s$ where $\epsilon$
is a finite order, even character of $\Z_p^\ast$ and $k'$, $s$ are weights such that $k'(\eta)={\rm exp}(u_{k'}\log(\eta))$, $s(\eta)={\rm exp}(u_s\log(\eta))$, for
all $\eta\in \Z_p^\ast$ with $u_{k'}\in p\Lambda_I$, $u_s\in q\Lambda_I$.
\end{assumption}
\

We see $\omega_f$, $\omega_f^o$, $\omega_g$, $\omega_g^o$, $\omega_h$, $\omega_h^o$ as global sections of ${\rm Fil}_0(\bW_{k_f}^{\rm an})$, ${\rm
Fil}_0(\bW_{k_g}^{\rm an})$ and ${\rm Fil}_0(\bW_{k_h}^{\rm an})$ respectively. Let $\omega_g^{o,[p]}$ be the $p$-depletion of $\omega^o_g$ as in Definition
\ref{def:pdepletion}. Then Assumption \ref{ass10} implies via Theorem \ref{theorem:mostgeneral} that $(\nabla_{k_g})^u(\omega_g^{o,[p]})$ makes sense and
$$(\nabla_{k_g})^u(\omega_g^{o,[p]})\in {\rm H}^0(\cX_{r',I},
\bW^{\rm an}_{k_g+2u}),$$ for some positive integer $r'\ge r$.
Therefore we have a section
$$(\nabla_{k_g})^u\bigl(\omega_g^{o,[p]} \bigr)\times \omega^o_h\in {\rm
H}^0(\cX_{r',I_u}, \bW^{\rm an}_{k_f}).$$Consider its class  in ${\rm H}^1_{\rm dR}\bigl(\cX_{r',I_u}, \bW_{k_f-2} \bigr)$, which after base change to $\fK_f$,
where  $\fK_f$ is obtained from $\Lambda_{I_f,K}$ by inverting the elements $\{u_s-n\vert n\in \N\}$, we obtain a section
 in ${\rm H}^0(\cX_{r',I_u}, \fw^{k_f})\otimes_{\Lambda_{I_f,K}}\fK_f$. Using  Definition
\ref{def:holomorphicproj} and the spectral theory of the
$U$-operator on ${\rm H}^0\bigl(\cX_{r',I_u}, \fw^{k_f} \bigr)$
developed in  \cite[Appendice B]{halo_spectral}  we have its
overconvergent projection onto the slope $\le a$ subspace:

$$H^{\dagger, \le
a}\left((\nabla_{k_g})^u\bigl(\omega_g^{o,[p]} \bigr)\times \omega^o_h\right)\in {\rm H}^0\bigl(\cX_{r',I_u}, \fw^{k_f} \bigr)^{\le a}\otimes_{\Lambda_{I_f}}
{\fK_f}.$$

{\it The family $\omega_f^{o,\ast}$:} In order to define triple product $L$-functions we need to pass from the family $\omega^o_f$ to a different family  $\omega_f^{o,\ast}\in {\rm
H}^0(\cX_{r,I_f}, \fw^{k_f})$, with the property that   for any classical specialization of $\omega^o_f$ which is an eigenform of conductor prime to $p$  the specialization of $\omega_f^{o,\ast}$ is also an eigenform of conductor prime to $p$ with prime-to-$N$ Hecke eigenvalues twisted by $\chi_f^{-1}$. For this reason one also writes  $\omega^o_f\otimes \chi_f^{-1}$ for $\omega_f^{o,\ast}$.

We follow \cite[Lemma 5.2]{BDP} and \cite[Lemma 5.1]{BSV}.  Possibly after base change from $\Z_p$  to the ring of integers of a finite extension of $\Z_p$ we may assume that $\Lambda$ contains a primitive $N$-th root of unity $\zeta$. This allows to define an Atkin-Lehner involution $w_N$ on $X_1(N)$: given an elliptic curve $E$ with a cyclic subgroup of $\psi_N\colon \Z/N\Z\subset E[N]$ we let $w_N(E,\psi_N)$ be the elliptic curve $E'$, quotient of $E$ by the image $H_N$ of $\psi_N$, with subgroup $H_N':= E[N]/H_N$ trivialized by identifying $H_N'$ with the Cartier dual $H_N^\vee$, identifying $H_N^\vee$ with $\mu_N$ using the dual of $\Psi_N\colon \Z/\N\Z\cong H_N$ and using the chosen $N$-th root of unity  to provide an isomorphism $\Z/N\Z\cong \mu_N$. Such involution extends to an involution on $\cX_{r,I}$, $\bW_k$ etc. We let $\omega_f^{o,\ast}:=w_N\bigl(\omega_f^{o}\bigr)$. As explained in loc.~cit.~it has

\begin{definition}\label{def:finiteslopel}
The Garrett-Rankin triple product $p$-adic $L$-function attached to the triple $\bigl(\omega_f^o,\omega_g^o,\omega_h^o\bigr)$ of $p$-adic families of modular forms,
of which $\omega_f^o$ has finite slope $\le a$, is
$$
\mathcal{L}^f_p\bigl(\omega^o_f, \omega^o_g, \omega^o_h \bigr):=\frac{\langle \omega_f^{o,\ast}, H^{\dagger, \le a}\left((\nabla_{k_g})^u\bigl(\omega_g^{o,[p]}
\bigr)\times \omega^o_h\right) \rangle}{\langle\omega_f^\ast, \omega_f^\ast \rangle}\in \fK_f\widehat{\otimes} \Lambda_{k_g,K} \widehat{\otimes} \Lambda_{k_h,K} .
$$

\end{definition}

We refer to \cite[\S 4.2.1]{UNO} for the Petersson inner product in this context; see also the discussion below. By the definition of the overconvergent projection in Definition \ref{def:holomorphicproj} the $p$-adic $L$-function $\mathcal{L}_p^f(\omega^o_f,\omega^o_g,\omega^o_h)$ has only finitely many poles, i.e., it is meromorphic.

\smallskip

{\it On the Petersson product for families of overconvergent forms:}\enspace    Consider the space $M=H^0(\cX_{r,I},\fw^k)^{\leq a}$ defined over an affinoid $\cW_I:=\Spm A$ of the weight sapce with total ring of fractions $\mathbb{K}$. Let $\mathbb{T}$ be the subalgebra of  $\End M$ generated by the Hecke operators. It defines an open affinoid of the eigencurve and $\Spm \mathbb{T}\to \cW_I$ is finite and generically \'etale. 
Thus we have a trace map $\mathbb{T} \to \mathbb{T}^\vee:={\rm Hom}_A(\mathbb{T},A)$ which defines an isomorphism $\iota\colon \mathbb{T}\otimes_A \mathbb{K}\cong \mathbb{T}^\vee\otimes_A \mathbb{K}$. We also have a pairing $M \times \mathbb{T} \to A $  sending a pair $(f,T)$, consisting of a form $f$ and a Hecke operator $T$, to the Fourier coefficient $a_1(f\vert T)$ in the $q$-expansion of $f\vert T$. This defines an $A$-linear map $j\colon M\to \mathbb{T}^\vee$.
The Petersson product is defined as the composite $$\langle,\rangle\colon M \times M \stackrel{j\times j}{\longrightarrow} \mathbb{T}^\vee \times \mathbb{T}^\vee \to \mathbb{T}^\vee\otimes_A \mathbb{K} \times \mathbb{T}^\vee\otimes_A \mathbb{K} \stackrel{\iota^{-1}\times 1}{\longrightarrow}  \mathbb{T}\otimes_A \mathbb{K} \times \mathbb{T}^\vee\otimes_A \mathbb{K} \to \mathbb{K}$$ (the last map is defined by the natural pairing $\mathbb{T}\times \mathbb{T}^\vee\to A$).

\subsection{Interpolation properties}

Let now $x\in \cW_{I_f}$, $y\in \cW_{I_g}$, $z\in \cW_{I_g}$ be a triple of unbalanced classical weights, i.e., such that $x$,  $y$ and $z$ are obtained by specializing
$k_f$,  $k_g$ and $k_z$ at integral weights in $\Z_{\ge 2}$ and there is a
classical weight $t'$ with $x-y-z=2t'$. Let us denote by $f_x$, $f_x^o$, $g_y$, $g_y^o$, $h_z$, $h_z^o$ the specializations of $\omega_f$, $\omega_f^o$, $\omega_g$,
$\omega_g^o$, $\omega_h$, $\omega_h^o$ at $x$, $y$, $z$ respectively, seen as sections over $\mathcal{X}_{r',I_u}$ of $\omega^{x}\subset {\rm
Fil}_{x-2}(\bW_{x-2}^{\rm an})={\rm Sym}^{x-2}({\rm H}_E)$, $\omega^{y}\subset{\rm Fil}_{y-2}(\bW^{\rm an}_{y-2})={\rm Sym}^{y-2}({\rm H}_E)$, $\omega^{z}\subset
{\rm Fil}_{z-2}(\bW^{\rm an}_{z-2})={\rm Sym}^{z-2}({\rm H}_E)$ respectively.  Let us denote by $\bigl(\nabla_{k_g}^u(\omega_g^{o,[p]}) \bigr)_{y,t'}$ the
specialization of $\nabla_{k_g}^u(\omega_g^{o,[p]})\in {\rm H}^0(\mathcal{X}_{r', I_g}, \bW^{\rm an}_{k_g+2u})$ at the classical weight $y+2t'$. We have:

\begin{lemma}\label{lemma:rightinterpolation}
We have
$\bigl(\nabla_{k_g}^u(\omega_g^{o,[p]})\bigr)_{y,t'}=\nabla^{t'}(g_y^{o,[p]})$,
the equality taking place in ${\rm H}^0\bigl(\cX_{r',I_u},
\bW_{y+2t'}^{\rm an}\bigr)$. In particular, $$
\mathcal{L}^f_p\bigl(\omega^o_f,\omega^o_g,\omega^o_h\bigr)(x,y,z):=\bigl(\mathcal{L}_p^f(\omega^o_f,\omega^o_g,\omega^o_h)
\bigr)_{x,y,z}=\frac{\langle f_x^{o,\ast}, H^{\dagger,
a}\Bigl(\nabla^{t'}\bigl(g_y^{o,[p]}\bigr)\times h_z^o    \Bigr)
\rangle}{\langle f_x^\ast, f_x^\ast \rangle}.
$$

\end{lemma}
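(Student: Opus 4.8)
\textbf{Proof plan for Lemma \ref{lemma:rightinterpolation}.} The statement splits into two independent assertions: (i) the specialization identity $\bigl(\nabla_{k_g}^u(\omega_g^{o,[p]})\bigr)_{y,t'}=\nabla^{t'}(g_y^{o,[p]})$ in ${\rm H}^0\bigl(\cX_{r',I_u},\bW_{y+2t'}^{\rm an}\bigr)$, and (ii) the resulting explicit formula for the value $\mathcal{L}^f_p(\omega^o_f,\omega^o_g,\omega^o_h)(x,y,z)$. The plan is to deduce (i) directly from the interpolation built into Theorem \ref{theorem:mostgeneral} and Corollary \ref{cor:specializas}, and then obtain (ii) by unwinding the definition of $\mathcal{L}^f_p$ in Definition \ref{def:finiteslopel} term by term, checking that each operation commutes with specialization at the triple of weights $(x,y,z)$.

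First I would prove (i). By Corollary \ref{cor:specializas}, the specialization of $(\nabla_{k_g})^u(\omega_g^{o,[p]})$ at the $\cO_K$-points of $\Lambda_{I_g}$ and of $\Lambda_{I}$ (coming from the component $u$ of the weight datum) corresponding to the classical weights $y$ and $t'$ is exactly $\nabla_{\ell+\chi_g}^{t'+\epsilon}(g_y^{o,[p]})$ in the notation of Definition \ref{def:nablachi}. Here one must observe that Assumption \ref{ass1}(2) is precisely the instance of Assumption \ref{ass10} needed to invoke Theorem \ref{theorem:mostgeneral}, and that the classical specialization of the weight $k_g$ is $y$ while that of $u$ is $t'$ twisted by the finite character $\epsilon$ and the Teichmüller component $\chi_g$. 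The remaining point is to identify $\nabla_{\ell+\chi_g}^{t'+\epsilon}(g_y^{o,[p]})$ with the classical iterated Gauss--Manin connection $\nabla^{t'}(g_y^{o,[p]})$ applied to the $p$-depleted form; this is a comparison on $q$-expansions, using the explicit $q$-expansion formula in Theorem \ref{theorem:mostgeneral} and Lemma \ref{lemma:formulanablaNg}, together with the fact that for $p$-depleted forms the twist by $\epsilon$ and the finite part of $t'$ are already absorbed into the classical operator. Since the $q$-expansion map on ${\rm H}^0(\cX_{r',I_u},\bW_{y+2t'}^{\rm an})$ is injective (a form is determined by its $q$-expansion at an unramified cusp, as in Definition \ref{def:qexp}), equality of $q$-expansions forces equality of sections.

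Next, for (ii), I would substitute the identity from (i) into Definition \ref{def:finiteslopel}, using that $\omega_h^o$ specializes to $h_z^o$, that $\omega_f^{o,\ast}$ specializes to $f_x^{o,\ast}$ (the Atkin--Lehner image $w_N$ commutes with specialization, since it is defined integrally over $\Lambda$), and that the overconvergent projection $H^{\dagger,\le a}$ commutes with specialization at $x$ provided $x$ avoids the finitely many bad weights $\{u_s=n : n\in\N\}$ at which $\fK_f$ was inverted --- this compatibility is exactly the content of the final remark of Definition \ref{def:holomorphicproj} together with Corollary \ref{cor:special}. One also needs that the product $(\nabla_{k_g})^u(\omega_g^{o,[p]})\times\omega_h^o$, viewed in the de Rham cohomology ${\rm H}^1_{\rm dR}(\cX_{r',I_u},\bW_{k_f-2}^\bullet)$, specializes at $x$ to the class of $\nabla^{t'}(g_y^{o,[p]})\times h_z^o$ --- immediate from the naturality of the cup product and slope decomposition under base change along $\rho\colon\Spa(K,\cO_K)\to\cW_{I_f}$. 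Finally the Petersson pairing $\langle\ ,\ \rangle$ is defined via the Hecke algebra and its trace form (see the discussion of the Petersson product for families), and base change along $\rho$ turns it into the classical Petersson pairing on the slope-$\le a$ subspace in weight $x$; so $\langle\omega_f^{o,\ast},-\rangle/\langle\omega_f^\ast,\omega_f^\ast\rangle$ specializes to $\langle f_x^{o,\ast},-\rangle/\langle f_x^\ast,f_x^\ast\rangle$.

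The main obstacle I expect is bookkeeping in (i): matching the finite-character data. Theorem \ref{theorem:mostgeneral} produces $\nabla_k^s(g)$ only after the reductions of Case I and Case II in its proof (removing the Teichmüller and finite-order twists via the operators $\theta^\chi$ of Proposition \ref{prop:thetachi}), so the specialization $\nabla_{\ell+\chi_g}^{t'+\epsilon}$ is a composite of several twisting operations, and one must verify carefully on $q$-expansions that this composite equals the plain classical $\nabla^{t'}$ when applied to a $p$-depleted form --- i.e.\ that the characters $\chi_g$, $\epsilon$ cancel against the definition of the $p$-depletion and the integral weight $t'$. Everything else (commuting specialization past $w_N$, past the cup product and slope decomposition, past $H^{\dagger,\le a}$, and past the Petersson product) is a routine application of exactness of base change along a regular element of $\Lambda_I\widehat\otimes K$, as in the proof of Corollary \ref{cor:special}, combined with the finiteness/flatness statements already established in \S\ref{sec:upoperator} and \S\ref{sec:depletion}.
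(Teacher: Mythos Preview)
Your proposal is correct and follows essentially the same approach as the paper: invoke Corollary \ref{cor:specializas} for the specialization identity (i), and then use Corollary \ref{cor:special} to pass the overconvergent projection and the cup product through specialization for (ii). The paper's own proof is two sentences long, so your elaboration on the finite-character bookkeeping and on commuting the Petersson pairing with base change is simply filling in details that the paper leaves implicit rather than a different route.
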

\begin{proof} The first claim follows  from
Corollary \ref{cor:specializas}. The second claim follows as the specialization map commutes with the overconvergent projection and the cup product by Corollary
\ref{cor:special}.
\end{proof}

As now $t'\ge 0$ is a classical weight we can relate the right
hand side of the formula of Lemma \ref{lemma:rightinterpolation}
to more classical objects. This is the content of the present section. 

In order to do that we fix embeddings of $\overline{\Q}$ in $\C$ and $\C_p$ respectively.  We also {\it
assume} that $f_x$, $g_y$ and $h_z$ are eigenforms of level $\Gamma_1(N)$ and nebentypus $\chi_x$, $\chi_y$ and $\chi_z$ respectively and with eigenvalues $a_x$, $a_y$ and $a_z$
respectively for the operator $T_p$; that is the Hecke polynomial for $T_p$ and the eigenform $f_x$, for example, is  $X^2-a_xX + \chi_x(p) p^{x-1}$ and likewise for $g_y$ and $h_z$.

Let $\alpha_x$, $\beta_x$, $\alpha_x^\ast$, $\beta_x^\ast$, $\alpha_y$, $\beta_y$
and $\alpha_z$, $\beta_z$ be the corresponding roots of the Hecke
polynomials of $T_p$ for the forms  $f_x$, resp.~$f_x^\ast$, resp.~$g_y$,
resp.~$h_z$.  Recall that~$f_x^\ast=f_x\otimes \chi_x^{-1}$; it  has nebentypus $\chi_x^{-1}$ and its eigenvalues for $T_p$ are the complex conjugates of $\alpha_x$ and $\beta_x$. In particular $a_x^\ast$ is the complex conjugte of $a_x$. 
We {\it assume} that $\alpha_x\neq \beta_x$, $\alpha_y\neq
\beta_y$ and $\alpha_z\neq \beta_z$. In particular also  $\alpha_x^\ast\neq \beta_x^\ast$. Then we
have the following interpolation result. With the notation of
Theorem \ref{thm:fghcirc} write $$L^{\rm alg}\Bigl(f_x,g_y,h_z,
\frac{x+y+z-2}{2}\Bigr):=\frac{\Bigl(\frac{\prod_{q|N\infty}C_q}{\pi^{2k_x}}
L\Bigl(f_x,g_y,h_z, \frac{x+y+z-2}{2}\Bigr)\Bigr)^{\frac{1}{2}}}
{{\langle f_x^\ast, f_x^\ast \rangle}}.$$Following
\cite[Thm.~1.3]{darmon_rotger} define $$\mathcal{E}(g_y,h_z,T):=
\bigl(1-p^{t'}\alpha_y \alpha_z T^{-1}\bigr)
\bigl(1-p^{t'}\alpha_y \beta_z T^{-1}\bigr) \bigl(1-p^{t'}\beta_y
\alpha_z T^{-1}\bigr) \bigl(1-p^{t'}\beta_y \beta_z
T^{-1}\bigr),$$
$$\mathcal{E}_1(g_y,h_z,T):=1-p^{2t'}\alpha_y\beta_y\alpha_z\beta_z
T^{-2}, \qquad \mathcal{E}_0(S,T):=1-\frac{T}{S}$$and
$$\mathcal{E}_2(T)=1-\frac{\chi_x^{-2}(p) a_{x}^\ast T}{p^{x-1}
(p+1)}.$$

\begin{theorem}\label{thm:Interpolate} We have $\displaystyle
\mathcal{L}^f_p\bigl(\omega^o_f,\omega^o_g,\omega^o_h\bigr)(x,y,z)=$
$$=\Bigl(\frac{\mathcal{E}\bigl(g_y,h_z,\alpha_x^\ast\bigr)\mathcal{E}_2\bigl(\beta_x^\ast\bigr) }
{\mathcal{E}_0\bigl(\alpha_x^\ast,\beta_x^\ast\bigr)\mathcal{E}_1\bigl(g_y,h_z,\alpha_x^\ast\bigr)}
+
\frac{\mathcal{E}\bigl(g_y,h_z,\beta_x^\ast\bigr)\mathcal{E}_2\bigl(\alpha_x^\ast\bigr)
}{\mathcal{E}_0\bigl(\beta_x^\ast,\alpha_x^\ast\bigr)\mathcal{E}_1\bigl(g_y,h_z,\beta_x^\ast\bigr)}\Bigr)
L^{\rm alg}\Bigl(f_x,g_y,h_z, \frac{x+y+z-2}{2}\Bigr).$$
\end{theorem}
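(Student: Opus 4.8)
The plan is to evaluate $\mathcal{L}^f_p(\omega^o_f,\omega^o_g,\omega^o_h)(x,y,z)$ by reducing to the archimedean Garrett--Rankin formula of Theorem \ref{thm:fghcirc} and then peeling off the local contributions at $p$, adapting the computation of \cite[\S 2--4]{darmon_rotger} and \cite[\S 5]{BSV} to the finite slope setting provided by the sheaves $\bW_k$ and the operator $\nabla_k^s$. By Lemma \ref{lemma:rightinterpolation} the quantity to compute is
$$\frac{\langle f_x^{o,\ast},\, H^{\dagger,\,a}\bigl(\nabla^{t'}(g_y^{o,[p]})\times h_z^o\bigr)\rangle}{\langle f_x^\ast,f_x^\ast\rangle},$$
so it suffices to compare this with the classical Rankin--Selberg integral $I\bigl((f^o_x)^\ast,g^o_y,h^o_z\bigr)=\langle (f^o_x)^\ast,\delta^{t'}(g^o_y)\,h^o_z\rangle$ that enters Theorem \ref{thm:fghcirc} (for the level-$N$, non $p$-depleted forms), up to Euler factors at $p$. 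First I would make the dictionary between the $p$-adic and archimedean pictures: since $t'$ is an integral weight, the last assertions of Theorem \ref{theorem:griffith} and Theorem \ref{thm:descentbWk} identify $\nabla^{t'}$ on ${\rm Fil}_\bullet(\bW_{k})$ with the iterated Gauss--Manin connection on $\mathrm{Sym}^\bullet(\mathrm{H}_E)$, which at integral weight is the algebraic incarnation of the Shimura--Maass operator $\delta^{t'}$; combined with the $q$-expansion formula of Theorem \ref{theorem:mostgeneral}, the form $\nabla^{t'}(g_y^{o,[p]})$ is the nearly holomorphic form $\delta^{t'}(g_y^{o,[p]})$. Likewise, Corollary \ref{cor:special} and Lemma \ref{lemma:H1drFiln} identify, after evaluation at the classical weight $x$, the passage to the de Rham class followed by $H^{\dagger}$ with the classical holomorphic projection $\pi_{\mathrm{hol}}$ of a nearly holomorphic form of weight $x$; since the Petersson product of a holomorphic form against a nearly holomorphic one factors through $\pi_{\mathrm{hol}}$, we are reduced to comparing $\langle f_x^{o,\ast},\pi_{\mathrm{hol}}\bigl(\delta^{t'}(g_y^{o,[p]})\,h_z^o\bigr)\rangle$, computed with the $p$-adic Petersson pairing of \S\ref{sec:tripleplfs} restricted to the slope $\le a$ subspace, with the archimedean integral $I\bigl((f^o_x)^\ast,g^o_y,h^o_z\bigr)$.

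The core of the argument is the local computation at $p$ producing the four Euler factors. I would expand the $p$-depletion as $g_y^{o,[p]}=(1-\alpha_y V)(1-\beta_y V)\,g^o_y$ with $V$ the operator of \S\ref{sec:depletion} and $\alpha_y,\beta_y$ the roots of $X^2-a_yX+\chi_y(p)p^{y-1}$, and similarly keep track of the two $p$-stabilizations $(1-\beta_x^\ast V)f_x^{o,\ast}$ and $(1-\alpha_x^\ast V)f_x^{o,\ast}$ of $f_x^{o,\ast}$, both lying in the slope $\le a$ part of $H^0(\cX_{r',I_u},\fw^{k_f})$ (using $\alpha_x^\ast\neq\beta_x^\ast$). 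Unwinding the adjunction between $V$ and $U$ and the standard Rankin--Selberg unfolding at $p$ as in \cite{darmon_rotger} and \cite{BSV}, each stabilization contributes: a factor $\mathcal{E}(g_y,h_z,\alpha_x^\ast)$ (resp. $\mathcal{E}(g_y,h_z,\beta_x^\ast)$) from matching the $p$-Euler factors of $g\times h$ against the chosen $U_p$-eigenvalue; a factor $\mathcal{E}_1(g_y,h_z,\alpha_x^\ast)^{-1}$ (resp. with $\beta_x^\ast$) from the normalisation of the diagonal restriction; a factor $\mathcal{E}_2(\beta_x^\ast)$ (resp. $\mathcal{E}_2(\alpha_x^\ast)$) coming from the Atkin--Lehner twist $\omega_f^{o,\ast}=w_N(\omega_f^o)$ and the passage to conductor prime to $p$; and a factor $\mathcal{E}_0(\alpha_x^\ast,\beta_x^\ast)^{-1}$ (resp. $\mathcal{E}_0(\beta_x^\ast,\alpha_x^\ast)^{-1}$) from the ratio of Petersson norms of a $p$-stabilization over its parent form, which is built into the $p$-adic Petersson pairing of \S\ref{sec:tripleplfs}. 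The two stabilizations enter additively because $H^{\dagger,a}$ followed by pairing with $f_x^{o,\ast}$ decomposes the $\pi_{f_x}$-isotypic, slope $\le a$ component of $H^0(\cX_{r',I_u},\fw^{k_f})$ into its two $p$-stabilized lines (here one uses the $U$-spectral theory of Corollary \ref{cor:FredholmUp} and Theorem \ref{thm:slopedecomp}).

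Finally I would assemble the pieces: by Theorem \ref{thm:fghcirc} we rewrite $I\bigl((f^o_x)^\ast,g^o_y,h^o_z\bigr)$ as $\bigl(\tfrac{\prod_{q|N\infty}C_q}{\pi^{2k_x}}L(f_x,g_y,h_z,\tfrac{x+y+z-2}{2})\bigr)^{1/2}$, dividing by $\langle f_x^\ast,f_x^\ast\rangle$ yields $L^{\mathrm{alg}}(f_x,g_y,h_z,\tfrac{x+y+z-2}{2})$, and multiplying by the sum of the two local contributions computed above gives exactly the displayed identity.

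\textbf{Main obstacle.} The hard part will be the local analysis at $p$: making rigorous the comparison between the $p$-adic Petersson pairing of \S\ref{sec:tripleplfs} on the finite-slope space $H^0(\cX_{r',I_u},\fw^{k_f})^{\le a}$ — using the $U$-spectral theory of Corollary \ref{cor:FredholmUp}, the overconvergent projection of Definition \ref{def:holomorphicproj}, and checking that the denominators $\prod_i(u_{k_f}-i)$ do not vanish at $x$ — and the archimedean Petersson product, and extracting from that comparison the precise Euler factors $\mathcal{E},\mathcal{E}_0,\mathcal{E}_1,\mathcal{E}_2$, i.e. the explicit Rankin--Selberg computation at $p$ together with the bookkeeping of the two $p$-stabilizations of $f_x$ and of the involution $w_N$ used to define $\omega_f^{o,\ast}$.
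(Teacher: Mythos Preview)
Your overall strategy matches the paper's: reduce via Lemma \ref{lemma:rightinterpolation} to a classical Petersson pairing, identify $\nabla^{t'}$ with $\delta^{t'}$ and $H^\dagger$ with holomorphic projection at the integral weight, and then carry out a local computation at $p$ to extract the Euler factors before invoking Theorem \ref{thm:fghcirc}. The paper organizes this local computation through a chain of lemmas (Lemmas \ref{lemma:2.17}, \ref{lemma:4.10}, \ref{prop:4.11}, Proposition \ref{prop:fundprop}, Lemma \ref{lemma:PeterssonV}) rather than a single unfolding, but the content is the same.

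There is one concrete misattribution you should fix. The factor $\mathcal{E}_2$ does \emph{not} come from the Atkin--Lehner involution $w_N$. In the paper's argument one first obtains, via Lemma \ref{prop:4.11}, the identity
\[
H^{\dagger,\leq a}\bigl(\nabla^{t'}(g_y^{o,[p]})\times h_z^o\bigr)=\frac{\mathcal{E}(g_y,h_z,U)}{\mathcal{E}_1(g_y,h_z,U)}\,H^{\dagger,\leq a}\bigl(\nabla^{t'}(g_y^{o})\times h_z^o\bigr),
\]
and then projects $\gamma:=e_{f_x^{o,\ast}}H^{\dagger,\leq a}\bigl(\nabla^{t'}(g_y^{o})\times h_z^o\bigr)$ to its two $p$-stabilizations $\gamma_{\alpha_x^\ast}=\gamma-\beta_x^\ast V(\gamma)$ and $\gamma_{\beta_x^\ast}=\gamma-\alpha_x^\ast V(\gamma)$ (so it is $\gamma$, not $f_x^{o,\ast}$, that gets stabilized). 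The coefficient $\mathcal{E}_0(\alpha_x^\ast,\beta_x^\ast)^{-1}$ is simply the linear-combination coefficient in $\gamma=(1-\beta_x^\ast/\alpha_x^\ast)^{-1}\gamma_{\alpha_x^\ast}+(1-\alpha_x^\ast/\beta_x^\ast)^{-1}\gamma_{\beta_x^\ast}$. The factor $\mathcal{E}_2(\beta_x^\ast)$ then arises from pairing $f_x^{o,\ast}$ against $\gamma_{\alpha_x^\ast}=\gamma-\beta_x^\ast V(\gamma)$ and using the explicit Petersson identity $\langle f_x^{o,\ast},V(\gamma)\rangle=\frac{\chi_x^{-2}(p)\,a_x^\ast}{p^{x-1}(p+1)}\langle f_x^{o,\ast},\gamma\rangle$ (Lemma \ref{lemma:PeterssonV}); the involution $w_N$ plays no role in producing this factor. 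If you reorganize your local computation along these lines the rest of your plan goes through.
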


The Theorem will be proven via a series of Lemmas and
Propositions.   We start with:

\begin{lemma}\label{lemma:2.17} We have $U\bigl(\nabla^u_y\bigl(g_y^{o,[p]}
\bigr)\times V\bigl(h_z^o\bigr) \bigr)=0$ and $U\bigl(V\bigl((\nabla_{y})^u\bigl(g_y^{o} \bigr)\bigr)\times h_z^{o,[p]} \bigr)=0$.

\end{lemma}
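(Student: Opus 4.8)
The plan is to deduce both identities from an elementary computation on $q$-expansions, transported back to sections via the $q$-expansion principle. Throughout, evaluate sections at the Tate curve $E={\rm Tate}(q^N)$ as in \S\ref{sec:ordinaryandqexp}, writing a section of $\bW_k$ (or of ${\rm Hdg}^{-\gamma}\bW_k$ over a further shrinking $\fX_{b,I}$, as produced by Theorem \ref{theorem:mostgeneral}) as $\sum_h A_h(q)V_{k,h}\in \bW_k(q)$. I will call such a section \emph{$p$-depleted} if every coefficient $A_h(q)$ lies in the subspace of $\Lambda_I^0(\!(q)\!)$ spanned by the $q^n$ with $(n,p)=1$. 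Two facts are used repeatedly: first, evaluation at the Tate curve is injective on the relevant modules of global sections --- this is the $q$-expansion principle, valid because $\fX_{b,I}$ is integral with irreducible special fibre, the ordinary locus is dense, and ${\rm Hdg}$ restricts on the (ordinary) Tate curve to the ideal $(p)$ with $p$ a non--zero--divisor; second, the operators $U$ and $V$ on sections are intertwined through this evaluation with the classical operators $U(\sum c_nq^n)=\sum c_{np}q^n$ and $V(\sum c_nq^n)=\sum c_nq^{pn}$, acting coefficientwise on $\bW_k(q)=\Lambda_I^0(\!(q)\!)\langle V\rangle(1+pZ)^{k}$ (compare \S\ref{sec:ordinaryandqexp}, \S\ref{sec:depletion} and Proposition \ref{prop:cU}; recall that $V$ likewise extends from $\fw^k$ to $\bW_k$ via the morphism $\bW_k\to \Phi^\ast(\bW_k)$ of Corollary \ref{prop:functfilbV0}).

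I would then record three observations. (i) Every $p$-depleted section is annihilated by $U$: applying the classical $U$ coefficientwise to a series supported on exponents prime to $p$ gives $0$, so $U$ of the section has vanishing $q$-expansion and hence vanishes. (ii) If $C$ is any section for which $V(C)$ is defined and $D$ is $p$-depleted, then $D\times V(C)$ is $p$-depleted: on $q$-expansions $V(C)$ has all coefficient series supported on multiples of $p$, and if $a+b\equiv 0\pmod p$ with $p\mid b$ then $p\mid a$, so multiplying a series supported on $\{(n,p)=1\}$ by one supported on $p\Z$ yields a series again supported on $\{(n,p)=1\}$; since the product on $\bW$ is computed coefficientwise in $q$ and by the rule $V_{k,h}V_{m,h'}=V_{k+m,h+h'}$, this gives the claim. (iii) If $g$ is $p$-depleted then $\nabla_k^{u}(g)$ is $p$-depleted: by the explicit formula for $\nabla_k^u$ on $q$-expansions in Theorem \ref{theorem:mostgeneral}, each coefficient of $\nabla_k^u(g)$ is a $\Lambda_I$-linear combination of terms $\partial^{u-j}(g_h(q))=\sum_{p\nmid n}s(n)n^{-j}a_{h,n}q^n$, which are supported on exponents prime to $p$.

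With these in hand the proof is immediate. By Definition \ref{def:pdepletion} and the remark following it, $g_y^{o,[p]}$ and $h_z^{o,[p]}$ are $p$-depleted (indeed their $V_{k,h}$-coefficients for $h\geq 1$ vanish since $g_y^o,h_z^o$ lie in $\Fil_0$, and the $\Fil_0$-coefficient is the classical $p$-depletion). Hence $\nabla_y^u(g_y^{o,[p]})$ is $p$-depleted by (iii), so $\nabla_y^u(g_y^{o,[p]})\times V(h_z^o)$ is $p$-depleted by (ii), and therefore killed by $U$ by (i); this is the first identity. For the second, the multiplication on $\bW$ being commutative, $V\bigl((\nabla_y)^u(g_y^o)\bigr)\times h_z^{o,[p]}=h_z^{o,[p]}\times V\bigl((\nabla_y)^u(g_y^o)\bigr)$ is $p$-depleted by (ii) applied with $D=h_z^{o,[p]}$, hence killed by $U$ by (i); note that here one only uses that $V$ of \emph{any} section has $q$-expansion supported on multiples of $p$, so no property of $(\nabla_y)^u(g_y^o)$ beyond its existence is needed.

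The only point requiring genuine care is the $q$-expansion principle in the presence of the ${\rm Hdg}^{-\gamma}$-denominators and after the passage from $r$ to $b$ (and the base change to $\Lambda_{I_s}$) implicit in Theorem \ref{theorem:mostgeneral}: one must check that evaluation at the Tate curve remains injective on ${\rm Hdg}^{-\gamma}{\rm H}^0(\fX_{b,I}\widehat{\otimes}\Lambda_{I_s},\bW_{k+2u})[1/p]$. This follows as above since $p$ and ${\rm Hdg}$ are non--zero--divisors there and ${\rm Hdg}$ becomes $(p)$ on the Tate curve; everything else is routine bookkeeping with $q$-expansions.
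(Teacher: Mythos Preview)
Your argument is correct and follows the same route as the paper: reduce to $q$-expansions via Theorem \ref{theorem:mostgeneral}, observe that each coefficient $\partial^{u-j}(g_y^{o,[p]}(q))$ is supported on exponents prime to $p$ while $V(h_z^o)(q)$ is supported on multiples of $p$, so their product is $p$-depleted and hence killed by $U$ (the paper carries out exactly this computation, with the extra factor $p^{j+\ell}$ from the action of $U$ on $V_{k,j+\ell}$, which you correctly note is irrelevant for vanishing). One small slip: on the ordinary Tate curve $\mathrm{Hdg}$ is the unit ideal, not $(p)$; but this only strengthens your point that the ${\rm Hdg}^{-\gamma}$-denominators cause no trouble there, so the $q$-expansion principle still applies.
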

\begin{proof} This is the analogue of \cite[lemma 2.17]{darmon_rotger}. We prove the first formula, the second one being analogous to
the first. Using Theorem \ref{theorem:mostgeneral} we have the following formula on $q$-expansion
$$\left(\nabla_{y}^{t'} \bigl(g_y^{o,[p]} \bigr)\times V\bigl(h_z^o\bigr)\right)(q)= \sum_\ell \sum_{j=0} \left(
\begin{array}{cc} {t'} \\ j
\end{array} \right)\prod_{i=0}^{j-1}(u+y-\ell-1-i)
\partial^{t'-j}\bigl(g_y{o,[p]}(q)\bigr) \cdot V\bigl(h_z^o\bigr)(q)   V_{y+2t',j+\ell}.$$As $U$ acts on
$\partial^{t'-j}\bigl(g_y^{o,[p]}(q)\bigr)  V\bigl(h_z^o\bigr)(q)
V_{y+2t',j+\ell}$ as
$p^{j+\ell}U\left(\partial^{t'-j}\bigl(g_y^{o,[p]}(q)\bigr)
V\bigl(h_z^o\bigr)(q)\right) V_{y+2t',j+\ell}$,  where
$U\bigl(\sum_n a_n q^n\bigr)= \sum_n a_{pn} q^n$, it suffices to
prove that the Fourier coefficients $a_n$ of the product
$\partial^{t'-j}\bigl(g_y^{o,[p]}(q)\bigr) V\bigl(h_z\bigr)(q)$
are zero whenever $p$ divides $n$. By construction the Fourier
coefficients $\partial^{t'-j}\bigl(g_y^{o,[p]}(q)\bigr)=\sum_n b_n
q^n$ are zero if $p\vert n$ and $V\bigl(h_z^o\bigr)(q)=\sum_n c_n
q^{pn}$. The claim is then clear.

\end{proof}

Given the roots $\alpha_y$ and $\beta_y$ of the Hecke polynomial
of $T_p$ associated to the form $g_y$, we get two associated
eigenforms for $U$, of level $\Gamma_1(Np)$, with eigenvalues
$\alpha_y$ and $\beta_y$ respectively, namely
$g_{\alpha_y}:=g_y-\beta_y V\bigl(g_y\bigr)$ and
$g_{\beta_y}:=g_y-\alpha_y V\bigl(g_y\bigr)$. These are called the
$p$-{\it stabilizations} of $g_y$. We start with the following analogue of \cite[Lemma 4.10]{darmon_rotger}:

\begin{lemma}\label{lemma:4.10}
Fix $p$-stabilizations  $g_{\alpha_y}^o$ and $h_{\alpha_z}^o$ of
$g_y^o$ and $h_z^o$ with eigenvalues $\alpha_{y}$ and $\alpha_{z}$
respectively. Then,
$$H^{\dagger,\leq a}\bigl(\nabla^{t'}\bigl(g_{y}^{o,[p]}\bigr)\times h_{z}^o\bigr)=\bigl(1-p^{t'} \alpha_y \alpha_z U^{-1}\bigr)
H^{\dagger,\leq
a}\bigl(\nabla^{t'}\bigl(g_{\alpha_y}^{o}\bigr)\times
h_{\alpha_z}^o\bigr).$$Notice that $U$ is invertible on the
slope $\leq a$ part so that the formula makes sense.
\end{lemma}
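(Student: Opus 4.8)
The plan is to follow the ordinary case of \cite[Lemma 4.10]{darmon_rotger}, the only extra work being to make available, in the present geometric language, the three elementary operator identities that drive the computation: $\nabla^{t'}\circ V=p^{t'}\,V\circ\nabla^{t'}$ on $\bW_\bullet$ over $\cX_{r',I_u}$ (here $\nabla^{t'}$ is the $t'$-fold iterate of the Gauss--Manin connection, which has only bounded $\mathrm{Hdg}$-denominators, invertible over the adic space); the projection formula $U\bigl((VA)\times B\bigr)=A\times(UB)$ for nearly overconvergent forms $A$, $B$; and the $U$-equivariance of the overconvergent projection $H^{\dagger,\le a}$, which is part of Lemma \ref{lemma:H1drFiln} and Definition \ref{def:holomorphicproj}. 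The first two are checked on the Tate-curve expansions of \S\ref{sec:ordinaryandqexp}, using the explicit shapes of $\nabla$, $U$ and $V$ on $\bW_\bullet$ recorded in \S\ref{section:explicitnablak}, \S\ref{sec:upoperator} and \S\ref{sec:depletion}, and then extend to global sections by the $q$-expansion principle (injectivity of the $q$-expansion map on the coherent sheaves $\Fil_h\bW_k$ over the connected space $\cX_{r',I_u}$). I will also use that $g_y^o$ and $h_z^o$ are $T_p$-eigenforms with eigenvalues $a_y$, $a_z$, being $K$-linear combinations of the forms $[a]^\ast(g_y)$, resp.~$[a]^\ast(h_z)$; in particular $U(h_z^o)=a_z\,h_z^o-\chi_z(p)p^{z-1}\,V(h_z^o)$ and similarly for $g_y^o$.

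First I would replace $h_z^o$ by its $\alpha_z$-stabilization: since $h_{\alpha_z}^o=(1-\beta_z V)(h_z^o)$ one has $h_z^o=h_{\alpha_z}^o+\beta_z\,V(h_z^o)$, hence
$$\nabla^{t'}\bigl(g_y^{o,[p]}\bigr)\times h_z^o=\nabla^{t'}\bigl(g_y^{o,[p]}\bigr)\times h_{\alpha_z}^o+\beta_z\Bigl(\nabla^{t'}\bigl(g_y^{o,[p]}\bigr)\times V(h_z^o)\Bigr).$$
By Lemma \ref{lemma:2.17} the second summand is killed by $U$; as $H^{\dagger,\le a}$ is $U$-equivariant and $U$ is invertible on the slope $\le a$ part, it is killed by $H^{\dagger,\le a}$, so $H^{\dagger,\le a}\bigl(\nabla^{t'}(g_y^{o,[p]})\times h_z^o\bigr)=H^{\dagger,\le a}\bigl(\nabla^{t'}(g_y^{o,[p]})\times h_{\alpha_z}^o\bigr)$. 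Next I would expand the $p$-depletion: from Definition \ref{def:pdepletion} and the Hecke relation, $g_y^{o,[p]}=(1-\alpha_y V)(1-\beta_y V)(g_y^o)=(1-\alpha_y V)(g_{\alpha_y}^o)$, using $\alpha_y+\beta_y=a_y$, $\alpha_y\beta_y=\chi_y(p)p^{y-1}$ and $(1-\beta_y V)(g_y^o)=g_{\alpha_y}^o$. Applying $\nabla^{t'}$ together with $\nabla^{t'}V=p^{t'}V\nabla^{t'}$ gives $\nabla^{t'}(g_y^{o,[p]})=(1-p^{t'}\alpha_y V)\bigl(\nabla^{t'}(g_{\alpha_y}^o)\bigr)$, so
$$\nabla^{t'}(g_y^{o,[p]})\times h_{\alpha_z}^o=\nabla^{t'}(g_{\alpha_y}^o)\times h_{\alpha_z}^o-p^{t'}\alpha_y\Bigl(V\bigl(\nabla^{t'}(g_{\alpha_y}^o)\bigr)\times h_{\alpha_z}^o\Bigr).$$

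Finally I would turn the remaining $V$ into $U^{-1}$. Writing $A:=\nabla^{t'}(g_{\alpha_y}^o)$, the projection formula and $U(h_{\alpha_z}^o)=\alpha_z\,h_{\alpha_z}^o$ give $U\bigl((VA)\times h_{\alpha_z}^o\bigr)=\alpha_z\,\bigl(A\times h_{\alpha_z}^o\bigr)$; applying $H^{\dagger,\le a}$, using its $U$-equivariance and inverting $U$ on the slope $\le a$ part, $H^{\dagger,\le a}\bigl((VA)\times h_{\alpha_z}^o\bigr)=\alpha_z\,U^{-1}H^{\dagger,\le a}\bigl(A\times h_{\alpha_z}^o\bigr)$. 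Substituting back through the two displays above yields
$$H^{\dagger,\le a}\bigl(\nabla^{t'}(g_y^{o,[p]})\times h_z^o\bigr)=\bigl(1-p^{t'}\alpha_y\alpha_z U^{-1}\bigr)H^{\dagger,\le a}\bigl(\nabla^{t'}(g_{\alpha_y}^o)\times h_{\alpha_z}^o\bigr),$$
which is the assertion. The only point requiring real care is the verification of the operator identities above in the nearly overconvergent setting; the key observation making this routine is that $V$ on the graded pieces of $\bW_\bullet$ must be normalized dually to $U$ (so that $UV=\mathrm{Id}$, cf. Proposition \ref{prop:cU}), and with that normalization the near-holomorphic correction term of $\nabla$ in \S\ref{section:explicitnablak} is compatible both with $V$ and with the multiplication on $\bW_\bullet$ — exactly as these operators are compatible on $q$-expansions.
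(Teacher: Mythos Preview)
Your argument is correct and follows the same template as the paper (both are adaptations of \cite[Lemma 4.10]{darmon_rotger}), but you take a somewhat more direct path. The paper first establishes the identity with $g_{\alpha_y}^{o,[p]}$ in place of $g_y^{o,[p]}$, and then proves $H^{\dagger,\le a}\bigl(\nabla^{t'}(g_y^{o,[p]})\times h_z^o\bigr)=H^{\dagger,\le a}\bigl(\nabla^{t'}(g_{\alpha_y}^{o,[p]})\times h_{\alpha_z}^o\bigr)$ via a chain of $p$-depletion swaps, using repeatedly Lemma \ref{lemma:2.17}. To handle the $V$-term it uses the multiplicativity $V(A)\times V(B)=V(A\times B)$ together with the identity $H^{\dagger,\le a}\circ V=U^{-1}\circ H^{\dagger,\le a}$ (which it deduces from writing $H^{\dagger,\le a}$ as an entire series $\sum_{n\ge 1}s_nU^n$ and $U\circ V=\mathrm{Id}$). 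You instead use the factorization $g_y^{o,[p]}=(1-\alpha_yV)(g_{\alpha_y}^o)$ directly and the projection formula $U\bigl((VA)\times B\bigr)=A\times(UB)$, which cuts out the intermediate passage through $g_{\alpha_y}^{o,[p]}$ and $h_{\alpha_z}^{o,[p]}$. Your route is shorter; the paper's has the mild advantage that the identities it invokes ($V(A)V(B)=V(AB)$ and $H^{\dagger,\le a}V=U^{-1}H^{\dagger,\le a}$) require no discussion of how $V$ is normalized on the higher graded pieces of $\bW_\bullet$, whereas your projection formula does rely on the normalization $UV=\mathrm{Id}$ on nearly overconvergent forms that you flag at the end.
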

\begin{proof} This is the analogue of \cite[lemma 4.10(iv)]{darmon_rotger}. Recall from \S \ref{sec:depletion} that
$g_{\alpha_y}^{o,[p]}:=g_{\alpha_y}^{o}-
V\bigl(U(g_{\alpha_y}^{o})\bigr)$. In particular, as
$U(g_{\alpha_y}^o)=\alpha_{y} g_{\alpha_y}^o$, then
$g_{\alpha_y}^{o,[p]}=g_{\alpha_y}^o-\alpha_{y}
V\bigl(g_{\alpha_y}^o \bigr)$. We also have
$\nabla^{t'}\bigl(V\bigl(g_{\alpha_y}^o \bigr)\bigr)=p^t V\bigl(
\nabla^{t'}(g_{\alpha_y}^o) \bigr)   $ as $ \nabla \circ V= p  V
\circ \nabla$. Hence,
$$H^{\dagger,\leq a}\bigl(\nabla^{t'}\bigl(g_{\alpha_y}^{o,[p]}\bigr)\times h_{\alpha_z}^o\bigr)=H^{\dagger,\leq
a}\bigl(\nabla^{t'}\bigl(g_{\alpha_y}^{o}\bigr)\times
h_{\alpha_z}^o\bigr) -p^{t'} \alpha_y  H^{\dagger,\leq
a}\bigl(V\bigl(\nabla^{t'}\bigl(g_{\alpha_y}^{o}\bigr)\bigr)\times
h_{\alpha_z}^o\bigr).$$One computes  $$H^{\dagger,\leq
a}\bigl(V\bigl(\nabla^{t'}(g_{\alpha_y}^o) \bigr) \times
V(h_{\alpha_z}^o)\bigr)= H^{\dagger,\leq
a}\bigl(V\bigl(\nabla^{t'}(g_{\alpha_y}^o)  \times
h_{\alpha_z}^o\bigr)\bigr)=U^{-1} H^{\dagger,\leq
a}\bigl(\nabla^{t'}(g_{\alpha_y}^o) \times h_{\alpha_z}^o\bigr)
;$$the last equality follows using that $U \circ V={\rm Id}$ and
the fact that $H^{\dagger,\leq a}$ can be expressed as an entire
power series $\sum_{n\geq 1} s_n U^n$ so that $\sum_{n\geq 1}
s_n U^n \circ V = \sum_{n\geq 1} s_n U^{n-1}$. It then follows
from the second formula of Lemma \ref{lemma:2.17} that
$$H^{\dagger,\leq a}\bigl(V\bigl(\nabla^{t'}\bigl(g_{\alpha_y}^o \bigr)\bigr) \times h_{\alpha_z}^o\bigr)= \alpha_z H^{\dagger,\leq
a}\bigl(V\bigl(\nabla^{t'}\bigl(g_{\alpha_y}^o \bigr)\bigr) \times
V\bigl(h_{\alpha_z}^o\bigr)\bigr)=\alpha_z U^{-1}
H^{\dagger,\leq a}\bigl(\nabla^{t'}(g_{\alpha_y}^o) \times
h_{\alpha_z}^o\bigr).$$Assembling these formulas we get that
$$H^{\dagger,\leq
a}\bigl(\nabla^{t'}\bigl(g_{\alpha_y}^{o,[p]}\bigr)\times
h_{\alpha_z}^o\bigr)=\bigl(1-p^{t'} \alpha_y \alpha_z
U^{-1}\bigr) H^{\dagger,\leq
a}\bigl(\nabla^{t'}\bigl(g_{\alpha_y}^{o}\bigr)\times
h_{\alpha_z}^o\bigr).$$Since $h_{\alpha_z}=h_z-\beta_z V(h_z)$, it
follows using Lemma \ref{lemma:2.17} that
$$H^{\dagger,\leq a}\bigl(\nabla^{t'}\bigl(g_{y}^{o,[p]}\bigr)\times h_{z}^o\bigr)=
H^{\dagger,\leq a}\bigl(\nabla^{t'}\bigl(g_{y}^{o,[p]}\bigr)\times
h_{\alpha_z}^o\bigr)$$which is also $H^{\dagger,\leq
a}\bigl(\nabla^{t'}\bigl(g_{y}^{o,[p]}\bigr)\times
h_{\alpha_z}^{o,[p]}\bigr)$, using again the Lemma
\ref{lemma:2.17},  as $
h_{\alpha_z}^{o,[p]}=h_{\alpha_z}^{o}-\alpha_z
V\bigl(h_{\alpha_z}^{o}\bigr)$. Since
$\nabla^{t'}\bigl(g_{y}^{o,[p]}\bigr)=\nabla^{t'}\bigl(g_{y}^{o}\bigr)+p^{t'}
V\bigl(\nabla^{t'}\bigl(U(g_{y}^{o})\bigr)\bigr)$, by loc.~cit.~we
have
$$H^{\dagger,\leq
a}\bigl(\nabla^{t'}\bigl(g_{y}^{o,[p]}\bigr)\times
h_{\alpha_z}^{o,[p]}\bigr)=H^{\dagger,\leq
a}\bigl(\nabla^{t'}\bigl(g_{y}^{o}\bigr)\times
h_{\alpha_z}^{o,[p]}\bigr)=H^{\dagger,\leq
a}\bigl(\nabla^{t'}\bigl(g_{\alpha_y}^{o}\bigr)\times
h_{\alpha_z}^{o,[p]}\bigr);$$the last equality follows from
$g_{\alpha_y}^{o}=g_{y}^{o}-\beta_y V\bigl(g_{y}^{o}\bigr) $ and
the fact that $\nabla^{t'}\bigl( V(g_{y}^{o})\bigr)=p^{t'}
V\bigl(\nabla^{t'}(g_{y}^{o}) \bigr)$ so that $ H^{\dagger,\leq
a}\bigl(\nabla^{t'}\bigl(V(g_{\alpha_y}^{o})\bigr)\times
h_{\alpha_z}^{o,[p]}\bigr)=0$. Arguing in the same way backwards
we have
$$H^{\dagger,\leq a}\bigl(\nabla^{t'}\bigl(g_{\alpha_y}^{o}\bigr)\times h_{\alpha_z}^{o,[p]}\bigr)=H^{\dagger,\leq
a}\bigl(\nabla^{t'}\bigl(g_{\alpha_y}^{o,[p]}\bigr)\times
h_{\alpha_z}^{o}\bigr).$$The claim follows.

\end{proof}

We also have the following analogue of \cite[Prop.~4.11]{darmon_rotger}:

\begin{lemma}\label{prop:4.11} We have
$$ H^{\dagger,\leq a}\bigl(\nabla^{t'}\bigl(g_{y}^{o,[p]}\bigr)\times h_{z}^o\bigr)=\frac{\mathcal{E}(g_y,h_z,U)}{\mathcal{E}_1(g_y,h_z,U)}
H^{\dagger,\leq a}\bigl(\nabla^{t'}\bigl(g_{y}^{o}\bigr)\times
h_{z}^o\bigr)  .$$

\end{lemma}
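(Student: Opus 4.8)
This is the analogue of \cite[Prop.~4.11]{darmon_rotger}, and the plan is to mimic that argument, the new inputs being Lemma \ref{lemma:4.10} and Lemma \ref{lemma:2.17}. Write $e:=H^{\dagger,\le a}$ and $d:=\nabla^{t'}$ for brevity. Let $g_{\alpha_y}^o,g_{\beta_y}^o$ (resp.~$h_{\alpha_z}^o,h_{\beta_z}^o$) be the two $p$-stabilizations of $g_y^o$ (resp.~$h_z^o$), so that $g_{\alpha_y}^o=g_y^o-\beta_y V(g_y^o)$, $g_{\beta_y}^o=g_y^o-\alpha_y V(g_y^o)$, and likewise for $h_z^o$. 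Since by assumption $\alpha_y\neq\beta_y$ and $\alpha_z\neq\beta_z$, inverting these relations gives
$$g_y^o=\frac{\alpha_y g_{\alpha_y}^o-\beta_y g_{\beta_y}^o}{\alpha_y-\beta_y},\qquad h_z^o=\frac{\alpha_z h_{\alpha_z}^o-\beta_z h_{\beta_z}^o}{\alpha_z-\beta_z},$$
while a comparison of $q$-expansions shows that the $p$-depletion $g_y^{o,[p]}$ equals both $g_{\alpha_y}^{o,[p]}$ and $g_{\beta_y}^{o,[p]}$.

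Applying Lemma \ref{lemma:4.10} to each of the four pairs of $p$-stabilizations $(g_{\ast_y}^o,h_{\ast_z}^o)$ yields, with $\Xi:=e\bigl(d(g_y^{o,[p]})\times h_z^o\bigr)$ and $X_{\ast\ast}:=e\bigl(d(g_{\ast_y}^o)\times h_{\ast_z}^o\bigr)$,
$$\Xi=(1-p^{t'}\alpha_y\alpha_z U^{-1})X_{\alpha\alpha}=(1-p^{t'}\alpha_y\beta_z U^{-1})X_{\alpha\beta}=(1-p^{t'}\beta_y\alpha_z U^{-1})X_{\beta\alpha}=(1-p^{t'}\beta_y\beta_z U^{-1})X_{\beta\beta};$$
all operators here act on the finite slope $\le a$ subspaces, and the computation is carried out after base change to $\fK_f$, where $U$ is invertible on those subspaces. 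On the other hand, feeding the two displayed linear relations into $d(g_y^o)\times h_z^o$ and applying $e$ gives
$$e\bigl(d(g_y^o)\times h_z^o\bigr)=\frac{\alpha_y\alpha_z X_{\alpha\alpha}-\alpha_y\beta_z X_{\alpha\beta}-\beta_y\alpha_z X_{\beta\alpha}+\beta_y\beta_z X_{\beta\beta}}{(\alpha_y-\beta_y)(\alpha_z-\beta_z)}.$$

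Now multiply the last identity by $\mathcal{E}(g_y,h_z,U)$. Since $\mathcal{E}(g_y,h_z,U)$ is, by definition, the product of $(1-p^{t'}\alpha_y\alpha_z U^{-1})$ with the three ``complementary'' factors, and these factors are commuting power series in $U$, we may rewrite $\mathcal{E}(g_y,h_z,U)X_{\alpha\alpha}$ as $\Xi$ times the product $Q_{\alpha\alpha}$ of those three complementary factors, and similarly for the other three terms --- no division is involved here. This reduces the lemma to the purely algebraic identity
$$\frac{\alpha_y\alpha_z Q_{\alpha\alpha}-\alpha_y\beta_z Q_{\alpha\beta}-\beta_y\alpha_z Q_{\beta\alpha}+\beta_y\beta_z Q_{\beta\beta}}{(\alpha_y-\beta_y)(\alpha_z-\beta_z)}=\mathcal{E}_1(g_y,h_z,U),$$
where, with $T:=U^{-1}$, $Q_{\alpha\alpha}=(1-p^{t'}\alpha_y\beta_z T)(1-p^{t'}\beta_y\alpha_z T)(1-p^{t'}\beta_y\beta_z T)$ and $Q_{\alpha\beta},Q_{\beta\alpha},Q_{\beta\beta}$ are the analogous triple products obtained by deleting the factor indexed by the corresponding pair. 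Both sides are polynomials in $T$ of degree $\le 3$, and one checks by a direct expansion that the coefficients of $T^3$ and of $T^1$ vanish, that the constant term is $1$, and that the coefficient of $T^2$ equals $-p^{2t'}\alpha_y\beta_y\alpha_z\beta_z$, i.e.~the right-hand side. Hence $\mathcal{E}(g_y,h_z,U)\,e\bigl(d(g_y^o)\times h_z^o\bigr)=\mathcal{E}_1(g_y,h_z,U)\,\Xi$, and dividing by $\mathcal{E}_1(g_y,h_z,U)$ in $\fK_f$ gives the asserted formula.

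The only genuine subtlety --- the single point where the finite slope case differs from the ordinary one of \cite{darmon_rotger} --- is the legitimacy of the operator-theoretic steps: the identity $e\circ V=U^{-1}\circ e$ used inside Lemma \ref{lemma:4.10}, the invertibility of $U$, and the inversion of $\mathcal{E}_1(g_y,h_z,U)$. All of these are handled by working throughout after base change to $\fK_f$ and on the slope $\le a$ subspaces provided by Corollary \ref{cor:FredholmUp} and the spectral theory of \cite[Appendice B]{halo_spectral}, on which $U$ is invertible with eigenvalues of valuation $\le a$; the resulting equalities are then identities of meromorphic sections over $\cW_{I}$, which is all that is needed.
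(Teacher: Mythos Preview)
Your proof is correct and follows essentially the same approach as the paper's: both invoke Lemma \ref{lemma:4.10} for all choices of $p$-stabilizations and then perform a linear-algebra computation. The only organizational difference is that the paper first collapses in the $h_z$-variable, obtaining the intermediate identities
\[
\Xi=\bigl(1-p^{t'}\alpha_y\alpha_zU^{-1}\bigr)\bigl(1-p^{t'}\alpha_y\beta_zU^{-1}\bigr)\,e\bigl(d(g_{\alpha_y}^o)\times h_z^o\bigr)
\]
and its $\beta_y$-analogue, so that the final step is a two-term calculation rather than your four-term one; the resulting polynomial identity is the same.
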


\begin{proof} It follows from Lemma \ref{lemma:4.10} that $$H^{\dagger,\leq
a}\bigl(\nabla^{t'}\bigl(g_{y}^{o,[p]}\bigr)\times h_{z}^o\bigr) =
\bigl(1-p^{t'}a_y b_z U^{-1}\bigr) H^{\dagger,\leq
a}\bigl(\nabla^{t'}\bigl(g_{a_y}^{o}\bigr)\times
h_{b_z}^o\bigr)$$for $a$, $b=\alpha,\beta$.

If $h_{\alpha_z}^o$ and $h_{\beta_z}^o$ are the two $p$-stabilizations of $h_z^o$, then $h_z^o=(\alpha_z-\beta_z)^{-1} \bigl(\alpha_z h_{\alpha_z}^o-\beta_z
h_{\beta_z}^o\bigr)$ and similarly for $g_y$. Hence
$$H^{\dagger,\leq a}\bigl(\nabla^{t'}\bigl(g_{y}^{o,[p]}\bigr)\times h_{z}^o\bigr)=\bigl(1-p^{t'}\alpha_y
\alpha_z U^{-1}\bigr) \bigl(1-p^{t'}\alpha_y \beta_z
U^{-1}\bigr) H^{\dagger,\leq
a}\bigl(\nabla^{t'}\bigl(g_{\alpha_y}^{o}\bigr)\times
h_{z}^o\bigr)
$$and
$$H^{\dagger,\leq
a}\bigl(\nabla^{t'}\bigl(g_{y}^{o,[p]}\bigr)\times
h_{z}^o\bigr)=\bigl(1-p^{t'}\beta_y \alpha_z U^{-1}\bigr)
\bigl(1-p^{t'}\beta_y \beta_z U^{-1}\bigr) H^{\dagger,\leq
a}\bigl(\nabla^{t'}\bigl(g_{\beta_y}^{o}\bigr)\times h_{z}^o\bigr)
.$$Thus, using that $g_y^o=(\alpha_y-\beta_y)^{-1} \bigl(\alpha_y
g_{\alpha_y}^o-\beta_y g_{\beta_y}^o\bigr)$, we obtain
$$H^{\dagger,\leq a}\bigl(\nabla^{t'}\bigl(g_{y}^{o}\bigr)\times h_{z}^o\bigr)= (\alpha_y-\beta_y)^{-1} \alpha_y
H^{\dagger,\leq
a}\bigl(\nabla^{t'}\bigl(g_{\alpha_y}^{o}\bigr)\times
h_{z}^o\bigr)- (\alpha_y-\beta_y)^{-1} \beta_y H^{\dagger,\leq
a}\bigl(\nabla^{t'}\bigl(g_{\beta_y}^{o}\bigr)\times
h_{z}^o\bigr).$$A simple computation provides the claimed formula.

\end{proof}

\begin{proposition}\label{prop:fundprop}
We
have $\langle f_x^{o,\ast}, H^{\dagger,\leq
a}\bigl(\nabla^{t'}\bigl(g_{y}^{o,[p]}\bigr)\times h_{z}^o\bigr)
\rangle=$ $$=
\Bigl(\frac{\mathcal{E}\bigl(g_y,h_z,\alpha_x^\ast\bigr)\mathcal{E}_2\bigl(\beta_x^\ast\bigr) }
{\mathcal{E}_0\bigl(\alpha_x^\ast,\beta_x^\ast\bigr)\mathcal{E}_1\bigl(g_y,h_z,\alpha_x^\ast\bigr)}
+
\frac{\mathcal{E}\bigl(g_y,h_z,\beta_x^\ast\bigr)\mathcal{E}_2\bigl(\alpha_x^\ast\bigr)
}{\mathcal{E}_0\bigl(\beta_x^\ast,\alpha_x^\ast\bigr)\mathcal{E}_1\bigl(g_y,h_z,\beta_x^\ast\bigr)}\Bigr)
\langle f_x^{o,\ast}, H^{\dagger,\leq
a}\bigl(\nabla^{t'}\bigl(g_{y}^{o}\bigr)\times h_{z}^o\bigr)
\rangle.$$

\end{proposition}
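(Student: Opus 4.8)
Building on Lemma \ref{prop:4.11}, the plan is to reduce the statement to a single ``unwinding'' formula for the Petersson pairing against the test vector $f_x^{o,\ast}$ and then to specialise it. Set $\psi:=H^{\dagger,\leq a}\bigl(\nabla^{t'}(g_y^o)\times h_z^o\bigr)\in {\rm H}^0\bigl(\cX_{r',I_u},\fw^{k_f}\bigr)^{\leq a}\otimes_{\Lambda_{I_f,K}}\fK_f$ and $R(T):=\mathcal{E}(g_y,h_z,T)\,\mathcal{E}_1(g_y,h_z,T)^{-1}$. By Lemma \ref{prop:4.11} we have $H^{\dagger,\leq a}\bigl(\nabla^{t'}(g_y^{o,[p]})\times h_z^o\bigr)=R(U)\psi$, and since $R(\alpha_x^\ast)=\mathcal{E}(g_y,h_z,\alpha_x^\ast)/\mathcal{E}_1(g_y,h_z,\alpha_x^\ast)$ (and likewise at $\beta_x^\ast$), the Proposition is equivalent to
\[
\langle f_x^{o,\ast},\,R(U)\psi\rangle=\Bigl(\frac{R(\alpha_x^\ast)\,\mathcal{E}_2(\beta_x^\ast)}{\mathcal{E}_0(\alpha_x^\ast,\beta_x^\ast)}+\frac{R(\beta_x^\ast)\,\mathcal{E}_2(\alpha_x^\ast)}{\mathcal{E}_0(\beta_x^\ast,\alpha_x^\ast)}\Bigr)\,\langle f_x^{o,\ast},\psi\rangle ,
\]
so it is enough to prove this for an arbitrary rational function $R$, regular at $T=\alpha_x^\ast$ and $T=\beta_x^\ast$, with the understanding that the identity is read in the fraction ring $\fK_f\widehat{\otimes}\Lambda_{k_g,K}\widehat{\otimes}\Lambda_{k_h,K}$ (so that $\mathcal{E}_1(g_y,h_z,U)$ and $\mathcal{E}_0(\alpha_x^\ast,\beta_x^\ast)=1-\beta_x^\ast/\alpha_x^\ast$, the latter invertible because $\alpha_x^\ast\neq\beta_x^\ast$, may be inverted).

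To prove the displayed identity I would work on the $\pi_{f_x^\ast}$-isotypic part of the slope $\leq a$ subspace at level $Np$, which is spanned by the two $p$-stabilisations $f_{\alpha_x^\ast}=f_x^{o,\ast}-\beta_x^\ast\,V(f_x^{o,\ast})$ and $f_{\beta_x^\ast}=f_x^{o,\ast}-\alpha_x^\ast\,V(f_x^{o,\ast})$; these are $U$-eigenvectors of eigenvalues $\alpha_x^\ast,\beta_x^\ast$ and $f_x^{o,\ast}=(\alpha_x^\ast-\beta_x^\ast)^{-1}\bigl(\alpha_x^\ast f_{\alpha_x^\ast}-\beta_x^\ast f_{\beta_x^\ast}\bigr)$. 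Because the Petersson product is defined through the Hecke-algebra trace pairing of \S\ref{sec:tripleplfs}, distinct systems of $U$-eigenvalues pair to zero, so $\langle f_{\alpha_x^\ast},f_{\beta_x^\ast}\rangle=0$, while $R(U)$ acts diagonally in this basis; expanding $\psi$ as well in the basis $\{f_{\alpha_x^\ast},f_{\beta_x^\ast}\}$ and multiplying out, the cross terms drop and both $\langle f_x^{o,\ast},R(U)\psi\rangle$ and $\langle f_x^{o,\ast},\psi\rangle$ become $\Z_p$-linear combinations of $\langle f_{\alpha_x^\ast},f_{\alpha_x^\ast}\rangle$ and $\langle f_{\beta_x^\ast},f_{\beta_x^\ast}\rangle$ with coefficients built from $\alpha_x^\ast,\beta_x^\ast$ and the coefficients of $\psi$. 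The only arithmetic input then needed is the comparison of the level-$N$ and level-$Np$ Petersson norms, namely $\langle f_{\alpha_x^\ast},f_{\alpha_x^\ast}\rangle=\mathcal{E}_2(\beta_x^\ast)\,\langle f_x^{o,\ast},f_x^{o,\ast}\rangle$ and $\langle f_{\beta_x^\ast},f_{\beta_x^\ast}\rangle=\mathcal{E}_2(\alpha_x^\ast)\,\langle f_x^{o,\ast},f_x^{o,\ast}\rangle$, which is the standard degeneracy-map norm computation (as in \cite[proof of Thm. 4.7]{darmon_rotger} and \cite[\S5]{BSV}), applied to $f_x^{o,\ast}=f_x^o\otimes\chi_x^{-1}$; the $\mathcal{E}_0$-factors are just the change-of-basis denominators. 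Plugging these in and using the elementary identity $\alpha_x^\ast\,\mathcal{E}_2(\beta_x^\ast)-\beta_x^\ast\,\mathcal{E}_2(\alpha_x^\ast)=\alpha_x^\ast-\beta_x^\ast$ (immediate from $\mathcal{E}_2(T)=1-cT$), the four terms assemble into the bracket in the display; substituting $R=\mathcal{E}/\mathcal{E}_1$ finishes the proof.

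The step I expect to be the main obstacle is the bookkeeping of the two normalisations in families rather than at a single classical point: passing from the level-$N$ test vector $\omega_f^{o,\ast}$ to its level-$Np$ $p$-stabilisations, and establishing the Petersson-norm comparison producing $\mathcal{E}_2$, over the total ring of fractions of the weight space. I would handle this by the usual specialisation argument: slope-$\leq a$ decompositions, the Hecke-algebra pairing and the operators $U$, $V$, $\nabla^{t'}$, $H^{\dagger,\leq a}$ all commute with specialisation at classical weights in $\cW_{I_f}\times\cW_{I_g}\times\cW_{I_h}$, so the identity of meromorphic functions is determined by, and can be checked on, the Zariski-dense set of classical unbalanced triples, where it is the identity already available from \cite{darmon_rotger} and \cite{BSV}. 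A secondary point to be careful about is that the reduction in the first paragraph and the expansion in the second presuppose that the $\pi_{f_x^\ast}$-component of $\psi$ meets both $p$-stabilisation lines; this is ensured by the running genericity hypotheses ($\alpha_x\neq\beta_x$, $\alpha_y\neq\beta_y$, $\alpha_z\neq\beta_z$, and $f_x^{o,\ast}$ of finite slope $\leq a$) together with the non-vanishing of the relevant Euler factors, and in the contrary degenerate cases the two-term bracket collapses to the expected one-term expression.
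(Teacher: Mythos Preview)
Your overall strategy---reduce via Lemma~\ref{prop:4.11} to computing $\langle f_x^{o,\ast},R(U)\psi\rangle/\langle f_x^{o,\ast},\psi\rangle$ and then diagonalise $U$---is close in spirit to the paper's, but the execution differs and there is a genuine gap.

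The paper $p$-stabilises the \emph{second} argument, not the first: it sets $\gamma:=e_{f_x^{o,\ast}}\psi$, the projection to the one-dimensional level-$N$ Hecke eigenspace (for the full Hecke algebra including $T_p$), so that $\gamma$ is a scalar multiple of $f_x^{o,\ast}$ and in particular is \emph{old at $p$}. Then $\gamma=(1-\beta_x^\ast/\alpha_x^\ast)^{-1}\gamma_{\alpha_x^\ast}+(1-\alpha_x^\ast/\beta_x^\ast)^{-1}\gamma_{\beta_x^\ast}$, and $R(U)$ acts by scalars on the stabilisations. The factor $\mathcal{E}_2$ then arises from the single classical identity (Lemma~\ref{lemma:PeterssonV}) $\langle f_x^{o,\ast},V(\gamma)\rangle=\frac{\chi_x^{-2}(p)a_x^\ast}{p^{x-1}(p+1)}\langle f_x^{o,\ast},\gamma\rangle$, which gives $\langle f_x^{o,\ast},\gamma_{\alpha_x^\ast}\rangle=\mathcal{E}_2(\beta_x^\ast)\langle f_x^{o,\ast},\gamma\rangle$. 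No orthogonality of $p$-stabilisations and no level-$Np$ norm comparison are used.

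Your argument instead $p$-stabilises $f_x^{o,\ast}$ and asserts (i) $\langle f_{\alpha_x^\ast},f_{\beta_x^\ast}\rangle=0$ and (ii) $\langle f_{\alpha_x^\ast},f_{\alpha_x^\ast}\rangle=\mathcal{E}_2(\beta_x^\ast)\langle f_x^{o,\ast},f_x^{o,\ast}\rangle$. These two ingredients are in tension. Orthogonality would require the pairing to be $U$-equivariant, which is what the trace pairing of \S\ref{sec:tripleplfs} gives; but the ``standard degeneracy-map norm computation'' you invoke for (ii) is for the classical Petersson product, where the adjoint of $U$ is not $U$ and (i) fails. More concretely, if one grants (i) and writes the isotypic part of $\psi$ as $c_\alpha f_{\alpha_x^\ast}+c_\beta f_{\beta_x^\ast}$ with independent $c_\alpha,c_\beta$, equating your expression with the target for all $R$ forces a fixed ratio $c_\alpha/c_\beta$; the identity cannot hold for arbitrary coefficients. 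What makes it hold is precisely that the projection of $\psi$ is old at $p$ (a multiple of $f_x^{o,\ast}$), a fact you never use. And even restricting to this old line, plugging in your formula (ii) does not reproduce the bracket: a short computation shows the required ratio is $\langle f_{\alpha_x^\ast},f_{\alpha_x^\ast}\rangle/\langle f_{\beta_x^\ast},f_{\beta_x^\ast}\rangle=-\frac{\beta_x^\ast\,\mathcal{E}_2(\beta_x^\ast)}{\alpha_x^\ast\,\mathcal{E}_2(\alpha_x^\ast)}$, not $\mathcal{E}_2(\beta_x^\ast)/\mathcal{E}_2(\alpha_x^\ast)$. The fix is to follow the paper: keep $f_x^{o,\ast}$ at level $N$, stabilise $\gamma$, and use Lemma~\ref{lemma:PeterssonV} in place of (i)--(ii).
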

\begin{proof}
Consider now the projection  $e_{f_x^{o,\ast}}$ onto the Hecke
eigenspace corresponding to $f_x^{o,\ast}$. Write
$$\gamma:=e_{f_x^{o,\ast}}H^{\dagger,\leq
a}\bigl(\nabla^{t'}\bigl(g_{y}^{o}\bigr)\times h_{z}^o\bigr)$$and
write $\gamma_{\alpha_x^\ast}=\gamma-\beta_x^\ast V(\gamma)$ and
$\gamma_{\beta_x^\ast}=\gamma-\alpha_x^\ast V(\gamma)$ for the two
$p$-stabilizations. Then $$\langle f_x^{o,\ast}, H^{\dagger,\leq
a}\bigl(\nabla^{t'}\bigl(g_{y}^{o}\bigr)\times
h_{z}^o\bigr)\rangle =\langle f_x^{o,\ast},\gamma\rangle$$and
$$\frac{\mathcal{E}\bigl(g_y,h_z,U\bigr)}{\mathcal{E}_1\bigl(g_y,h_z,U\bigr)}\bigl(\gamma_{\alpha_x^\ast}\bigr)=
\frac{\mathcal{E}\bigl(g_y,h_z,\alpha_x^\ast\bigr)}{\mathcal{E}_1\bigl(g_y,h_z,\alpha_x^\ast\bigr)}
\bigl(\gamma_{\alpha_x^\ast}\bigr)$$and similarly for
$\gamma_{\beta_x^\ast}$. Recalling that
$\gamma=(\alpha_x^\ast-\beta_x^\ast)^{-1}\bigl(\alpha_x^\ast
\gamma_{\alpha_x^\ast}-\beta_x^\ast
\gamma_{\beta_x^\ast}\bigr)=\bigl(1-\beta_x^\ast/\alpha_x^\ast\bigr)^{-1}\gamma_{\alpha_x^\ast}+
\bigl(1-\alpha_x^\ast/\beta_x^\ast\bigr)^{-1} \gamma_{\beta_x^\ast}$ the
conclusion follows from Lemma \ref{prop:4.11} and Lemma
\ref{lemma:PeterssonV} noticing that the Hecke eignspace associated to $f_x^{o,\ast}$ has nebentypus $\chi_x^{-1}$.

\end{proof}

\begin{lemma}\label{lemma:PeterssonV} Let $\delta$ and $\gamma\in
S_k\bigl(\Gamma_1(N)\bigr)$. Assume that $\gamma$ is an eigenform
with eigenvalue $a_p$ for the operator $T_p$ and with nebentypus
$\chi$. Then $$\langle \delta, V(\gamma)\rangle= \frac{\chi(p)^2
a_p}{p^{k-1} (p+1)} \langle \delta,\gamma\rangle.
$$

\end{lemma}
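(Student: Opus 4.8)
The plan is to reduce the statement, by an orthogonality argument, to the case $\delta=\gamma$, and then to evaluate the single ratio $\langle\gamma,V(\gamma)\rangle/\langle\gamma,\gamma\rangle$ by the usual Atkin--Lehner bookkeeping at $p$; this is the analogue, for the pairing $\langle\ ,\ \rangle$ attached to the Hecke algebra, of the classical computations recalled in \cite[\S 2.6]{darmon_rotger} and \cite[\S 4.2.1]{UNO}. For the reduction one uses that $\langle\ ,\ \rangle$ is orthogonal for the decomposition of $S_k\bigl(\Gamma_1(N)\cap\Gamma_0(p)\bigr)$ into eigensystems for the operators $T_\ell$ with $\ell\nmid Np$. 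When $\gamma$ is (a scalar multiple of) a newform of level $N$, the eigensystem component attached to $\gamma$ is spanned by $\gamma$ and $V(\gamma)$, while its intersection with $S_k(\Gamma_1(N))$ is spanned by $\gamma$ alone; hence, if $c$ denotes the coefficient of $\gamma$ in the eigen-expansion of $\delta\in S_k(\Gamma_1(N))$, one gets $\langle\delta,V(\gamma)\rangle=c\,\langle\gamma,V(\gamma)\rangle$ and $\langle\delta,\gamma\rangle=c\,\langle\gamma,\gamma\rangle$, so it suffices to treat $\delta=\gamma$ (the case in which $\gamma$ is old at $N$ follows from a minor variant of the same argument).

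Next I would record the classical facts at level $\Gamma_1(N)$ with $p\nmid N$: on $S_k(\Gamma_1(N))$ one has $T_p=U_p+p^{k-1}\langle p\rangle V$ and $U_p\circ V=\mathrm{Id}$ as operators on $S_k\bigl(\Gamma_1(N)\cap\Gamma_0(p)\bigr)$, so that $U_p\gamma=a_p\gamma-\chi(p)p^{k-1}V(\gamma)$. Writing $\alpha,\beta$ for the roots of $X^2-a_pX+\chi(p)p^{k-1}$ (assumed distinct, the degenerate case being treated by a limiting argument), the $p$-stabilizations $\gamma_\alpha:=\gamma-\beta V(\gamma)$ and $\gamma_\beta:=\gamma-\alpha V(\gamma)$ are $U_p$-eigenforms of eigenvalues $\alpha$ and $\beta$, and $\gamma=(\alpha-\beta)^{-1}(\alpha\gamma_\alpha-\beta\gamma_\beta)$, $V(\gamma)=(\alpha-\beta)^{-1}(\gamma_\alpha-\gamma_\beta)$. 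Since $\langle\ ,\ \rangle$ is the pairing attached to the Hecke algebra through the first Fourier coefficient, every Hecke operator is self-adjoint for it, $\langle Tf,g\rangle=\langle f,Tg\rangle$; in particular $\gamma_\alpha$ and $\gamma_\beta$, having distinct $U_p$-eigenvalues, are orthogonal.

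Finally, to compute $\langle\gamma,V(\gamma)\rangle/\langle\gamma,\gamma\rangle$ I would express $\langle\gamma_\alpha,\gamma_\alpha\rangle$ and $\langle\gamma_\beta,\gamma_\beta\rangle$ in terms of $\langle\gamma,\gamma\rangle$ using the classical $p$-stabilization norm formulas (equivalently, using that $V$ is essentially the pull-back along $\tau\mapsto p\tau$ and that $[\Gamma_0(N):\Gamma_0(Np)]$ equals $p+1$), substitute these into the orthogonal expansions of $\gamma$ and $V(\gamma)$ above, and simplify using $\alpha+\beta=a_p$ and $\alpha\beta=\chi(p)p^{k-1}$; this produces the asserted factor $\chi(p)^2 a_p/\bigl(p^{k-1}(p+1)\bigr)$ (which is indeed symmetric in $\alpha,\beta$, as it must be, and in which the character value enters as $\chi(p)^2$ because $\langle\ ,\ \rangle$ is $A$-bilinear rather than Hermitian). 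The one genuinely delicate point is this normalization bookkeeping: keeping track of the exact powers of $p$, of the factor $p+1$, and of the character value both through the change of level $N\rightsquigarrow Np$ and through the Hecke-algebra definition of $\langle\ ,\ \rangle$.
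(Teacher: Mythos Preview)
Your approach is quite different from the paper's, and it has a genuine circularity problem.

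The paper does not reduce to $\delta=\gamma$ at all. It argues directly with the double coset decomposition
\[
\Gamma_1(N)\,\alpha\,\Gamma_1(N)=\coprod_{j=0}^{p-1}\Gamma_1(N)\beta_j\;\sqcup\;\Gamma_1(N)\begin{pmatrix}m&n\\N&p\end{pmatrix}\beta_\infty,
\qquad \alpha=\begin{pmatrix}1&0\\0&p\end{pmatrix},\ \beta_j=\begin{pmatrix}1&j\\0&p\end{pmatrix},\ \beta_\infty=\begin{pmatrix}p&0\\0&1\end{pmatrix}.
\]
Writing each $\beta_j=a_j\alpha b_j$ with $a_j,b_j\in\Gamma_1(N)$ and using $\Gamma_1(N)$-invariance of the Petersson pairing on both slots, one finds that every term $\langle\delta,\gamma\vert_k\beta_j\rangle$ equals $\langle\delta,\gamma\vert_k\alpha\rangle$, and likewise for the $\beta_\infty$ term. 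Summing gives $a_p\langle\delta,\gamma\rangle=(p+1)\overline{\chi}(p)\langle\delta,\gamma\vert_k\beta_\infty\rangle$; then the identity $V(\gamma)=\overline{\chi}(p)p^{1-k}\gamma\vert_k\beta_\infty$ converts this into the stated formula. This works for arbitrary $\delta$ and only uses that $\gamma$ is a $T_p$-eigenform of nebentypus $\chi$, exactly the hypotheses of the lemma.

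Your route runs into two problems. First, the ``classical $p$-stabilization norm formulas'' you invoke are not an independent input: they are obtained precisely by expanding $\langle\gamma_\alpha,\gamma_\alpha\rangle=\langle\gamma,\gamma\rangle-2\beta\,\langle\gamma,V\gamma\rangle+\beta^2\langle V\gamma,V\gamma\rangle$ and plugging in values of $\langle\gamma,V\gamma\rangle$ and $\langle V\gamma,V\gamma\rangle$. Citing them to recover $\langle\gamma,V\gamma\rangle$ is circular; the content you defer to these formulas is exactly the content of the lemma. If instead you try to use only the orthogonality $\langle\gamma_\alpha,\gamma_\beta\rangle=0$ together with an independent value for $\langle V\gamma,V\gamma\rangle$, the latter again requires the same coset/slash-action bookkeeping that the paper carries out.

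Second, there is a mismatch of pairings. The computation in the paper uses the classical (Hermitian) Petersson integral: note the complex conjugate $\overline{\chi}(p)$ that appears, which is what produces the factor $\chi(p)^2$ in the statement. For that pairing $U_p$ is \emph{not} self-adjoint at level $\Gamma_1(N)\cap\Gamma_0(p)$, so $\gamma_\alpha$ and $\gamma_\beta$ are not automatically orthogonal, undercutting your key step. If you switch to the $A$-bilinear Hecke-algebra pairing (where $U_p$ is self-adjoint), then your index/pull-back argument for $\langle V\gamma,V\gamma\rangle$ no longer applies directly, since that is a geometric statement about the Petersson integral. Either way the argument does not close without doing a computation equivalent to the paper's. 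Finally, your reduction to $\delta=\gamma$ imposes that $\gamma$ be a full Hecke eigenform, which is stronger than the stated hypothesis.
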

\begin{proof} Let $\alpha:=\left(\begin{matrix} 1 & 0 \cr 0 & p\end{matrix}\right)$.
Following \cite[\S 5.2]{Diamond} we write $$\Gamma_1(N) \alpha
\Gamma_1(N)=\amalg_{j=0}^{p-1} \Gamma_1(N) \beta_j \amalg
\Gamma_1(N) \left(\begin{matrix}m & n \cr N & p\end{matrix}\right)
\beta_\infty,$$where $\beta_j=\left(\begin{matrix} 1 & j \cr 0 &
p\end{matrix}\right)$ for $0\leq j \leq p-1$,
$\beta_\infty=\left(\begin{matrix} p & 0 \cr 0 &
1\end{matrix}\right)$ and $mp-n N=1$. Moreover $T_p
(\gamma)=\sum_{j=0}^{p-1} \gamma\vert_k \beta_j + \gamma\vert_k
\left(\begin{matrix}m & n \cr N & p\end{matrix}\right)
\beta_\infty$. Hence, $a_p \langle \delta,\gamma\rangle =\langle
\delta, T_p (\gamma)\rangle =\sum_{j=0}^{p-1} \langle \delta,
\gamma\vert_k \beta_j\rangle + \langle \delta, \gamma\vert_k
\left(\begin{matrix}m & n \cr N & p\end{matrix}\right)
\beta_\infty \rangle $. Write $\beta_j= a_j \alpha b_j$. Then
$$\langle \delta, \gamma\vert_k \beta_j\rangle= \langle \delta,
\gamma\vert_k a_j \alpha b_j\rangle=\langle \delta\vert_k
b_j^{-1}, \gamma\vert_k a_j \alpha\rangle= \langle \delta,
\gamma\vert_k \alpha\rangle$$as the Petersson product is invariant
for the action of elements of $\Gamma_1(N)$. Similarly, writing
$\left(\begin{matrix}m & n \cr N & p\end{matrix}\right)
\beta_\infty=a \alpha b$ we have $$\langle \delta, \gamma\vert_k
\alpha\rangle=\langle \delta, \gamma\vert_k a^{-1}
\left(\begin{matrix}m & n \cr N & p\end{matrix}\right) b^{-1}
\beta_\infty\rangle = \langle \delta, \gamma\vert_k
\left(\begin{matrix}m & n \cr N & p\end{matrix}\right)
\beta_\infty\rangle.$$We conclude that
$$a_p \langle \delta,\gamma\rangle=(p+1) \overline{\chi}(p) \langle \delta, \gamma\vert_k
\beta_\infty\rangle.
$$

Recall that $T_p(\gamma)=U(\gamma)+\chi(p) p^{k-1} V(\gamma)$ with $V(\gamma)=\overline{\chi}(p) p^{1-k} \gamma\vert_k
\beta_\infty$ and $U(\gamma)=\sum_{j=0}^{p-1} \gamma\vert_k
\beta_j$. Hence $$\langle \delta, V(\gamma)\rangle=
\frac{\chi(p)^2 a_p}{p^{k-1} (p+1)} \langle \delta,\gamma\rangle.
$$

\end{proof}

\begin{proof} (of Theorem  \ref{thm:Interpolate})
It follows from Theorem \ref{thm:fghcirc} that $L^{\rm
alg}\Bigl(f_x,g_y,h_z, \frac{x+y+z-2}{2}\Bigr) =
\frac{I(f^o_x,g^o_x,h^o_y)}{\langle f_x^\ast, f_x^\ast \rangle}$.
On the other hand $$\langle f_x^{o,\ast},e_{f_x^{o,\ast}}
H^{\dagger,\leq a}\bigl(\nabla^{t'}\bigl(g_{y}^{o}\bigr)\times
h_{z}^o\bigr) \rangle= {\langle f_x^{o,\ast}, e_{f_x^{o,\ast}}
\mathcal{H}^{\rm hol}\Bigl(\delta^{t'}\bigl(g_y^o\bigr)\times
h_z^o \Bigr) \rangle}=I(f^o_x,g^o_x,h^o_y),$$where
$\mathcal{H}^{\rm hol}$ the classical holomorphic projection on
nearly holomorphic modular forms and  $\delta$ be the
Shimura--Maass operator; see \cite[\S2]{UNO} or \cite[\S 2.3 \&
2.4]{darmon_rotger}. The claim follows now from Proposition
\ref{prop:fundprop}.
\end{proof}

In particular for $x=k$, $y=\ell$,  $z=m$  we have by construction
$f_x=f$, $g_y=g$, $h_z=h$ and $f_x^{o}=f^{o}$, $g_y^{o}=g^{o}$,
$h_z^{o}=h^{o}$. Then
\begin{corollary} We have
$$\mathcal{L}_p^f(\omega^o_f,\omega^o_g,\omega^o_h)(x,y,z)=  \times{\Bigl(L^{\rm alg}\bigl(f,g,h, \frac{k+\ell+m-2}{2}\bigr)  \Bigr)^{\frac{1}{2}}} $$
for some non-zero constant $\times$ so that $\mathcal{L}_p^f(\omega^o_f,\omega^o_g,\omega^o_h) \neq 0$ if the value of the classical $L$-function is non-zero.
\end{corollary}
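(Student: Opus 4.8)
The plan is to specialize Theorem~\ref{thm:Interpolate} at the particular point $(x,y,z)=(k,\ell,m)$ and identify all the Euler-type factors that appear. By construction of the families $\omega_f^o,\omega_g^o,\omega_h^o$ deforming the $p$-stabilizations of $f^o,g^o,h^o$, at this point we have $f_x=f$, $g_y=g$, $h_z=h$, and $t'=t$, so Theorem~\ref{thm:Interpolate} gives
$$\mathcal{L}_p^f(\omega^o_f,\omega^o_g,\omega^o_h)(k,\ell,m)=\left(\frac{\mathcal{E}(g,h,\alpha_f^\ast)\mathcal{E}_2(\beta_f^\ast)}{\mathcal{E}_0(\alpha_f^\ast,\beta_f^\ast)\mathcal{E}_1(g,h,\alpha_f^\ast)}+\frac{\mathcal{E}(g,h,\beta_f^\ast)\mathcal{E}_2(\alpha_f^\ast)}{\mathcal{E}_0(\beta_f^\ast,\alpha_f^\ast)\mathcal{E}_1(g,h,\beta_f^\ast)}\right)L^{\rm alg}\!\left(f,g,h,\tfrac{k+\ell+m-2}{2}\right).$$
The claim will follow once I check that the factor in parentheses is a \emph{nonzero} algebraic number under the running assumptions $\alpha_f\neq\beta_f$, $\alpha_g\neq\beta_g$, $\alpha_h\neq\beta_h$ (hence $\alpha_f^\ast\neq\beta_f^\ast$), and that $L^{\rm alg}$ is by definition (see the display just before Theorem~\ref{thm:Interpolate}) the square root of the algebraic part of $L(f,g,h,\frac{k+\ell+m-2}{2})$ divided by $\langle f^\ast,f^\ast\rangle$, so that up to the nonzero constant $\prod_{q\mid N\infty}C_q$ of Theorem~\ref{thm:fghcirc} it equals $\bigl(L^{\rm alg}(f,g,h,\frac{k+\ell+m-2}{2})\bigr)^{1/2}$ in the notation of the final corollary.

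First I would recall that $\mathcal{E}_0(S,T)=1-T/S$, so $\mathcal{E}_0(\alpha_f^\ast,\beta_f^\ast)$ and $\mathcal{E}_0(\beta_f^\ast,\alpha_f^\ast)$ are both nonzero precisely because $\alpha_f^\ast\neq\beta_f^\ast$; this is exactly why that hypothesis is imposed. Next I would argue that the whole bracketed expression is symmetric in $(\alpha_f^\ast,\beta_f^\ast)$, hence lies in $\Q_{f,g,h}$ after fixing embeddings; and that it is nonzero. The cleanest way to see non-vanishing is to observe that the displayed interpolation formula of \cite{darmon_rotger} (Theorem~1.3 therein, from which the $\mathcal{E},\mathcal{E}_0,\mathcal{E}_1,\mathcal{E}_2$ are imported verbatim) already records that this factor is the product of the standard ``missing Euler factors at $p$'' entering the Garrett--Rankin interpolation, and those are nonzero for ordinary-or-finite-slope non-critical refinements; alternatively, one specializes the analogous ordinary statement of \S\ref{sec:tripleordinary} (the formula $\mathcal{L}_p^f(\bof^o,\bg^o,\bh^o)(k,\ell,m)=\times\bigl(L^{\rm alg}(f,g,h,\tfrac{k+\ell+m-2}{2})\bigr)^{1/2}$ with $\times\neq 0$) and matches factors, since the finite-slope construction degenerates to the ordinary one when $a=0$. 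I would package this as: the bracketed term equals a nonzero algebraic constant times $1$, absorb it together with $\prod_q C_q$ into a single nonzero constant $\times$, and conclude $\mathcal{L}_p^f(\omega^o_f,\omega^o_g,\omega^o_h)(k,\ell,m)=\times\bigl(L^{\rm alg}(f,g,h,\tfrac{k+\ell+m-2}{2})\bigr)^{1/2}$.

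Finally, the statement ``$\mathcal{L}_p^f(\omega^o_f,\omega^o_g,\omega^o_h)\neq 0$ if the classical central value is non-zero'' is then immediate: if $L(f,g,h,\frac{k+\ell+m-2}{2})\neq 0$ then by Theorem~\ref{thm:fghcirc} (with the choice of $f^o,g^o,h^o$ making all $C_q\neq 0$) we have $L^{\rm alg}(f,g,h,\frac{k+\ell+m-2}{2})\neq 0$, hence its square root is nonzero, hence the specialization $\mathcal{L}_p^f(\omega^o_f,\omega^o_g,\omega^o_h)(k,\ell,m)$ is nonzero, and in particular the three-variable meromorphic function $\mathcal{L}_p^f(\omega^o_f,\omega^o_g,\omega^o_h)$ is not identically zero.

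The main obstacle I anticipate is not any deep input but bookkeeping: one must be careful that the normalization of $L^{\rm alg}$ in Theorem~\ref{thm:Interpolate} (which already divides by $\langle f_x^\ast,f_x^\ast\rangle$ and takes a square root, and carries $\prod_q C_q$ inside) is matched against the normalization implicit in the target corollary, and that the Euler factor $\mathcal{E}_1(g,h,\alpha_f^\ast)=1-p^{2t}\alpha_g\beta_g\alpha_h\beta_h(\alpha_f^\ast)^{-2}$ in the denominator is invertible at the point $(k,\ell,m)$ — this is guaranteed by the non-criticality built into the hypotheses $\alpha_g\neq\beta_g$, $\alpha_h\neq\beta_h$ together with the finite slope of $f$. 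Once these compatibilities are recorded, the proof is a one-line specialization plus an application of Theorem~\ref{thm:fghcirc}.
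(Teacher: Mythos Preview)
Your approach is essentially the same as the paper's: specialize Theorem~\ref{thm:Interpolate} at $(x,y,z)=(k,\ell,m)$, identify the resulting Euler-type factor as the nonzero constant $\times$, and invoke Theorem~\ref{thm:fghcirc} for the nonvanishing implication. The paper is in fact even more terse---it states the corollary as an immediate consequence after the sentence ``In particular for $x=k$, $y=\ell$, $z=m$ we have by construction $f_x=f$, $g_y=g$, $h_z=h$\ldots'' and does not separately justify the nonvanishing of the bracketed factor; your attempts to argue this (via \cite{darmon_rotger} or by degeneration to the ordinary case) are additional heuristics rather than a proof, but the paper does not supply one either.
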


\begin{remark}\label{rmk:DRanalogue} For Hida families the Euler factors
appearing in the formula in \ref{thm:Interpolate} differ from
those in \cite{darmon_rotger}. This is due to the fact that the
pairing $\langle f_x^{o,\ast}, H^{\dagger,\leq
a}\bigl(\nabla^{t'}\bigl(g_{y}^{o,[p]}\bigr)\times h_{z}^o\bigr)
\rangle$ computed in Proposition \ref{prop:fundprop} is
substituted in loc.~cit.~by the ordinary stabilizations, namely
one computes $$\langle e_{\rm ord} \bigl(f_x^{o,\ast}\bigr) , e_{\rm
ord}\bigl(\nabla^{t'}\bigl(g_{y}^{o,[p]}\bigr)\times h_{z}^o\bigr)
\rangle ,$$where $e_{\rm ord}=H^{\dagger,0}$ is the ordinary
projection appearing in Hida theory. Nevertheless, under the
Assumptions (\ref{ass10}), one can use the techniques of the
present paper to provide an alternative proof of \cite[Thm.
4.7]{darmon_rotger}.

\end{remark}

\section{Appendix I.}

In this appendix we set-up the general theory of formal vector bundles with marked sections for families of $p$-divisible groups ``which are not far from being ordinay"
in order to facilitate the construction of sheaves of type $\bW^0_k$ on Shimura varieties of type PEL other then modular curves. However we do not construct these
sheaves and we do not construct the triple product $p$-adic $L$-functions in the finite slope case here for other Shimura varieties, only set-up the geometric
machine which should produce the modular sheaves.

\subsection{Vector bundles with marked sections associated to $p$-divisible groups.}\label{sec:pdivconstructions}

We start by fixing a flat $\Z_p$-algebra $A_0$ such that $A_0$ is
$p$-adically complete and separated integral domain. Let $R$ be a normal domain, which is a $p$-adically  complete and separated $A_0$-algebra, without $A_0$-torsion. Let $G$ be a $p$-divisible
group  over $R$ of height $h$ and dimension $d<h$. Let $\det
V_G$ be the determinant ideal of the Vershiebung morphism
$V_G\colon \overline{G}\to \overline{G}^{(p)}$, where
$\overline{G}:=G\times_R (R/pR)$. Its inverse image via the projection $R\to R/pR$ defines an ideal of
$R$ that we denote by $\mathrm{Hdg}(G)$. Let $n$ be a positive
integer and assume that $p \in \mathrm{Hdg}(G)^{p^{ n +1}}$. It
then follows from \cite[Lemma A.1]{halo_spectral}  that $\mathrm{Hdg}(G)$ is an invertible ideal. Furthermore,  $G$ admits a
canonical subgroup $H_n\subset G[p^n]$ of rank $p^{nd}$ thanks to \cite[Cor.~A.2]{halo_spectral}. We assume
that $H_n^\vee(R)=\bigl(\Z/p^n)^d$ and that $G[p](R)/H_1(R)\cong (\Z/p\Z)^{h-d}$. Thanks to  \cite[Prop.~A.3]{halo_spectral} this implies that there exists an invertible ideal
$\mathrm{Hdg}(G)^{\frac{1}{p-1}}\subset R$ whose $(p-1)$-th
power is $\mathrm{Hdg}(G)$.  We also know from \cite[Cor.~A.2]{halo_spectral} that
$\displaystyle {\rm Ker}\bigl(\omega_G\lra
\omega_{H_n}\bigr)\subset p^n
\mathrm{Hdg}(G)^{-\frac{p^n-1}{p-1}}\omega_G$  so that we have a
natural diagram
\begin{equation}\label{eq:dlogs}
\begin{array}{cccccccc}
&&\omega_G\\
&&\downarrow\\
H_n^\vee&\stackrel{{\rm dlog}}{\lra}&\omega_{H_n}\\
&&\downarrow\\
&&\frac{\omega_G}{\bigl(p^n\mathrm{Hdg}(G)^{-\frac{(p^n-1)}{p-1}}\bigr)\omega_G}
\end{array}
\end{equation}

Let $\cI\subset R$ be the invertible ideal
$p^n\mathrm{Hdg}(G)^{-\frac{p^n}{p-1}}$ of $R$.
Let $\Omega_G\subset \omega_G$ be the $R$-submodule generated
by (any) lifts of the images of a $\Z/p^n\Z$-basis of $H_n^\vee(R)$ in
$\omega_G/\bigl(p^n\mathrm{Hdg}(G)^{-\frac{(p^n-1)}{p-1}}\bigr)\omega_G$
via ${d\log}$. It follows from \cite[\S3]{SiegelAIP} and \cite[\S
A]{halo_spectral}  that the sheaf $\Omega_G$ has the following
properties:

\smallskip

a) the cokernel of $\Omega_G\subset \omega_G$ is annihilated by
$\mathrm{Hdg}(G)^{\frac{1}{p-1}}$;
\smallskip

b)  $\Omega_G$ is a free $R$-module of rank $d$ and the map
${\rm dlog}$ defines an isomorphism
$$H_n^\vee(R)\otimes_\Z \bigl(R/\cI\bigr)\cong
\Omega_G\otimes_R
\bigl(R/\cI\bigr).$$

Let $\mathbb{E}(G^\vee)\to G^\vee$ the universal vector extension of the dual $p$-divisible group $G^\vee$ and let
$\mathrm{H}^1_{\rm dR}(G)$ be the sheaf of invariant
differentials of $\mathbb{E}(G^\vee)$. It is a
locally free $R$-module of rank $h$ endowed with an integrable
connection $\nabla\colon \mathrm{H}^1_{\rm dR}(G) \to
\mathrm{H}^1_{\rm dR}(G)\widehat{\otimes}_{R}
\Omega^1_{R/A_0}$, called the Gauss-Manin connection. It also fits
into the exact sequence

$$ 0 \to \omega_{G} \to  \mathrm{H}^1_{\rm dR}(G) \to \omega_{G^\vee}^\vee \to 0.$$This defines the so called Hodge 
filtration on $\mathrm{H}^1_{\rm dR}(G)$. Consider the exact sequence

$$\begin{matrix} 0 \lra & \mathrm{Hdg}(G)^{\frac{p}{p-1}}\cdot \omega_{G} & \lra &  \mathrm{Hdg}(G)^{\frac{p}{p-1}}\cdot \mathrm{H}^1_{\rm dR}(G) &
\lra &  \mathrm{Hdg}(G)^{\frac{p}{p-1}} \cdot\omega_{G^\vee}^\vee &
\lra  0 \cr & \downarrow & & \downarrow & & \downarrow \cr 0 \lra
&  \omega_{G} & \lra & \mathrm{H}^1_{\rm dR}(G) & \lra &
\omega_{G^\vee}^\vee & \lra  0\end{matrix} $$obtained by multiplying
by the invertible ideal $\mathrm{Hdg}(G)^{\frac{p}{p-1}}$.

\begin{definition}\label{def:Hnatural} Using the inclusion $\mathrm{Hdg}(G)^{\frac{p}{p-1}}\cdot \omega_G \subset \Omega_G \subset \omega_G$ define
$\mathrm{H}^\sharp_G$ to be the pushout of
$\mathrm{Hdg}(G)^{\frac{1}{p-1}} \cdot \mathrm{H}_{\rm dR}^1(G)$
via the inclusion $\mathrm{Hdg}(G)^{\frac{1}{p-1}}\omega_G \subset
\Omega_G$. 

The $R$-module $\mathrm{H}^\sharp_G$ has the simple description $\mathrm{H}^\sharp_G:=\mathrm{Hdg}(G)^{\frac{p}{p-1}} \mathrm{H}^1_{\rm dR}(G)+ \Omega_G$
as $R$-submodule of $\mathrm{H}_{\rm dR}^1(G)$.
\end{definition}

\begin{proposition} The $R$-module $\mathrm{H}^\sharp_G$ has the following properties:

\begin{itemize}

\item[i.] we have an exact sequence $0 \to \Omega_G \to
\mathrm{H}^\sharp_G \to  \mathrm{Hdg}(G)^{\frac{p}{p-1}} \cdot
\omega_{G^\vee}^\vee \to  0$. In particular, it is a locally free
$R$-module of rank $d$ and it contains $\Omega_G\subset
\mathrm{H}^\sharp_G$ as a locally direct summand;

\item[ii.] it fits into the following diagram with exact rows:
$$\begin{matrix} 0 \lra & \Omega_G & \lra &  \mathrm{H}^\sharp_G &
\lra &  \mathrm{Hdg}(G)^{\frac{p}{p-1}}\cdot \omega_{G^\vee}^\vee &
\lra  0 \cr & \downarrow & & \downarrow & & \downarrow \cr 0 \lra
&  \omega_{G} & \lra & \mathrm{H}^1_{\rm dR}(G) & \lra &
\omega_{G^\vee}^\vee & \lra  0.\end{matrix} $$

\item[iii.] the choice of a $\Z/p^n\Z$-basis of $H_n^\vee(S)$ defines a
basis $s_1,\ldots,s_d$ of the $R/\cI$-module of $\Omega_G/\cI
\Omega_G$ via the map ${\rm dlog}$.
\end{itemize}

In particular, we are in the hypotheses of \S\ref{sec:vbfil} with
$\cE:=\mathrm{H}^\sharp_G$, $\cF=\Omega_G$ and the sections
$s_1,\ldots,s_d$ of $\Omega_G/\cI \Omega_G$
where $\cI=p^n \mathrm{Hdg}(G)^{-\frac{p^n}{p-1}}$.
\end{proposition}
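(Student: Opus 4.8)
The statement is a package of three assertions about $\mathrm{H}^\sharp_G$ together with the final remark placing us in the setting of §\ref{sec:vbfil}. The plan is to extract everything from the defining pushout diagram and the already-recalled properties (a), (b) of $\Omega_G$, together with the standard exact sequence $0\to \omega_G\to \mathrm{H}^1_{\rm dR}(G)\to \omega_{G^\vee}^\vee\to 0$. First I would verify the alternative description $\mathrm{H}^\sharp_G=\mathrm{Hdg}(G)^{\frac{p}{p-1}}\mathrm{H}^1_{\rm dR}(G)+\Omega_G$ inside $\mathrm{H}^1_{\rm dR}(G)$: by definition $\mathrm{H}^\sharp_G$ is the pushout of $\mathrm{Hdg}(G)^{\frac{1}{p-1}}\mathrm{H}^1_{\rm dR}(G) \leftarrow \mathrm{Hdg}(G)^{\frac{1}{p-1}}\omega_G \hookrightarrow \Omega_G$; since $\mathrm{Hdg}(G)^{\frac{1}{p-1}}\omega_G\supset \mathrm{Hdg}(G)^{\frac{p}{p-1}}\mathrm{H}^1_{\rm dR}(G)\cap \mathrm{Hdg}(G)^{\frac{1}{p-1}}\omega_G$ — wait, more simply: a pushout of $B\leftarrow A\hookrightarrow C$ with $A\hookrightarrow B$ injective is $B\oplus_A C$, which maps into any common overmodule $M$ as $B+C$ provided $A=B\cap C$ there; here $B=\mathrm{Hdg}(G)^{\frac{1}{p-1}}\mathrm{H}^1_{\rm dR}(G)$, $C=\Omega_G$, $M=\mathrm{H}^1_{\rm dR}(G)$, and $B\cap C = \mathrm{Hdg}(G)^{\frac{1}{p-1}}\omega_G$ because $\Omega_G\subset \omega_G$ and $\mathrm{Hdg}(G)^{\frac{1}{p-1}}\mathrm{H}^1_{\rm dR}(G)\cap \omega_G=\mathrm{Hdg}(G)^{\frac{1}{p-1}}\omega_G$ (this last equality uses that $\mathrm{H}^1_{\rm dR}(G)/\omega_G=\omega_{G^\vee}^\vee$ is torsion-free and $\mathrm{Hdg}(G)$ is invertible). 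This simultaneously identifies the quotient $\mathrm{H}^\sharp_G/\Omega_G$ with the image of $\mathrm{Hdg}(G)^{\frac{p}{p-1}}\mathrm{H}^1_{\rm dR}(G)$ in $\mathrm{H}^1_{\rm dR}(G)/\omega_G=\omega_{G^\vee}^\vee$, which is exactly $\mathrm{Hdg}(G)^{\frac{p}{p-1}}\omega_{G^\vee}^\vee$; this gives claim (i)'s exact sequence.

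For the rest of (i), local freeness of rank $d$ follows because $\Omega_G$ is free of rank $d$ by property (b) and $\mathrm{Hdg}(G)^{\frac{p}{p-1}}\omega_{G^\vee}^\vee$ is invertibly twisted from the locally free $\omega_{G^\vee}^\vee$ of rank... here I should be careful: $\omega_{G^\vee}$ has rank $h-d$, so $\omega_{G^\vee}^\vee$ has rank $h-d$, and $\mathrm{H}^\sharp_G$ has rank $d+(h-d)=h$, matching $\mathrm{H}^1_{\rm dR}(G)$. (The statement's "rank $d$" for $\mathrm{H}^\sharp_G$ should read rank $h$; I will phrase it as rank $h$, or simply say "locally free of the same rank as $\mathrm{H}^1_{\rm dR}(G)$", to stay consistent — in the modular curve application $h=2$, $d=1$, so both the marked-section module $\Omega_E$ and $\mathrm{H}^\sharp_E$ end up relevant.) Being an extension of a locally free module by a locally free direct summand, $\Omega_G\hookrightarrow \mathrm{H}^\sharp_G$ is locally a direct summand: locally the extension splits since $\mathrm{Hdg}(G)^{\frac{p}{p-1}}\omega_{G^\vee}^\vee$ is locally free. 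Claim (ii) is then immediate by drawing the two pushout/inclusion squares: the left square is $\Omega_G\hookrightarrow\omega_G$ (property (a)) versus $\mathrm{H}^\sharp_G\hookrightarrow\mathrm{H}^1_{\rm dR}(G)$, the right square is multiplication by $\mathrm{Hdg}(G)^{\frac{p}{p-1}}$ on $\omega_{G^\vee}^\vee$, and commutativity is a diagram chase using the explicit description $\mathrm{H}^\sharp_G=\mathrm{Hdg}(G)^{\frac{p}{p-1}}\mathrm{H}^1_{\rm dR}(G)+\Omega_G$. Claim (iii) is literally property (b): $\mathrm{dlog}$ induces $H_n^\vee(R)\otimes_{\Z}(R/\cI)\cong \Omega_G\otimes_R(R/\cI)$, so a $\Z/p^n\Z$-basis of $H_n^\vee(R)$ maps to an $R/\cI$-basis $s_1,\dots,s_d$; since $\cI\subset \cI'$ where $\cI'=p^n\mathrm{Hdg}(G)^{-\frac{p^n-1}{p-1}}$ is the ideal appearing in diagram (\ref{eq:dlogs}), the reduction is well-defined, and I should double-check the precise ideal ($\cI=p^n\mathrm{Hdg}(G)^{-\frac{p^n}{p-1}}$ versus $\cI'$) — the point is $\cI\subseteq\cI'$ so lifting/reduction is unambiguous modulo $\cI$. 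Finally, the concluding sentence is then just an application of §\ref{sec:vbfil} with $\cE=\mathrm{H}^\sharp_G$, $\cF=\Omega_G$: $\cF$ is locally free of rank $d$, a locally direct summand by (i), $\cE/\cF$ locally free, and $s_1,\dots,s_d$ form a basis of $\overline{\cF}=\Omega_G/\cI\Omega_G$ by (iii).

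The only genuine subtlety — what I would call the main obstacle — is bookkeeping the Hodge-ideal powers consistently: $\mathrm{Hdg}(G)^{\frac{1}{p-1}}$ enters via property (a) (cokernel of $\Omega_G\subset\omega_G$), $\mathrm{Hdg}(G)^{\frac{p}{p-1}}$ enters in the pushout, and $\cI=p^n\mathrm{Hdg}(G)^{-\frac{p^n}{p-1}}$ is a different, much larger, ideal; one must check that $\mathrm{Hdg}(G)^{\frac{1}{p-1}}\mathrm{H}^1_{\rm dR}(G)\cap\omega_G=\mathrm{Hdg}(G)^{\frac{1}{p-1}}\omega_G$ and analogues, which hold because $\mathrm{Hdg}(G)$ is invertible and the relevant quotients are torsion-free $R$-modules (as $R$ is a normal domain and $G$ is $p$-divisible, so $\omega_{G^\vee}^\vee$ is locally free hence torsion-free). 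Everything else is formal nonsense about pushouts and locally split exact sequences, so I would keep those steps terse.
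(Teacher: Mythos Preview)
The paper states this proposition without proof; it is meant to follow immediately from Definition~\ref{def:Hnatural} together with the recalled properties (a), (b) of $\Omega_G$ and the Hodge exact sequence. Your proposal correctly supplies these details, and your approach (identifying the pushout with the submodule $\mathrm{Hdg}(G)^{\frac{p}{p-1}}\mathrm{H}^1_{\rm dR}(G)+\Omega_G$ of $\mathrm{H}^1_{\rm dR}(G)$, then reading off the quotient) is exactly the intended one.

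Two remarks. First, you correctly spot that the stated rank ``$d$'' for $\mathrm{H}^\sharp_G$ is a typo for $h$; your argument gives the right value. Second, your hesitation over $\mathrm{Hdg}(G)^{\frac{1}{p-1}}$ versus $\mathrm{Hdg}(G)^{\frac{p}{p-1}}$ in the pushout stems from an inconsistency in the wording of Definition~\ref{def:Hnatural}: the displayed diagram immediately preceding it and the ``simple description'' at the end of the definition both use $\frac{p}{p-1}$, and that is the exponent you should use throughout (the exact sequence in (i) with $\mathrm{Hdg}(G)^{\frac{p}{p-1}}\cdot\omega_{G^\vee}^\vee$ then falls out directly). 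With that fixed, your verification that the pushout embeds as the sum $B+C$ in $\mathrm{H}^1_{\rm dR}(G)$ because $B\cap C=\mathrm{Hdg}(G)^{\frac{p}{p-1}}\omega_G$ (using torsion-freeness of $\omega_{G^\vee}^\vee$ and invertibility of $\mathrm{Hdg}(G)$) is clean and correct.
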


\begin{proposition}\label{prop:nablasharp}
Assume that $G^\vee[p^n](R)\cong (\Z/p^n\Z)^h$. Then the Gauss-Manin connection $\nabla$ on
$\mathrm{H}^1_{\rm dR}(G)$ defines a connection
$$\nabla_{G,\sharp}\colon \mathrm{H}^\sharp_G\lra \mathrm{H}^\sharp_G\widehat{\otimes}_{R} \Omega^1_{R/A_0} $$
such that $\nabla_{G,\sharp}\vert_{\Omega_G}\equiv 0$ modulo $\cI$. In
particular, the hypotheses of \S\ref{sec:fvvconenction}, namely that $s_1,\ldots,s_d$ are horizontal for $\nabla_{G,\sharp}$
modulo $\cI$, hold true.
\end{proposition}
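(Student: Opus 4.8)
The plan is to prove Proposition~\ref{prop:nablasharp} by reducing, as usual, the horizontality of the marked sections modulo $\cI$ to a statement about the behaviour of the Gauss--Manin connection on the canonical subgroup, and then to quote the compatibility of $\nabla$ with ${\rm dlog}$. First I would recall that the universal vector extension $\mathbb{E}(G^\vee)$ and its de Rham realisation $\mathrm{H}^1_{\rm dR}(G)$ are functorial in $G$, so that the inclusion $H_n\hookrightarrow G[p^n]\hookrightarrow G$ induces a morphism $\mathrm{H}^1_{\rm dR}(G)\to \mathrm{H}^1_{\rm dR}(H_n)$ compatible with Gauss--Manin connections, and dually the quotient $G^\vee\to H_n^\vee$ induces $H_n^\vee\to \mathrm{H}^1_{\rm dR}(G)$ landing in the Hodge filtration step $\omega_{G^\vee}^\vee$... more precisely one has the crystalline ${\rm dlog}$ map $H_n^\vee(R)\to \mathrm{H}^1_{\rm dR}(G)$ refining the $\omega$-valued ${\rm dlog}$ of (\ref{eq:dlogs}). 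The key point, which I would extract from \cite[\S A]{halo_spectral} and \cite[\S 3]{SiegelAIP}, is that the image of $H_n^\vee(R)$ under this crystalline ${\rm dlog}$ is killed by $\nabla$ modulo $p^n \mathrm{Hdg}(G)^{-\frac{p^n-1}{p-1}}$; equivalently, $\nabla$ of a lift of a basis element $s_i$ lies in $\mathrm{H}^1_{\rm dR}(G)\otimes_R \cI \Omega^1_{R/A_0}$ up to a further denominator in $\mathrm{Hdg}(G)^{\frac{1}{p-1}}$ coming from the difference between $\Omega_G$ and $\omega_G$.

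Second, I would show that $\nabla$ restricts to $\mathrm{H}^\sharp_G$, i.e.\ that $\nabla\bigl(\mathrm{H}^\sharp_G\bigr)\subset \mathrm{H}^\sharp_G\widehat{\otimes}_R \Omega^1_{R/A_0}$. Here I would use the explicit description $\mathrm{H}^\sharp_G=\mathrm{Hdg}(G)^{\frac{p}{p-1}} \mathrm{H}^1_{\rm dR}(G)+ \Omega_G$ from Definition~\ref{def:Hnatural}. For the first summand, Griffiths transversality for the Hodge filtration on $\mathrm{H}^1_{\rm dR}(G)$ together with the fact that $\mathrm{Hdg}(G)^{\frac{p}{p-1}}$ is an ideal gives $\nabla\bigl(\mathrm{Hdg}(G)^{\frac{p}{p-1}} \mathrm{H}^1_{\rm dR}(G)\bigr)\subset \mathrm{Hdg}(G)^{\frac{p}{p-1}} \mathrm{H}^1_{\rm dR}(G)\otimes \Omega^1_{R/A_0}+ \mathrm{H}^1_{\rm dR}(G)\otimes {\rm dlog}(\mathrm{Hdg}(G)^{\frac{p}{p-1}}) \subset \mathrm{H}^\sharp_G\otimes \Omega^1_{R/A_0}$, where one uses that $\mathrm{Hdg}(G)^{\frac{p}{p-1}}$ is invertible and that ${\rm dlog}(\mathrm{Hdg}(G))\in \mathrm{Hdg}(G)^{-1}\cdot \Omega^1_{R/A_0}$ up to bounded denominators (this is the analogue of Lemma~\ref{lemma:propM}(b): under the assumption $p\in \mathrm{Hdg}(G)^{p^{n+1}}$ the logarithmic derivative of the Hasse ideal is integral after multiplying by $\mathrm{Hdg}(G)$, and $\mathrm{Hdg}(G)\cdot \mathrm{Hdg}(G)^{\frac{p}{p-1}}\mathrm{H}^1_{\rm dR}(G)=\mathrm{Hdg}(G)^{\frac{1}{p-1}}\mathrm{Hdg}(G)^{\frac{p-1+p}{p-1}}\mathrm{H}^1_{\rm dR}(G)$ still lies inside $\mathrm{Hdg}(G)^{\frac{p}{p-1}}\mathrm{H}^1_{\rm dR}(G)\subset \mathrm{H}^\sharp_G$). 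For the second summand $\Omega_G$, since $\Omega_G\subset \omega_G\subset \mathrm{H}^1_{\rm dR}(G)$ and $\nabla(\omega_G)\subset \mathrm{H}^1_{\rm dR}(G)\otimes \Omega^1_{R/A_0}$, one needs that $\nabla(\Omega_G)$ lands in $\mathrm{H}^\sharp_G\otimes \Omega^1_{R/A_0}$; this follows from the crystalline ${\rm dlog}$ computation above, which shows $\nabla(\Omega_G)\subset \Omega_G\otimes \Omega^1_{R/A_0}+ p^n\mathrm{Hdg}(G)^{-\frac{p^n-1}{p-1}}\mathrm{H}^1_{\rm dR}(G)\otimes \Omega^1_{R/A_0}$, and since $p^n\mathrm{Hdg}(G)^{-\frac{p^n-1}{p-1}}=\cI\cdot \mathrm{Hdg}(G)^{\frac{1}{p-1}}$ while $p\in \mathrm{Hdg}(G)^{p^{n+1}}$ forces $p^n\mathrm{Hdg}(G)^{-\frac{p^n-1}{p-1}}\subset \mathrm{Hdg}(G)^{\frac{p}{p-1}}$, this is contained in $\mathrm{H}^\sharp_G\otimes \Omega^1_{R/A_0}$.

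Third, the horizontality modulo $\cI$: by the previous step $\nabla_{G,\sharp}(s_i)$, for $s_i$ a lift of a basis element of $H_n^\vee(R)$ via ${\rm dlog}$, lies in $p^n\mathrm{Hdg}(G)^{-\frac{p^n-1}{p-1}}\mathrm{H}^1_{\rm dR}(G)\otimes \Omega^1_{R/A_0}+\Omega_G\otimes \cI\,\Omega^1_{R/A_0}$... but in fact the crystalline ${\rm dlog}$ of a $p^n$-torsion point is \emph{exactly} annihilated modulo $\cI=p^n\mathrm{Hdg}(G)^{-\frac{p^n}{p-1}}$: this is because the crystalline extension class attached to a point of $H_n^\vee(R)\cong \mathrm{Hom}(\Q_p/\Z_p, G^\vee)[p^n]$ is, up to the precise integral error controlled in \cite[Cor.~A.2, Prop.~A.3]{halo_spectral}, parallel. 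Concretely I would argue that the difference between two lifts of $s_i$ lies in $\cI\Omega_G$ and $\nabla$ of it is in $\cI\mathrm{H}^1_{\rm dR}(G)\otimes \Omega^1_{R/A_0}+\Omega_G\otimes\cI\Omega^1_{R/A_0}$, hence does not affect the class modulo $\cI$; and for one chosen lift the crystalline ${\rm dlog}$ compatibility shows $\nabla(s_i)\in \cI\cdot\bigl(\mathrm{H}^\sharp_G\otimes \Omega^1_{R/A_0}\bigr)$ on the nose. The extra hypothesis $G^\vee[p^n](R)\cong(\Z/p^n\Z)^h$ is used precisely so that the crystalline ${\rm dlog}$ is defined on a full basis and so that one can compare with the full level structure, making the error terms in loc.~cit.\ uniform. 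The main obstacle will be making the third step rigorous: extracting from \cite{halo_spectral} and \cite{SiegelAIP} the precise statement that the crystalline ${\rm dlog}$ of a canonical $p^n$-torsion point is horizontal modulo $\cI$ — in the modular curve case this is the content of the explicit Tate-curve computation behind Lemma~\ref{lemma:propM}, and in general it amounts to the rigidity of the connection on the unit-root-type submodule — rather than the bookkeeping with Hodge ideals, which is routine once the denominators are tracked as above.
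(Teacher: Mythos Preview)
Your overall architecture matches the paper's: first show $\nabla$ preserves $\mathrm{H}^\sharp_G=\Omega_G+\mathrm{Hdg}(G)^{\frac{p}{p-1}}\mathrm{H}^1_{\rm dR}(G)$ summand by summand, then check the $s_i$ are horizontal modulo $\cI$. Two points are worth sharpening by comparison with the paper.

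\textbf{The horizontality step.} You correctly flag this as the main obstacle and propose to extract it from \cite{halo_spectral} and \cite{SiegelAIP}. The paper does not quote it but proves it directly, and the argument is short enough that you should simply run it. The hypothesis $G^\vee[p^n](R)\cong(\Z/p^n\Z)^h$ gives a morphism of group schemes $\rho\colon(\Z/p^n\Z)^h\to G^\vee[p^n]$ over $R$; dually one has $G[p^n]\to\mu_{p^n}^h$. Now $\mu_{p^n}^h$ is isotrivial over $R_n:=R/p^nR$, so the Gauss--Manin connection on $\mathrm{H}^1_{\rm dR}(\mu_{p^n}^h/R_n)$ is trivial and in particular $\overline{\nabla}\circ{\rm dlog}_{\mu_{p^n}^h}=0$. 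By functoriality of Gauss--Manin and of ${\rm dlog}$ this forces $\overline{\nabla}_{G[p^n]}\circ{\rm dlog}_{G^\vee[p^n]}=0$ in $\omega_G/p^n\omega_G$. Composing ${\rm dlog}_{G^\vee[p^n]}$ with the projection to $\omega_G/\bigl(\mathrm{Hdg}(G)^{\frac{1}{p-1}}\cI\bigr)\omega_G$ factors through $H_n^\vee(R)$, so one may choose the lifts $\tilde s_i\in\Omega_G$ in the image of ${\rm dlog}_{G^\vee[p^n]}$ modulo $p^n$ and conclude $\nabla(\tilde s_i)\in p^n\mathrm{H}^1_{\rm dR}(G)\otimes\Omega^1$. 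Since $p\in\mathrm{Hdg}(G)^{p^{n+1}}$ one has $p^n\mathrm{H}^1_{\rm dR}(G)\subset\mathrm{Hdg}(G)^{\frac{p}{p-1}}\mathrm{H}^1_{\rm dR}(G)\subset\mathrm{H}^\sharp_G$, and writing $p^n=\cI\cdot\mathrm{Hdg}(G)^{\frac{p^n}{p-1}}$ gives $\nabla(\tilde s_i)\in\cI\,\mathrm{H}^\sharp_G\otimes\Omega^1$. This is exactly the ``crystalline ${\rm dlog}$ compatibility'' you were looking for, and it explains cleanly why the full trivialisation of $G^\vee[p^n]$ (not just of $H_n^\vee$) is needed.

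\textbf{The $\mathrm{Hdg}(G)^{\frac{p}{p-1}}\mathrm{H}^1_{\rm dR}(G)$ summand.} Your route via Griffiths transversality and bounds on ${\rm dlog}(\mathrm{Hdg})$ is more laborious than necessary, and the bookkeeping in your parenthetical does not quite close. The paper's observation is simpler: $\mathrm{Hdg}(G)^{\frac{p}{p-1}}=\bigl(\mathrm{Hdg}(G)^{\frac{1}{p-1}}\bigr)^p$ is locally a $p$-th power, so by Leibniz $d\bigl(\mathrm{Hdg}(G)^{\frac{p}{p-1}}\bigr)\equiv 0$ modulo $p$; since $p\in\mathrm{Hdg}(G)^{p^{n+1}}\subset\mathrm{Hdg}(G)^{\frac{p}{p-1}}$, the term $x\otimes d\bigl(\mathrm{Hdg}(G)^{\frac{p}{p-1}}\bigr)$ already lands in $\mathrm{Hdg}(G)^{\frac{p}{p-1}}\mathrm{H}^1_{\rm dR}(G)\otimes\Omega^1$. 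No appeal to the Hodge filtration is required.
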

\begin{proof} The isomorphism $\rho_S\colon  G^\vee[p^n](R)\cong (\Z/p^n\Z)^h$ induces
a morphism of finite and flat group schemes $\rho\colon
(\Z/p^n\Z)^h \to G^\vee[p^n]$ over $R$. Let $R_n:=R/p^n R$. Since ${\rm dlog}$ is functorial and
$\omega_{G[p^n],R_n}=\omega_G/p^n\omega_G$ as $G$ is a
$p$-divisible group, we have a commutative diagram:
$$\begin{matrix} (\Z/p^n\Z)^h & \stackrel{{\rm dlog}_{\mu_{p^n}^h}}{\lra} & \omega_{\mu_{p^n,R_n}}^h \cr
\downarrow \rho(S) & & \downarrow  d\rho^\vee \cr  G^\vee[p^n](R)
&\stackrel{{\rm dlog}_{G^\vee[p^n]}}{\lra}    &
\omega_{G[p^n],R_n}=\omega_G/p^n\omega_G.\cr
\end{matrix}$$

The connection on $\mathrm{H}^1_{\rm dR}(G)$ modulo $p^n$ is the
connection  $\overline{\nabla}_{G[p^n]}$ on the invariant
differentials  of the universal extension of $G[p^n]^\vee=G^\vee[p^n]$ relative to $R_n$,
that we denote by $\mathrm{H}^1_{\rm dR}\bigl(G[p^n]/R_n\bigr)$.
Since $\mu_{p^n}^h$ is isotrivial over $R_n$, it follows that the
Gauss-Manin connection $\overline{\nabla}_{\mu_{p^n}^h}$ on
$\mathrm{H}^1_{\rm dR}(\mu_{p^n}^h/R_n\bigr)$ is trivial so that
$\overline{\nabla}_{\mu_{p^n}^h} \circ {\rm
dlog}_{\mu_{p^n}^h}=0$. By the functoriality of the Gauss-Manin
connection and the commutativity of the diagram above it follows
that $\overline{\nabla}_{G[p^n]} \circ d\log_{G^\vee[p^n]}=0$. Due to
(\ref{eq:dlogs}) the map ${\rm dlog}_{G^\vee[p^n]}$ composed with the
projection to $\omega_G/\mathrm{Hdg}(G)^{\frac{1}{p-1}} \cI
\omega_G$ factors via $G^\vee[p^n](R)\to H_n^\vee(R)$ and ${\rm
dlog}_{H_n^\vee}$. In particular, we can choose lifts $\tilde{s}_1,\ldots,\tilde{s}_d\in \Omega_G$ of $s_1,\ldots,s_d$ in the image
of ${\rm dlog}_{G^\vee[p^n]}$ modulo $p^n$ and we deduce that
$\nabla(\tilde{s}_i)\equiv 0$ modulo $p^n \mathrm{H}^1_{\rm dR}(G)$ for
$i=1,\ldots,d$. Thus the restriction of $\nabla$ to $\Omega_G$
factors through $p^n \mathrm{H}^1_{\rm
dR}(G) \subset \mathrm{Hdg}(G)^{\frac{p}{p-1}} \mathrm{H}^1_{\rm
dR}(G)\subset \mathrm{H}^\sharp_G$  (recall that $p\in \mathrm{Hdg}(G)^{p^{n+1}}$) and the images
$\nabla(s_1),\ldots,\nabla(s_d)$ are $ 0$ modulo $\cI
\mathrm{H}^\sharp_G \otimes \Omega^1_{S/A_0}$ (recall $\cI=p^n \mathrm{Hdg}(G)^{-\frac{p^n}{p-1}}$). This defines
$\nabla_{G,\sharp}$ on $\Omega_G$.

As $\mathrm{H}^\sharp_G=\Omega_G + \mathrm{Hdg}(G)^{\frac{p}{p-1}}
\mathrm{H}^1_{\rm dR}(G) $, as $R$-submodules of
$\mathrm{H}^1_{\rm dR}(G)$, to conclude we are left to show that
$\nabla$ sends $\mathrm{Hdg}(G)^{\frac{p}{p-1}} \mathrm{H}^1_{\rm
dR}(G)$ into $\mathrm{Hdg}(G)^{\frac{p}{p-1}} \mathrm{H}^1_{\rm
dR}(G)$. Using Leibniz's rule this follows as
$\mathrm{Hdg}(G)^{p/(p-1)}$ is a $p$-th power  so that
$d \mathrm{Hdg}(G)^{p/(p-1)}\equiv 0$ modulo $p R$ and $p
\in \mathrm{Hdg}(G)^{p^{ n +1}}$ by assumption.

\end{proof}

\subsection{Functoriality in the elliptic case}\label{sec:functoriality}

We keep the assumptions of the previous section on the rings $A_0$ and $R$.  Let $G$ and $G'$ be $p$-divisible groups over $R$ associated to elliptic curves over $R$. We assume
that $p \in \mathrm{Hdg}(G')^{p^{ n +1}}$ and that
$\mathrm{Hdg}(G')\subset \mathrm{Hdg}(G)$. Then  both $G$ and $G'$ admit canonical subgroups $H_n\subset G[p^n]$ and $H_n'\subset G'[p^n]$, of rank $p^{n}$. We assume that 
 $H_n^\vee(R)\cong\bigl(\Z/p^n )\cong H_n^{',\vee}(R)$. We
set $\cI=p^n \mathrm{Hdg}(G')^{-\frac{p^n}{p-1}}$ (which contains
$p^n \mathrm{Hdg}(G)^{-\frac{p^n}{p-1}}$). 

\

We let $\lambda\colon G'\to G$ be an isogeny such that $H_n'$ maps to $H_n$ and the induced map $H_n'\to H_n$ 
is an isomorphism after inverting $p$. 
Then the dual isogeny $\lambda^\vee\colon G^\vee \to G^{',\vee}$ defines a map of universal vector extensions $\lambda^\vee\colon \mathbb{E}(G^\vee)\to \mathbb{E}(G^{,\vee'})$ and, taking the induced map on Lie algebras $\lambda^\vee_\ast$, a commutative diagram:
$$
\begin{array}{lllllllll}
0&\rightarrow&\omega_G &\rightarrow&\mathrm{H}^1_{\rm dR}(G) &\rightarrow& \omega_{G^\vee}^\vee   &\rightarrow& 0\\
&&\downarrow \lambda^\ast&&\downarrow \lambda^\vee_\ast & & \downarrow \bigl((\lambda^{\vee})^\ast\bigr)^\vee\\
0&\rightarrow&\omega_{G'}&\rightarrow&\mathrm{H}^1_{\rm dR}(G^{'}) &\rightarrow& \omega_{G^{',\vee}}^\vee   &\rightarrow& 0.
\end{array}
$$
Here $\lambda^\ast\colon \omega_G\to \omega_{G'}$, resp. $(\lambda^\vee)^{\ast}\colon \omega_{G^{',\vee}} \to \omega_{G^\vee}$ is the pull-back on invariant differentials defined by $\lambda$, resp. $\lambda^\vee$ and $\bigl((\lambda^\vee)^\ast\bigr)^\vee$ is the $R$-dual of $(\lambda^\vee)^{\ast}$. 
As $\lambda$ induces a map $H_n'\to H_n$, which is an isomorphism after inverting $p$, then $\lambda$ induces a map $H_n^\vee \to H_n^{',\vee}$ which is an isomorphism after inverting $p$ and we get an  isomorphism $H_n^\vee(R) \cong H_n^{',\vee}(R)$. Then,  the functoriality of 
diagram (\ref{eq:dlogs}) provides  the commutative diagram
$$
\begin{array}{lllllllll}
0&\rightarrow&\Omega_{G}&\subset&\omega_{G}\\
&&\downarrow\cong&&\downarrow \lambda^\ast\\
0&\rightarrow&\Omega_{G'}&\subset&\omega_{G'}
\end{array}
$$

We denote by $\lambda^\ast\colon \Omega_{G} \to \Omega_{G'}$ the induced
isomorphism. The choice of a $\Z/p^n\Z$-basis of $H_n^\vee(R)$ defines
a basis $s$ of the $R/\cI$-module 
$\Omega_G/\cI \Omega_G$ and, via the isomorphism $H_n^\vee(R)\lra
H_n^{',\vee}(R)$ induced by $\lambda^\vee$, also a basis $s'$
of the $\cO_S/\cI$-module of $\Omega_{G'}/\cI \Omega_{G'}$.

\begin{lemma}
\label{lemma:fsharp} Assume that $\lambda$ has degree $p^n$. Then the  map 
$\lambda^\vee_\ast$ induces a
morphism $\lambda^\sharp\colon \mathrm{H}^\sharp_G\rightarrow
\mathrm{H}^\sharp_{G'}$. Moreover $\lambda^\sharp$ fits in following commutative diagram
$$\begin{matrix} 0 \lra & \Omega_G & \lra &  \mathrm{H}^\sharp_G & \lra &
\mathrm{Hdg}(G)^{\frac{p}{p-1}}\cdot \omega_{G^\vee}^\vee & \lra  0
\cr & \downarrow \lambda^\ast & & \downarrow \lambda^\sharp & & \downarrow  \bigl((\lambda^\vee)^\ast\bigr)^\vee \cr 0
\lra & \Omega_{G'} & \lra & \mathrm{H}^\sharp_{G'} & \lra &
\mathrm{Hdg}(G')^{\frac{p}{p-1}}\cdot \omega_{G^{',\vee}}^\vee & \lra
0,\end{matrix} $$with the properties that $f^\ast(s)=s'$ (modulo $\cI$) and the image of $\mathrm{Hdg}(G)^{\frac{p}{p-1}}\cdot \omega_{G^\vee}^\vee$ via $\bigl((\lambda^\vee)^\ast\bigr)^\vee$
is equal to $\tau_{\lambda} \cdot \mathrm{Hdg}(G')^{\frac{p}{p-1}}\cdot \omega_{G^{',\vee}}^\vee$ with $\tau_{\lambda}=1$ if $n=0$ and $\tau_\lambda=p^n / \mathrm{Hdg}(G')^{\frac{(p+1)(p^n-1)}{p^{n}(p-1)}}$.
\end{lemma}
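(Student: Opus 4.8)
The plan is to construct $\lambda^\sharp$ from the map $\lambda_\ast^\vee\colon \mathrm{H}^1_{\rm dR}(G)\to \mathrm{H}^1_{\rm dR}(G')$ on de Rham cohomology and to chase through the definitions of $\mathrm{H}^\sharp_G$ and $\mathrm{H}^\sharp_{G'}$ as explicit $R$-submodules of $\mathrm{H}^1_{\rm dR}(G)$, resp.~$\mathrm{H}^1_{\rm dR}(G')$, as recorded in Definition \ref{def:Hnatural}. First I would recall that $\mathrm{H}^\sharp_G = \mathrm{Hdg}(G)^{\frac{p}{p-1}}\mathrm{H}^1_{\rm dR}(G) + \Omega_G$ and similarly for $G'$. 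Since $\mathrm{Hdg}(G')\subset \mathrm{Hdg}(G)$, we have $\mathrm{Hdg}(G')^{\frac{p}{p-1}}\mathrm{H}^1_{\rm dR}(G')\subset \mathrm{Hdg}(G)^{\frac{p}{p-1}}\mathrm{H}^1_{\rm dR}(G')$, and $\lambda_\ast^\vee$ carries $\mathrm{Hdg}(G)^{\frac{p}{p-1}}\mathrm{H}^1_{\rm dR}(G)$ into $\mathrm{Hdg}(G)^{\frac{p}{p-1}}\mathrm{H}^1_{\rm dR}(G')$; combined with the compatibility $\lambda^\ast(\Omega_G)=\Omega_{G'}$ from the diagram preceding the statement, this shows $\lambda_\ast^\vee(\mathrm{H}^\sharp_G)\subset \mathrm{H}^\sharp_{G'}$, defining $\lambda^\sharp$ as the restriction. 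The identity $f^\ast(s)=s'$ modulo $\cI$ is then immediate from the construction of $s$ and $s'$ via ${\rm dlog}$ and the functoriality of diagram (\ref{eq:dlogs}), exactly as in the diagram just above the Lemma.

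Next I would verify the commutative diagram with exact rows. The left square commutes by the $\Omega_G\to\Omega_{G'}$ compatibility already established; the right square commutes because it is obtained from the $\lambda_\ast^\vee$-diagram displayed before the Lemma by restricting to the $\mathrm{H}^\sharp$-submodules and passing to the induced maps on the quotients $\mathrm{H}^\sharp_G/\Omega_G\cong \mathrm{Hdg}(G)^{\frac{p}{p-1}}\omega_{G^\vee}^\vee$. Exactness of the rows is Proposition \ref{prop:nablasharp}'s companion statement, item (i) of the Proposition on $\mathrm{H}^\sharp_G$. The only genuine computation is the determination of $\tau_\lambda$: I need to identify the image of $\mathrm{Hdg}(G)^{\frac{p}{p-1}}\omega_{G^\vee}^\vee$ under $\bigl((\lambda^\vee)^\ast\bigr)^\vee$ inside $\mathrm{Hdg}(G')^{\frac{p}{p-1}}\omega_{G^{',\vee}}^\vee$. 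For $n=0$ the isogeny $\lambda$ is an isomorphism on $p$-divisible groups (degree $1$), so $\tau_\lambda=1$. For $n\ge 1$, the key input is the behaviour of $(\lambda^\vee)^\ast$ on $\omega_{G^{',\vee}}$: since $\lambda$ has degree $p^n$ and $\lambda^\vee$ identifies the canonical subgroups, one knows (e.g.~from the theory of the canonical subgroup and the computation of the co-different / the $\mathrm{dlog}$ map, cf.~\cite[App.~A]{halo_spectral} and \cite[\S3]{SiegelAIP}) that $(\lambda^\vee)^\ast$ has elementary divisor $p^n / \mathrm{Hdg}(G')^{\frac{(p+1)(p^n-1)}{p^n(p-1)}}$ relative to the lattice generated by the canonical differential, and dualizing together with the ratio $\mathrm{Hdg}(G)^{\frac{p}{p-1}}/\mathrm{Hdg}(G')^{\frac{p}{p-1}}$ (whose contribution is absorbed into the formula) yields the stated $\tau_\lambda$.

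The step I expect to be the main obstacle is precisely the exact determination of $\tau_\lambda$: tracking how $\mathrm{Hdg}(G)$ and $\mathrm{Hdg}(G')$ interact under $\lambda$, how the Hodge–Tate/$\mathrm{dlog}$ normalizations scale, and how the degree-$p^n$ hypothesis pins down the exponent $\frac{(p+1)(p^n-1)}{p^n(p-1)}$. I would carry this out by working over the localization at a point where $\omega_G$, $\omega_{G'}$, $\mathrm{H}^\sharp_G$, $\mathrm{H}^\sharp_{G'}$ are all free, choosing compatible bases adapted to the canonical subgroups (a basis $f$ of $\Omega$ together with a generator $e$ of $\mathrm{Hdg}^{p/(p-1)}\omega^\vee$ as in \S\ref{sec:Hzero}), and computing the matrix of $\lambda^\sharp$ in these bases; the diagonal entries then read off $\lambda^\ast$ (an isomorphism on $\Omega$) and $\tau_\lambda$ on the quotient. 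Everything else — commutativity, exactness, the marked-section compatibility — is formal functoriality, so the writeup will concentrate almost entirely on this local computation, invoking \cite[Cor.~A.2, Prop.~A.3]{halo_spectral} for the precise denominators.
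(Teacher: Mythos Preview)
There is a genuine gap in the first paragraph: your argument for the containment $\lambda_\ast^\vee(\mathrm{H}^\sharp_G)\subset \mathrm{H}^\sharp_{G'}$ does not work, and this is in fact the heart of the lemma. You correctly observe that $\lambda_\ast^\vee$ carries $\mathrm{Hdg}(G)^{\frac{p}{p-1}}\mathrm{H}^1_{\rm dR}(G)$ into $\mathrm{Hdg}(G)^{\frac{p}{p-1}}\mathrm{H}^1_{\rm dR}(G')$, and that $\mathrm{Hdg}(G')^{\frac{p}{p-1}}\mathrm{H}^1_{\rm dR}(G')\subset \mathrm{Hdg}(G)^{\frac{p}{p-1}}\mathrm{H}^1_{\rm dR}(G')$. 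But these two inclusions only show that the image of $\mathrm{H}^\sharp_G$ lies in $\mathrm{Hdg}(G)^{\frac{p}{p-1}}\mathrm{H}^1_{\rm dR}(G')+\Omega_{G'}$, which is \emph{strictly larger} than $\mathrm{H}^\sharp_{G'}=\mathrm{Hdg}(G')^{\frac{p}{p-1}}\mathrm{H}^1_{\rm dR}(G')+\Omega_{G'}$, since the ideal inclusion goes the wrong way. From $A\subset C$ and $B\subset C$ you cannot conclude $A\subset B$. So the construction of $\lambda^\sharp$ is not formal; it requires exactly the computation you defer to the end.

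The paper handles this as follows. First it reduces to the case where $R$ is a dvr (using normality of $R$ and local freeness of the $\mathrm{H}^\sharp$'s to check the inclusion at codimension-one primes), and then factors $\lambda$ into degree-$p$ isogenies, reducing to $n=1$. In that case $\mathrm{Hdg}(G')=\mathrm{Hdg}(G)^p$, so the a priori target $\mathrm{Hdg}(G)^{\frac{p}{p-1}}\mathrm{H}^1_{\rm dR}(G')+\Omega_{G'}$ is exactly $\widetilde{\mathrm{H}}^\sharp_{G'}:=\mathrm{Hdg}(G')^{\frac{1}{p-1}}\mathrm{H}^1_{\rm dR}(G')+\Omega_{G'}$, and $\mathrm{H}^\sharp_{G'}$ is the pullback of $\widetilde{\mathrm{H}}^\sharp_{G'}$ along $\mathrm{Hdg}(G')^{\frac{p}{p-1}}\omega_{G'}^\vee\subset \mathrm{Hdg}(G')^{\frac{1}{p-1}}\omega_{G'}^\vee$. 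Thus landing in $\mathrm{H}^\sharp_{G'}$ is \emph{equivalent} to the statement about $\tau_\lambda$: one must show $\bigl((\lambda^\vee)^\ast\bigr)^\vee(\omega_G^\vee)\subset \mathrm{Hdg}(G')\,\omega_{G'}^\vee$. This is done by identifying $\lambda^\vee$ with Frobenius modulo $p/\mathrm{Hdg}(G)$ (hence $\lambda$ with Verschiebung), deducing $\lambda^\ast(\omega_G)=\mathrm{Hdg}(G)\,\omega_{G'}$ and $(\lambda^\vee)^\ast(\omega_{G'})=\bigl(p/\mathrm{Hdg}(G)\bigr)\omega_G$ from $\lambda^\vee\circ\lambda=p$, and then dualizing. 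In other words, the existence of $\lambda^\sharp$ and the computation of $\tau_\lambda$ are one and the same step, not two separate ones as your outline suggests.
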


\begin{proof} If $\lambda$ is an isomorphism there is nothing to prove. For general $n$, we remark the $\mathrm{H}^\sharp_G$ and $
\mathrm{H}^\sharp_{G'}$ are locally free $R$-modules of rank $2$ and $R$ is normal; hence it suffices to prove that $\lambda^\vee_\ast\bigl(\mathrm{H}^\sharp_G\bigr)\subset \mathrm{H}^\sharp_{G'}$ holds after localization at codimension $1$ prime idelas of $R$. This is clear for prime ideals not containing $p$. Thus, after replacing $R$ with the localization at a prime ideal cotaining $p$, we may assume that $R$ is a dvr. In this case, we may  write $\lambda$ as the composite of $n$ isogenies of degree $p$ and we reduce to the case that $n=1$, i.e., that $\lambda$ has degree $p$.  Then $\lambda$ is the quotient under a subgroup scheme $N$ such that $N\cap H_1'=\{0\}$. From now on we view the dual isogeny $\lambda^\vee$ as a morphism $\lambda^\vee \colon G\to G^{'}$, identifying $G\cong G^\vee$ and $G'\cong G^{',\vee}$ via the principal polarizations on  $G$ and $G'$ and $\omega_G\cong \omega_{G^\vee}$ and $\omega_{G'}\cong \omega_{G^{',\vee}}$. Then $\lambda^\vee$ is the quotient by the canonical subgroup $H_1$. This forces $\mathrm{Hdg}(G')=\mathrm{Hdg}(G)^p$ and $\lambda^\vee$ coincides with Frobenius modulo $p/{\rm Hdg}(G)$ (see \cite[Cor. A.2]{halo_spectral}). 

As $\mathrm{Hdg}(G')=\mathrm{Hdg}(G)^p$, the image $\lambda^{\vee}_\ast\bigl(\mathrm{H}^\sharp_G \bigr)$ is contained in $\widetilde{\mathrm{H}}^\sharp_{G'}:= \mathrm{Hdg}(G')^{\frac{1}{p-1}} \mathrm{H}^1_{\rm dR}(G^{'}) + \Omega_{G'}$. We clearly have ${\mathrm{H}}^\sharp_{G'} \subset \widetilde{\mathrm{H}}^\sharp_{G'} $ and since $\mathrm{Hdg}(G')^{\frac{1}{p-1}} \omega_{G'}\subset \Omega_{G'}$, we have an exact sequence 

$$0 \to \Omega_{G'} \to
\widetilde{\mathrm{H}}^\sharp_{G'} \to  \mathrm{Hdg}(G')^{\frac{1}{p-1}} \cdot
\omega_{G'}^\vee \to  0.$$

In particular ${\mathrm{H}}^\sharp_{G'}$ is identified with the pull-back of $\widetilde{\mathrm{H}}^\sharp_{G'}$ via the inclusion $\mathrm{Hdg}(G')^{\frac{p}{p-1}} \cdot
\omega_{G'}^\vee \subset \mathrm{Hdg}(G')^{\frac{1}{p-1}} \cdot
\omega_{G'}^\vee$. Then $\lambda^\vee_\ast\bigl(\mathrm{H}^\sharp_G \bigr)$ is contained in ${\mathrm{H}}^\sharp_{G'}$ if and only if the image of $\mathrm{Hdg}(G)^{\frac{p}{p-1}} \cdot
\omega_{G}^\vee $ via $\bigl((\lambda^\vee)^\ast)^\vee$ is contained in $\mathrm{Hdg}(G')^{\frac{p}{p-1}} \cdot
\omega_{G'}^\vee$. This amounts to prove that the image of  $
\omega_{G}^\vee$ via $\bigl((\lambda^\vee)^\ast)^\vee$ is contained in $\mathrm{Hdg}(G')^{\frac{p}{p-1}} \mathrm{Hdg}(G)^{\frac{p}{p-1}} \cdot
\omega_{G'}^\vee=\mathrm{Hdg}(G')\omega_{G'}^\vee$. We remarked above that $\lambda^\vee$ is Frobenius modulo $p/{\rm Hdg}(G)$ so that the map $\lambda$ is Vershiebung
modulo $p/{\rm Hdg}(G)$ and hence $\lambda^\ast(\omega_G)={\rm Hdg}(G) \cdot \omega_{G'}$ modulo $p/{\rm Hdg}(G) \omega_{G'}$. Since $p\in {\rm Hdg}(G)^{p^{n+1}}$ this implies that 
$\lambda^\ast(\omega_G)={\rm Hdg}(G) \omega_{G'}$. Using that $\lambda^\vee \circ \lambda$ is multiplication by $p$, we deduce that the map $(\lambda^\vee)^\ast \circ \lambda^\ast$ 
on differentials is mutiplication by $p$ so that $(\lambda^\vee)^\ast(\omega_{G'})=p/{\rm Hdg}(G)  \cdot \omega_G$.
Taking $R$-duals we conclude that the image of  $
\omega_{G}^\vee$ via $\bigl((\lambda^\vee)^\ast)^\vee$ is  $p/{\rm Hdg}(G) \cdot
\omega_{G'}^\vee$ and the image of  $\mathrm{Hdg}(G)^{\frac{p}{p-1}}\cdot
\omega_{G}^\vee$ via $\bigl((\lambda^\vee)^\ast)^\vee$ is  $\tau_{\lambda} \cdot \mathrm{Hdg}(G')^{\frac{p}{p-1}}\cdot
\omega_{G'}^\vee$ with $\tau_{\lambda}=p{\rm Hdg}(G)^{-1} \mathrm{Hdg}(G)^{\frac{p}{p-1}}\mathrm{Hdg}(G')^{\frac{-p}{p-1}}$.

Since $\mathrm{Hdg}(G)=\mathrm{Hdg}(G')^{\frac{1}{p}}$ then $\tau_{\lambda}=p / \mathrm{Hdg}(G')^{\frac{p+1}{p}}=p/\mathrm{Hdg}(G)^{{p+1}}$and, as $p \in \mathrm{Hdg}(G)^{p^{ n +1}} \subset {\rm Hdg}(G)^{p+1}$, we deduce that 
$\tau_{\lambda}\in R$ so that the first and last claims follow.

The statement concerning  $f^\ast(s)$ follows from the fact that $\lambda^\ast$  is compatible
with ${\rm dlog}\colon H_n^\vee(R) \to \Omega_G/\cI \Omega_G$ and
${\rm dlog}\colon H_n^{',\vee}(R) \to \Omega_{G'}/\cI \Omega_{G'}$
and the isomorphism $H_n^\vee(R) \to H_n^{',\vee}(R)$ provided by $\lambda^\vee$.

\end{proof}

Let $f_0'\colon \bV_0\bigl(\mathrm{H}^\sharp_{G'}\bigr) \to S$ and
$f_0\colon \bV_0\bigl(\mathrm{H}^\sharp_{G}\bigr) \to S$ be the
formal schemes of Definition \ref{def:V0}. It follows from the
functoriality of this definition that $f^\sharp$ defines a
commutative diagram of formal schemes over $S$:

$$\begin{matrix}\bV_0\bigl(\mathrm{H}^\sharp_{G'}\bigr) & \stackrel{\lambda^\sharp}{\lra} & \bV\bigl(\mathrm{H}^\sharp_{G}\bigr) \cr
\downarrow & & \downarrow \cr \bV_0\bigl(\Omega_{G'}\bigr) &
\stackrel{g}{\lra} & \bV\bigl(\Omega_{G}\bigr)\cr
\end{matrix}.$$In conclusion, we deduce from Corollary
\ref{cor:functfil}:

\begin{proposition}\label{prop:functfilbV0}  Assume that $\lambda$ is an isomorhism or that it has degree $p$. Then 
the morphism $\lambda^\sharp$ induces a morphism $f_{0,\ast} \bigl( \cO_{\bV_0(\mathrm{H}^\sharp_{G})} \bigr)\lra f_{0,\ast}'
\bigl(\cO_{\bV_0(\mathrm{H}^\sharp_{G'})}\bigr)$ preserving the filtrations
$\Fil_\bullet f_{0,\ast} \bigl( \cO_{\bV_0(\mathrm{H}^\sharp_{G})}\bigr)$ and
$\Fil_\bullet f_{0,\ast}' \bigl(\cO_{\bV_0(\mathrm{H}^\sharp_{G'})}\bigr)$.
Via the identifications of the graded pieces in Corollary
\ref{cor:fil} the induced map

$$f_{0,\ast} \bigl( \cO_{\bV_0(\Omega_{G})} \otimes_{\cO_S} \mathrm{Sym}^h\bigl(\mathrm{Hdg}(G)^{\frac{p}{p-1}}\omega_{G^\vee}^\vee\bigr) \lra
f_{0,\ast}'  \bigl(\cO_{\bV_0(\Omega_{G'})}\bigr) \otimes_{\cO_S}
\mathrm{Sym}^h\bigl(\mathrm{Hdg}(G')^{\frac{p}{p-1}}\omega_{G^{',\vee}}^\vee\bigr)$$

is the tensor product of the isomorphism $f_{0,\ast}\bigl(
\cO_{\bV_0(\Omega_{G})}\bigr) \to f_{0,\ast}' \bigl( \cO_{\bV_0(\Omega_{G'})}\bigr)$
provided by $\lambda^\ast$ and the map on $\mathrm{Sym}^h$ provided by the
dual of the map $(\lambda^{\vee})\ast\colon \omega_{G^{',\vee}} \to
\omega_{G^\vee}$.

Furthermore, assume that $G[p^n](R)\cong (\Z/p^n\Z)^2$ and
$G^{'}[p^n](R) \cong (\Z/p^n\Z)^2$. Then $\lambda^\sharp\colon \mathrm{H}^\sharp_G\rightarrow
\mathrm{H}^\sharp_{G'}$ is compatible with the connections
$\nabla_{G,\sharp}$ and $\nabla_{G',\sharp}$ defined in
Proposition \ref{prop:nablasharp}.

\end{proposition}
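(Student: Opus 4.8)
\textbf{Proof proposal for Proposition \ref{prop:functfilbV0}.}

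The plan is to assemble the statement from the pieces already established. The first and main assertion — that $\lambda^\sharp$ induces a filtered morphism on the pushforward structure sheaves — is a direct application of the functoriality developed in Section \ref{sec:FVBMS}. Indeed, by Lemma \ref{lemma:fsharp} the isogeny $\lambda$ produces a commutative square
$$\begin{matrix} \Omega_G & \hookrightarrow & \mathrm{H}^\sharp_G \cr \big\downarrow \lambda^\ast & & \big\downarrow \lambda^\sharp \cr \Omega_{G'} & \hookrightarrow & \mathrm{H}^\sharp_{G'} \end{matrix}$$
of locally free sheaves in which the vertical arrows are compatible with the marked sections, i.e. $\lambda^{\ast}(s)=s'$ modulo $\cI$ (the last assertion of Lemma \ref{lemma:fsharp}), and in which $\Omega_G$, $\Omega_{G'}$ are locally direct summands of $\mathrm{H}^\sharp_G$, $\mathrm{H}^\sharp_{G'}$ respectively by part (i) of the Proposition preceding \ref{prop:nablasharp}. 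This is exactly the input required by Definition \ref{def:V0} and by Corollary \ref{cor:functfil}: first, $\lambda^\sharp$ and $\lambda^\ast$ induce a commutative diagram of formal vector bundles with marked sections as displayed just before the statement, and then Corollary \ref{cor:functfil} gives a morphism $f_{0,\ast}\bigl(\cO_{\bV_0(\mathrm{H}^\sharp_G)}\bigr)\to f_{0,\ast}'\bigl(\cO_{\bV_0(\mathrm{H}^\sharp_{G'})}\bigr)$ carrying $\Fil_h$ into $\Fil_h$ for every $h$.

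Next I would identify the induced map on graded pieces. By Corollary \ref{cor:fil}, $\mathrm{Gr}_h f_{0,\ast}\cO_{\bV_0(\mathrm{H}^\sharp_G)}\cong f_{0,\ast}\cO_{\bV_0(\Omega_G)}\otimes_{\cO_S}\mathrm{Sym}^h\bigl(\mathrm{H}^\sharp_G/\Omega_G\bigr)$, and by part (i) of the Proposition preceding \ref{prop:nablasharp} the quotient $\mathrm{H}^\sharp_G/\Omega_G$ is canonically $\mathrm{Hdg}(G)^{p/(p-1)}\omega_{G^\vee}^\vee$, likewise for $G'$. Under these identifications the map $\mathrm{gr}_h$ is, by the naturality part of Corollary \ref{cor:functfil}, the tensor product of the map on $\bV_0(\Omega_\bullet)$ induced by $\lambda^\ast\colon\Omega_G\to\Omega_{G'}$ (an isomorphism of invertible sheaves, hence inducing an isomorphism $f_{0,\ast}\cO_{\bV_0(\Omega_G)}\cong f_{0,\ast}'\cO_{\bV_0(\Omega_{G'})}$) and the $h$-th symmetric power of the quotient map $\mathrm{H}^\sharp_G/\Omega_G\to\mathrm{H}^\sharp_{G'}/\Omega_{G'}$, which by the commutative diagram in Lemma \ref{lemma:fsharp} is $\mathrm{Sym}^h$ of $\bigl((\lambda^\vee)^\ast\bigr)^\vee\colon \mathrm{Hdg}(G)^{p/(p-1)}\omega_{G^\vee}^\vee\to \mathrm{Hdg}(G')^{p/(p-1)}\omega_{G^{',\vee}}^\vee$, i.e. the $R$-dual of $(\lambda^\vee)^\ast\colon\omega_{G^{',\vee}}\to\omega_{G^\vee}$. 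This is precisely the assertion in the statement.

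Finally, for the compatibility with connections, I would invoke the functoriality of the construction in Section \ref{sec:fvvconenction}. Under the hypotheses $G[p^n](R)\cong(\Z/p^n\Z)^2$ and $G'[p^n](R)\cong(\Z/p^n\Z)^2$ (which are $G^\vee[p^n](R)\cong(\Z/p^n\Z)^h$ for $h=2$, the running hypothesis of Proposition \ref{prop:nablasharp} since $G\cong G^\vee$ via the principal polarization), Proposition \ref{prop:nablasharp} endows both $\mathrm{H}^\sharp_G$ and $\mathrm{H}^\sharp_{G'}$ with connections $\nabla_{G,\sharp}$, $\nabla_{G',\sharp}$ for which the marked sections are horizontal modulo $\cI$. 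Since $\nabla_{G,\sharp}$ and $\nabla_{G',\sharp}$ are the restrictions of the Gauss--Manin connections on $\mathrm{H}^1_{\rm dR}(G)$ and $\mathrm{H}^1_{\rm dR}(G')$, and $\lambda^\vee_\ast$ is horizontal for Gauss--Manin by functoriality of de Rham cohomology, the map $\lambda^\sharp$ is horizontal for $\nabla_{G,\sharp}$, $\nabla_{G',\sharp}$. The discussion in \S\ref{sec:fvvconenction} then shows that the associated Grothendieck isomorphisms $\epsilon^\sharp$ fit into the commutative square of formal vector bundles with marked sections induced by $\lambda^\sharp$, hence by uniqueness the induced connections on $f_{0,\ast}\cO_{\bV_0(\mathrm{H}^\sharp_G)}$ and $f_{0,\ast}'\cO_{\bV_0(\mathrm{H}^\sharp_{G'})}$ are intertwined by the morphism constructed above. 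The only delicate point — and the one I would be most careful about — is bookkeeping the denominators: in the degree-$p$ case Lemma \ref{lemma:fsharp} shows the cokernel direction carries a factor $\tau_\lambda=p^n/\mathrm{Hdg}(G')^{(p+1)(p^n-1)/(p^n(p-1))}$, so one must check that $\lambda^\sharp$ genuinely lands in $\mathrm{H}^\sharp_{G'}$ (not merely in $\mathrm{H}^\sharp_{G'}[1/\alpha]$) before the functoriality machinery of Section \ref{sec:FVBMS} applies; this is exactly what the reduction to a DVR and the normality hypothesis on $R$ in the proof of Lemma \ref{lemma:fsharp} secure, and I would cite it rather than redo it.
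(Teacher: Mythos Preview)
Your proposal is correct and follows the same approach as the paper: the paper does not give a separate proof of this proposition at all, but rather presents it as an immediate consequence of Corollary~\ref{cor:functfil} (after noting, from Lemma~\ref{lemma:fsharp} and the functoriality in Definition~\ref{def:V0}, that $\lambda^\sharp$ induces the displayed commutative square of formal vector bundles with marked sections). Your write-up simply unpacks this deduction, including the identification of the graded pieces via Corollary~\ref{cor:fil} and the connection compatibility via \S\ref{sec:fvvconenction}, which the paper leaves entirely implicit.
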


\section{Appendix II: Application to the  three variable Rankin-Selberg $p$-adic L-functions. A corrigendum to \cite{UNO}, by Eric Urban.}

\bigskip

\subsection{Introduction}
In \cite{UNO}, the author introduced nearly overconvergent modular forms of finite order and their spectral theory. The theory has be refined in \cite{AI} including intgral structure that allows to define families
of nearly overconvergent modular forms of unbounded degree that was  missing in \cite{UNO}.
The  purpose of this appendix is to fill a gap in \cite{UNO} about the construction of the three variable Rankin-Selberg  $p$-adic L-functions which we can now solve thanks to the work of F. Andreatta and A. Iovita \cite{AI}.
The gap lies in the construction made in section \S 4.4.1 a few lines before Proposition 11 where the existence of a finite slope projector denoted $e_{R,\Vf}$ is claimed. Here $\Vf$ is an affinoid of weight space and $R$ is a polynomail in $A(\Vf)[X]$ dividing the Fredholm determinant of $U$ acting on the space of $\Vf$-families of nearly overconvergent modular forms. It was falsely claimed on top of page 434
 that $e_{R,\Vf}$ can be defined as  $S(U)$ for some $S\in X.A(\Vf)[[X]]$ when it would  actually be a limit of polynomial
in the Hecke operator $U$ with coefficient in the fractions ring of $A(\Vf)$ that may have unbounded denominators making the convergence a difficult question. In the following pages, we will explain
how the existence of this projector in the theory of \cite{AI}  can actually be used to define the missing ingredient of the construction in \cite[\S 4.4.1]{UNO}. For the sake of brevity, we will use freely the notations of \cite{UNO} and \cite{AI} without recalling all of them.

I would like to thank Zheng Liu for pointing out the gap to me when she was working in her thesis on a generalization of my work to the Siegel modular case.  I would like also to thank F. Andreatta and A. Iovita for telling me about their work and for including this corrigendum as an appendix of their paper.

\subsection{Families of nearly overconvergent modular forms}
Let $p$ be an odd prime. The purpose of this paragraph is to collect some results of \cite{UNO}and \cite{AI} and harmonize the notations. Recall that for any rigid analytic variety  $X$ over a  non archimedean field, we denote respectively by $A(X)$ and $A^0(X)$  the ring of rigid analytic function on $X$ and its subring of functions bounded by $1$ on $X$. Recall also that we denote weight space by $\Xf$. It is the rigid analytic space over $\Q_p$ such that $\Xf(\Q_p)=Hom_{cont}(\Z_p^\times,\Q_p^\times)$. For any integer $k$, we denote by $[k]\in\Xf(\Q_p)$ the weight given by $x\mapsto x^k, \forall x\in\Z_p^\times$. For any $p$-power root of unity $\zeta$, we denote $\chi_\zeta$ the finite order character of  $\Z_p^\times$ trivial on $\mu_{p-1}$ and such that $\chi_\zeta(1+p)=\zeta$.

Let $\Uf\subset\Xf$ be an affinoid subdomain of weight space and
choose $I=[0,p^c]$ such that $ A^0(\Xf)=\Lambda\subset
\Lambda_I\subset A^0(\Uf)$ with $\Lambda$ and $\Lambda_I$ as
defined in \cite[\S 3.1]{AI}. We also fix integers $r$ and $n$
compatible with $I$ as in loc.~cit. We consider the Frechet space
over the Banach algebra $A(\Uf)$
$$\CN_\Uf^\dag:=\limi{r}(H^0(\Xf_{r,I},\mathbb W_{k_I})\otimes _{\Lambda_I}A(\Uf))$$
Here $\Xf_{r,I}$ is the formal scheme defined in \cite[\S 3.1]{AI} attached to a  strict neighborhood (in rigid geometry) of the ordinary locus of the modular curve.

It is easily seen that the filtration on $\mathbb W_{k_I}$ of \cite[Thm 3.11]{AI} induces the filtration
$$ \CM_\Uf^\dag=\CN^{0,\dag}_\Uf\subset \CN^{1,\dag}_\Uf\subset\dots\subset \CN^{s,\dag}_\Uf\subset \dots\subset \CN_\Uf^\dag$$
where for each integer $s$, $\CN^{s,\dag}_\Uf$ denotes the space of $\Uf$-families of nearly overconvergent modular forms as defined in \cite[\S 3.3]{UNO}. The work
done in \cite[\S 3.1]{AI} that we use here is the rigorous construction using the correct integral structure of what was alluded to in \cite[Remark 10]{UNO}. Moreover, it follows from \cite[\S 3.6]{AI} that there is a completely continuous action of the
$U$ operator on $\CN_\Uf^\dag$ that respect the above filtration and that is compatible with the one defined in \cite{UNO}. Moreover, we easily see for example using \cite[Prop. 7 (ii)]{UNO} that
the Fredholm determinant $P^\infty_\Uf(\kappa,X)$ of $U$ acting on $\CN_\Uf^\dag$ satisfied the relation
$$P^\infty_\Uf(\kappa,X)=\prod_{i=0}^\infty P_{\Uf[-2i]}(\kappa.[-2i],p^iX)$$
where $ P_{\Uf[-2i]}$ stands for the Fredholm determinant of $U$ acting on the space of families of overconvergent modular forms of weights varying in the translated affinoid $\Uf$ by the weight $[-2i]$.

Recall finally that an admissible pair for nearly overconvergent
forms is a data  $(R,\Vf)$ where $R\in A(\Uf)[X]$ is a monic
polynomial such that there is a factorization
$P^\infty_\Uf(\kappa,X)=R^*(X) Q(X)$ where $R^*(X)=R(1/X)X^{deg\;
R}$ and $Q(X)$ are relatively prime in $A(\Uf)\{\{X\}\}$. To such
a pair, one can associate a decomposition
$$\CN_\Uf^\dag=\CN_{R,\Uf}\oplus \CS_{R,\Uf}$$
which is stable under the action of $U$ and such that
$det(1-X.U|\CN_{R,\Uf})=R(X)$. We will call $e_{R,\Uf}$ the
projection of $\CN_\Uf^\dag$ onto $\CN_{R,\Uf}$. This later
subspace consists in families of nearly overconvergent modular
forms of bounded order. This is well-known and follows from the
generalization by Coleman and others of the spectral theory of
completely continuous operators originally due to J.P. Serre.

\subsection{The nearly overconvergent Eisenstein family}
Recall that we have defined in \cite[\S4.3]{UNO} the nearly overconvergent Eisenstein family $q$-expansion $\Theta.E\in A^0(\Xf\times\Xf)[[q]]$ by
$$\Theta.E(\kappa,\kappa'):=\sum_{n=1\atop (n,p)=1}^\infty \langle n\rangle_\kappa a(n,E,\kappa')q^n$$
It satisfied the following interpolation property \cite[Lemma 6]{UNO}.  If $\kappa=[r]$ and $\kappa'=[k]\psi$ with $\psi$  a finite order character and $k$ and $r$ positive integers, the evaluation at $(\kappa,\kappa')$ of $\Theta.E$ is  $\Theta.E(\kappa,\kappa')=\Theta^r.E^{(p)}_k(\psi)(q)$
and is the $p$-adic $q$-expansion of the nearly holomorphic Eisenstein series $\delta_k^rE^{(p)}_k(\psi)$.

A generalization of this statement is the crucial lemma below
which will follow from \cite[Thm 4.6]{AI}. Because of the
hypothesis 4.1 of loc.~cit., we need to introduce the following
notation. We denote by  $\Xf'\subset\Xf$ the affinoid subdomain of
$\Xf$ of the weights $\kappa$ such that
$|\kappa(1+p)-\zeta(1+p)^n|_p\leq 1/p^2$ for some integer $n$ and
some $p$-power root of unity $\zeta$. Notice that $\Xf'(\Q_p)$
contains all the classical weights.
%

\begin{lemma} \label{lemma_eis}There exists
$\Theta E_{\Xf',\Xf'}\in A(\Xf')  \hat\otimes \CN_{\Xf'}^{\dag}$ such that its $q$-expansion is given by the canonical image of $\Theta.E$ into $A(\Xf')\hat\otimes_{\Q_p} A(\Xf')[[q]]$ induced by
the canonical map $\Lambda\otimes_{\Z_p}\Lambda\rightarrow A(\Xf')\hat\otimes_{\Q_p} A(\Xf')$.

\end{lemma}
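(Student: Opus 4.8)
The plan is to realize $\Theta E_{\Xf',\Xf'}$ as a section of the sheaf $\bW_{k}$ over a suitable strict neighbourhood of the ordinary locus, obtained by applying the $p$-adic iteration of the Gauss--Manin connection (Theorem \ref{theorem:mostgeneral}) to an overconvergent Eisenstein family, and then to check that its $q$-expansion matches $\Theta.E$. First I would recall that the $p$-adic Eisenstein family $E$, viewed via its $q$-expansion $\sum_{n\geq 0} a(n,E,\kappa') q^n$ in $A^0(\Xf)[[q]]$, is the $q$-expansion of an overconvergent modular form of weight $\kappa'$: this is the classical Katz/Coleman construction, which places $E$ in $\varinjlim_r {\rm H}^0(\fX_{r,I'}, \fw^{k_{I'}})\otimes_{\Lambda_{I'}} A(\Xf')$ for appropriate $I'$, $r$. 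Then I would form its $p$-depletion $E^{[p]}$ using the $V$-operator and Definition \ref{def:pdepletion}, so that $U(E^{[p]})=0$ and its $q$-expansion is $\sum_{n\geq 1,(n,p)=1} a(n,E,\kappa') q^n$, still an overconvergent section (of $\fw^{k_{I'}}\subset \bW_{k_{I'}}$ via $\Fil_0$) lying in the kernel of $U$.

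The heart of the argument is then to apply Theorem \ref{theorem:mostgeneral}, with $k := k_{I'}$ (the weight of the Eisenstein family, parametrized by the second copy of $\Xf'$) and $s$ the universal weight on the first copy of $\Xf'$ (responsible for the $\langle n\rangle_\kappa$ factor). Here is where the restriction to $\Xf'$ enters: Assumption \ref{ass10} requires $k$ and $s$ to be $p$-adically close to classical weights, which for $p$ odd is exactly captured by demanding $|\kappa(1+p)-\zeta(1+p)^n|_p\leq 1/p^2$, i.e.\ $\kappa\in\Xf'$, and similarly for the Eisenstein weight variable; the evenness conditions in Assumption \ref{ass10}(a),(b) are harmless here because, after decomposing into finite-order character components and integer-weight parts, one can always reduce to the strict Assumption \ref{ass} case using the twisting operators $\theta^\chi$ of Proposition \ref{prop:thetachi} as in Cases I and II of the proof of Theorem \ref{theorem:mostgeneral}. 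Applying $\nabla_k^s$ to $E^{[p]}$ produces an element $\Theta E_{\Xf',\Xf'}$ of ${\rm Hdg}^{-\gamma}{\rm H}^0(\fX_{b,I'}, \bW_{k+2s})$ for some $b\geq r$ and $\gamma\geq 0$ depending only on $r$, $n$, $p$; since $p$ is invertible on the adic fiber this lands, after restriction to $\cX_{b,I'}$, in $\CN_{\Xf'}^{\dag}$ (or rather in $A(\Xf')\hat\otimes \CN_{\Xf'}^{\dag}$, the first $\Xf'$ being the weight variable of $s$).

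Finally I would verify the $q$-expansion identity. By the explicit formula in Theorem \ref{theorem:mostgeneral}, applied to $g = E^{[p]}$ which is a \emph{weight-zero-degree} section (i.e.\ $g_h(q)=0$ for $h\geq 1$, $g_0(q)$ being the $q$-expansion of $E^{[p]}$), the only surviving term is $h=j=0$, giving $\nabla_k^s(E^{[p]})(q) = \partial^{s}(E^{[p]}(q))\, V_{k+2s,0}$, and by the description of $\partial^{s}$ on $p$-depleted $q$-expansions, $\partial^{s}\bigl(\sum_{(n,p)=1} a_n q^n\bigr) = \sum_{(n,p)=1} s(n) a_n q^n = \sum_{(n,p)=1} \langle n\rangle_\kappa a(n,E,\kappa') q^n$, which is precisely $\Theta.E(\kappa,\kappa')(q)$. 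The main obstacle I anticipate is bookkeeping rather than conceptual: one must (i) match the normalizations and the precise affinoid $\Xf'$ (and choices of $I$, $r$, $n$, $b$) between the setup of \cite{UNO} and that of \cite{AI}, and (ii) confirm that the Eisenstein family really does extend to an overconvergent — not merely $p$-adic — family on the relevant strict neighbourhood over all of $\Xf'$, so that the hypotheses of Theorem \ref{theorem:mostgeneral} genuinely apply; this is exactly the content invoked from \cite[Thm. 4.6]{AI}. Once that is in place, uniqueness of $\Theta E_{\Xf',\Xf'}$ is automatic since a nearly overconvergent family is determined by its $q$-expansion at the chosen unramified cusp (Definition \ref{def:qexp}).
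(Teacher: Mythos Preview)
Your approach is essentially the same as the paper's: take the $p$-depleted overconvergent Eisenstein family, apply the $p$-adic iterate of Gauss--Manin from Theorem~\ref{theorem:mostgeneral}, and check the $q$-expansion. The paper carries this out component-by-component on the disjoint decomposition $\Xf'=\bigsqcup_{n,\zeta}\Xf'_{n,\zeta}$, and on each piece writes down an explicit composite of finite-character twists $\nabla^{\chi}$ and integer powers $\nabla^{m'}$ to arrange that both the Eisenstein weight and the iteration weight satisfy the strict Assumption~\ref{ass}; in particular it chooses $m'$ with $m+2m'\equiv 0\pmod p$ to force the ``$k_0$ even mod $p$'' clause of Assumption~\ref{ass10}(b), a step your outline sweeps into ``as in Cases I and II'' but which does require a preliminary integer shift, not only a character twist.

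One small inaccuracy: in your $q$-expansion check you say ``the only surviving term is $h=j=0$''. In fact the formula in Theorem~\ref{theorem:mostgeneral} applied to $g=E^{[p]}$ (so $g_h=0$ for $h\geq 1$) still produces terms for every $j\geq 0$, namely $\binom{u_s}{j}\prod_{i=0}^{j-1}(u_k+u_s-1-i)\,\partial^{s-j}(g_0(q))\,V_{k+2s,j}$. What is true is that the $q$-expansion map of Definition~\ref{def:qexp} projects onto the $V_{k+2s,0}$-component, so only the $j=0$ term contributes to the $q$-expansion in $A(\Xf')\hat\otimes A(\Xf')[[q]]$, giving $\partial^s(E^{[p]}(q))=\Theta.E(\kappa,\kappa')$ as you claim. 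Your conclusion is correct; just be careful to distinguish the full Tate-curve expansion in $\bW_{k+2s}(q)$ from its image under the $q$-expansion projection.
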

\proof
For a given integer $n$ and $p$-power root of unity $\zeta$, we denote by $\Xf'_{n,\zeta}\subset\Xf'$ the affinoid subdomain of the weights $\kappa$ such that $|\kappa(1+p)-\zeta(1+p)^n|_p\leq 1/p^2$.  When $\zeta=1$, we just write $\Xf'_{n}$ for $\Xf'_{n,1}$. Since $\Xf'$ is the disjoint union
$$\Xf'=\bigsqcup_{n=0}^{p-1}\bigsqcup_{\zeta}\Xf'_{n,\zeta}$$
it is sufficient to construct $E_{\Xf'_{n,\zeta},\Xf'_{m,\eta}}\in A(\Xf'_{n,\zeta})  \hat\otimes \CN_{\Xf'_{m,\eta}}^{\dag}$ satisfying  the corresponding condition on the $q$-expansion.
Notice also that $\Xf'_{n,\zeta}=[n]\chi_\zeta.\Xf'_0$ and that,
with the notations of \cite{AI},
we have $A^0(\Xf'_0)=\Lambda_{I'}$ with $I'=[0,p^2]$.

It clearly exists $E_{\Xf'_{m,\eta}}^{(p)}\in \CM^\dag_{\Xf'_{m,\eta}}\subset\CN^\dag_{\Xf'_{m,\eta}}$ such that its $q$-expansion in $A(\Xf'_{m,\eta})[[q]]$ is given by $\Theta E([0],\kappa )$.
Indeed it is defined by $E_{\Xf'_{m,\eta}}^{(p)}=E^{ord}_{\Xf'_{m,\eta}}-E^{ord}_{\Xf'_{m,\eta}}|V_p$ where $E^{ord}_{\Xf'_{m,\eta}}\in e_{ord}.\CM^\dag_{\Xf'_{m,\eta}}$ denotes the $\Xf'_{m,\eta}$-family of ordinary Eisenstein series and $V_p$ denotes the Frobenius operator inducing raising $q$ to its $p$-power on the $q$-expansion.

We have the isomorphism $\Lambda\cong \Z_p[(\Z/p\Z)^\times][[T]]$ done by choosing the  topological generator $1+p\in 1+p\Z_p$,. Let $\kappa_{\Xf'_0}$ be the universal weight $\Z_p^\times\rightarrow A(\Xf'_0)^\times$. We can easily see that
$Log(\kappa_{\Xf'_0})=\frac{log(1+T)}{log(1+p)}=u_\kappa$ where $u_\kappa$ is the notation defined in \cite{AI} while $Log(\kappa_{\Xf'_0})$ is  the notation defined in \cite{UNO}. The  assumption 4.1 of \cite{AI}, now reads easily as $I\subset [0, p^2]$ and is therefore satisfied since $A^0(\Xf'_0)=\Lambda_{[0,p^2]}$.

Before pursuing, we note that we will use the notation
$\nabla^\chi$ following the definition 4.11 of \cite{AI} for the
twist of nearly overconvergent forms by a finite order character
$\chi$ of $\Z_p^\times$. We refer the reader to loc.~cit.~for its
properties.

Let $\kappa_s$ the generic weight $\Z_p^\times \rightarrow A(\Xf'_{n,\zeta})$. Since $\Xf'_{n,\zeta}=[n]\chi_\zeta.\Xf'_0$, the weight $\kappa_s.[-n]\chi_\zeta^{-1}$ satisfies the assumption 4.1 of \cite{AI}. Let $m'$ be a natural integer such that $m+2m'$ is divisible by $p$ and let $\eta'$ be a $p$-power root of unity so that $\eta'^2=\eta^{-1}$. Then $([m'].\chi_{\eta'})^2\Xf'_{m,\eta}=\Xf'_0$ and therefore the weight of $\nabla^{\chi_{\eta'}}\nabla^{m'}E^{(p)}_{\Xf'_{m,\eta}}$ satisfies also the assumption 4.1. of \cite{AI}. According to \cite[Thm 4.6]{AI}, one can therefore define  $\nabla^{s-n\chi_\zeta} (\nabla^{\chi_{\eta'}}\nabla^{m'}E^{(p)}_{\Xf'_{m,\eta}})$ where $\nabla^{s-n\chi_\zeta}$ stands for $\nabla^{s'}$ with $s'$ the weight corresponding to $\kappa_{s'}=\kappa_s[-n]\chi_\zeta^{-1}$ taking values in $A^0(\Xf'_0)$.
Since $\Xf'_{n,\zeta}$ depends only on $n$ modulo $p$, we may and do assume that $n>m'$, and we can therefore  set
$$\Theta E_{\Xf'_{n,\zeta},\Xf'_{m,\eta}}:=\nabla^{\chi_{\zeta\eta}}\nabla^{n-m'}(\nabla^{s-n\chi_\zeta} (\nabla^{\chi_{\eta'}}\nabla^{m'}E^{(p)}_{\Xf'_{m,\eta}}))$$
From the effect of $\nabla$ on the $q$-expansion, it is now easy to verify that $\Theta E_{\Xf'_{n,\zeta},\Xf'_{m,\eta}}$ satisfies the condition on the $q$-expansion claimed in the Lemma.  \qed


\subsection{Final construction of $G^E_{Q,\Uf,R,\Vf}$} In this paragraph,  we explain how to replace the bottom of page 433 of \cite{UNO}. We now assume that $\Uf$ and $\Vf$ are affinoid subdomains of $\Xf'$.  Let $(Q,\Uf)$ be an admissible pair for overconvergent forms of tame level $1$ and let $T_{Q,\Uf}$
be the corresponding Hecke algebra over $A(\Uf)$. By definition it is the ring of analytic function
on the affinoid subdomain $\CE_{Q,\Uf}$ sitting over the affinoid subdomain $Z_{Q,\Uf}$ associated to $ (Q,\Uf)$ of the spectral curve of the $U$-operator. Recall that
$$Z_{Q,\Uf}=Max(A(\Uf)[X]/Q^*(X))\subset Z_{U}\subset\A^1_{\mathbf rig}\times\Uf$$
where $Z_{U}$ is the spectral curve attache to $U$  and
$$T_{Q,\Uf}=A(\CE_{Q,\Uf})\hbox{  with  } \CE_{Q,\Uf}=\CE\otimes_{Z_{U}}Z_{Q,\Uf}$$
where $\CE$ stands for the Eigencurve. The universal family of overconvergent  modular eigenforms of type $(Q,\Uf)$ is an element of $\CM_{Q,\Uf}\otimes_{A(\Uf)}T_{Q,\Uf}$ whose $q$-expansion is given by
$$G_{Q,\Uf}(q):=\sum_{n=1}^\infty T(n)q^n\in T_{Q,\Uf}[[q]]$$
Tautologically, for any point $y\in\CE_{Q,\Uf}$ of weight $\kappa_y\in\Uf$, the evaluation $G_{Q,\Uf}(y)$ at $y$ of $G_{Q,\Uf}$ is the overconvergent normalized eigenform $g_y$ of weight $\kappa_y$ associated to $y$.

We set
$$G^{E}_{Q,\Uf}:=G_{Q,\Uf}.\Theta.E_{\Xf'\Xf'}\in T_{Q,\Uf}\otimes A(\Xf')\hat\otimes \CN_{\Xf'}^\dag=A(\CE_{Q,\Uf})\otimes A(\Xf')\hat\otimes \CN_{\Xf'}^\dag$$

Let now $(R,\Vf)$ be an admissible pair for nearly overconvergent forms as in \cite[\S4.1]{UNO}. We consider
$$G^E_{Q,\Uf,R,\Vf}\in A(\Vf\times \CE_{Q,\Uf}\times\Xf')\hat\otimes \CN_{R,\Vf}$$ defined by
$$G^E_{Q,\Uf,R,\Vf}(\kappa,y,\nu):=e_{R,\Vf}. G^E_{Q,\Uf}(y,\nu,\kappa\kappa_y^{-1}\nu^{-2})\in\CN_\kappa^\dag$$
for any $(\kappa,y,\nu)\in\Vf\times \CE_{Q,\Uf}\times\Xf'(\Q_p)$. Notice that since $\Uf$ and $\Vf$ are contained in $\Xf'$, so is $\kappa\kappa_y^{-1}\nu^2$ which allows to evaluate
$G^{E}_{Q,\Uf}$ at $(y,\nu,\kappa\kappa_y^{-1}\nu^{-2})$. Its gives a nearly overconvergent modular form of  weight  $\kappa$ which is the running variable in $\Vf$. We can therefore apply the finite slope projector
$e_{R,\Vf}$ from  $\CN^{\dag}_\Vf$ onto $\CN_{R,\Vf}$ specialized at $\kappa$.

\subsection{Final Remarks}\label{sec:finalremark}
We denote $G^E_{Q,\Uf,R,\Vf}(q)\in A(\Vf\times\CE_{Q,\Uf}\times\Xf')[[q]]$ the $q$-expansion of the family of nearly overconvergent forms we have defined above.
This is the family of $q$-expansion that we wanted to define in \cite[\S 4.4.1]{UNO}. The rest of the statements and results of \cite[\S 4]{UNO} are now valid under
the condition that we replace $\Xf$ by $\Xf'$ and $\CE$ by $\CE'=\CE \times_\Xf\Xf'$ in all of them. To obtain a more general result, we would need to extend the
work of \cite{AI} to relax their assumption 4.1. This seems possible by noticing that the condition $u_\kappa\in p.\Lambda_I$ can be replaced by $u_\kappa$
topologically nilpotent in $\Lambda_I$ and by using a congruence for $\nabla^{(p-1)p^n}-id$ for $n$ sufficiently large. This would allow to replace $\Xf'$ by $\Xf$
in  Lemma \ref{lemma_eis} above which is the only reason we needed to restrict ourself to $\Xf'$.


\begin{thebibliography}{99}

\bibitem[AIPHS]{halo_spectral} F.~Andreatta, A.~Iovita, V.~Pilloni: \emph{  Le Halo Spectral},  Ann.~Sci.~ENS. {\bf 51}, 603--655 (2018).


\bibitem[AIPS]{SiegelAIP} F.~Andreatta, A.~Iovita, V.~Pilloni: \emph{   $p$-Adic families of Siegel modular cuspforms},
Annals of Math.~{\bf 181}, 623--697 (2015).



\bibitem[AIS2]{andreatta_iovita_stevens2}
F.~Andreatta, A.~Iovita, G.~Stevens: {\em Overconvergent modular sheaves and modular forms for $\GL_{2/F}$}, Israel J.~of Math. {\bf 201}, 299--359 (2014).



\bibitem[AI17]{AI}  F.~Andreatta,  A.~Iovita:
\emph{  Triple product $p$-adic L-functions asociated to finite
slope $p$-adic families of modular forms}, preprint (this very
article).

\bibitem[AS]{ash_stevens}
A.~Ash, G.~Stevens, {\em p-Adic deformations of arithmetic cohomology}, preprint available at
{\em https://www2.bc.edu/avner-ash/} (2008)


\bibitem[BO]{berthelot_ogus} P.~Berthelot, A.~Ogus: \emph{Notes on crystalline cohomology}, Princeton University Press, Princeton, N.J. (1978).

\bibitem[BDP]{BDP} M.~Bertolini, H.~Darmon, K.~Prasanna: \emph{ Generalised Heegner cycles and p-adic Rankin L-series (With an appendix by Brian Conrad)}, Duke Math. J.~{\bf 162}, 1033-1148 (2013).

\bibitem[BSV]{BSV} M.~Bertolini, M.~Seveso, R.~Venerucci: \emph{ Reciprocity laws for diagonal classes and rational points on elliptic curves}, Preprint 2018.

\bibitem[Col1]{coleman} R.~Coleman: {\em Classical and overconvergent modular forms}, Invent.~Math.~{\bf 124}, 215-241~(1996).


\bibitem[Col2]{ColemanBanach}  R.~Coleman: {\em $p$-Adic Banch spaces and families of modular forms},
Invent.~Math.~{\bf 127},~417-479~(1997).

\bibitem[DS]{Diamond} F.~Diamond, J.~Shurman: \emph{A First Course in Modular Forms}, GTM {\bf 228}, Springer (2005).


\bibitem[DR1]{darmon_rotger} H.~Darmon, V.~Rotger: \emph{Diagonal cycles and Euler systems I: a $p$-adic Gross-Zagier formula},
Ann.~Sci.~ENS {\bf 47}, 779--832 (2014).


\bibitem[DR2]{dr2} H.~Darmon, V.~Rotger: \emph{ Diagonal cycles and Euler systems II: the Birch and Swinnerton-Dyer conjecture for Hasse-Weil-Artin $L$-series}.
Journal of the AMS {\bf 30},  601--672 (2017).



\bibitem[GS]{Marco} M.~Greenberg, M.~A.~Seveso: \emph{Triple product p-adic L-functions for balanced weights}, arXiv:1506.05681.

\bibitem[HaKu]{harris_kudla} M.~Harris, S.~Kudla: \emph{  The central critical value of a triple product $L$-function}, Annals of Math.~{\bf 133},  605--672 (1991).

\bibitem[HaTi]{harris_tilouine} M.~Harris, J.~Tilouine: \emph{  $p$-adic measures and square roots of special values of triple product $L$-functions},
Math. Annalen {\bf 320},  127--147 (2001).

\bibitem[HX]{HX} R.~Harron, L.~Xiao: {\it Gauss-Manin connections for $p$-adic families of nearly overconvergent modular forms}. Ann. Inst.
Fourier {\bf 64}, 2449--2464 (2014).

\bibitem[Hi88]{Hida2} H.~Hida: \emph{  A p-adic measure attached to the zeta functions
associated with two elliptic modular forms. II}, Ann. Inst.
Fourier {\bf 38}, 1--83 (1988).


\bibitem[Hs]{HS} M.-L.~Hsieh: \emph{Hida families and $p$-adic triple product $L$-functions}, arXiv:1705.02717.

\bibitem[I]{ichino} A.~Ichino: \emph{  Trilinear forms and the central values of triple product $L$-functions},
Duke Math.~J.~{\bf 145}, 281--307 (2008).


\bibitem[Ko]{Koblitz} N.~Koblitz: \emph{  Introduction to elliptic curves and modular forms}. GTM {\bf 97}. Springer-Verlag, New York, 1984. viii+248 pp.


\bibitem[L]{zheng} Z.~Liu: \emph{  Nearly overconvergent $p$-adic families of modular forms}, preprint, (2017).


\bibitem[Lo]{Loeffler} D.~Loeffler: \emph{  A note on $p$-adic Rankin-Selberg $L$-functions}, Canad.~Math.~Bulletin {\bf 61}, 608–-621 (2018).




\bibitem[Ur11]{Ueigen} E.~Urban: \emph{  Eigenvarieties for reductive groups}, Annals of Math.~{\bf 74}, 1685--1784 (2011).


\bibitem[Ur14]{UNO} E.~Urban: \emph{  Nearly overconvergent modular forms}, Iwasawa theory 2012, 401-441, Contrib. Math. Comput. Sci. {\bf 7}, Springer, Heidelberg, 2014




\bibitem[W]{watson} T.C.~Watson: \emph{  Rankin triple products and quantum chaos},  Ph.D. thesis (Princeton 2002), available at https://arxiv.org/abs/0810.0425

\end{thebibliography}
\end{document}